\numberwithin{equation}{subsection}
\theoremstyle{plain}
\newtheorem{assertion}{Assertion}[subsection]
\newtheorem{corollary}[assertion]{Corollary}
\newtheorem{lemma}[assertion]{Lemma}
\newtheorem{proposition}[assertion]{Proposition}
\newtheorem{theorem}[assertion]{Theorem}
\theoremstyle{definition}
\newtheorem{example}[assertion]{Example}
\newtheorem{examples}[assertion]{Examples}
\theoremstyle{remark}
\newtheorem{notation}[assertion]{Notation}
\newtheorem{remark}[assertion]{Remark}
\newcommand{\N}{\mathbf{N}}
\newcommand{\Z}{\mathbf{Z}}
\newcommand{\Q}{\mathbf{Q}}
\newcommand{\C}{\mathbf{C}}
\newcommand{\B}{\mathbf{B}}
\newcommand{\Qbar}{\bar{\Q}}
\newcommand{\Zp}{\Z_p}
\newcommand{\Qp}{\Q_p}
\newcommand{\Cp}{\C_p}
\newcommand{\Qpbar}{\Qbar_p}
\newcommand{\Oc}{\mathcal{O}}
\newcommand{\mf}{\mathfrak{m}}
\newcommand{\dprime}{{\prime\prime}}
\newcommand{\torsion}{\textup{tor}}
\newcommand{\divisible}{\textup{div}}
\newcommand{\cont}{\textup{cont}}
\newcommand{\mult}{\textup{m}}
\newcommand{\cyc}{\textup{cyc}}
\newcommand{\ab}{\textup{ab}}
\newcommand{\ur}{\textup{ur}}
\newcommand{\Iw}{\textup{Iw}}
\newcommand{\et}{\textup{ét}}
\newcommand{\dR}{\textup{dR}}
\newcommand{\st}{\textup{st}}
\newcommand{\cris}{\textup{cris}}
\newcommand{\e}{\textup{e}}
\newcommand{\BdR}{\B_\dR}
\newcommand{\Bst}{\B_\st}
\newcommand{\Bcris}{\B_\cris}
\newcommand{\Be}{\B_\e}
\newcommand{\Ac}{\mathcal{A}}
\newcommand{\Fc}{\mathcal{F}}
\newcommand{\Lc}{\mathcal{L}}
\newcommand{\Uc}{\mathcal{U}}
\newcommand{\Xc}{\mathcal{X}}
\DeclareMathOperator{\Hom}{Hom}
\DeclareMathOperator{\Ker}{Ker}
\DeclareMathOperator{\Img}{Im}
\DeclareMathOperator{\Coker}{Coker}
\DeclareMathOperator{\HH}{H}
\DeclareMathOperator{\DDb}{\mathbf{D}}
\DeclareMathOperator{\DdR}{\DDb_\dR}
\DeclareMathOperator{\Dst}{\DDb_\st}
\DeclareMathOperator{\Dcris}{\DDb_\cris}
\DeclareMathOperator{\De}{\DDb_\e}
\DeclareMathOperator{\Fil}{Fil}
\DeclareMathOperator{\Gr}{Gr}
\DeclareMathOperator{\Gal}{Gal}
\DeclareMathOperator{\Mod}{Mod}
\DeclareMathOperator{\Ab}{Ab}
\DeclareMathOperator{\Br}{B}
\DeclareMathOperator{\Cr}{C}
\DeclareMathOperator{\Sr}{S}
\DeclareMathOperator{\Zr}{Z}
\DeclareMathOperator{\Tr}{Tr}
\DeclareMathOperator{\height}{ht}
\DeclareMathOperator{\id}{id}
\DeclareMathOperator{\length}{length}
\DeclareMathOperator{\res}{res}
\newcommand{\similarrightarrow}{\xrightarrow{\sim}}
\DeclareFontFamily{U}{wncy}{}
\DeclareFontShape{U}{wncy}{m}{n}{<->wncyr10}{}
\DeclareSymbolFont{mcy}{U}{wncy}{m}{n}
\DeclareMathSymbol{\Sha}{\mathord}{mcy}{"58}
\newcommand{\keywords}[1]{\noindent{\small\textbf{Keywords:}~#1.}\par}
\newcommand{\msc}[2]{\noindent{\small\textbf{Mathematics Subject Classification (2020):}~Primary:~#1. Secondary:~#2.}\par}
\newcommand{\email}[1]{\href{mailto:#1}{\nolinkurl{#1}}}
\newcommand{\contact}[3]{\begin{samepage}\noindent{\small\textsc{#1}\\*~#2\\*\textit{E-mail address:}~\email{#3}\par}\end{samepage}}
\title{\textbf{On a characterisation of perfectoid fields by Iwasawa theory}}
\author{Gautier \textsc{Ponsinet}}
\date{\today}
\begin{document}
\maketitle
\begin{abstract}
	We prove that the vanishing of the module of universal norms associated with a de Rham Galois representation whose Hodge\--Tate weights are not all non\-/positive characterises the algebraic extensions of the field of \(p\)\=/adic numbers whose completion is a perfectoid field.
	We thereby generalise results by Coates and Greenberg for abelian varieties, and by Bondarko for \(p\)\=/divisible groups.
\end{abstract}
\bigskip
\keywords{Iwasawa theory, perfectoid fields}
\msc{\texttt{11R23}}{\texttt{14G45}}
\tableofcontents

\section{Introduction}
\label{sec:introduction}

Let \(p\) be a prime number.
The purpose of the present article is to characterise the algebraic extensions of the field of \(p\)\=/adic numbers whose completion is a perfectoid field in terms of the Iwasawa theory of de Rham Galois representations.

We fix the following notation for the entirety of the introduction.

Let \(\Qpbar\) be an algebraic closure of the field \(\Qp\) of \(p\)\=/adic numbers.
Let \(K\) be a finite extension of \(\Qp\) contained in \(\Qpbar\).
We denote by \(G_K = \Gal(\Qpbar/K)\) the absolute Galois group of \(K\).
Let \(L\) be an algebraic extension of \(K\) contained in \(\Qpbar\).

Let \(V\) be a \(p\)\=/adic representation of \(G_K\), that is, a finite dimensional \(\Qp\)\=/vector space equipped with a continuous and \(\Qp\)\=/linear action of \(G_K\).
Let \(T\) be a \(\Zp\)\=/lattice in \(V\) stable under the action of \(G_K\), that is, a finitely generated \(\Zp\)\=/submodule of \(V\), spanning \(V\) over \(\Qp\), and stable under the action of \(G_K\).

We adopt the convention that the Hodge\--Tate weight of the \(1\)\=/dimensional \(p\)\=/adic representation \(\Qp(1)\), on which \(G_K\) acts via the cyclotomic character, is~\(1\).

\subsection{Motivation}
\label{subsec:motivation}

Bloch and Kato~\cite{BlochKato1990} have defined via \(p\)\=/adic Hodge theory, and for each finite extension \(K^\prime\) of \(K\), subgroups in Galois cohomology
\[
	\HH^1_e(K^\prime,T) \subseteq \HH^1_f(K^\prime,T) \subseteq \HH^1_g(K^\prime,T) \subseteq \HH^1(K^\prime,T),
\]
which are involved in their conjecture on the special values of \(L\)\=/functions of motives~\cite{Fontaine1992}.

We consider the \(1\)\=/st Iwasawa cohomology group
\[
	\HH^1_\Iw(K,L,T) = \varprojlim \HH^1(K^\prime,T),
\]
and, for each \(\ast \in \{e,f,g\}\), the module of \emph{\(\ast\)\=/universal norms}
\[
	\HH^1_{\Iw,\ast}(K,L,T) = \varprojlim \HH^1_\ast(K^\prime,T),
\]
where \(K^\prime\) runs through the finite extensions of \(K\) contained in \(L\), and the transition morphisms are the corestriction maps.
The groups thus defined
\[
		\HH^1_{\Iw,e}(K,L,T) \subseteq \HH^1_{\Iw,f}(K,L,T) \subseteq \HH^1_{\Iw,g}(K,L,T) \subseteq \HH^1_\Iw(K,L,T)
\]
are involved in Iwasawa theory~\cite{FukayaKato2006}, which motivates their study.

For instance, if \(T=T_p(A)\) is the \(p\)\=/adic Tate module of an abelian variety \(A\) defined over \(K\), then the modules of universal norms \(\HH^1_{\Iw,\ast}(K,L,T_p(A))\) all coincide and are isomorphic via the Kummer map to the module studied in Mazur's foundational article~\cite{Mazur1972}
\[
	\varprojlim A^{(p)}(K^\prime),
\]
where  \(K^\prime\) runs through the finite extensions of \(K\) contained in \(L\), the group \(A^{(p)}(K^\prime)\) is the \(p\)\=/adic completion of the group \(A(K^\prime)\) of \(K^\prime\)\=/rational points of \(A\), and the transition morphisms are the norm maps.

If the completion \(\hat{L}\) of \(L\) for the \(p\)\=/adic valuation topology is a perfectoid field, a class of fields introduced by Scholze~\cite{Scholze2012}, then the modules of universal norms have been computed in several cases (see \S~\ref{subsec:bloch-kato-groups-over-perfectoid-fields}, where results by Coates and Greenberg~\cite{CoatesGreenberg1996}, Perrin-Riou~\cite{Perrin-Riou2000,Perrin-Riou2001}, Berger~\cite{Berger2005}, and the author~\cite{Ponsinet2025,Ponsinet2024} are reviewed).
In particular, these results provide examples of non\-/trivial de Rham representation \(V\) whose Hodge\--Tate weights are not all \(\leq 0\), which implies that, for each finite extension \(K^\prime\) of \(K\), the \(\Zp\)\=/module \(\HH^1_g(K^\prime,T)\) has rank \(> 0\), and such that if \(\hat{L}\) is perfectoid, then the module \(\HH^1_{\Iw,g}(K,L,T)\) is trivial.
The purpose of the present article is to establish the converse statement.

\subsection{Main result}
\label{subsec:result}

The main result of the present article is the following.

\begin{theorem} \label{intro:theorem:main}
	Assume that \(V\) is a non\-/trivial de Rham representation of \(G_K\) whose Hodge\--Tate weights are not all \(\leq 0\).
	If the module \(\HH^1_{\Iw,g}(K,L,T)\) is trivial, then the field \(\hat{L}\) is perfectoid.
\end{theorem}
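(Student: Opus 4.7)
I prove the contrapositive: assuming \(\hat L\) is not perfectoid, I exhibit a non\-/zero element of \(\HH^1_{\Iw, g}(K, L, T)\). The argument rests on two ingredients: a general construction of universal norms out of Bloch\--Kato exponentials, and a ramification\-/theoretic input (Coates\--Greenberg, as reinterpreted through Scholze's perfectoid formalism) identifying non\-/perfectoid completions with a failure of trace divisibility along \(L/K\).

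For each finite extension \(K' \subseteq L\) of \(K\), the Bloch\--Kato exponential provides a map
\[
\exp_{V,K'} : K' \otimes_K \DdR(V)/\Fil^0 \to \HH^1(K', V).
\]
Hypothesis~(1) makes the source non\-/zero for every \(K'\). Hypothesis~(2), combined with the fundamental exact sequence of \(p\)\=/adic Hodge theory applied to \(V\) and to \(V^\ast(1)\), identifies the image of \(\exp_{V,K'}\) with \(\HH^1_g(K', V)\) and makes \(\exp_{V,K'}\) injective (possibly after enlarging \(K'\) so that \(V\) becomes semi\-/stable). The standard compatibility
\[
\cores_{K''/K'} \circ \exp_{V,K''} = \exp_{V, K'} \circ (\Tr_{K''/K'} \otimes \id)
\]
then assembles, by passage to the inverse limit over finite \(K' \subseteq L\), into an injection
\[
\exp_\Iw : \varprojlim_{K'} \bigl(K' \otimes_K \DdR(V)/\Fil^0\bigr) \hookrightarrow \HH^1_{\Iw, g}(K, L, V).
\]

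To produce a non\-/zero element in the source of \(\exp_\Iw\), I invoke the characterisation that \(\hat L\) is perfectoid if and only if \(L/K\) is deeply ramified in the sense of Coates\--Greenberg, if and only if the ideals \(\Tr_{K''/K'}(\Oc_{K''}) \subseteq \Oc_{K'}\) become arbitrarily divisible along the tower. Assuming \(\hat L\) is not perfectoid, this divisibility fails, and a Mittag\--Leffler style argument produces a non\-/zero trace\-/compatible sequence in \(\varprojlim_{K'} K'\). Tensoring with a non\-/zero element of \(\DdR(V)/\Fil^0\) (which exists by~(1)) yields a non\-/zero element in the source of \(\exp_\Iw\), hence a non\-/zero class in \(\HH^1_{\Iw, g}(K, L, V)\); a final clearing\-/of\-/denominators step descends it to \(\HH^1_{\Iw, g}(K, L, T)\) via the injection \(\HH^1_{\Iw, g}(K, L, T) \otimes_{\Zp} \Qp \hookrightarrow \HH^1_{\Iw, g}(K, L, V)\).

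The main obstacle is the construction of the non\-/zero trace\-/compatible sequence in the non\-/deeply\-/ramified setting, and more generally controlling the target of \(\exp_\Iw\) against the various sub\-/levels of \(\HH^1\). This is the representation\-/free analogue of the formal\-/group computation carried out by Coates\--Greenberg for abelian varieties; generalising it to arbitrary de Rham representations requires the Bloch\--Kato exponential framework above together with a careful analysis of how the failure of deep ramification concretely bounds the divisibility of trace maps on the tangent space at each level. Once this arithmetic input is isolated, the remainder of the argument is a mechanical assembly of \(p\)\=/adic Hodge theory and Iwasawa\-/theoretic inverse limits.
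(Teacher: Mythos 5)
Your strategy is genuinely different from the paper's: the paper first dualises the statement via local Tate duality (Proposition~\ref{proposition:bk-dual-iw}), so that the claim becomes \enquote{\(\HH^1_e(L,V^\ast(1)/T^\ast(1)) = \HH^1(L,V^\ast(1)/T^\ast(1))\) forces \(\hat{L}\) perfectoid}, and then derives a contradiction from the non-vanishing of \(\HH^1(L,\BdR^+\otimes_{\Qp}V^\ast(1))\) over non-perfectoid \(\hat{L}\) (Iovita--Zaharescu), Hyodo's isomorphism \(f_K\colon\HH^1(K,V_\e)\similarrightarrow\HH^1(K,V_\dR)\), and a density argument in the compact-open topology. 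You instead try to manufacture a universal norm directly from a trace-compatible system in the tangent space. The direct route hits an integrality obstruction that the dual route is specifically designed to avoid, and as written your argument has a genuine gap there.

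Concretely, the final \enquote{clearing-of-denominators} step is not a formality. You produce (at best) a non-zero element of \(\varprojlim_{K'}\HH^1_e(K',V)\), but the theorem is about \(\varprojlim_{K'}\HH^1_g(K',T)\), and an inverse limit of \(\Qp\)-vector spaces can be non-zero while the inverse limit of lattices inside them vanishes: take \(A_n=\Qp\) with transition maps multiplication by \(p\), so \(\varprojlim A_n\simeq\Qp\) while \(\varprojlim\Zp=\bigcap_n p^n\Zp=0\). To rule this out you would need the denominators of \(\exp_{V,K'}\) against \(\Img\bigl(\HH^1(K',T)\to\HH^1(K',V)\bigr)\) to be uniformly bounded along the tower, which is a substantive integrality statement (Perrin-Riou-type) that you neither prove nor cite; the paper sidesteps it entirely because the Pontryagin dual of an inverse limit of lattices is a direct limit of torsion groups, where no denominators appear. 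Two further points need repair. First, injectivity of \(\exp_{V,K'}\) requires \(\DDb_{\e,K'}(V)=0\), and hypothesis~(2) only gives \(\De(V)=\Dcris(V)^{\varphi=1}=0\) over \(K\); after a finite base change the space \(\DDb_{\e,K'}(V)\) can strictly grow (Lemma~\ref{lemma:de-semistable} rescues you only when \(V\) is already semistable over \(K\)), so your trace-compatible class could a priori lie in \(\varprojlim\Ker(\exp_{V,K'})\). Second, the key arithmetic input --- that non-perfectoid \(\hat{L}\) yields a non-zero element of \(\varprojlim_{\Tr}K'\) --- is plausible (bounded differents force \(\Tr_{K''/K'}(\Oc_{K''})\supseteq\mf_{K'}^{c}\) for a uniform \(c\), and compactness then gives a non-zero limit), but it is exactly the hard arithmetic content of the theorem and cannot be deferred as a \enquote{mechanical assembly}; it is the analogue of what the paper extracts from \(\HH^1(L,\BdR^+)\neq 0\).
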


\begin{remark}
	From Theorem~\ref{intro:theorem:main}, we recover, via a different method, results by Coates and Greenberg~\cite{CoatesGreenberg1996} and by Bondarko~\cite{Bondarko2003} characterising perfectoid fields in terms of the Galois cohomology of abelian varieties and \(p\)\=/divisible groups respectively (see \S~\ref{subsec:p-divisible-groups}).
\end{remark}

\begin{remark}
	As we have already mentioned, there exist non\-/trivial de Rham representations \(V\) whose Hodge\--Tate weights are not all \(\leq 0\), and such that if \(\hat{L}\) is perfectoid, then \(\HH^1_{\Iw,g}(K,L,T)\) is trivial.
	Therefore, for these representations, the following equivalence holds: the field \(\hat{L}\) is perfectoid if and only if the module \(\HH^1_{\Iw,g}(K,L,T)\) is trivial (see \S~\ref{subsec:bloch-kato-groups-over-perfectoid-fields}).
\end{remark}

\begin{remark}
	The hypothesis on the Hodge\--Tate weights of \(V\) in Theorem~\ref{intro:theorem:main} is necessary (see \S~\ref{subsec:hodge-tate-weights}).
\end{remark}

\subsection{Outline of the proof}
\label{subsec:outline}

Our proof of Theorem~\ref{intro:theorem:main} relies on the following.
\begin{itemize}
	\item The definition of the Bloch\--Kato groups involves the \(p\)\=/adic period rings (see \S~\ref{subsec:bloch-kato-groups} and \S~\ref{subsec:universal-norms}).
	\item The Galois cohomology of the \(p\)\=/adic period rings characterises perfectoid fields (see \S~\ref{subsec:galois-cohomology-of-p-adic-period-rings}).
	Recall that using Tate's method~\cite{Tate1967}, Iovita and Zaharescu~\cite{IovitaZaharescu1999:1} have computed the Galois cohomology groups of the ring \(\BdR^+\) of \(p\)\=/adic period introduced by Fontaine~\cite{Fontaine1994:II}, and they have for instance proved
	\[
		\HH^1(L,\BdR^+) =
		\begin{cases}
			0 & \text{if \(\hat{L}\) is perfectoid}, \\
			\text{\(1\)\=/dimensional \(\hat{L}\)\=/vector space} & \text{otherwise}.
		\end{cases}
	\]
	\item Galois cohomology groups may be equipped with a well\-/behaved structure of topological abelian groups induced by the compact\--open topology on continuous cochains, which is closely related to condensed group cohomology (see \S~\ref{subsec:the-compact-open-topology}).
\end{itemize}

The fraction field \(\BdR\) of \(\BdR^+\) contains the period ring \(\Be = \Bcris^{\varphi = 1}\), and, if \(V\) is de Rham, then there exists a commutative diagram whose rows and columns are exact
\begin{equation} \label{eq:intro:diagram}
	\begin{tikzcd}
		& & & 0 \ar{d} \\
		& & & \HH^1(L,\BdR^+ \otimes_{\Qp} V) \ar{d} \\
		0 \ar{r} & \HH^1_g(L,V)/\HH^1_e(L,V) \ar{r} \ar{d} & \HH^1(L,\Be \otimes_{\Qp} V) \ar{r}{f_L} \ar[equal]{d} & \HH^1(L,\BdR \otimes_{\Qp} V) \ar{d} \\
		0 \ar{r} & \HH^1(L,V)/\HH^1_e(L,V) \ar{r} & \HH^1(L,\Be \otimes_{\Qp} V) \ar{r} & \HH^1(L,(\BdR/\BdR^+) \otimes_{\Qp} V) \ar{d} \\
		& & & \phantom{,}0,
	\end{tikzcd}
\end{equation}
where
\[
	\begin{split}
		\HH^1_e(L,V) & = \Ker\left(\HH^1(L,V) \rightarrow \HH^1(L,\Be \otimes_{\Qp} V)\right), \\
		\HH^1_g(L,V) & = \Ker\left(\HH^1(L,V) \rightarrow \HH^1(L,\BdR \otimes_{\Qp} V)\right)
	\end{split}
\]
are respectively the exponential and the geometric Bloch\--Kato groups.

The aformentionned topology defines a structure of \(p\)\=/adic Banach space on \(\HH^1(L,V)\), and we will prove that, if \(V\) is de Rham, then the geometric Bloch\--Kato group \(\HH^1_g(L,V)\) is closed in \(\HH^1(L,V)\).

We will prove Theorem~\ref{intro:theorem:main} by contradiction as follow.
Let \(V^\ast(1) = \Hom(V,\Qp(1))\) be the Tate dual representation of \(V\).
Assume that \(V^\ast(1)\) satisfies the hypotheses of Theorem~\ref{intro:theorem:main} and that \(\hat{L}\) is not perfectoid.
We will then prove the following two contradictory statements.
\begin{itemize}
	\item The extension \(L/K\) is infinite, and, by arguing on the dimensions of Galois cohomology groups along the finite extensions \(K^\prime\) of \(K\) contained in \(L\), it holds
	\[
		\Img(f_L) \cap \HH^1(L,\BdR^+ \otimes_{\Qp}V) \neq 0.
	\]
	\item The exponential Bloch\--Kato group \(\HH^1_e(L,V)\) is dense in \(\HH^1(L,V)\), and thus, since \(\HH^1_g(L,V)\) is closed in \(\HH^1(L,V)\), it holds \(\HH^1_g(L,V) = \HH^1(L,V)\).
	The commutativity of the diagram~\eqref{eq:intro:diagram} then implies that
	\[
		\Img(f_L) \cap \HH^1(L,\BdR^+ \otimes_{\Qp}V) = 0.
	\]
\end{itemize}

\subsection{Organisation of the article}
\label{subsec:organisation}

In section~\ref{sec:continuous-group-cohomology}, we review and we further study the structure of topological abelian groups on continuous group cohomology induced by the compact\-/open topology on continuous cochains.

In section~\ref{sec:p-adic-period-rings-and-perfectoid-fields}, we review the relation between the Galois cohomology of \(p\)\=/adic period rings and perfectoid fields.
Moreover, we study the structure of topological groups of the Galois cohomology groups of \(p\)\=/adic period rings.

In section~\ref{sec:bloch-kato-groups-and-perfectoid-fields}, we review the definition and properties of the Bloch\--Kato groups and we prove Theorem~\ref{intro:theorem:main}.

In section~\ref{sec:examples}, we recover from Theorem~\ref{intro:theorem:main} results by Coates and Greenberg for abelian varieties, and by Bondarko for \(p\)\=/divisible groups.
We also provide examples of representations which imply that the statement Theorem~\ref{intro:theorem:main} is not empty, and that the hypothesis in Theorem~\ref{intro:theorem:main} on the Hodge\--Tate weights is necessary.

\subsection{Acknowledgements}
\label{subsec:acknowledgements}

This work has been supported by:
\begin{itemize}
	\item the Deutsche Forschungsgemeinschaft (DFG, German Research Foundation) through the Collaborative Research Centre TRR 326 \enquote{Geometry and Arithmetic of Uniformized Structures}, project number 444845124, which has allowed me to conduct research at the Universität Heidelberg,
	\item the Institut des Hautes Études Scientifiques,
	\item the Universität Duisburg--Essen.
\end{itemize}
I thank Otmar Venjakob at the Universität Heidelberg and Massimo Bertolini at the Universität Duisburg--Essen.
I thank Laurent Berger, Pierre Colmez, Tongmu He, and Joaquín Rodrigues Jacinto for their comments on a first version of the article.
I also thank Pierre Colmez for providing me with Hyodo's article~\cite{Hyodo1991}.

\subsection{Notation}
\label{subsec:notation}

We adopt the convention that the set of natural numbers \(\N\) contains \(0\).

We fix a prime number \(p\), and an algebraic closure \(\Qpbar\) of the field \(\Qp\) of \(p\)\=/adic numbers.
We denote by \(\Cp\) the completion of \(\Qpbar\) for the \(p\)\=/adic valuation topology.
Every algebraic extension of \(\Qp\) considered is contained in \(\Qpbar\).
We denote by \(\Qp^\ur\) the maximal unramified extension of \(\Qp\), and by \(\sigma \in \Gal(\Qp^\ur/\Qp)\) the arithmetic Frobenius.
If \(L\) is an algebraic extension of \(\Qp\), then we denote by \(G_L = \Gal(\Qpbar/L)\) the absolute Galois group of \(L\), and by \(L_0 = L \cap \Qp^\ur\) the maximal unramified extension of \(\Qp\) contained in \(L\).
We fix a finite extension \(K\) of \(\Qp\).

If \(k\) is a valued field, then we denote by \(\Oc_k\) the associated valuation ring, and by \(\mf_k\) the maximal ideal of \(\Oc_k\), and we denote by \(\hat{k}\) the completion of \(k\).

If \(A\) is an abelian group, then we denote by \(A_\divisible\) the maximal divisible subgroup of \(A\), by \(A_\torsion\) the torsion subgroup of \(A\), by \(A[p^n]\) the \(p^n\)\=/torsion subgroup of \(A\) for each \(n \in \N\), and by \(A[p^\infty] = \bigcup_{n \in \N} A[p^n]\) the \(p\)\=/primary torsion subgroup of \(A\).

\section{Continuous group cohomology and the compact\texorpdfstring{\-/}{-}open topology}
\label{sec:continuous-group-cohomology}

\subsection{Continuous group cohomology}
\label{subsec:continuous-group-cohomology}

We briefly recall the definition of continuous group cohomology~\cite[\S~2]{Tate1976}.

Let \(G\) be a topological group.
A \emph{topological \(G\)\=/module} is a topological abelian group \(M\) equipped with a left action by \(G\)
\begin{equation} \label{eq:group-action}
	G \times M \rightarrow M,
\end{equation}
which is compatible with the group structure of \(M\) and continuous, that is, such that the map~\eqref{eq:group-action} is continuous.

An \emph{exact sequence} of topological abelian groups is a sequence of topological abelian groups whose underlying sequence of abelian groups is exact.
A \emph{strict exact sequence} of topological abelian groups is an exact sequence of topological abelian groups in which all maps are strict.
An \emph{exact sequence} (respectively a \emph{strict exact sequence}) of topological \(G\)\=/modules is a sequence of topological \(G\)\=/modules whose underlying sequence of topological abelian groups is exact (respectively strict exact).

If \(M\) is a topological \(G\)\=/module, then, for each \(n \in \N\), we denote by \(\Cr^n(G,M)\) the group of continuous \(n\)\=/cochains, that is, the group of continuous functions from \(G^n\) to \(M\).
For each \(n \in \N\), there exits a coboundary group homomorphism
\[
	d_n : \Cr^n(G,M) \rightarrow \Cr^{n+1}(G,M),
\]
the group of \(n\)\=/cocycles is defined by \(\Zr^n(G,M) = \Ker d_n\), and the group of \(n\)\=/coboundaries by
\[
	\Br^n(G,M) =
	\begin{cases}
		\Img(d_{n-1}) & \text{if \(n>0\),} \\
		0 & \text{if \(n=0\)}.
	\end{cases}
\]
The coboundary maps satisfy the equation \(d_{n+1} \circ d_n = 0\), and thus, there are inclusions
\[
	\Br^n(G,M) \subseteq \Zr^n(G,M) \subseteq \Cr^n(G,M).
\]
The \(n\)\=/th group of \emph{continuous group cohomology} \(\HH^n(G,M)\) is defined by
\[
	\HH^n(G,M) = \Zr^n(G,M)/\Br^n(G,M).
\]
It is convenient to also set \(\HH^{-1}(G,M) = 0\).

\begin{proposition} \label{proposition:lim-cohomology}
	Let \((M_i,f_i : M_{i+1} \rightarrow M_i)_{i \in \N}\) be a projective system of topological \(G\)\=/modules such that each map \(f_i\) admits a continuous section.
	Let \(\varprojlim_{f_i} M_i\) be the topological \(G\)\=/module equipped with the inverse limit topology.
	Then, for each \(n \in \N\), there exists an exact sequence of groups
	\[
		0 \rightarrow {\varprojlim_{f_i}}^1 \HH^{n-1}(G,M_i) \rightarrow \HH^n(G,\varprojlim_{f_i} M_i) \rightarrow \varprojlim_{f_i} \HH^n(G,M_i) \rightarrow 0.
	\]
\end{proposition}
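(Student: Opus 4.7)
The plan is to deduce the short exact sequence from the classical Milnor-style $\varprojlim^1$ exact sequence for a tower of cochain complexes with surjective transition maps. Concretely, for each $n \in \N$ I will consider the tower of abelian groups $(\Cr^n(G,M_i), (f_i)_\ast)_{i \in \N}$, verify that it has surjective transition maps and that its inverse limit is naturally identified with $\Cr^n(G,\varprojlim_{f_i} M_i)$, and then invoke the standard short exact sequence relating the cohomology of an inverse limit of cochain complexes to the $\varprojlim$ and $\varprojlim^1$ of their cohomologies.

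First I would establish the identification
\[
	\Cr^n(G,\varprojlim_{f_i} M_i) \cong \varprojlim_{(f_i)_\ast} \Cr^n(G,M_i)
\]
as abelian groups. This follows directly from the universal property of the inverse limit topology on $\varprojlim_{f_i} M_i$: a continuous map from $G^n$ into $\varprojlim_{f_i} M_i$ is the same datum as a compatible family of continuous maps $G^n \to M_i$. The identification is manifestly compatible with the coboundary operators, so I obtain an isomorphism of cochain complexes between $\Cr^\bullet(G,\varprojlim_{f_i} M_i)$ and $\varprojlim_{(f_i)_\ast} \Cr^\bullet(G,M_i)$.

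Next I would check that the transition map $(f_i)_\ast : \Cr^n(G,M_{i+1}) \to \Cr^n(G,M_i)$ is surjective for each $n$ and each $i$. Given a continuous cochain $c : G^n \to M_i$ and a continuous section $s_i : M_i \to M_{i+1}$ of $f_i$, the composition $s_i \circ c : G^n \to M_{i+1}$ is continuous and satisfies $(f_i)_\ast(s_i \circ c) = c$. (Note that $s_i$ need not be a group homomorphism; set-theoretic continuous sections suffice at the level of cochains.) Consequently, the tower of cochain complexes $\Cr^\bullet(G,M_i)$ is level-wise surjective.

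Finally I would apply the Milnor-type short exact sequence for the cohomology of an inverse limit of a tower of cochain complexes with surjective transition maps: for any such tower $C^\bullet_i$, one has a natural exact sequence
\[
	0 \rightarrow {\varprojlim}^1 \HH^{n-1}(C^\bullet_i) \rightarrow \HH^n(\varprojlim C^\bullet_i) \rightarrow \varprojlim \HH^n(C^\bullet_i) \rightarrow 0,
\]
which is a direct consequence of the Mittag-Leffler vanishing of $\varprojlim^1$ applied separately to the surjective towers of cocycles and coboundaries, together with the snake lemma. Combining this with the identification of the first paragraph yields the desired exact sequence.

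The only point requiring any care is the surjectivity of the transition maps on cochains, which is precisely where the hypothesis on the existence of continuous sections is used; once this is in place, the rest is formal homological algebra and does not depend on the topological structure beyond what has already been observed. Since the statement concerns only the underlying abelian group structure of the cohomology groups, no additional work on the compact-open topology is needed here.
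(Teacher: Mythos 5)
Your proposal is correct and follows essentially the same route as the paper: the continuous sections are used in exactly the same way to obtain surjectivity of the induced maps on cochain groups, and the Milnor-type \(\varprojlim^1\) sequence you invoke is precisely what the paper derives by writing out the short exact sequence of complexes \(0 \rightarrow \Cr^n(G,\varprojlim_{f_i} M_i) \rightarrow \prod_{i} \Cr^n(G,M_i) \xrightarrow{\id - h} \prod_{i} \Cr^n(G,M_i) \rightarrow 0\) and taking its cohomology. The only minor imprecision is your parenthetical sketch of the Milnor sequence's proof (the towers of cocycles and coboundaries need not themselves be surjective; the standard argument goes through the two-term \(\prod \rightarrow \prod\) complex as in the paper), but since you are citing a standard result this does not affect correctness.
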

\begin{proof}
	Since the map \(f_i\) admits a continuous section \(s_i : M_i \rightarrow M_{i+1}\), the map \(h_i : \Cr^n(G,M_{i+1}) \rightarrow \Cr^n(G,M_i)\) induced by \(f_i\) is surjective for each integers \(i\) and \(n\).
	Indeed, if \(c_i\) is an element of \(\Cr^n(G,M_i)\), then \(s_i \circ c_i \in \Cr^n(G,M_{i+1})\) and \(h_i (s_i \circ c_i) = f_i \circ s_i \circ c_i = c_i\).
	It follows that, for each \(n \in \N\), there is an exact sequence of groups
	\begin{equation} \label{eq:complex-limit}
		0 \rightarrow \Cr^n(G,\varprojlim_{f_i} M_i) \rightarrow \prod_{i \in \N} \Cr^n(G,M_i) \xrightarrow{\id - h} \prod_{i \in \N} \Cr^n(G,M_i) \rightarrow 0,
	\end{equation}
	where \((\id - h)((c_i)_i) = (c_i - h_{i+1}(c_{i+1}))_i\) for each \((c_i)_i \in \prod_{i \in \N} \Cr^n(G,M_i)\).
	The exactness in the middle of the sequence~\eqref{eq:complex-limit} follows from the definition of \(\varprojlim_{f_i} M_i\), and the surjectivity of \((\id - h)\) can be proved by constructing an element in the preimage of an element \((c_i)_i\) by induction on \(i\).
	The cohomology of the short exact sequence of complexes~\eqref{eq:complex-limit} yields the statement.
\end{proof}

\subsection{The compact\texorpdfstring{\-/}{-}open topology}
\label{subsec:the-compact-open-topology}

Let \(G\) be a topological group.
If \(M\) is a topological \(G\)\=/module, then, for each \(n \in \N\), the \emph{compact\-/open topology} on the set \(\Cr^n(G,M)\) is the topology generated by the subsets
\[
	\Sr(J,U) = \{f \in \Cr^n(G,M), f(J) \subseteq U\},
\]
where \(J\) runs through the compact subsets of \(G^n\), and \(U\) runs through the open subsets of \(M\).

Recall~\cite[Appendix~A]{Ponsinet2025} that if \(G\) is locally compact and separated, then, for each \(n \in \N\), the compact\-/open topology defines a structure of topological abelian group on \(\Cr^n(G,M)\), and the coboundaries maps \(d_n\) are morphisms of topological abelian groups.
The subspace topology from \(\Cr^n(G,M)\) makes \(\Zr^n(G,M)\) and \(\Br^n(G,M)\) into topological abelian groups, and the quotient topology from \(\Zr^n(G,M)\) makes \(\HH^n(G,M)\) into a topological abelian group.
Moreover, the structure of topological abelian group on \(\HH^n(G,M)\) satisfies the following properties.
\begin{itemize}
	\item Let \(M^G\) be the submodule of \(G\)\=/invariant elements of \(M\) equipped with the subspace topology from \(M\).
	The canonical isomorphism
	\[
		\HH^0(G,M) \similarrightarrow M^G
	\]
	is an isomorphism of topological abelian groups.
	\item If \(M \rightarrow N\) is a morphism of topological \(G\)\=/modules, then the induced map \(\HH^n(G,M) \rightarrow \HH^n(G,N)\) is a morphism of topological abelian groups.
	\item If
	\begin{equation} \label{eq:ses}
		0 \rightarrow M^\prime \rightarrow M \rightarrow M^\dprime \rightarrow 0
	\end{equation}
	is a strict exact sequence of topological \(G\)\=/modules, then the sequence~\eqref{eq:ses} induces an exact sequence of topological abelian groups
	\begin{equation} \label{eq:esh0h1}
		\begin{tikzcd}
			0 \ar{r} & \HH^0(G,M^\prime) \ar{r} & \HH^0(G,M) \ar{r} \ar[phantom, ""{coordinate, name=Z}]{d} & \HH^0(G,M^\dprime) \ar[rounded corners, to path={ -- ([xshift=2em]\tikztostart.east) |- (Z) [near end]\tikztonodes -| ([xshift=-2em]\tikztotarget.west) -- (\tikztotarget)}]{dll} \\
			& \HH^1(G,M^\prime) \ar{r} & \HH^1(G,M) \ar{r} & \HH^1(G,M^\dprime).
		\end{tikzcd}
	\end{equation}
	Moreover, if there exists a continuous section of the projection of \(M\) on \(M^\dprime\) as topological spaces, then the sequence~\eqref{eq:esh0h1} extends into a long exact sequence of topological abelian groups
	\[
		\cdots \rightarrow \HH^n(G,M^\prime) \rightarrow \HH^n(G,M) \rightarrow  \HH^n(G,M^\dprime) \rightarrow \HH^{n+1}(G,M^\prime) \rightarrow \cdots.
	\]
\end{itemize}

\begin{remark}
	The continuous group cohomology groups, thus equipped with a structure of topological abelian groups induced by the compact\-/open topology, are closely related to condensed group cohomology developped by Clausen and Scholze (see for instance~\cite[\S~2.2.1]{Artusa2024} and~\cite[Lemma~3.3.2 2)]{BarthelSchlankStapletonWeinstein2024}).
\end{remark}

\begin{notation}
	We recall the standard notation for \emph{Galois cohomology}.
	\begin{itemize}
		\item If \(G=\Gal(k^\prime/k)\) is the Galois group of a Galois field extension \(k^\prime/k\), then we write \(\HH^n(k^\prime/k,M)\) instead of \(\HH^n(\Gal(k^\prime/k),M)\).
		\item If \(G=G_k\) is the absolute Galois group of a field \(k\), then we write \(\HH^n(k,M)\) instead of \(\HH^n(G_k,M)\).
	\end{itemize}
\end{notation}

\begin{remark}
	Galois groups are profinite, and thus, compact and separated.
\end{remark}

\subsection{Linear topology}
\label{subsec:linear-topology}

Let \(G\) be a compact and separated topological group.
Let \(M\) be a topological \(G\)\=/module whose topology is linear, generated by a set of \(G\)\=/submodules \(\{M_i\}_{i \in \N}\) which forms a fundamental system of open neighbourhoods of the identity element of \(M\).

\begin{proposition} \label{proposition:linear}
	For each \(n \in \N\), the topology of \(\HH^n(G,M)\) is linear, generated by the set of subgroups
	\[
		\left\{\Img\left(\HH^n(G,M_i) \rightarrow \HH^n(G,M)\right)\right\}_{i \in \N},
	\]
	which forms a fundamental system of open neighbourhoods of the identity element of \(\HH^n(G,M)\).
\end{proposition}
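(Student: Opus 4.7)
The plan is to prove the statement by tracking the linear topology step by step from continuous cochains down to cohomology, using crucially that $G^n$ is compact so that the compact--open topology admits a particularly simple description.

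First I would observe that since $G$ is compact, for each $n \in \N$ the space $G^n$ is compact, so the subbasic open set $\Sr(G^n, U) = \{f \in \Cr^n(G,M) : f(G^n) \subseteq U\}$ is contained in every $\Sr(J,U)$ with $J \subseteq G^n$ compact. Hence a fundamental system of open neighbourhoods of $0$ in $\Cr^n(G,M)$ is given by the subgroups of the form $\Sr(G^n, U)$, where $U$ is an open neighbourhood of $0$ in $M$. Since $\{M_i\}_{i \in \N}$ forms a fundamental system of open neighbourhoods of $0$ in $M$, every such $U$ contains some $M_i$, and since the intersection of two $M_i$'s contains a third (being an open neighbourhood of $0$), the subgroups
\[
	\Sr(G^n, M_i) = \{f \in \Cr^n(G,M) : f(G^n) \subseteq M_i\}
\]
form a fundamental system of open neighbourhoods of $0$ in $\Cr^n(G,M)$. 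In particular $\Cr^n(G,M)$ carries a linear topology. Moreover, the natural inclusion $\Cr^n(G,M_i) \hookrightarrow \Cr^n(G,M)$ (induced by $M_i \hookrightarrow M$) identifies $\Cr^n(G,M_i)$ with $\Sr(G^n, M_i)$ as a topological subgroup.

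Next I would transfer this linearity to cocycles and then to cohomology. Intersecting with $\Zr^n(G,M)$, I get a linear topology on the subspace $\Zr^n(G,M)$ generated by the subgroups $\Zr^n(G,M) \cap \Cr^n(G,M_i)$. Because the differential $d_n$ is functorial in the coefficient module and the inclusion $M_i \hookrightarrow M$ is $G$-equivariant, $\Zr^n(G,M) \cap \Cr^n(G,M_i)$ equals the image of $\Zr^n(G,M_i) \to \Zr^n(G,M)$. Taking the quotient topology on $\HH^n(G,M) = \Zr^n(G,M)/\Br^n(G,M)$, the images in $\HH^n(G,M)$ of these subgroups of $\Zr^n(G,M)$ form a fundamental system of open neighbourhoods of $0$, and the topology remains linear since a quotient of a linearly topologised group by a subgroup is linearly topologised.

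To finish, I would verify that the image of $\Zr^n(G,M_i) \to \Zr^n(G,M)$, once projected onto $\HH^n(G,M)$, coincides with the image of the map $\HH^n(G,M_i) \to \HH^n(G,M)$ induced by $M_i \hookrightarrow M$: a class in $\HH^n(G,M)$ lies in the latter image precisely when it is represented by some $z \in \Zr^n(G,M_i)$, which is exactly the former image. This yields the stated description of the fundamental system of open neighbourhoods of $0$ in $\HH^n(G,M)$. I expect no serious obstacle; the only point to be careful about is the reduction of arbitrary basic compact--open neighbourhoods to the special form $\Sr(G^n, M_i)$, which is the place where the compactness of $G$ and the linearity of the topology on $M$ combine.
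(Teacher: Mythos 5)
Your proposal is correct and follows essentially the same route as the paper: use compactness of \(G^n\) to reduce the compact--open neighbourhoods of \(0\) to the subgroups \(\Sr(G^n,M_i)=\Img(\Cr^n(G,M_i)\to\Cr^n(G,M))\), intersect with cocycles, and pass to the quotient, identifying the resulting images with those of \(\HH^n(G,M_i)\). The only cosmetic difference is that the paper invokes a Bourbaki result to exhibit an explicit base \(\{\Sr(J,m+M_i)\}\) before specialising to \(J=G^n\), whereas you reduce basic neighbourhoods of \(0\) directly; both handle the same (routine) point.
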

\begin{proof}
	We first note that the topology of the group \(\Cr^n(G,M)\) is linear, generated by the subgroups \(\Sr(J,M_i)\), where \(J\) runs through the compact subsets of \(G^n\) and \(i \in \N\).
	Indeed, the set \(\{m + M_i\}_{m \in M,i \in \N}\) is a base of the topology of the space \(M\), which implies that the set
	\[
		\{\Sr(J,m+M_i); J \text{ compact}, m \in M, i \in \N\}
	\]
	is a base of the compact-open topology on \(\Cr^n(G,M)\) (see~\cite[X \S~3 \textnumero~4 Remarques~2)]{Bourbaki1974:TG:5-10}).
	Moreover, if the identity element  of \(\Cr^n(G,M)\) belongs to \(\Sr(J,m+M_i)\), then \(\Sr(J,m+M_i) = \Sr(J,M_i)\).
	Therefore, the set of subgroups \(\{\Sr(J,M_i); J \text{ compact}, i \in \N\}\) forms a fundamental system of open neighbourhoods of the identity element of \(\Cr^n(G,M)\).

	Since \(G\) is compact, the space \(G^n\) is compact, and thus, the subgroup \(\Sr(G^n,M_i)\) of \(\Cr^n(G,M)\) is open.
	Moreover, it holds
	\[
		\Sr(G^n,M_i) \subseteq \Sr(J,M_i),
	\]
	for each compact subsets \(J\) of \(G^n\).
	Therefore, in the linear topology on \(\Cr^n(G,M)\) generated by the subgroups \(\Sr(G^n,M_i)\), the subgroup \(\Sr(J,M_i)\) is open~\cite[III \S~2 \textnumero~1 Corollaire]{Bourbaki1971:TG:1-4}, and thus, the compact\-/open topology on the group \(\Cr^n(G,M)\) coincides with the linear topology generated by the subgroups \(\{\Sr(G^n,M_i)\}_{i \in \N}\).

	Note that the subgroup \(\Sr(G^n,M_i)\) is the image of the group \(\Cr^n(G,M_i)\) in \(\Cr^n(G,M)\).
	Since the morphism of groups \(\Cr^j(G,M_i) \rightarrow \Cr^j(G,M)\) is injective for each \(i,j \in \N\), the induced topology on \(\Zr^n(G,M)\) is then generated by the subgroups
	\[
		\begin{split}
			& \Zr^n(G,M) \cap \Sr(G^n,M_i) \\
			= & \Zr^n(G,M) \cap \Img\left(\Cr^n(G,M_i) \rightarrow \Cr^n(G,M)\right) \\
			= & \Img\left(\Zr^n(G,M_i) \rightarrow \Zr^n(G,M)\right).
		\end{split}
	\]
	Finally, the quotient topology on \(\HH^n(G,M)\) is generated by the images of the groups \(\Zr^n(G,M_i)\) in \(\HH^n(G,M)\), and these images coincide with the images of the groups \(\HH^n(G,M_i)\) in \(\HH^n(G,M)\).
\end{proof}

\begin{corollary} \label{corollary:discrete}
	If \(M\) is discrete, then, for each \(n \in \N\), the topological group \(\HH^n(G,M)\) is discrete.
\end{corollary}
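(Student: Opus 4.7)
The plan is to deduce the corollary directly from Proposition~\ref{proposition:linear} by exhibiting the zero subgroup as an open neighbourhood of the identity in \(\HH^n(G,M)\).

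First I would observe that since \(M\) is discrete, the trivial subgroup \(\{0\}\) is an open neighbourhood of the identity in \(M\). Since the topology of \(M\) is linear and generated by the family \(\{M_i\}_{i \in \N}\) as a fundamental system of open neighbourhoods of \(0\), there exists some index \(i_0 \in \N\) with \(M_{i_0} \subseteq \{0\}\), and hence \(M_{i_0} = 0\). (Alternatively, one may simply enlarge the given fundamental system by inserting the zero submodule, which is still a \(G\)-submodule, without changing the topology.)

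Next I would apply Proposition~\ref{proposition:linear} to this fundamental system. The proposition asserts that the topology on \(\HH^n(G,M)\) is linear and admits the family
\[
	\left\{\Img\left(\HH^n(G,M_i) \rightarrow \HH^n(G,M)\right)\right\}_{i \in \N}
\]
as a fundamental system of open neighbourhoods of the identity. In particular, the image of \(\HH^n(G,M_{i_0}) = \HH^n(G,0) = 0\) in \(\HH^n(G,M)\) is the trivial subgroup, and this subgroup is therefore open.

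Finally, a topological group in which the trivial subgroup is open is discrete, since translation is a homeomorphism and hence every singleton is open. This concludes the proof. I do not anticipate any genuine obstacle here: the argument is a one-line application of the preceding proposition once one notices that discreteness of \(M\) forces the zero submodule to appear (up to refinement) in any fundamental system of open neighbourhoods of \(0\) by open submodules.
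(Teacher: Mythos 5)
Your argument is correct and is essentially the paper's own proof: both take the fundamental system of open submodules to contain (or be refined to contain) the trivial submodule and then apply Proposition~\ref{proposition:linear} to conclude that the trivial subgroup of \(\HH^n(G,M)\) is open. Your write-up merely spells out the final step that an open trivial subgroup forces discreteness.
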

\begin{proof}
	If \(M\) is discrete, then the topology of \(M\) is linear with one of the subgroups \(M_i\) being the trivial subgroup of \(M\), and the statement follows from Proposition~\ref{proposition:linear}.
\end{proof}

\begin{lemma} \label{lemma:lim1-top}
	Let \((A_i,f_i)_{i \in \N}\) be a projective system of topological abelian groups.
	The quotient topology on \(\varprojlim^1_{f_i} A_i\), from \(\prod_{i \in \N} A_i\) equipped with the product topology, is the trivial topology.
\end{lemma}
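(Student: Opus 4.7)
The plan is to use the explicit presentation of \(\varprojlim^1_{f_i} A_i\) as the cokernel
\[
	\Coker\bigl(\id - h : \prod_{i \in \N} A_i \to \prod_{i \in \N} A_i\bigr),
\]
where \(h\) denotes the shift \((a_i)_i \mapsto (f_i(a_{i+1}))_i\), in direct analogy with the map appearing in the proof of Proposition~\ref{proposition:lim-cohomology}. Setting \(A = \prod_{i \in \N} A_i\) and \(B = \Img(\id - h)\), the open subsets of \(A/B\) correspond to the open \(B\)\=/saturated subsets of \(A\), so the quotient topology is the trivial topology precisely when every nonempty open \(B\)\=/saturated subset of \(A\) equals \(A\), equivalently when \(B\) is dense in \(A\). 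The statement therefore reduces to showing that \(\Img(\id - h)\) is dense in \(\prod_{i \in \N} A_i\) for the product topology.

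To establish density I intersect \(\Img(\id - h)\) with an arbitrary nonempty basic open subset of the product. Such a subset has the form \(V = \prod_{i \in \N} V_i\) with \(V_i \subseteq A_i\) nonempty and open, and \(V_i = A_i\) for all \(i > N\) for some \(N \in \N\). I construct \(x = (x_i)_i\) with \((\id - h)(x) \in V\) by finite backward recursion on the index: set \(x_i = 0\) for \(i > N\), and for \(i = N, N-1, \ldots, 0\) pick \(x_i\) in the nonempty translate \(V_i + f_i(x_{i+1})\). By construction \(x_i - f_i(x_{i+1}) \in V_i\) for \(i \leq N\), and for \(i > N\) the \(i\)\=/th coordinate of \((\id - h)(x)\) lies in \(A_i = V_i\) trivially, so \((\id - h)(x)\) belongs to \(V \cap \Img(\id - h)\).

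No step here is a genuine obstacle: the entire content is the observation that \(\id - h\) is \enquote{lower triangular} in the coordinate indexing, so any finite system of constraints on its image can be solved by back-substitution. Combined with the reduction in the first paragraph, the density of \(\Img(\id - h)\) in \(\prod_{i \in \N} A_i\) yields the trivial topology on the quotient \(\varprojlim^1_{f_i} A_i\).
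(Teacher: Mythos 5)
Your proof is correct and follows essentially the same route as the paper: both present \(\varprojlim^1_{f_i} A_i\) as the cokernel of the shifted-identity map on \(\prod_{i \in \N} A_i\) and then exploit the lower-triangular structure to show its image meets (indeed saturates) every basic open set of the product. Your version merely makes explicit the back-substitution that the paper's proof leaves as an easy verification, and reformulates the conclusion via the standard fact that a dense subgroup of a topological group yields an indiscrete quotient.
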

\begin{proof}
	On the one hand, the group \(\varprojlim^1_{f_i} A_i\) is the cokernel of the map
	\[
		\prod_{i \in \N} A_i \xrightarrow{\id - f} \prod_{i \in \N} A_i,
	\]
	where \((\id - f)((a_i)_i) = (a_i - f_{i+1}(a_{i+1}))_i\) for each \((a_i)_i \in \prod_{i \in \N} A_i\).
	On the other hand, a basis for the product topology on \(\prod_{i \in \N} A_i\) is formed by subsets \(\prod_i U_i\), where \(U_i\) is an open subset in \(A_i\) and \(U_i = A_i\) for all but finitely many \(i \in \N\).
	We easily verify that for any such subset \(\prod_i U_i\), it holds \((\prod_i U_i) + \Img(\id-f) = \prod_i A_i\).
\end{proof}

If \(N\) is an abelian group equipped with a linear topology generated by a set of subgroups \(\{N_i\}_{i \in \N}\) which forms a fundamental system of open neighbourhoods of the identity element of \(N\), then the topological group \(N\) is separated (respectively complete) if the natural map
\[
	N \rightarrow \varprojlim_{i \in \N} N/N_i
\]
is injective (respectively surjective).

\begin{corollary} \label{corollary:lim-cohomology-linear}
	If \(M\) is separated and complete, then, for each \(n \in \N\), the exact sequence of groups from Proposition~\ref{proposition:lim-cohomology}
	\[
		0 \rightarrow {\varprojlim}^1 \HH^{n-1}(G,M/M_i) \rightarrow \HH^n(G,M) \rightarrow \varprojlim \HH^n(G,M/M_i) \rightarrow 0
	\]
	defines a strict exact sequence of topological groups, where
	\begin{itemize}
		\item the group \(\varprojlim \HH^n(G,M/M_i)\) is equipped with the inverse limit topology, and
		\item the group \(\varprojlim^1 \HH^{n-1}(G,M/M_i)\) is equipped with the quotient topology from \(\prod_{i \in \N} \HH^{n-1}(G,M/M_i)\), which is the trivial topology by Lemma~\ref{lemma:lim1-top}.
	\end{itemize}
	Moreover, the topological group \(\HH^n(G,M)\) is complete.
\end{corollary}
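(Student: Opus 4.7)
The plan is to deduce the corollary by applying Proposition~\ref{proposition:lim-cohomology} to the projective system \((M/M_i)_{i \in \N}\) with transition maps the canonical surjections, and then to verify that the resulting exact sequence of groups is strict and complete as topological groups by matching neighborhood bases.

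First I would observe that each quotient \(M/M_i\) is \emph{discrete}, since \(M_i\) is open in \(M\), and hence that the surjections \(M/M_{i+1} \twoheadrightarrow M/M_i\) between discrete groups admit set-theoretic sections, which are automatically continuous. The hypothesis that \(M\) is separated and complete gives the isomorphism of topological \(G\)\=/modules \(M \similarrightarrow \varprojlim M/M_i\), so Proposition~\ref{proposition:lim-cohomology} yields the short exact sequence of groups asserted in the statement. By Corollary~\ref{corollary:discrete}, each \(\HH^n(G, M/M_i)\) is discrete, which already identifies the relevant target topologies: on the quotient, \(\varprojlim \HH^n(G, M/M_i)\) is a strict (separated, complete) inverse limit of discrete groups, and on the kernel, Lemma~\ref{lemma:lim1-top} gives the trivial topology.

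The main point, and probably the only delicate step, is to check that the two topologies produced on the right\-/hand term agree, and symmetrically for the kernel. By Proposition~\ref{proposition:linear}, a fundamental system of open neighbourhoods of the identity of \(\HH^n(G, M)\) is given by the subgroups \(U_i = \Img\bigl(\HH^n(G, M_i) \rightarrow \HH^n(G, M)\bigr)\). The short exact sequences \(0 \to M_i \to M \to M/M_i \to 0\) are strict and admit continuous set\-/theoretic sections (\(M/M_i\) being discrete), so by \eqref{eq:cohomology_les} one has
\[
	U_i = \Ker\bigl(\HH^n(G, M) \rightarrow \HH^n(G, M/M_i)\bigr).
\]
The projections \(\varprojlim \HH^n(G, M/M_i) \rightarrow \HH^n(G, M/M_i)\) have open kernels whose preimage in \(\HH^n(G, M)\) is exactly \(U_i\); hence the map \(\HH^n(G, M) \rightarrow \varprojlim \HH^n(G, M/M_i)\) is continuous and open onto its image (which is surjective by Proposition~\ref{proposition:lim-cohomology}), so its target carries both the quotient and the inverse limit topology, proving strictness. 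The kernel \(K = \varprojlim^1 \HH^{n-1}(G, M/M_i)\) is contained in every \(U_i\) by construction, and hence the subspace topology from \(\HH^n(G, M)\) is the trivial topology; this is the same as the quotient topology of Lemma~\ref{lemma:lim1-top}, so the left map is strict as well.

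For completeness, I would unwind the definition: the group \(\HH^n(G, M)\) is complete iff the natural map \(\HH^n(G, M) \rightarrow \varprojlim \HH^n(G, M)/U_i\) is surjective. The inclusion \(\HH^n(G, M)/U_i \hookrightarrow \HH^n(G, M/M_i)\) induces an injection \(\varprojlim \HH^n(G, M)/U_i \hookrightarrow \varprojlim \HH^n(G, M/M_i)\), and the composite
\[
	\HH^n(G, M) \rightarrow \varprojlim \HH^n(G, M)/U_i \hookrightarrow \varprojlim \HH^n(G, M/M_i)
\]
is surjective by Proposition~\ref{proposition:lim-cohomology}, so the first arrow is already surjective, giving completeness. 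The chief obstacle is purely bookkeeping: keeping straight that \(U_i\) coincides with the kernel to \(\HH^n(G, M/M_i)\) and that this identification propagates through the long exact sequence to make the two topologies coincide; once this is in hand, strictness and completeness fall out formally.
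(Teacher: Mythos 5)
Your proposal is correct and follows essentially the same route as the paper: the key step in both is the identification \(\Img\bigl(\HH^n(G,M_i)\to\HH^n(G,M)\bigr)=\Ker\bigl(\HH^n(G,M)\to\HH^n(G,M/M_i)\bigr)\) (the paper's \enquote{factorisation maps}), combined with Proposition~\ref{proposition:linear}, the discreteness of the \(\HH^n(G,M/M_i)\), and Lemma~\ref{lemma:lim1-top} to match the linear topology on \(\HH^n(G,M)\) with the inverse limit topology on the target and the trivial topology on the kernel. Your completeness argument via the injection \(\varprojlim \HH^n(G,M)/U_i \hookrightarrow \varprojlim \HH^n(G,M/M_i)\) is exactly the content of the paper's exact sequence~\eqref{eq:lim-cohomology-factorisations}.
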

\begin{proof}
	For each \(n \in \N\) and \(i \in \N\), the topological group \(\HH^n(G,M/M_i)\) is discrete by Corollary~\ref{corollary:discrete}.
	Moreover, for each \(n \in \N\) and \(i \in \N\), the strict exact sequence of topological groups
	\[
		0 \rightarrow M_i \rightarrow M \rightarrow M/M_i \rightarrow 0
	\]
	induces an exact sequence of topological groups
	\[
		\HH^n(G,M_i) \xrightarrow{\iota} \HH^n(G,M) \rightarrow \HH^n(G,M/M_i).
	\]
	and thus, the continuous morphism \(\HH^n(G,M) \rightarrow \HH^n(G,M/M_i)\) factorises through the discrete group \(\HH^n(G,M)/\iota(\HH^n(G,M_i))\).

	The exact sequence of groups from Proposition~\ref{proposition:lim-cohomology}
	\[
		0 \rightarrow {\varprojlim}^1 \HH^{n-1}(G,M/M_i) \rightarrow \HH^n(G,M) \rightarrow \varprojlim \HH^n(G,M/M_i) \rightarrow 0
	\]
	combined with the factorisation maps
	\begin{equation} \label{eq:factorisations}
		0 \rightarrow \HH^n(G,M)/\iota(\HH^n(G,M_i)) \rightarrow \HH^n(G,M/M_i)
	\end{equation}
	yields a short exact sequence of groups
	\begin{equation} \label{eq:lim-cohomology-factorisations}
		0 \rightarrow {\varprojlim}^1 \HH^{n-1}(G,M/M_i) \rightarrow \HH^n(G,M) \rightarrow \varprojlim \HH^n(G,M)/\iota(\HH^n(G,M_i)) \rightarrow 0,
	\end{equation}
	and an isomorphism of groups
	\begin{equation} \label{eq:lim-factorisations}
		\varprojlim \HH^n(G,M)/\iota(\HH^n(G,M_i)) \similarrightarrow \varprojlim \HH^n(G,M/M_i).
	\end{equation}
	By Proposition~\ref{proposition:linear} the topology of \(\HH^n(G,M)\) is linear generated by \(\iota(\HH^n(G,M_i))\), by Lemma~\ref{lemma:lim1-top} the topology of \(\varprojlim^1 \HH^{n-1}(G,M/M_i)\) is trivial, and we have already established that the factorisation maps~\eqref{eq:factorisations} are continuous morphisms of discrete groups, and hence, we conclude that the exact sequence~\eqref{eq:lim-cohomology-factorisations} defines a strict exact sequence of topological groups and the isomorphism~\eqref{eq:lim-factorisations} defines an isomorphism of topological groups.
\end{proof}

\begin{proposition} \label{proposition:spectral-sequence}
	Assume that \(G\) is profinite.
	Let \(H\) be a closed normal subgroup of \(G\).
	If \(M\) is separated and complete, and such that, for each \(n \in \N\), the projective system \((\HH^n(H,M/M_i))_{i \in \N}\) of discrete \(G/H\)\=/modules satisfies the Mittag-Leffler condition.
	Then there exists a convergent spectral sequence of groups
	\[
		E_2^{n,m} = \HH^n(G/H,\HH^m(H,M)) \Rightarrow \HH^{n+m}(G,M).
	\]
\end{proposition}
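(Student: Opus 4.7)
My plan is to construct the spectral sequence as the limit, along the tower of discrete quotients $(M/M_i)_{i \in \N}$, of the classical Hochschild--Serre spectral sequences for profinite cohomology with discrete coefficients.

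First, for each $i \in \N$, the subgroup $M_i$ is open in $M$, and continuity of the $G$-action on $M$ implies that every element of $M/M_i$ has an open stabiliser in $G$. The quotient $M/M_i$ is therefore a discrete topological $G$-module, and by restriction a discrete $H$-module. The classical Hochschild--Serre spectral sequence of the profinite group $G$ with closed normal subgroup $H$, acting on this discrete module, yields, functorially in $i$, a first-quadrant convergent spectral sequence
\[ E_2^{n,m}(i) = \HH^n(G/H, \HH^m(H, M/M_i)) \Longrightarrow \HH^{n+m}(G, M/M_i). \]

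Second, I would pass to the inverse limit over $i$. Applying Corollary~\ref{corollary:lim-cohomology-linear} to the profinite group $H$ acting on the complete separated module $M$, together with the Mittag--Leffler hypothesis on $(\HH^m(H, M/M_i))_i$, identifies $\HH^m(H, M)$ with $\varprojlim_i \HH^m(H, M/M_i)$ as a complete separated linearly topologised $G/H$-module. A second application of Corollary~\ref{corollary:lim-cohomology-linear}, this time to $G/H$ acting on $\HH^m(H, M)$, identifies $\HH^n(G/H, \HH^m(H, M))$ with $\varprojlim_i \HH^n(G/H, \HH^m(H, M/M_i)) = \varprojlim_i E_2^{n,m}(i)$. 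Similarly, Corollary~\ref{corollary:lim-cohomology-linear} applied to $G$ acting on $M$ identifies $\HH^{n+m}(G, M)$ with $\varprojlim_i \HH^{n+m}(G, M/M_i)$, up to a trivially topologised $\varprojlim^1$-term that will be forced to vanish.

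Finally, it remains to check that the inverse limit of the convergent spectral sequences $(E_r^{*,*}(i))_i$ is itself a convergent spectral sequence with $E_2^{n,m} = \HH^n(G/H, \HH^m(H, M))$ and abutment $\HH^{n+m}(G, M)$. The main obstacle is to propagate the Mittag--Leffler hypothesis: it is stated only on the $H$-cohomology, but I would need it also on the $G/H$-cohomology of the $H$-cohomology (to get a limit $E_2$-page free of $\varprojlim^1$-contributions) and on the $G$-cohomology (for a clean identification of the abutment). The former should follow from the fact that continuous cohomology of a profinite group with discrete coefficients is a filtered colimit over cohomology of finite quotients, which preserves Mittag--Leffler systems. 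The latter can then be obtained by descending induction along the finite Hochschild--Serre filtration on $\HH^{n+m}(G, M/M_i)$, whose graded pieces are subquotients of $E_\infty^{*,*}(i)$, themselves subquotients of $E_2^{*,*}(i)$, and hence inherit the Mittag--Leffler property.
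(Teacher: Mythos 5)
Your strategy --- take the classical Hochschild--Serre spectral sequence for each discrete quotient $M/M_i$ and pass to the inverse limit over $i$ --- is genuinely different from the paper's, and the difference is precisely where your argument breaks down. The paper never forms a limit of spectral sequences: following Jannsen, it works in the abelian category $\Mod_\delta(G)^{\N}$ of projective systems of discrete modules, where $(A_i) \mapsto (\HH^0(H,A_i))$ preserves injectives, so a single Grothendieck spectral sequence exists at the level of systems; the Mittag--Leffler hypothesis is then used only once, through Jannsen's comparison theorem, to rewrite each term as continuous cohomology of the limit module. In your approach, by contrast, the inverse limit of a tower of spectral sequences is not a spectral sequence in general: $\varprojlim$ does not commute with the passage from $E_r$ to $E_{r+1}$ (which is a kernel modulo an image), so one needs $\varprojlim^1$-vanishing for the systems of $r$-cycles and $r$-boundaries on \emph{every} page, not just control of the $E_2$-terms. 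You relegate this to a final "check", but it is the central difficulty, and your sketch does not address it.

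The two specific claims you use to propagate the Mittag--Leffler condition are also unjustified or false. First, $\HH^n(G/H,-)$ does not preserve the Mittag--Leffler property of a projective system $(A_i)_i$ of discrete coefficients: ML concerns the stabilisation of images along the index $i$, and since cohomology is not right exact, a surjection $A_j \twoheadrightarrow \Img(A_j \to A_i)$ induces no surjection on $\HH^n$; the filtered-colimit description over finite quotients of $G/H$ runs along a different index and does not help. Second, your closing assertion that subquotients of the $E_2$-page "inherit the Mittag--Leffler property" is false: sub-systems of ML systems need not be ML (e.g.\ $(p^i\Z)_i$ inside the constant system $(\Z)_i$), and $E_{r+1}$ is a subquotient, not a quotient, of $E_r$. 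Finally, the hypothesis of the proposition gives ML only for the $H$-cohomology systems, so the $\varprojlim^1$-term obstructing $\HH^{n+m}(G,M) \simeq \varprojlim_i \HH^{n+m}(G,M/M_i)$ is not "forced to vanish" by anything you have established; the paper avoids needing this identification altogether by using Jannsen's isomorphism $\HH^{n+m}(G,(M/M_i)) \simeq \HH^{n+m}(G,M)$, which only requires ML for the system $(M/M_i)$ itself (automatic, as its transition maps are surjective). To repair your argument you would essentially have to reconstruct Jannsen's framework, so I would recommend adopting it from the start.
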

\begin{proof}
	Recall~\cite[Lemma~2.6.5]{NeukirchSchmidtWingberg2008} that if \(\Pi\) is a profinite group, then the category \(\Mod_\delta(\Pi)\) of discrete \(\Pi\)\=/modules is abelian with enough injectives, which implies that the category \(\Mod_\delta(\Pi)^\N\) of projective systems indexed by \(\N\) is abelian with enough injectives~\cite[Proposition~1.1]{Jannsen1988}.
	Let \(\Ab\) be the category of abelian groups.
	Following Jannsen~\cite[\S~2]{Jannsen1988}, for each \(n \in \N\), the \(n\)\=/th derived functor associated with the left exact functor
	\[
		\begin{split}
		\Mod_\delta(\Pi)^{\N} & \rightarrow \Ab \\
		(A_i) \mapsto & \varprojlim \HH^0(\Pi,A_i)
		\end{split}
	\]
	is denote by \(\HH^n(\Pi,(A_i))\).
	Recall~\cite[Theorem~2.2]{Jannsen1988} that if \((A_i)\) satisfies the Mittag-Leffler condition and \(A = \varprojlim A_i\) denotes the topological \(\Pi\)\=/module equipped with the inverse limit topology, then there exists a canonical isomorpshism of groups
	\begin{equation} \label{eq:jannsen-continuous}
		\HH^n(\Pi,A) \simeq \HH^n(\Pi,(A_i)).
	\end{equation}

	We consider the composition of functors
	\begin{equation} \label{eq:composition-functors}
		\begin{tikzcd}[row sep=0]
			\Mod_\delta(G)^{\N} \ar{r} &  \Mod_\delta(G/H)^{\N} \ar{r} & \Ab \\
			(A_i) \ar{r} & (\HH^0(H,A_i)) & \\
			& (B_i) \ar{r} & \varprojlim \HH^0(G/H,B_i).
		\end{tikzcd}
	\end{equation}
	Since the first functor in~\eqref{eq:composition-functors} is right adjoint to the exact forgetful functor from \(\Mod_\delta(G/H)^{\N}\) to \(\Mod_\delta(G)^{\N}\), the first functor in~\eqref{eq:composition-functors} maps injective objects to injective objects.
	Thus, we may consider the Grothendieck spectral sequence associated with the composition of functors~\eqref{eq:composition-functors}, which, applied to the system \((M/M_i)\), reads
	\begin{equation} \label{eq:spectral-sequence-jannsen}
		\HH^n(G/H,(\HH^m(H,M/M_i))) \Rightarrow \HH^{n+m}(G,(M/M_i)).
	\end{equation}
	Since the system \((M/M_i)\) and the systems \((\HH^m(H,M/M_i))\) for all \(m \in \N\) satisfy the Mittag-Leffler condition, the combination of the canonical isomorphisms~\eqref{eq:jannsen-continuous} and the spectral sequence~\eqref{eq:spectral-sequence-jannsen} yields the spectral sequence
	\begin{equation} \label{eq:spectral-lim}
		\HH^n(G/H,\varprojlim \HH^m(H,M/M_i)) \Rightarrow \HH^{n+m}(G,M),
	\end{equation}
	and, by Corollary~\ref{corollary:lim-cohomology-linear}, there exists, for each \(m \in \N\), an isomorphism of topological groups
	\begin{equation} \label{eq:iso-lim}
		\HH^m(H,M) \similarrightarrow \varprojlim \HH^m(H,M/M_i).
	\end{equation}
	We conclude by combining the spectral sequence~\eqref{eq:spectral-lim} with the isomorphisms~\eqref{eq:iso-lim}.
\end{proof}

\begin{remark}
	If \(M\) is discrete, then Proposition~\ref{proposition:spectral-sequence} is the Hochschild–Serre spectral sequence~\cite[I \S~2.6 b)]{Serre1994}.
\end{remark}

\subsection{\texorpdfstring{\(p\)\=/}{p-}adic topology}
\label{sec:p-adic-topology}

A \(\Zp\)\=/module \(M\) is \emph{\(p\)\=/adically separated} (respectively \emph{\(p\)\=/adically complete}) if it is separated (respectively complete) for the \(p\)\=/adic topology, that is, if the canonical map
\begin{equation} \label{eq:completion}
	M \rightarrow \varprojlim_{i \in \N} M/p^i M
\end{equation}
is injective (respectively surjective).

\begin{remark} \label{remark:completion}
	The kernel and cokernel of the map~\eqref{eq:completion} are respectively \(\varprojlim_{i} p^i M = \bigcap_{i} p^i M\) and \({\varprojlim}^1_{i} p^i M\), where the transition maps are the inclusion maps \(p^{i+1} M \subseteq p^i M\).
	Therefore, the module \(M\) is \(p\)\=/adically separated (respectively complete) if and only if the module \(\varprojlim_{i} p^i M\) (respectively \({\varprojlim}^1_{i} p^i M\)) is trivial.
\end{remark}

We denote by \((M,p)\) (respectively by \((M[p^i])\)) the projective system associated with a \(\Zp\)\=/module \(M\), indexed by \(i \in \N\), where the module at \(i\) is \(M\) (respectively \(M[p^i]\)), and whose transition maps are multiplication by \(p\).
Recall~\cite[\S~4]{Jannsen1988} that there exists a natural long exact sequence
\begin{equation} \label{eq:various-projective-system-sequence}
	\begin{tikzcd}
		0 \ar{r} & \varprojlim M[p^i] \ar{r} & \varprojlim (M,p) \ar{r} \ar[phantom, ""{coordinate, name=Z}]{d} & \varprojlim p^i M \ar[rounded corners, to path={ -- ([xshift=2em]\tikztostart.east) |- (Z) [near end]\tikztonodes -| ([xshift=-2em]\tikztotarget.west) -- (\tikztotarget)}]{dll} & \\
		& {\varprojlim}^1 M[p^i] \ar{r} & {\varprojlim}^1 (M,p) \ar{r} & {\varprojlim}^1 p^i M \ar{r} & 0,
	\end{tikzcd}
\end{equation}
and
\begin{equation} \label{eq:divisible}
	\Img\left(\varprojlim (M,p) \rightarrow \varprojlim p^i M\right) = M_\divisible.
\end{equation}
Note that the system \((M[p^i])\) depends only on \(M_\torsion\), and the module \(\varprojlim M[p^i]\) is the \emph{\(p\)\=/adic Tate module} \(T_p(M)\) of \(M\).

Recall the following~\cite[Proposition~4.4]{Jannsen1988}.

\begin{proposition} \label{proposition:completness-projective-system}
	Let \(M\) be a \(\Zp\)\=/module.
	The following are equivalent.
	\begin{enumerate}
		\item The module \(M\) is \(p\)\=/adically complete and separated.
		\item The groups in the long exact sequence~\eqref{eq:various-projective-system-sequence} associated with \(M\) are all trivial.
		\item There exists a projective system \((M_i,f_i)_{i \in \N}\) of \(\Zp\)\=/modules in which each \(M_i\) is torsion with finite exponent, and an isomorphism \(M \simeq \varprojlim_{f_i} M_i\).
	\end{enumerate}
\end{proposition}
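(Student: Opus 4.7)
The plan is to establish $(1) \Leftrightarrow (2)$ and $(1) \Leftrightarrow (3)$ separately.

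\textbf{The equivalence $(1) \Leftrightarrow (2)$} follows from Remark~\ref{remark:completion} combined with the long exact sequence~\eqref{eq:various-projective-system-sequence}. By that remark, condition (1) is precisely the simultaneous vanishing of $\varprojlim_i p^i M$ and $\varprojlim^1_i p^i M$, so $(2) \Rightarrow (1)$ is immediate. For the converse $(1) \Rightarrow (2)$, two further vanishings are needed. First, $\varprojlim M[p^i] = T_p(M) = 0$ under $p$\=/adic separation: indeed, any compatible sequence $(x_i)$ with $p x_{i+1} = x_i$ and $p^i x_i = 0$ satisfies $x_i = p^k x_{i+k} \in \bigcap_k p^k M = 0$ for every $k$. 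Second, $\varprojlim^1(M, p) = 0$ under $p$\=/adic completeness: given $(y_i) \in \prod_i M$, the geometric series $x_i := \sum_{k \geq 0} p^k y_{i+k}$ has partial sums that are $p$\=/adically Cauchy, hence converges in $M$, and the identity $x_i - p x_{i+1} = y_i$ exhibits $\id - p$ as surjective on $\prod_i M$. Inserting these two vanishings, together with hypothesis (1), into the long exact sequence~\eqref{eq:various-projective-system-sequence} forces every one of the six terms to vanish.

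\textbf{The implication $(1) \Rightarrow (3)$} is immediate by choosing $M_i := M/p^i M$, each of exponent $p^i$, since $M \simeq \varprojlim_i M/p^i M$ is exactly condition (1).

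\textbf{The converse $(3) \Rightarrow (1)$} is the main obstacle. Suppose $M \simeq \varprojlim_{f_i} M_i$ with $p^{n_i} M_i = 0$ for each $i$. \emph{Separation} is immediate: any $x \in \bigcap_k p^k M$ projects into $\bigcap_k p^k M_i \subseteq p^{n_i} M_i = 0$ for every $i$, whence $x = 0$. For \emph{completeness}, by Remark~\ref{remark:completion} it suffices to show $\varprojlim^1_k p^k M = 0$, i.e., that any $(n_k) \in \prod_k p^k M$ admits a lift $(m_k) \in \prod_k p^k M$ with $m_k - m_{k+1} = n_k$ (where the transition maps of the system $(p^k M)_k$ are the inclusions). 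Writing $n_{k+j} = p^{k+j} u_{k+j}$ for some $u_{k+j} \in M$, the partial sums of $T_k := \sum_{j \geq 0} p^j u_{k+j}$ have eventually constant projections in each $M_i$ because $p^j M_i = 0$ once $j \geq n_i$; hence $T_k$ defines an element of $M = \varprojlim_i M_i$. Setting $m_k := p^k T_k \in p^k M$, a coordinate\-/wise check (using that contributions of indices $j$ with $k+j \geq n_i$ vanish in $M_i$) confirms $m_k = \sum_{j \geq 0} n_{k+j}$ and hence $m_k - m_{k+1} = n_k$, completing the proof.
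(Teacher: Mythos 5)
Your proof is correct. Note that the paper offers no argument of its own for this proposition: it is quoted from Jannsen (Proposition~4.4 of the cited article), so there is no in\-/text proof to compare against; your write\-/up supplies a self\-/contained and elementary verification. All the key steps check out: deducing \((2)\Leftrightarrow(1)\) from Remark~\ref{remark:completion} together with the two auxiliary vanishings \(\varprojlim M[p^i]=0\) (forced by separatedness, since \(x_i=p^kx_{i+k}\in\bigcap_k p^kM\)) and \({\varprojlim}^1(M,p)=0\) (via the geometric series \(x_i=\sum_{k\geq 0}p^ky_{i+k}\), which converges by completeness and telescopes to \(x_i-px_{i+1}=y_i\)), after which exactness of~\eqref{eq:various-projective-system-sequence} kills the remaining two terms; and, in the direction \((3)\Rightarrow(1)\), the construction \(m_k=p^kT_k\) with \(T_k=\sum_{j\geq 0}p^ju_{k+j}\), whose coordinatewise verification of \(m_k-m_{k+1}=n_k\) is exactly right because in each \(M_i\) all contributions with \(k+j\geq n_i\) vanish. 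The one point you pass over quickly is that \(T_k\) really defines an element of \(\varprojlim_i M_i\): beyond the stabilisation of the partial sums in each \(M_i\), one should observe that the stabilised values are compatible under the transition maps \(f_i\), which holds since every finite partial sum is an honest element of \(M\) and the projections commute with the \(f_i\). This is a minor elaboration, not a gap.
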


\begin{remark} \label{remark:p-adic-finite-exponent}
	If \(M\) is a torsion \(\Zp\)\=/module with finite exponent, then \(M\) is \(p\)\=/adically complete and separated.
	Indeed, for \(i\) sufficiently large, the multiplication by \(p^i\) on \(M\) is the trivial map, and thus, the canonical map~\eqref{eq:completion} is an isomorphism, and the groups in the long exact sequence~\eqref{eq:various-projective-system-sequence} associated with \(M\) are obviously all trivial.
\end{remark}

\begin{remark} \label{remark:projective-system-sequence}
	A short exact sequence of \(\Zp\)\=/modules
	\[
		0 \rightarrow M^\prime \rightarrow M \rightarrow M^\dprime \rightarrow 0
	\]
	induces a short exact sequence of projective systems
	\[
		0 \rightarrow (M^\prime,p) \rightarrow (M,p) \rightarrow (M^\dprime,p) \rightarrow 0,
	\]
	and thus, an exact sequence of \(\Zp\)\=/modules
	\[
		\begin{tikzcd}
			0 \ar{r} & \varprojlim (M^\prime,p) \ar{r} & \varprojlim (M,p) \ar{r} \ar[phantom, ""{coordinate, name=Z}]{d} & \varprojlim (M^\dprime,p) \ar[rounded corners, to path={ -- ([xshift=2em]\tikztostart.east) |- (Z) [near end]\tikztonodes -| ([xshift=-2em]\tikztotarget.west) -- (\tikztotarget)}]{dll} & \\
			& {\varprojlim}^1 (M^\prime,p) \ar{r} & {\varprojlim}^1 (M,p) \ar{r} & {\varprojlim}^1 (M^\dprime,p) \ar{r} & 0.
		\end{tikzcd}
	\]
\end{remark}

Recall~\cite[Definition~4.6]{Jannsen1988} that a \(\Zp\)\=/module \(M\) is \emph{weakly \(p\)\=/complete} (or \emph{derived \(p\)\=/complete}) if
\[
	\varprojlim (M,p) = {\varprojlim}^1 (M,p) = 0.
\]

\begin{remark} \label{remark:complete-separated-weakly-complete}
	By Proposition~\ref{proposition:completness-projective-system}, if \(M\) is a \(p\)\=/adically complete and separated \(\Zp\)\=/module, then \(M\) is weakly \(p\)\=/complete.
\end{remark}

\begin{lemma} \label{lemma:weakly-p-complete}
	Let \(M\) be a weakly \(p\)\=/complete \(\Zp\)\=/module.
	\begin{enumerate}
		\item The module \(M\) is \(p\)\=/adically complete.
		\item The module \(M\) is \(p\)\=/adically separated if and only if \({\varprojlim}^1 M[p^i]\) is trivial.
		In particular, if \(M_\torsion\) has finite exponent, then \(M\) is \(p\)\=/adically separated.
		\item The subgroup \(M_\divisible\) is trivial.
	\end{enumerate}
\end{lemma}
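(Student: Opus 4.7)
The plan is to read all three statements directly off the six\-/term long exact sequence~\eqref{eq:various-projective-system-sequence} associated with $M$, together with the identity~\eqref{eq:divisible} and Remark~\ref{remark:completion}. The weakly $p$\-/complete hypothesis means the two middle terms $\varprojlim(M,p)$ and ${\varprojlim}^1(M,p)$ of that sequence vanish, which collapses it into simpler information.

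First I would substitute these vanishings into~\eqref{eq:various-projective-system-sequence}. From the final segment $\,0 \to {\varprojlim}^1 p^i M \to 0\,$ I read off ${\varprojlim}^1 p^i M = 0$, which by Remark~\ref{remark:completion} is exactly statement~(1). The middle segment degenerates into an isomorphism
\[
    \varprojlim p^i M \similarrightarrow {\varprojlim}^1 M[p^i],
\]
so, again by Remark~\ref{remark:completion}, $M$ is $p$\-/adically separated if and only if ${\varprojlim}^1 M[p^i] = 0$, giving the equivalence in~(2). For the \enquote{in particular} clause, if $M_\torsion$ has finite exponent, then $M_\torsion$ is $p$\-/adically complete and separated by Remark~\ref{remark:p-adic-finite-exponent}, whence all terms of its associated sequence~\eqref{eq:various-projective-system-sequence} vanish by Proposition~\ref{proposition:completness-projective-system}; since $M[p^i] = M_\torsion[p^i]$ for all $i$, this yields ${\varprojlim}^1 M[p^i] = 0$ and hence the $p$\-/adic separatedness of $M$.

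Finally, statement~(3) is immediate from the identity~\eqref{eq:divisible}: the subgroup $M_\divisible$ is the image of $\varprojlim(M,p)$ in $\varprojlim p^i M$, and by hypothesis the source is zero.

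There is no substantial obstacle: the lemma is a formal consequence of~\eqref{eq:various-projective-system-sequence},~\eqref{eq:divisible}, Remark~\ref{remark:completion}, and the known behaviour of torsion modules of finite exponent (Remark~\ref{remark:p-adic-finite-exponent} and Proposition~\ref{proposition:completness-projective-system}), once the six\-/term sequence is truncated by the weakly $p$\-/complete hypothesis.
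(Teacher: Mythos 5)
Your proof is correct and follows essentially the same route as the paper's, which likewise derives statements~(1) and~(2) from the six\-/term sequence~\eqref{eq:various-projective-system-sequence} together with Remark~\ref{remark:completion} and Remark~\ref{remark:p-adic-finite-exponent}, and statement~(3) from the identity~\eqref{eq:divisible}. You have merely spelled out the diagram chase that the paper leaves implicit.
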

\begin{proof}
	The first two statements follow from the exact sequence~\eqref{eq:various-projective-system-sequence}, Remark~\ref{remark:p-adic-finite-exponent} and Remark~\ref{remark:completion}.
	The last statement follows from the identity~\eqref{eq:divisible}.
\end{proof}

Recall the following~\cite[Proposition~4.8]{Jannsen1988}.

\begin{proposition} \label{proposition:weakly-p-complete}
	The following holds.
	\begin{enumerate}
		\item If \(0 \rightarrow M^\prime \rightarrow M \rightarrow M^\dprime \rightarrow 0\) is an exact sequence of \(\Zp\)\=/modules in which two of the modules are weakly \(p\)\=/complete, then the third module is weakly \(p\)\=/complete.
		\item If \((M_i,f_i)_{i \in \N}\) is a projective system of \(\Zp\)\=/modules such that each \(M_i\) is torsion with finite exponent, then \(\varprojlim_{f_i}^1 M_i\) is weakly \(p\)\=/complete.
	\end{enumerate}
\end{proposition}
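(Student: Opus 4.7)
My plan for the first statement is to derive a six\-/term exact sequence by applying \(\varprojlim\) to the short exact sequence of projective systems
\[
0 \to (M^\prime,p) \to (M,p) \to (M^\dprime,p) \to 0
\]
that the given short exact sequence induces via component\-/wise multiplication by \(p\). This produces
\[
0 \to \varprojlim(M^\prime,p) \to \varprojlim(M,p) \to \varprojlim(M^\dprime,p) \to {\varprojlim}^1(M^\prime,p) \to {\varprojlim}^1(M,p) \to {\varprojlim}^1(M^\dprime,p) \to 0,
\]
and a short diagram chase then shows that whenever the four terms attached to two of \(M^\prime, M, M^\dprime\) all vanish, so do the remaining two.

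For the second statement, the strategy will be to exhibit \({\varprojlim}^1_{f_i} M_i\) as an iterated quotient of two weakly \(p\)\=/complete modules and invoke part~(1). First I would verify that both \(\prod_{i \in \N} M_i\) and \(\varprojlim_{f_i} M_i\) are weakly \(p\)\=/complete: the identification \(\prod_{i \in \N} M_i = \varprojlim_{k} \prod_{i \leq k} M_i\) realises the product as an inverse limit of torsion \(\Zp\)\=/modules of finite exponent (take the lcm of the exponents of \(M_0, \ldots, M_k\)), so Proposition~\ref{proposition:completness-projective-system}(3) makes it \(p\)\=/adically complete and separated, hence weakly \(p\)\=/complete by Remark~\ref{remark:complete-separated-weakly-complete}; the same proposition applied directly to \(\varprojlim_{f_i} M_i\) gives the same conclusion.

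Then I would split the canonical four\-/term exact sequence
\[
0 \to \varprojlim_{f_i} M_i \to \prod_{i \in \N} M_i \xrightarrow{\id - f} \prod_{i \in \N} M_i \to {\varprojlim}^1_{f_i} M_i \to 0
\]
at \(\Img(\id - f)\) into two short exact sequences. Part~(1) applied to the first short exact sequence forces \(\Img(\id - f)\) to be weakly \(p\)\=/complete, and part~(1) applied to the second then forces \({\varprojlim}^1_{f_i} M_i\) to be weakly \(p\)\=/complete.

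The main obstacle I expect is the absence of a uniform bound on the exponents of the \(M_i\) in the second statement; rewriting the infinite product as an inverse limit of torsion modules of finite exponent is the manoeuvre that makes everything go through, after which the six\-/term sequence of part~(1) handles all the remaining bookkeeping.
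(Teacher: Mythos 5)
Your proposal is correct and follows essentially the same route as the paper: part (1) via the six\-/term \(\varprojlim\)--\({\varprojlim}^1\) sequence attached to \(0 \to (M^\prime,p) \to (M,p) \to (M^\dprime,p) \to 0\), and part (2) by splitting the four\-/term sequence \(0 \to \varprojlim_{f_i} M_i \to \prod_i M_i \to \prod_i M_i \to {\varprojlim}^1_{f_i} M_i \to 0\) and applying part (1), using that \(\varprojlim_{f_i} M_i\) and \(\prod_i M_i\) are \(p\)\=/adically complete and separated. Your explicit identification \(\prod_i M_i = \varprojlim_k \prod_{i \leq k} M_i\) just spells out a detail the paper leaves implicit when invoking Proposition~\ref{proposition:completness-projective-system}.
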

\begin{proof}
	The first statement follows from Remark~\ref{remark:projective-system-sequence}.
	The second statement follows from the first one applied to the exact sequence
	\[
		0 \rightarrow \varprojlim_{f_i} M_i \rightarrow \prod_{i \in \N} M_i \rightarrow \prod_{i \in \N} M_i \rightarrow {\varprojlim_{f_i}}^1 M_i \rightarrow 0
	\]
	since \(\varprojlim_{f_i} M_i\) and \(\prod_{i \in \N} M_i\) are \(p\)\=/adically separated and complete by Proposition~\ref{proposition:completness-projective-system}.
\end{proof}

\begin{proposition} \label{proposition:p-adic-topology-torsion}
	Let \(f: M \rightarrow N\) be a morphism of \(\Zp\)\=/modules such that \(\Ker(f)\) and \(\Coker(f)\) are torsion with finite exponent.
	Then, the following holds.
	\begin{enumerate}
		\item \label{proposition:p-adic-topology-torsion:1} The torsion subgroup \(M_\torsion\) has finite exponent if and only if the torsion subgroup \(N_\torsion\) has finite exponent.
		\item The map \(f\) induces isomorphisms
		\[
			\begin{split}
				\varprojlim (M,p) & \similarrightarrow  \varprojlim (N,p), \\
				{\varprojlim}^1 (M,p) & \similarrightarrow  {\varprojlim}^1 (N,p).
			\end{split}
		\]
		\item \label{proposition:p-adic-topology-torsion:3} The module \(M\) is weakly \(p\)\=/complete if and only if the module \(N\) is weakly \(p\)\=/complete.
		\item If either \(M\) or \(N\) is weakly \(p\)\=/complete, and either \(M_\torsion\) or \(N_\torsion\) has finite exponent, then \(M\) and \(N\) are both \(p\)\=/adically complete and separated.
		\item If \(\Ker(f)\) is trivial, then the map \(f\) induces an isomorphism of the long exact sequences~\eqref{eq:various-projective-system-sequence} associated with \(M\) and \(N\) respectively.
		\item If \(\Ker(f)\) is trivial, then the module \(M\) is \(p\)\=/adically complete and separated if and only if the module \(N\) is \(p\)\=/adically complete and separated.
		\item If \(\Ker(f)\) is trivial, then the map \(f\) induces an isomorphism \(M_\divisible \similarrightarrow N_\divisible\).
	\end{enumerate}
\end{proposition}
\begin{proof}
	Since \(\Ker(f)\) is torsion, we deduce from the snake lemma applied to the commutative diagram
	\[
		\begin{tikzcd}
			0 \ar{r} & M_\torsion \ar{r} \ar{d} & M \ar{r} \ar{d}{f} & M/M_\torsion \ar{r} \ar{d} & 0 \\
			0 \ar{r} & N_\torsion \ar{r} & N \ar{r} & N/N_\torsion \ar{r} & 0
		\end{tikzcd}
	\]
	that the map \(f\) induces an exact sequence
	\begin{equation} \label{eq:es-torsion}
		0 \rightarrow \Ker(f) \rightarrow M_\torsion \rightarrow N_\torsion \rightarrow C \rightarrow 0,
	\end{equation}
	where \(C\) is a subgroup of \(\Coker(f)\), and in particular, the module \(C\) has finite exponent.
	Therefore, the exact sequence~\eqref{eq:es-torsion} implies that \(M_\torsion\) has finite exponent if and only if \(N_\torsion\) has finite exponent.

	By Proposition~\ref{proposition:completness-projective-system} and Remark~\ref{remark:p-adic-finite-exponent} applied to \(\Ker(f)\) and \(\Coker(f)\), and by Remark~\ref{remark:projective-system-sequence}, it holds
	\[
		\varprojlim (\Ker(f),p) = {\varprojlim}^1 (\Ker(f),p) = \varprojlim (\Coker(f),p) = {\varprojlim}^1 (\Coker(f),p) = 0,
	\]
	and the map \(f\) induces isomorphisms
	\begin{equation} \label{eq:iso-lim-proj}
		\begin{split}
			\varprojlim (M,p) & \similarrightarrow  \varprojlim (N,p), \\
			{\varprojlim}^1 (M,p) & \similarrightarrow  {\varprojlim}^1 (N,p).
		\end{split}
	\end{equation}
	The isomorphisms~\eqref{eq:iso-lim-proj} implies that the module \(M\) is weakly \(p\)\=/complete if and only if the module \(N\) is weakly \(p\)\=/complete.
	Moreover, if either \(M\) or \(N\) is weakly \(p\)\=/complete, and either \(M_\torsion\) or \(N_\torsion\) has finite exponent, then both \(M\) and \(N\) are weakly \(p\)\=/complete and both \(M_\torsion\) or \(N_\torsion\) have finite exponent by the statements~\ref{proposition:p-adic-topology-torsion:1} and~\ref{proposition:p-adic-topology-torsion:3} we have just proved, and by Lemma~\ref{lemma:weakly-p-complete}, both \(M\) and \(N\) are \(p\)\=/adically complete and separated.

	We now assume that \(\Ker(f)\) is trivial.
	For each \(i \in \N\), the snake lemma applied to the commutative diagram
	\[
		\begin{tikzcd}
			0 \ar{r} & M \ar{r} \ar{d}{p^i} & N \ar{r} \ar{d}{p^i} & \Coker(f) \ar{d}{p^i} \ar{r} & 0 \\
			0 \ar{r} & M \ar{r} & N \ar{r} & \Coker(f) \ar{r} & 0
		\end{tikzcd}
	\]
	yields an exact sequence
	\[
		0 \rightarrow M[p^i] \rightarrow N[p^i] \rightarrow \Coker(f)[p^i].
	\]
	In particular, there is a short exact sequence of projective systems
	\begin{equation} \label{eq:ses-projective-systems-torsion}
		0 \rightarrow (M[p^i],p) \rightarrow (N[p^i],p) \rightarrow (N[p^i]/M[p^i],p) \rightarrow 0,
	\end{equation}
	where \(N[p^i]/M[p^i] \subseteq \Coker(f)\) is of finite exponent bounded independently of~\(i\).
	The bound on the finite exponents of \((N[p^i]/M[p^i])_{i \in \N}\) implies that
	\[
		\varprojlim N[p^i]/M[p^i] = {\varprojlim}^1 N[p^i]/M[p^i] =0,
	\]
	and the short exact sequence~\eqref{eq:ses-projective-systems-torsion} induces isomorphisms
	\begin{equation} \label{eq:iso-lim-proj-torsion}
		\begin{split}
			\varprojlim M[p^i] & \similarrightarrow  \varprojlim N[p^i], \\
			{\varprojlim}^1 M[p^i] & \similarrightarrow  {\varprojlim}^1 N[p^i].
		\end{split}
	\end{equation}

	The isomorphisms~\eqref{eq:iso-lim-proj} and~\eqref{eq:iso-lim-proj-torsion} and the five lemma imply that \(f\) induces an isomorphism of the long exact sequences~\eqref{eq:various-projective-system-sequence} associated with \(M\) and \(N\) respectively.
	In particular, the module \(M\) is \(p\)\=/adically complete and separated if and only if the module \(N\) is \(p\)\=/adically complete and separated by Proposition~\ref{proposition:completness-projective-system}, and the identity~\eqref{eq:divisible} implies that \(f\) induces an isomorphism between \(M_\divisible\) and \(N_\divisible\).
\end{proof}

Let \(G\) be a compact and separated topological group.
Let \(M\) be a \(\Zp\)\=/module endowed with the \(p\)\=/adic topology, and equipped with a continuous and \(\Zp\)\=/linear action of \(G\).

\begin{proposition} \label{proposition:cohomology-zp-modules}
	Let \(n \in \N\).
	\begin{enumerate}
		\item If \(M\) is torsion\-/free, then the topology of \(\HH^n(G,M)\) is the \(p\)\=/adic topology.
		\item If \(M\) is \(p\)\=/adically complete and separated, then \(\HH^n(G,M)\) is weakly \(p\)\=/complete.
	\end{enumerate}
\end{proposition}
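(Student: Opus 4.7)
The plan is to combine the description of the topology on continuous cohomology provided by Proposition~\ref{proposition:linear} with the observation that the \(p\)-adic topology on \(M\) is the linear topology generated by the \(G\)-submodules \(\{p^iM\}_{i\in\N}\). Both parts then reduce to tracking what these general statements say when specialised to \(M_i = p^iM\).

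For part (1), I would apply Proposition~\ref{proposition:linear} with \(M_i = p^iM\), yielding that the topology on \(\HH^n(G,M)\) is linear and generated by the subgroups \(\Img\!\left(\HH^n(G, p^iM) \to \HH^n(G, M)\right)\). The key point is then to identify this image with \(p^i \HH^n(G, M)\). Since \(M\) is torsion\-/free, multiplication by \(p^i\) is an injection \(M \hookrightarrow M\) with image \(p^iM\); a direct check shows that, when \(p^iM\) is given the subspace topology from \(M\), this map is an isomorphism of topological \(G\)\=/modules \(M \xrightarrow{\sim} p^iM\) (the subspace topology and the \(p\)\=/adic topology on \(p^iM\) share the common fundamental system of open neighbourhoods \(\{p^kM\}_{k\geq i}\)). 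Factoring the multiplication\-/by\-/\(p^i\) endomorphism of \(M\) as \(M \xrightarrow{\sim} p^iM \hookrightarrow M\) and applying the functor \(\HH^n(G,-)\), one obtains \(\Img\!\left(\HH^n(G, p^iM) \to \HH^n(G, M)\right) = p^i\HH^n(G, M)\), so that the topology of \(\HH^n(G, M)\) is exactly the \(p\)\=/adic topology.

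For part (2), I would apply Corollary~\ref{corollary:lim-cohomology-linear} to the linear topology on \(M\) generated by \(\{p^iM\}_{i\in\N}\), which is legitimate because \(M\) is \(p\)\=/adically separated and complete. This yields a strict exact sequence of topological groups
\[
	0 \to {\varprojlim}^1 \HH^{n-1}(G, M/p^iM) \to \HH^n(G, M) \to \varprojlim \HH^n(G, M/p^iM) \to 0.
\]
Since \(p^iM\) is open in \(M\), the quotient \(M/p^iM\) is a discrete \(G\)\=/module annihilated by \(p^i\), so by functoriality each \(\HH^m(G, M/p^iM)\) is also annihilated by \(p^i\), hence a torsion \(\Zp\)\=/module of finite exponent. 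The right\-/hand term is then \(p\)\=/adically complete and separated by Proposition~\ref{proposition:completness-projective-system}(3 \(\Rightarrow\) 1), hence weakly \(p\)\=/complete by Remark~\ref{remark:complete-separated-weakly-complete}, while the left\-/hand term is weakly \(p\)\=/complete by Proposition~\ref{proposition:weakly-p-complete}(2). Applying Proposition~\ref{proposition:weakly-p-complete}(1) to the sequence above then gives the weak \(p\)\=/completeness of \(\HH^n(G, M)\).

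The delicate step is the identification in part (1) of \(\Img\!\left(\HH^n(G, p^iM) \to \HH^n(G, M)\right)\) with \(p^i\HH^n(G,M)\): it rests on comparing the subspace topology on \(p^iM\) with its intrinsic \(p\)\=/adic topology and on the torsion\-/freeness of \(M\) (without which multiplication by \(p^i\) fails to be an isomorphism of \(\Zp\)\=/modules, and the image can differ from \(p^i\HH^n(G,M)\)). Part (2), by contrast, is essentially a formal consequence of the results already assembled.
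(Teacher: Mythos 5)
Your proof is correct and follows essentially the same route as the paper: part (1) identifies \(\Img(\HH^n(G,p^iM)\to\HH^n(G,M))\) with \(p^i\HH^n(G,M)\) via the injectivity of multiplication by \(p^i\) on the torsion\-/free module \(M\) and then invokes Proposition~\ref{proposition:linear}, and part (2) runs the \(\varprojlim\)/\(\varprojlim^1\) exact sequence for \((M/p^iM)\) through Propositions~\ref{proposition:completness-projective-system} and~\ref{proposition:weakly-p-complete} exactly as the paper does. The only cosmetic differences are that you spell out the comparison of the subspace and intrinsic \(p\)\=/adic topologies on \(p^iM\) (which the paper leaves implicit behind the short exact sequence \(0\to M\xrightarrow{p^i}M\to M/p^iM\to 0\)) and that you cite Corollary~\ref{corollary:lim-cohomology-linear} where the paper cites Proposition~\ref{proposition:lim-cohomology} directly.
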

\begin{proof}
	If \(M\) is torsion\-/free, then, for each \(i \in \N\), the short exact sequence
	\[
		0 \rightarrow M \xrightarrow{p^i} M \rightarrow M/p^i M \rightarrow 0
	\]
	implies that
	\[
		\Img\left(\HH^n(G,p^i M) \rightarrow \HH^n(G,M)\right) = p^i \HH^n(G,M),
	\]
	and thus, the topology of \(\HH^n(G,M)\) is the \(p\)\=/adic topology by Proposition~\ref{proposition:linear}.

	By the second point of Proposition~\ref{proposition:weakly-p-complete}, the module \(\varprojlim^1 \HH^{n-1}(G,M/p^i M)\) is weakly \(p\)\=/complete.
	By Remark~\ref{remark:complete-separated-weakly-complete} and Proposition~\ref{proposition:completness-projective-system}, the module \(\varprojlim \HH^n(G,M/p^i M)\) is \(p\)\=/adically complete and separated, and thus, weakly \(p\)\=/complete.
	The second statement then follows from the combination of Proposition~\ref{proposition:lim-cohomology} and the first point of Proposition~\ref{proposition:weakly-p-complete}.
\end{proof}

\begin{proposition} \label{proposition:cohomological-dimension}
	Let \(N \in \N\).
	Assume that \(G\) is profinite with \(p\)\=/cohomological dimension \(\leq N\).
	\begin{enumerate}
		\item If \(M\) is \(p\)\=/adically complete and separated, then, for each \(n > N\), the group \(\HH^n(G,M)\) is trivial.
		\item If \(M\) is divisible, then the group \(\HH^N(G,M)\) is divisible.
	\end{enumerate}
\end{proposition}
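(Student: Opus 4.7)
The plan is to derive both statements from the exact sequence provided by Corollary~\ref{corollary:lim-cohomology-linear}, combined with the vanishing of higher Galois cohomology of discrete $p$-primary torsion modules furnished by the hypothesis on the cohomological dimension.

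For part~(1), I would apply Corollary~\ref{corollary:lim-cohomology-linear} with the fundamental system of open submodules $(p^i M)_{i \in \N}$ to obtain, for each $n \in \N$, a strict exact sequence
\[
    0 \to {\varprojlim}^1 \HH^{n-1}(G, M/p^i M) \to \HH^n(G, M) \to \varprojlim \HH^n(G, M/p^i M) \to 0.
\]
Each $M/p^i M$ is a discrete $p$-primary torsion $G$-module, so $\HH^n(G, M/p^i M) = 0$ for $n > N$ by the hypothesis on the cohomological dimension. This immediately gives $\HH^n(G, M) = 0$ for $n \geq N + 2$. The delicate case is $n = N + 1$, where I still need to rule out ${\varprojlim}^1 \HH^N(G, M/p^i M)$. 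I would establish this via the Mittag-Leffler condition: the short exact sequence $0 \to p^i M / p^{i+1} M \to M/p^{i+1} M \to M/p^i M \to 0$ of $p$-torsion discrete $G$-modules yields a long exact sequence whose next term $\HH^{N+1}(G, p^i M/p^{i+1} M)$ vanishes, so the transition maps $\HH^N(G, M/p^{i+1} M) \to \HH^N(G, M/p^i M)$ are surjective and the $\varprojlim^1$ vanishes.

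For part~(2), the divisibility of $M$ means multiplication by $p$ is surjective, so there is a short exact sequence $0 \to M[p] \to M \xrightarrow{p} M \to 0$. This induces a long exact sequence in continuous cohomology (a set-theoretic section of $\cdot p$ exists and is automatically continuous for the relevant topologies), and in particular the fragment
\[
    \HH^N(G, M) \xrightarrow{p} \HH^N(G, M) \to \HH^{N+1}(G, M[p]).
\]
Since $M[p]$ is annihilated by $p$, it is trivially $p$-adically complete and separated, so applying part~(1) to $M[p]$ yields $\HH^{N+1}(G, M[p]) = 0$. Hence multiplication by $p$ is surjective on $\HH^N(G, M)$, which is the desired divisibility.

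The main obstacle is the case $n = N + 1$ of part~(1): unlike the higher degrees, it is not handled simply by dimension shifting, and the required Mittag-Leffler condition must be extracted from the cohomological dimension hypothesis applied to the associated graded pieces. A secondary subtlety in part~(2) is ensuring that the short exact sequence $0 \to M[p] \to M \xrightarrow{p} M \to 0$ satisfies the topological hypotheses needed to invoke the long exact sequence of continuous cohomology, and that $M[p]$ in turn satisfies the hypotheses required to apply part~(1).
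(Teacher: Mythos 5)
Your proposal is correct and follows essentially the same route as the paper: part (1) via the $\varprojlim$/$\varprojlim^1$ exact sequence together with the Mittag--Leffler condition for $(\HH^N(G,M/p^iM))_i$ extracted from the vanishing of $\HH^{N+1}(G,p^iM/p^{i+1}M)$, and part (2) via the multiplication-by-$p$ sequence and the vanishing of $\HH^{N+1}(G,M[p])$. The only cosmetic difference is that you justify the latter vanishing by invoking part (1) for $M[p]$, whereas the paper appeals directly to the cohomological dimension hypothesis; these amount to the same thing.
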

\begin{proof}
	By cohomological dimension, for each \(n > N\) and each \(i > 0\), the group \(\HH^n(G,M/p^i M)\) is trivial.
	Moreover, for each \(i \in \N\), the short exact sequence
	\[
		0 \rightarrow p^i M/p^{i+1} M \rightarrow M/p^{i+1} M \rightarrow M/p^i M \rightarrow 0
	\]
	induces an exact sequence
	\[
		\HH^N(G, M/p^{i+1} M) \rightarrow \HH^N(G,M/p^i M) \rightarrow \HH^{N+1}(G,p^i M/p^{i+1} M),
	\]
	and the group \(\HH^{N+1}(G,p^i M/p^{i+1} M)\) is trivial by cohomological dimension, which implies that the system \((\HH^N(G,M/p^i M))\) satisfies the Mittag-Leffler condition, and thus, the group \(\varprojlim^1  \HH^N(G,M/p^i M)\) is trivial.
	Therefore, by Proposition~\ref{proposition:lim-cohomology}, if \(M\) is \(p\)\=/adically complete and separated, then, for each \(n > N\), the group \(\HH^n(G,M)\) is trivial.

	If \(M\) is divisible, then the short exact sequence
	\[
		0 \rightarrow M[p] \rightarrow M \xrightarrow{p} M \rightarrow 0
	\]
	induces an exact sequence
	\[
		\HH^N(G,M) \xrightarrow{p} \HH^N(G,M) \rightarrow \HH^{N+1}(G,M[p]),
	\]
	and the group \(\HH^{N+1}(G,M[p])\) is trivial by cohomological dimension.
\end{proof}

\subsection{\texorpdfstring{\(p\)\=/}{p-}adic Banach representations}
\label{subsec:p-adic-banach-representations}

We review some results concerning topological \(\Qp\)\=/vector spaces (see for instance~\cite{Schneider2002}).

We will consider Banach spaces over \(\Qp\) up to norm equivalence, and thus, we set the following notation.
A \emph{\(p\)\=/adic Banach space} is a topological \(\Qp\)\=/vector space whose topology is the one of a Banach space over \(\Qp\).

Let \(U\) be a \(\Qp\)\=/vector space, and let \(\Uc\) be a \(\Zp\)\=/submodule of \(U\) which spans \(U\) over \(\Qp\), that is, it holds \(\Qp \otimes_{\Zp} \Uc \similarrightarrow U\).
If \(U\) is equipped with the locally convex topology defined by \(\Uc\), that is, the linear topology generated by the subgroups \(\{p^i \Uc\}_{i \in \N}\), then \(U\) is a \(p\)\=/adic Banach space if and only if \(\Uc\) is \(p\)\=/adically complete and separated.
Moreover, if \(\Uc^\prime\) is another \(\Zp\)\=/submodule of \(U\) which spans \(U\) over \(\Qp\), then \(\Uc\) and \(\Uc^\prime\) define the same locally convex topology on \(U\) if and only if \(\Uc\) and \(\Uc^\prime\) are commensurable, that is,  there exists \(s \in \N\) such that \(p^s \Uc \subseteq \Uc^\prime \subseteq p^{-s} \Uc\).

A \emph{lattice} in a \(p\)\=/adic Banach space \(X\) is an open \(\Zp\)\=/submodule \(\Xc\) in \(X\) which spans \(X\) over \(\Qp\) and which is separated and complete for the \(p\)\=/adic topology.
Note that the topology induced by a \(p\)\=/adic Banach space \(X\) on a lattice \(\Xc\) is the \(p\)\=/adic topology, and the topology of \(X\) coincides with the locally convex topology defined by \(\Xc\).

\begin{lemma} \label{lemma:banach-dense-subspace}
	Let \(X\) be a \(p\)\=/adic Banach space, and let \(\Xc\) be a lattice in \(X\).
	Let \(\pi: X \rightarrow X/\Xc\) be the quotient map.
	Let \(Y\) be a \(\Qp\)\=/vector subspace of \(X\).
	The subspace \(Y\) is dense in \(X\) if and only if \(\pi(Y) = X/\Xc\).
\end{lemma}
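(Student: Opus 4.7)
The plan is to unwind the definition of the topology on \(X\): since \(\Xc\) is a lattice in \(X\), the topology of \(X\) coincides with the locally convex topology defined by \(\Xc\), so \(\{p^n \Xc\}_{n \in \N}\) forms a fundamental system of open neighbourhoods of \(0\). Both directions of the equivalence should follow from short, direct arguments organised around this description.

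For the forward implication, I would use only the openness of \(\Xc\). Given an arbitrary \(x \in X\), the translate \(x + \Xc\) is an open neighbourhood of \(x\), so density of \(Y\) yields some \(y \in Y\) with \(y - x \in \Xc\), that is, \(\pi(y) = \pi(x)\); letting \(x\) vary produces \(\pi(Y) = X/\Xc\).

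For the reverse implication, the key idea is to exploit the \(\Qp\)\=/linearity of \(Y\) in order to rescale. Suppose \(\pi(Y) = X/\Xc\); given \(x \in X\) and \(n \in \N\), I need to find \(y \in Y\) with \(x - y \in p^n \Xc\). Applying the hypothesis to the scaled element \(p^{-n} x \in X\) yields \(y^\prime \in Y\) satisfying \(p^{-n} x - y^\prime \in \Xc\); multiplying by \(p^n\) gives \(x - p^n y^\prime \in p^n \Xc\), and \(p^n y^\prime \in Y\) by \(\Qp\)\=/linearity. Since \(x\) and \(n\) were arbitrary and the subgroups \(p^n \Xc\) form a fundamental system of neighbourhoods of \(0\), this shows that \(Y\) meets every basic open neighbourhood of every point, hence \(Y\) is dense in \(X\).

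There is no substantial obstacle in this argument; the only observation beyond unwinding definitions is that the \(\Qp\)\=/linearity of \(Y\), combined with the description \(X = \bigcup_{n \in \N} p^{-n} \Xc\) coming from \(\Qp \otimes_{\Zp} \Xc \similarrightarrow X\), allows one to upgrade a congruence modulo \(\Xc\) to a congruence modulo \(p^n \Xc\) for any \(n \in \N\).
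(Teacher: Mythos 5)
Your proof is correct and follows essentially the same route as the paper's: the key step in both is the identity \(Y + p^n \Xc = p^n(Y + \Xc) = p^n X = X\), obtained by rescaling with the \(\Qp\)\=/linearity of \(Y\). The only difference is in the final bookkeeping: the paper certifies density via the inverse\-/limit descriptions \(X \simeq \varprojlim_n X/p^n\Xc\) and \(\overline{Y} \simeq \varprojlim_n (Y + p^n \Xc)/p^n\Xc\), whereas you argue directly that every basic neighbourhood \(x + p^n \Xc\) meets \(Y\), which is slightly more elementary and equally valid.
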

\begin{proof}
	The quotient map \(\pi\) is surjective and continuous, and the topological group \(X/\Xc\) is discrete.
	Therefore, if \(Y\) is dense in \(X\), then, by~\cite[Théorème~1 b)]{Bourbaki1971:TG:1-4}, it holds
	\[
		X/\Xc = \pi(X)=\pi(\overline{Y}) \subseteq \overline{\pi(Y)} = \pi(Y).
	\]

	Conversely, if \(\pi(Y) = X/\Xc\), then \(Y + \Xc = X\).
	Moreover, for each \(n \in \Z\), since \(p^n Y = Y\), it holds
	\begin{equation} \label{eq:y-lattice}
		Y + p^n \Xc = p^n Y + p^n \Xc = p^n(Y + \Xc) = p^n X = X.
	\end{equation}
	Finally, recall~\cite[I \S~7]{Schneider2002} that there are isomorphisms
	\begin{align}
		X & \simeq \varprojlim_n X/p^n\Xc, \label{eq:x-complete} \\
		\overline{Y} & \simeq \varprojlim_n (Y + p^n \Xc)/p^n\Xc. \label{eq:y-complete}
	\end{align}
	The combination of the equations~\eqref{eq:y-lattice},~\eqref{eq:x-complete} and~\eqref{eq:y-complete} yields
	\[
		\overline{Y} \simeq \varprojlim_n (Y + p^n \Xc)/p^n\Xc = \varprojlim_n X/p^n\Xc \simeq X.
	\]
\end{proof}

Let \(G\) be a compact and separated topological group.
A \emph{\(p\)\=/adic Banach representation of \(G\)} is a \(p\)\=/adic Banach space equipped with a continuous and \(\Qp\)\=/linear action of \(G\).
A \emph{\(G\)\=/stable lattice} in a \(p\)\=/adic Banach representation \(X\) of \(G\) is a lattice in \(X\) stable under the action of \(G\).

\begin{remark}
	Since \(G\) is compact, if \(X\) is a \(p\)\=/adic Banach representation of \(G\), then there exists a \(G\)\=/stable lattice in \(X\) (see~\cite[\S~6.5]{Emerton2017}).
\end{remark}

Let \(X\) be a \(p\)\=/adic Banach representation of \(G\), and let \(\Xc\) be a \(G\)\=/stable lattice in \(X\).
Let \(n \in \N\).
We study the topological abelian groups \(\HH^n(G,X)\) and \(\HH^n(G,\Xc)\).

Proposition~\ref{proposition:cohomology-zp-modules} yields the following.

\begin{proposition} \label{proposition:cohomology-lattice}
	The topology of \(\HH^n(G,\Xc)\) is the \(p\)\=/adic topology.
	Moreover, the \(\Zp\)\=/module \(\HH^n(G,\Xc)\) is weakly \(p\)\=/complete.
\end{proposition}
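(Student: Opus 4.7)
The plan is to reduce the proposition to a direct invocation of Proposition~\ref{proposition:cohomology-zp-modules}, verifying that both hypotheses of that proposition (torsion\-/freeness and \(p\)\=/adic completeness\-/separatedness) are built into the definition of a \(G\)\=/stable lattice in a \(p\)\=/adic Banach representation.

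First I would observe that \(\Xc\) is, by definition, an open \(\Zp\)\=/submodule of the \(\Qp\)\=/vector space \(X\) which spans \(X\) over \(\Qp\); in particular \(\Xc\) sits inside a \(\Qp\)\=/vector space and is therefore torsion\-/free as a \(\Zp\)\=/module. Moreover, the topology induced on \(\Xc\) by the Banach topology of \(X\) coincides with the \(p\)\=/adic topology of \(\Xc\) (as recalled in the paragraph preceding the proposition), so \(\Xc\) is a topological \(G\)\=/module endowed with the \(p\)\=/adic topology in the sense required by \S~\ref{sec:p-adic-topology}. The action of \(G\) on \(\Xc\) is continuous and \(\Zp\)\=/linear because it is the restriction of a continuous \(\Qp\)\=/linear action on \(X\), and \(G\) is compact and separated by hypothesis, so the framework of Proposition~\ref{proposition:cohomology-zp-modules} applies.

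From here the two assertions follow at once. For the first, torsion\-/freeness of \(\Xc\) puts us in case (1) of Proposition~\ref{proposition:cohomology-zp-modules}, giving that the compact\-/open topology on \(\HH^n(G,\Xc)\) coincides with the \(p\)\=/adic topology. For the second, the definition of a lattice in \(X\) requires \(\Xc\) to be separated and complete for the \(p\)\=/adic topology, which is exactly the hypothesis of case (2) of Proposition~\ref{proposition:cohomology-zp-modules}, and we conclude that \(\HH^n(G,\Xc)\) is weakly \(p\)\=/complete.

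There is no real obstacle here: the proposition is essentially a bookkeeping corollary of Proposition~\ref{proposition:cohomology-zp-modules}, and the only point worth making explicit is the identification of the subspace topology on the lattice \(\Xc \subseteq X\) with its intrinsic \(p\)\=/adic topology, which is precisely what allows one to feed \(\Xc\) into the machinery of \S~\ref{sec:p-adic-topology} rather than of \S~\ref{subsec:the-compact-open-topology} applied to \(X\) itself.
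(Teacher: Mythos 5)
Your proof is correct and matches the paper's: the paper derives this proposition directly from Proposition~\ref{proposition:cohomology-zp-modules}, exactly as you do, with the hypotheses (torsion\-/freeness, \(p\)\=/adic completeness and separatedness, and the identification of the subspace topology on \(\Xc\) with its \(p\)\=/adic topology) left implicit where you spell them out. No issues.
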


The strict exact sequence of topological \(G\)\=/modules
\[
	0 \rightarrow \Xc \rightarrow X \rightarrow X/\Xc \rightarrow 0,
\]
induces an exact sequence of topological groups
\begin{equation} \label{eq:torsion-divisible}
	\HH^n(G,\Xc) \rightarrow \HH^n(G,X) \rightarrow \HH^n(G,X/\Xc)  \rightarrow \HH^{n+1}(G,\Xc),
\end{equation}
and
\begin{itemize}
	\item the topological group \(\HH^n(G,X/\Xc)\) is discrete and torsion,
	\item the image of \(\HH^n(G,X/\Xc)\) in \(\HH^{n+1}(G,\Xc)\) is the torsion subgroup \(\HH^{n+1}(G,\Xc)_\torsion\) of \(\HH^{n+1}(G,\Xc)\),
	\item the group \(\HH^n(G,X)\) is a \(\Qp\)\=/vector space, and the image of \(\HH^n(G,\Xc)\) in \(\HH^n(G,X)\) spans \(\HH^n(G,X)\) over \(\Qp\),
	\item the image of \(\HH^n(G,X)\) in \(\HH^n(G,X/\Xc)\) is the divisible subgroup \(\HH^n(G,X/\Xc)_\divisible\) of \(\HH^n(G,X/\Xc)\).
	Indeed, on the one hand, since \(\HH^n(G,X)\) is divisible, its image in \(\HH^n(G,X/\Xc)\) is divisible.
	On the other hand, the module \(\HH^n(G,\Xc)\) is weakly \(p\)\=/complete by Proposition~\ref{proposition:cohomology-lattice}, which implies that \(\HH^n(G,\Xc)_\divisible\) is trivial by Lemma~\ref{lemma:weakly-p-complete}.
\end{itemize}

\begin{proposition} \label{proposition:locally-convex}
	The topological group \(\HH^n(G,X)\) is a topological \(\Qp\)\=/vector
space whose topology is the locally convex topology defined by the image of
\(\HH^n(G,\Xc)\) in \(\HH^n(G,X)\).
\end{proposition}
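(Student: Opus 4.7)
My plan is to apply Proposition~\ref{proposition:linear} to \(X\), viewed as a topological \(G\)\=/module whose topology is the linear topology generated by the \(G\)\=/stable open subgroups \(\{p^i \Xc\}_{i \in \N}\), which form a fundamental system of open neighbourhoods of the identity of \(X\). Since \(G\) is compact and separated, the proposition asserts that the topology of \(\HH^n(G,X)\) is linear and generated by the subgroups \(\Img\bigl(\HH^n(G, p^i \Xc) \rightarrow \HH^n(G,X)\bigr)\) for \(i \in \N\).

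Next I would identify each such image as \(p^i \iota(\HH^n(G,\Xc))\), where \(\iota : \HH^n(G,\Xc) \rightarrow \HH^n(G,X)\) denotes the canonical map. Since \(\Xc\) is torsion\=/free, multiplication by \(p^i\) is an isomorphism of topological \(G\)\=/modules from \(\Xc\) onto \(p^i \Xc\): both sides carry the \(p\)\=/adic topology, and on \(p^i \Xc\) this coincides with the subspace topology inherited from \(X\), because \(p^k\Xc \cap p^i\Xc = p^{\max(k,i)}\Xc\). Composing the induced isomorphism \(\HH^n(G,\Xc) \similarrightarrow \HH^n(G, p^i \Xc)\) with the morphism \(\HH^n(G, p^i \Xc) \rightarrow \HH^n(G,X)\) induced by inclusion recovers \(p^i \iota\), so the image in \(\HH^n(G,X)\) is indeed \(p^i \iota(\HH^n(G,\Xc))\).

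Combining these two steps, the topology on \(\HH^n(G,X)\) is the linear topology generated by the subgroups \(\{p^i \iota(\HH^n(G,\Xc))\}_{i \in \N}\), which is precisely the locally convex topology defined by the \(\Zp\)\=/submodule \(\iota(\HH^n(G,\Xc))\) of \(\HH^n(G,X)\), in the sense recalled at the beginning of \S\ref{subsec:p-adic-banach-representations}. Since \(\iota(\HH^n(G,\Xc))\) spans \(\HH^n(G,X)\) over \(\Qp\) (noted in the bullet points preceding the statement), this locally convex topology is a \(\Qp\)\=/vector space topology, completing the argument. The only point requiring mild care is the identification of the map on cohomology induced by multiplication by \(p^i\) on \(\Xc\) with \(p^i \iota\); this is a routine unravelling of the definition of the pushforward on continuous cochains together with \(\Zp\)\=/linearity, and I do not expect any substantial obstacle.
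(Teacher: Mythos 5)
Your proposal is correct and follows essentially the same route as the paper: both reduce to Proposition~\ref{proposition:linear} applied to the linear topology on \(X\) generated by \(\{p^i\Xc\}_{i\in\N}\), and both identify \(\Img\bigl(\HH^n(G,p^i\Xc)\rightarrow\HH^n(G,X)\bigr)\) with \(p^i\,\Img\bigl(\HH^n(G,\Xc)\rightarrow\HH^n(G,X)\bigr)\) — the paper via the short exact sequence \(0\rightarrow\Xc\xrightarrow{p^i}X\rightarrow X/p^i\Xc\rightarrow 0\), you by factoring that same map through the isomorphism \(\Xc\similarrightarrow p^i\Xc\). The difference is purely presentational.
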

\begin{proof}
	For each \(i \in \N\), the short exact sequence
	\[
		0 \rightarrow \Xc \xrightarrow{p^i} X \rightarrow X/p^i \Xc \rightarrow 0
	\]
	implies that
	\[
		\Img\left(\HH^n(G,p^i \Xc) \rightarrow \HH^n(G,X)\right) = p^i \Img\left(\HH^n(G,\Xc) \rightarrow \HH^n(G,X)\right),
	\]
	and thus, the statement follows from Proposition~\ref{proposition:linear}.
\end{proof}

\begin{proposition} \label{proposition:cohomology-banach}
	If the torsion subgroup \(\HH^n(G,\Xc)_\torsion\) of \(\HH^n(G,\Xc)\) has finite exponent, then \(\HH^n(G,X)\) is a \(p\)\=/adic Banach space.
\end{proposition}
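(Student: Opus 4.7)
The plan is to apply the criterion stated just before Lemma~\ref{lemma:banach-dense-subspace}: combined with Proposition~\ref{proposition:locally-convex}, which identifies the topology of $\HH^n(G,X)$ with the locally convex topology defined by the $\Zp$-submodule $\mathcal{I} := \Img\bigl(\HH^n(G,\Xc) \to \HH^n(G,X)\bigr)$, it suffices to prove that $\mathcal{I}$ is $p$-adically complete and separated.

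First I would pin down $\mathcal{I}$ using the long exact sequence~\eqref{eq:torsion-divisible}. By the description of the connecting-morphism image recorded in the bullet points immediately following~\eqref{eq:torsion-divisible} (applied with $n$ replaced by $n-1$), the kernel of $\HH^n(G,\Xc) \to \HH^n(G,X)$ equals $\HH^n(G,\Xc)_\torsion$, yielding a short exact sequence
\[
0 \to \HH^n(G,\Xc)_\torsion \to \HH^n(G,\Xc) \to \mathcal{I} \to 0.
\]
Moreover, since $\mathcal{I}$ is a $\Zp$-submodule of the $\Qp$-vector space $\HH^n(G,X)$, it is torsion-free.

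Next I would feed these inputs into the $p$-adic completeness machinery. Proposition~\ref{proposition:cohomology-lattice} gives that $\HH^n(G,\Xc)$ is weakly $p$-complete, and the finite-exponent hypothesis on $\HH^n(G,\Xc)_\torsion$ combined with Remarks~\ref{remark:p-adic-finite-exponent} and~\ref{remark:complete-separated-weakly-complete} shows that $\HH^n(G,\Xc)_\torsion$ is weakly $p$-complete as well. Proposition~\ref{proposition:weakly-p-complete}(1) then yields weak $p$-completeness of $\mathcal{I}$, Lemma~\ref{lemma:weakly-p-complete}(1) upgrades this to $p$-adic completeness, and Lemma~\ref{lemma:weakly-p-complete}(2) applied with $\mathcal{I}_\torsion = 0$ delivers $p$-adic separatedness. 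I do not anticipate a genuine obstacle here; the only subtle point is recognising that the finite-exponent hypothesis on $\HH^n(G,\Xc)_\torsion$ is precisely what is needed to transport weak $p$-completeness from $\HH^n(G,\Xc)$ to the quotient $\mathcal{I}$.
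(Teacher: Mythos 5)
Your proof is correct and follows essentially the same route as the paper: both identify the image of \(\HH^n(G,\Xc)\) in \(\HH^n(G,X)\) with \(\HH^n(G,\Xc)/\HH^n(G,\Xc)_\torsion\) and reduce, via Proposition~\ref{proposition:locally-convex}, to showing this image is \(p\)\=/adically complete and separated. The only (harmless) variation is in the transfer step: the paper first upgrades \(\HH^n(G,\Xc)\) itself to \(p\)\=/adically complete and separated and then applies Proposition~\ref{proposition:p-adic-topology-torsion} to the quotient map, whereas you pass weak \(p\)\=/completeness to the quotient via Proposition~\ref{proposition:weakly-p-complete} and conclude using its torsion\-/freeness.
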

\begin{proof}
	Since \(\HH^n(G,\Xc)\) is weakly \(p\)\=/complete by Proposition~\ref{proposition:cohomology-lattice} and \(\HH^n(G,\Xc)_\torsion\) has finite exponent, the module \(\HH^n(G,\Xc)\) is \(p\)\=/adically complete and separated by Lemma~\ref{lemma:weakly-p-complete}.
	Therefore, by Proposition~\ref{proposition:p-adic-topology-torsion}, the quotient \(\HH^n(G,\Xc)/\HH^n(G,\Xc)_\torsion\), which is isomorphic to the image of \(\HH^n(G,\Xc)\) in \(\HH^n(G,X)\), is \(p\)\=/adically complete and separated, which implies, by Proposition~\ref{proposition:locally-convex}, that  \(\HH^n(G,X)\) is a \(p\)\=/adic Banach space.
\end{proof}

\begin{corollary} \label{corollary:h0-banach}
	The topological \(\Qp\)\=/vector space \(\HH^0(G,X)\) is a \(p\)\=/adic Banach space.
\end{corollary}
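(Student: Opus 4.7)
The plan is to deduce Corollary~\ref{corollary:h0-banach} directly from Proposition~\ref{proposition:cohomology-banach} applied with $n=0$. To do so, I need to verify that the torsion subgroup $\HH^0(G,\Xc)_\torsion$ has finite exponent.

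First I would identify $\HH^0(G,\Xc)$ with the submodule $\Xc^G$ of $G$-invariants, via the canonical topological isomorphism $\HH^0(G,\Xc) \similarrightarrow \Xc^G$ recalled in \S~\ref{subsec:the-compact-open-topology}. Since $\Xc$ is a lattice in the $\Qp$-vector space $X$, it embeds into $X$ and is therefore torsion-free as a $\Zp$-module. Consequently $\Xc^G$ is torsion-free, so $\HH^0(G,\Xc)_\torsion = 0$ and in particular has finite exponent.

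Applying Proposition~\ref{proposition:cohomology-banach} with $n=0$ then yields that $\HH^0(G,X)$ is a $p$-adic Banach space, which is the desired conclusion. There is no real obstacle here; the content of the corollary is just that torsion-freeness of the lattice makes the hypothesis of Proposition~\ref{proposition:cohomology-banach} trivially satisfied in degree zero.
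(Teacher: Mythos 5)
Your proposal is correct and is essentially identical to the paper's own proof: both identify \(\HH^0(G,\Xc)\) with \(\Xc^G \subseteq \Xc\), observe that it is torsion-free because \(\Xc\) is a lattice in a \(\Qp\)-vector space, and then apply Proposition~\ref{proposition:cohomology-banach} with \(n=0\). No issues.
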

\begin{proof}
	The module \(\HH^0(G,\Xc) \similarrightarrow \Xc^G \subseteq \Xc\) is torsion\-/free, and thus, it satisfies the hypotheses of Proposition~\ref{proposition:cohomology-banach}.
\end{proof}

A \emph{\(p\)\=/adic representation of \(G\)} is a finite dimensional \(p\)\=/adic Banach representation of \(G\).

\begin{lemma} \label{lemma:finite-torsion}
	If \(X\) is a \(p\)\=/adic representation of \(G\), then \(\HH^1(G,\Xc)_\torsion\) is finite.
\end{lemma}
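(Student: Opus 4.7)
The plan is to identify $\HH^1(G,\Xc)_\torsion$ explicitly as a subquotient of $(X/\Xc)^G$ and then invoke the structure of cofinitely generated torsion $\Zp$-modules. Since $X$ is finite-dimensional, $\Xc$ is a finitely generated free $\Zp$-module of rank $d = \dim_{\Qp} X$, and in particular $\Xc/p\Xc$ is a finite $\Fp$-vector space. The quotient $X/\Xc$ is discrete, so the short exact sequence $0 \to \Xc \to X \to X/\Xc \to 0$ is strict and admits a continuous section; by \S \ref{subsec:the-compact-open-topology} it gives rise to a long exact sequence of topological groups
\[
	\cdots \to X^G \to (X/\Xc)^G \xrightarrow{\partial} \HH^1(G,\Xc) \xrightarrow{f} \HH^1(G,X).
\]

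I would then show that $\HH^1(G,\Xc)_\torsion = \Img(\partial)$ and that this image is canonically isomorphic to $(X/\Xc)^G/(X/\Xc)^G_\divisible$. The inclusion $\HH^1(G,\Xc)_\torsion \subseteq \Ker(f)$ is immediate since $\HH^1(G,X)$ is a $\Qp$-vector space, and the reverse inclusion $\Img(\partial) = \Ker(f) \subseteq \HH^1(G,\Xc)_\torsion$ follows from the fact that $(X/\Xc)^G$ is $p^\infty$-torsion. The image of $X^G$ in $(X/\Xc)^G$ is divisible because $X^G$ is a $\Qp$-vector space, and it contains every divisible element: any such element maps under $\partial$ to a divisible element of $\HH^1(G,\Xc)$, which must vanish since $\HH^1(G,\Xc)$ is weakly $p$-complete by Proposition \ref{proposition:cohomology-lattice} and therefore has trivial divisible subgroup by Lemma \ref{lemma:weakly-p-complete}.

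To conclude, I would observe that $A := (X/\Xc)^G$ is a $p^\infty$-torsion $\Zp$-module whose $p$-torsion equals $(\Xc/p\Xc)^G$, a subgroup of the finite group $\Xc/p\Xc$. By the classical structure theorem for cofinitely generated torsion $\Zp$-modules, $A \cong (\Qp/\Zp)^s \oplus F$ for some finite $p$-group $F$, whence $A/A_\divisible \cong F$ is finite as required. The substantive step is the identification of $\HH^1(G,\Xc)_\torsion$ with a quotient of $(X/\Xc)^G$ by its divisible part; the main subtlety is ruling out additional torsion in $\HH^1(G,\Xc)$ beyond the image of $\partial$, and this is exactly what the weak $p$-completeness of $\HH^1(G,\Xc)$ prevents.
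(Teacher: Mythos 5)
Your proof is correct and follows essentially the same route as the paper: both identify \(\HH^1(G,\Xc)_\torsion\) with \((X/\Xc)^G/(X/\Xc)^G_\divisible\) via the connecting map of \(0 \rightarrow \Xc \rightarrow X \rightarrow X/\Xc \rightarrow 0\), using the weak \(p\)\=/completeness of \(\HH^1(G,\Xc)\) (hence triviality of its divisible subgroup) to pin down the torsion and divisible parts. The only divergence is the final finiteness step, where you invoke the structure theorem for cofinitely generated torsion \(\Zp\)\=/modules via the finiteness of \(((X/\Xc)[p])^G \simeq (\Xc/p\Xc)^G\), whereas the paper reaches the same conclusion by Pontryagin duality; the two arguments are dual to one another and equally valid.
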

\begin{proof}
	The short exact sequence~\eqref{eq:torsion-divisible} induces an isomorphism of groups
	\begin{equation} \label{eq:tor-div}
		\HH^1(G,\Xc)_\torsion \similarrightarrow (X/\Xc)^{G}/(X/\Xc)^{G}_\divisible.
	\end{equation}
	We prove that the group \((X/\Xc)^{G}/(X/\Xc)^{G}_\divisible\) is finite.
	We denote by \(P = \Hom_{\Zp}((X/\Xc)^G,\Qp/\Zp)\) the Pontryagin dual of the discrete torsion group \((X/\Xc)^G\).
	The Pontryagin duality of the inclusion \((X/\Xc)^G \subseteq X/\Xc\) implies that \(P\) is a quotient of the free \(\Zp\)\=/module of finite rank \( \Hom_{\Zp}(X/\Xc,\Qp/\Zp) \simeq \Hom_{\Zp}(\Xc,\Zp)\), and thus, the \(\Zp\)\=/module \(P\) is finitely generated, and hence, the group \(P_\torsion\) is finite.
	Moreover, the Pontryagin dual of the quotient \((X/\Xc)^{G}/(X/\Xc)^{G}_\divisible\) is the torsion subgroup \(P_\torsion\) of \(P\).
	Therefore, by the isomorphism~\eqref{eq:tor-div}, the group \(\HH^1(G,\Xc)_\torsion\) is finite.
\end{proof}

\begin{corollary} \label{corollary:h1-representation-banach}
	If \(X\) is a \(p\)\=/adic representation of \(G\), then \(\HH^1(G,X)\) is a \(p\)\=/adic Banach space.
\end{corollary}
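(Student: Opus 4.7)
The plan is to combine the two results immediately preceding the statement. By the remark following the definition of $p$-adic Banach representations, since $G$ is compact, there exists a $G$-stable lattice $\Xc$ in $X$. Fix such an $\Xc$.

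By Lemma~\ref{lemma:finite-torsion}, the torsion subgroup $\HH^1(G,\Xc)_\torsion$ is finite. In particular, it is a torsion abelian group of finite exponent (any finite $p$-primary group is annihilated by a sufficiently high power of $p$). Hence the hypothesis of Proposition~\ref{proposition:cohomology-banach} is satisfied for $n=1$, and applying that proposition yields that $\HH^1(G,X)$ is a $p$-adic Banach space.

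So the argument is essentially a one-line chain: existence of a stable lattice, finiteness of the torsion in degree 1, then the Banach criterion of Proposition~\ref{proposition:cohomology-banach}. There is no real obstacle here; the content of the corollary has been entirely absorbed into the preparatory lemma and proposition, whose combination is immediate once one notices that \emph{finite} implies \emph{torsion of finite exponent}.
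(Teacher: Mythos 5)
Your proposal is correct and is exactly the paper's argument: fix a \(G\)\=/stable lattice \(\Xc\), invoke Lemma~\ref{lemma:finite-torsion} to get that \(\HH^1(G,\Xc)_\torsion\) is finite (hence of finite exponent), and apply Proposition~\ref{proposition:cohomology-banach}. Nothing is missing.
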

\begin{proof}
	The statement follows from Proposition~\ref{proposition:cohomology-banach} and Lemma~\ref{lemma:finite-torsion}.
\end{proof}

We will also need the following result.
Assume that \(G\) is procyclic, and let \(g\) be a topological generator of \(G\).
If \(M\) is a topological \(G\)\=/module, then there exists an exact sequence of groups
\begin{equation} \label{eq:cohomology-zp}
	0 \rightarrow \HH^0(G,M) \rightarrow M \xrightarrow{(g-1)} M \rightarrow  \HH^1(G,M) \rightarrow 0,
\end{equation}
where the map \(M \rightarrow \HH^1(G,M)\) is the morphism that sends an element \(m \in M\) to the class of the cocycle which maps the generator \(g\) to \(m\), which is surjective since a cocycle \(c \in \Zr^1(G,M)\) is determined by its value \(c(g)\) by continuity and by procyclicity of \(G\).

\begin{lemma} \label{lemma:cohomology-gamma}
	If the map
	\[
		\begin{split}
			(g - 1) : X & \rightarrow X \\
			x & \mapsto g(x) - x
		\end{split}
	\]
	is an automorphism of \(p\)\=/adic Banach spaces, then the group \(\HH^0(G,\Xc)\) is trivial, and the group \(\HH^1(G,\Xc)\) is torsion with finite exponent.
\end{lemma}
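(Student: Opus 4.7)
The plan is to apply the exact sequence \eqref{eq:cohomology-zp} to both $X$ and $\Xc$, and exploit the hypothesis that $(g-1)$ is an automorphism of $p$-adic Banach spaces via commensurability of lattices.

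First, from \eqref{eq:cohomology-zp} applied to $X$, we have $\HH^0(G,X) = \Ker((g-1) : X \to X) = 0$, because the endomorphism $(g-1)$ is in particular injective. Since the inclusion $\Xc \hookrightarrow X$ induces an injection $\HH^0(G,\Xc) = \Xc^G \hookrightarrow X^G = \HH^0(G,X)$, we conclude that $\HH^0(G,\Xc) = 0$. Then \eqref{eq:cohomology-zp} applied to $\Xc$ simplifies to an identification
\[
	\HH^1(G,\Xc) \simeq \Xc/(g-1)\Xc.
\]

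It therefore suffices to exhibit some $s \in \N$ with $p^s \Xc \subseteq (g-1)\Xc$. By hypothesis, $(g-1) : X \to X$ is an isomorphism of $p$-adic Banach spaces, so the inverse $(g-1)^{-1} : X \to X$ is continuous and $\Qp$-linear. The image $(g-1)^{-1}(\Xc)$ is then a $\Zp$-submodule of $X$ which spans $X$ over $\Qp$ and which is separated and complete for the $p$-adic topology (being the image of $\Xc$ under a topological $\Zp$-linear isomorphism between $\Xc$ and $(g-1)^{-1}(\Xc)$, via the continuous bijection induced by $(g-1)^{-1}$ whose inverse is the continuous map induced by $(g-1)$). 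Since both $\Xc$ and $(g-1)^{-1}(\Xc)$ are lattices defining the topology of $X$ (the Banach topology being unique), they are commensurable: there exists $s \in \N$ such that $p^s (g-1)^{-1}(\Xc) \subseteq \Xc$. Applying $(g-1)$ gives $p^s \Xc \subseteq (g-1)\Xc$, so $\HH^1(G,\Xc) = \Xc/(g-1)\Xc$ is annihilated by $p^s$, hence is torsion with finite exponent.

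The only delicate point is the commensurability argument: one must check that $(g-1)^{-1}(\Xc)$ is actually a lattice in $X$, which reduces to verifying that it is $p$-adically separated and complete. This follows because the map $(g-1) : \Xc \to (g-1)\Xc$ is a continuous bijection whose set-theoretic inverse is the restriction of the continuous map $(g-1)^{-1}$, so it identifies the $p$-adic topology on $\Xc$ with the $p$-adic topology on $(g-1)(\Xc)$ (and symmetrically for $(g-1)^{-1}(\Xc)$). Once commensurability of $\Xc$ and $(g-1)^{-1}(\Xc)$ is granted via the material recalled in \S\ref{subsec:p-adic-banach-representations}, the conclusion is immediate.
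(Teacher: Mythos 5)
Your proof is correct and takes essentially the same route as the paper's: both reduce the statement via the exact sequence~\eqref{eq:cohomology-zp} to the injectivity of \((g-1)\) on \(\Xc\) and to the claim that \(\Xc/(g-1)\Xc\) has finite exponent. The paper establishes the latter by noting that \((g-1)\Xc\) is a lattice and hence commensurable with \(\Xc\), which is the same commensurability argument you run through \((g-1)^{-1}(\Xc)\) instead.
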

\begin{proof}
	Since \((g-1)\) is an automorphism of \(X\), the map \((g-1)\) is injective, and \((g -1)\Xc\) is a lattice in \(X\), and thus, the lattices \((g -1)\Xc\) and \(\Xc\) are commensurable (see~\cite[Lemma~6.5.1]{Emerton2017}).
	We conclude using the exact sequence~\eqref{eq:cohomology-zp}.
\end{proof}

\section{\texorpdfstring{\(p\)\=/}{p-}adic period rings and perfectoid fields}
\label{sec:p-adic-period-rings-and-perfectoid-fields}

\subsection{Perfectoid fields}
\label{subsec:perfectoid-fields}

We recall the definition of a perfectoid field introduced by Scholze~\cite[\S~3]{Scholze2012}.

A \emph{non\-/archimedean} field is a topological field whose topology is induced by a non\-/trivial valuation of rank \(1\).

A \emph{perfectoid} field is a complete non\-/archimedean field \(k\) of positive residue characteristic, say \(p\), whose associated rank\-/\(1\)\=/valuation is non\-/discrete, and such that the Frobenius map
\[
	\begin{split}
		\Oc_k/(p) & \rightarrow \Oc_k/(p) \\
		x & \mapsto x^p
	\end{split}
\]
is surjective.

\begin{remark} \label{remark:perfectoid-deeply-ramified}
	Coates and Greenberg~\cite{CoatesGreenberg1996} have introduced the notion of \emph{deeply ramified} fields, which is also used in the articles of Bondarko~\cite{Bondarko2003} and Iovita and Zaharescu~\cite{IovitaZaharescu1999:1}.
	Recall that a perfectoid field is deeply ramified, and a complete deeply ramified field whose valuation is of rank \(1\) is perfectoid (see~\cite[Remark~3.3]{Scholze2012} and~\cite[Proposition~6.6.6]{GabberRamero2003}).
	In particular, an algebraic extension \(L\) of \(K\) is deeply ramified if and only if the field \(\hat{L}\) is perfectoid.
\end{remark}

An extension of non\-/archimedean fields is a field extension \(k^\prime/k\) such that both \(k^\prime\) and \(k\) are non\-/archimedean fields and the valuation of \(k^\prime\) is an extension of the valuation of \(k\).
In view of Remark~\ref{remark:perfectoid-deeply-ramified}, recall the following~\cite[Corollary~6.6.16]{GabberRamero2003}.

\begin{proposition} \label{proposition:perfectoid}
	Let \(k^\prime/k\) be an algebraic extension of non\-/archimedean fields.
	\begin{enumerate}
		\item If \(\hat{k}\) is perfectoid, then \(\hat{k}^\prime\) is perfectoid.
		\item If \(k^\prime/k\) is finite and \(\hat{k}^\prime\) is perfectoid, then \(\hat{k}\) is perfectoid.
	\end{enumerate}
\end{proposition}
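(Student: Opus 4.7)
The plan is to reduce both parts to the equivalent characterisation recorded in Remark~\ref{remark:perfectoid-deeply-ramified}: for a complete rank\=/$1$ non\-/archimedean field of residue characteristic $p$, being perfectoid is the same as being deeply ramified. The bulk of the work then splits into verifying the basic properties of the valuation (rank, discreteness, residue characteristic, completeness), which behave elementarily under algebraic extensions and completions, and invoking the propagation properties of deep ramification established by Coates\--Greenberg and Gabber\--Ramero.

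For part (1), I would first verify the non\-/Frobenius conditions. The field $\hat{k}'$ is complete by construction. Any extension of the rank\=/$1$ valuation of $\hat{k}$ to the algebraic closure remains of rank $1$, so the valuation it induces on $k'$, and hence on $\hat{k}'$, is rank $1$ with residue characteristic $p$; moreover its value group contains that of $\hat{k}$, which is non\-/discrete because $\hat{k}$ is perfectoid, so $\hat{k}'$ has non\-/discrete value group as well. It remains to show that the Frobenius on $\Oc_{\hat{k}'}/(p)$ is surjective. By Remark~\ref{remark:perfectoid-deeply-ramified}, $\hat{k}$ is deeply ramified, and I would invoke the stability of deep ramification under arbitrary algebraic extensions (Gabber\--Ramero). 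Since passage to the completion preserves deep ramification for rank\=/$1$ fields, $\hat{k}'$ is a complete rank\=/$1$ deeply ramified field, hence perfectoid.

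For part (2), the finiteness of $k'/k$ implies that $\hat{k}'/\hat{k}$ is a finite extension of complete non\-/archimedean fields. Since $\hat{k}' $ is perfectoid, its valuation is rank $1$ and non\-/discrete, and the value group $|\hat{k}^\times|$ has finite index in $|\hat{k}'^\times|$; a discrete subgroup of finite index in a non\-/discrete subgroup of $\R_{>0}$ would itself be non\-/discrete (any finite\-/index subgroup of a dense subgroup of $\R_{>0}$ is dense), so $\hat{k}$ has non\-/discrete rank\=/$1$ valuation, and clearly the residue characteristic is $p$ and $\hat{k}$ is complete. To conclude, I would use the descent of deep ramification along finite extensions (again Gabber\--Ramero): since $\hat{k}'$ is deeply ramified and $\hat{k}'/\hat{k}$ is finite, $\hat{k}$ is deeply ramified, and therefore perfectoid by Remark~\ref{remark:perfectoid-deeply-ramified}.

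The main obstacle is of course the two propagation statements for deep ramification (upward under arbitrary algebraic extensions and downward under finite extensions), which are nontrivial and depend on the characterisations via K\"ahler differentials or the almost\-/purity philosophy. I would not reprove these here but invoke them as part of the Gabber\--Ramero framework already cited; the remainder of the argument, as above, is essentially a bookkeeping verification that the valuation\-/theoretic hypotheses defining a perfectoid field transfer correctly across the extension and the completion.
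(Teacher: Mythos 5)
The paper gives no proof of this proposition at all: it is stated as a recalled result with a direct citation to Gabber--Ramero \cite[Corollary~6.6.16]{GabberRamero2003}, which is exactly the propagation statement for deep ramification that you invoke. Your argument is a correct expansion of that citation --- the reduction via Remark~\ref{remark:perfectoid-deeply-ramified} and the elementary valuation-theoretic bookkeeping (rank, non-discreteness of a finite-index value group, completeness) are all sound --- so it takes essentially the same approach as the paper.
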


\begin{examples} \label{examples:perfectoid}
	We recall some examples of perfectoid fields.
	\begin{enumerate}
		\item A complete non\-/archimedean field of characteristic \(p\) is perfectoid if and only if it is perfect.
		\item The field \(\Cp\) is perfectoid.
		\item By a result of Sen~\cite{Sen1972} (see also~\cite[Theorem~2.13]{CoatesGreenberg1996}), if \(L\) is a Galois extension of \(K\) whose Galois group is a \(p\)\=/adic Lie group in which the inertia subgroup is infinite, then \(\hat{L}\) is perfectoid.
		In particular, if \(L\) is an infinitely ramified \(\Zp\)\=/extension of \(K\), then \(\hat{L}\) is perfectoid.
		Therefore, the completion of the \(p^\infty\)\=/cyclotomic extension \(K_\cyc\) generated over \(K\) by all the \(p\)\=/power roots of unity, and the completion of the \(\Zp\)\=/cyclotomic extension \(K_\infty\) of \(K\) contained in \(K_\cyc\) are both perfectoid.
		\item The completion of the maximal abelian extension \(K^\ab\) of \(K\) is perfectoid.
		\item Let \((p^{1/p^n})_{n \in \N}\) be a sequence of \(p\)\=/power roots of \(p\) in \(\Qpbar\) such that \((p^{1/p^{n+1}})^p = p^{1/p^n}\).
		For each \(n \in \N\), let \(K(p^{1/p^n})\) be the extension of \(K\) generated by \(p^{1/p^n}\), and let \(K(p^{1/p^{\infty}}) = \bigcup_{n \in \N} K(p^{1/p^n})\).
		The completion of the field \(K(p^{1/p^{\infty}})\) is perfectoid.
		Note that \(K(p^{1/p^{\infty}})\) is not Galois over \(K\).
		\end{enumerate}
		Note that the completion of the maximal unramified extension \(K^\ur\) of \(K\) is not perfectoid.
\end{examples}

\subsection{\texorpdfstring{\(p\)\=/}{p-}adic period rings}
\label{subsec:p-adic-period-rings}

We review the \(p\)\=/adic period rings introduced by Fontaine~\cite{Fontaine1994:II}.

The \emph{ring of \(p\)\=/adic periods} \(\BdR^+\) is a complete discrete valuation ring endowed with an action of \(G_K\), whose residue field is \(\Cp\), and which has a structure of \(K\)\=/algebra.
The \emph{field of \(p\)\=/adic periods} \(\BdR\) is the field of fractions of \(\BdR^+\).
There is a natural filtration \((\Fil^i \BdR)_{i \in \Z}\) on \(\BdR\), which is stable under the action of \(G_K\), by its fractional ideals, that is, if \(\xi\) is a uniformiser of \(\BdR^+\), then
\[
	\Fil^i \BdR = \BdR^+ \cdot \xi^i, i \in \Z,
\]
and we set \(\Gr^i \BdR = \Fil^i \BdR/\Fil^{i+1} \BdR\).

We recall the relation between uniformisers of \(\BdR^+\) and perfectoid fields (see~\cite[Theorem~5.2]{IovitaZaharescu1999:1} or~\cite[\S~3]{FarguesFontaine2018}).

\begin{proposition} \label{proposition:perfectoid-uniformiser}
	Let \(L\) be an algebraic extension of \(K\).
	The field \(\hat{L}\) is perfectoid if and only if there exists a uniformiser \(t_{\hat{L}}\) of \(\BdR^+\) such that \(t_{\hat{L}} \in (\BdR^+)^{G_L}\).
\end{proposition}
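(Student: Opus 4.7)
The plan is to prove both directions by translating ``$G_L$-invariant uniformiser of $\BdR^+$'' into ``$G_L$-invariant generator of $\Ker(\theta : W(\Oc_{\Cp^\flat}) \twoheadrightarrow \Oc_{\Cp})$'', where $\theta$ is Fontaine's surjection and $\BdR^+$ is the $\Ker\theta$-adic completion of $W(\Oc_{\Cp^\flat})[1/p]$. The key dictionary is that the generators of $\Ker\theta$ are exactly the distinguished elements of $W(\Oc_{\Cp^\flat})$, that is, those of the form $\sum_{n \geq 0} p^n [a_n]$ with $v^\flat(a_0) = v(p)$ and $a_1 \in \Oc_{\Cp^\flat}^\times$.

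For the forward direction, assume $\hat{L}$ is perfectoid. Then the tilt $\hat{L}^\flat$ is a perfectoid field of characteristic $p$, and $\Oc_{\hat{L}^\flat} = (\Oc_{\Cp^\flat})^{G_L}$. Applying Fontaine's construction directly to $\hat{L}$ produces a surjection $\theta_{\hat{L}} \colon W(\Oc_{\hat{L}^\flat}) \twoheadrightarrow \Oc_{\hat{L}}$ whose kernel is principally generated by some distinguished element $t_{\hat{L}}$. Since $G_L$ acts trivially on $\Oc_{\hat{L}^\flat}$, hence on $W(\Oc_{\hat{L}^\flat})$, the element $t_{\hat{L}}$ is automatically $G_L$-invariant; as the distinguished property is preserved under the inclusion $W(\Oc_{\hat{L}^\flat}) \hookrightarrow W(\Oc_{\Cp^\flat})$, the element $t_{\hat{L}}$ also generates $\Ker\theta$ in $W(\Oc_{\Cp^\flat})$ and is therefore a uniformiser of $\BdR^+$.

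For the converse, assume $t_{\hat{L}} \in (\BdR^+)^{G_L}$ is a uniformiser. One first wants to extract a $G_L$-invariant distinguished element $\tilde{t}_{\hat{L}} \in W(\Oc_{\Cp^\flat})^{G_L}$ generating $\Ker\theta$; I would do so by iteratively adjusting $t_{\hat{L}}$ modulo successive powers of $\Ker\theta$ using the $G_L$-equivariant isomorphisms $\Gr^i \BdR \cong \Cp(i)$ and the invariants thereof. Its leading Teichmüller coefficient then gives a $G_L$-invariant pseudo-uniformiser $\varpi \in (\Oc_{\Cp^\flat})^{G_L}$ with $v^\flat(\varpi) = v(p)$. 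Unpacking the definition of the tilt, this furnishes a compatible system of $p$-power roots $(\varpi^{(n)})_{n \in \N}$ of an element $\varpi^{(0)} \in \Oc_{\Cp}$ of valuation $v(p)$, each $\varpi^{(n)}$ being representable modulo $p$ by an element of $\Oc_{\hat{L}}/p$ via the mod-$p$ version of Ax--Sen--Tate.

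The main obstacle is the last step: leveraging this single compatible system of $p$-power roots in $\Oc_{\hat{L}}/p$ to deduce surjectivity of the Frobenius on all of $\Oc_{\hat{L}}/p$. The cleanest route is to show that $L$ contains, up to controlled error, a subextension generated by lifts of the $\varpi^{(n)}$ that is deeply ramified in the sense of Coates and Greenberg, and then apply Proposition~\ref{proposition:perfectoid} together with Remark~\ref{remark:perfectoid-deeply-ramified}; alternatively, one verifies Frobenius surjectivity on $\Oc_{\hat{L}}/p$ directly by $\varpi^{(0)}$-adic approximation, using polynomial expressions in the $\varpi^{(n)}$ as preimages. Both approaches hinge on a refined Ax--Sen--Tate statement controlling $(\Oc_{\Cp}/p)^{G_L}$, which is the technical crux of the argument and the substance of Iovita and Zaharescu's proof of their Theorem~5.2.
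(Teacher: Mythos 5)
The paper offers no proof of this proposition: it is explicitly \emph{recalled} from the literature, with references to Iovita and Zaharescu (their Theorem~5.2) and to Fargues and Fontaine, so there is no internal argument to compare yours against. Judged on its own terms, your forward direction is correct and standard: for perfectoid \(\hat{L}\) the kernel of \(\theta_{\hat{L}} : W(\Oc_{\hat{L}^\flat}) \rightarrow \Oc_{\hat{L}}\) is generated by a distinguished element, \(G_L\) acts trivially on \(\Oc_{\hat{L}^\flat}\), and distinguishedness is preserved under \(W(\Oc_{\hat{L}^\flat}) \hookrightarrow W(\Oc_{\Cp^\flat})\), so this element is indeed a \(G_L\)\=/invariant uniformiser of \(\BdR^+\).

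The converse, however, contains a genuine gap which you yourself flag: everything is reduced to \enquote{a refined Ax--Sen--Tate statement controlling \((\Oc_{\Cp}/p)^{G_L}\)}, which is precisely the content of the Iovita--Zaharescu theorem that the paper cites as the source of the proposition. Deferring to it makes the argument circular as a proof; supplying it would require reproducing Sen's ramification estimates and Tate's method, none of which appears in your sketch. Moreover, the two reduction steps you interpose are themselves not routine: (i) the iterative adjustment of \(t_{\hat{L}}\) to a \(G_L\)\=/invariant distinguished element of \(W(\Oc_{\Cp^\flat})\) is not clearly available, since a priori you only control invariants of the graded pieces \(\Gr^i \BdR \cong \Cp(i)\), and without already knowing \(\HH^0(L,\Cp(i))\) and \(\HH^1(L,\Cp(i))\) the obstruction to propagating \(G_L\)\=/invariance across the filtration steps is uncontrolled; and (ii) passing from a single compatible system of \(p\)\=/power roots of a pseudo-uniformiser in \(\Oc_{\hat{L}}/p\) to surjectivity of Frobenius on all of \(\Oc_{\hat{L}}/p\) is exactly the deep-ramification argument, not a formality. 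Note also that the Witt-vector detour is unnecessary for the converse: the image of \(t_{\hat{L}}\) in \(\Gr^1 \BdR \cong \Cp(1)\) is a non-zero \(G_L\)\=/invariant element, so the converse is precisely the implication \(\HH^0(L,\Cp(1)) \neq 0 \Rightarrow \hat{L}\) perfectoid --- which is again the Iovita--Zaharescu computation. In short, the easy direction is done, and the hard direction is named but not proved.
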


\begin{notation}
	Let \(\chi: G_{\Qp} \rightarrow \Zp^\ast\) be the cyclotomic character.
	Let \(i \in \Z\).
	We denote by \(\Zp(i)\) the free \(\Zp\)\=/module of rank \(1\) on which \(G_{\Qp}\) acts by \(\chi^i\).
	If \(L\) is an algebraic extension of \(\Qp\) and if \(M\) is a \(\Zp\)\=/module equipped with a \(\Zp\)\=/linear action of \(G_L\), then, the \emph{\(i\)\=/th Tate twist} \(M(i)\) of \(M\) is defined by
	\[
		M(i) = M \otimes_{\Zp} \Zp(i),
	\]
	on which \(G_L\) acts by \(g(m \otimes z) = g(m) \otimes g(z)= \chi^i(g) (g(m)\otimes z)\), for all \(g \in G_L\), \(m \in M\) and \(z \in \Zp(i)\).
\end{notation}

The choice of a compatible system of \(p\)\=/power roots of unity \((\zeta_{p^n})_{n \in \N}\), with \(\zeta_{p^{n+1}}^p = \zeta_{p^n}\) and \(\zeta_1 = 1\), allows to define a uniformiser of \(\BdR^+\) invariant by \(G_{K_\cyc}\), classically denoted simply by \(t\), on which \(G_K\) acts by \(g(t) = \chi(g)\cdot t\) for each \(g \in G_K\), and which, for each \(i \in \Z\), induces an isomorphism of \(G_K\)\=/modules
\begin{equation} \label{eq:iso-gr-cp}
		\Gr^i \BdR \similarrightarrow \Cp(i).
\end{equation}

The field \(\BdR\) is equipped with a topology, the so-called \emph{canonical} topology, which is coarser than the valuation topology from \(\BdR^+\).
The action of \(G_K\) on \(\BdR\) endowed with the canonical topology is continuous.
We will consider \(\BdR\) and its subquotients endowed with the canonical topology.
For each \(i \in \Z\) and \(j \in  \N\), the quotient \(\Fil^i \BdR/\Fil^{i+j} \BdR\) is a \(p\)\=/adic Banach representation of \(G_K\).
In particular, the isomorphisms~\eqref{eq:iso-gr-cp} are isomorphisms of \(p\)\=/adic Banach representations of \(G_K\).
For each \(i \in \Z\), the filtration \(\Fil^i \BdR\) is closed in \(\BdR\), and if \(\xi\) is a uniformiser of \(\BdR^+\), then, for each \(j \in \Z\), multiplication by \(\xi^j\) induces an isomorphism of topological modules \(\Fil^i \BdR \similarrightarrow \Fil^{i+j} \BdR\).
Moreover, there are isomorphisms of topological \(G_K\)\=/modules
\[
	\begin{split}
	\Fil^i \BdR & \similarrightarrow \varprojlim_j \Fil^i \BdR/\Fil^{i+j} \BdR, \\
	\BdR & \similarrightarrow \varinjlim_i \Fil^i \BdR,
	\end{split}
\]
and thus, the space \(\Fil^i \BdR\) is a Fréchet space and \(\BdR\) is an ind-Fréchet space~\cite[\S~2C]{Fontaine2020}.

The field \(\BdR\) contains \(K_0\)\=/subalgebras \(\Bcris \subset \Bst \subset \BdR\), the \emph{ring of crystalline periods} \(\Bcris\) and the \emph{ring of semistable periods} \(\Bst\).
The ring \(\Bcris\) contains \(t\).
Both \(\Bcris\) and \(\Bst\) are stable under the action of \(G_K\), and are equipped with a \(\sigma\)\=/semilinear injective endomorphism \(\varphi\) commuting with the action of \(G_K\) and satisfying \(\varphi(t) = p t\).
Moreover, there exists a \(K_0\)\=/linear surjective map \(N : \Bst \rightarrow \Bst\), which commutes with the action of \(G_K\) and satisfies the relation \(N \circ \varphi = p \varphi \circ N\), and whose kernel is \(\Bcris = \Bst^{N=0}\).
Furthermore, the natural maps
\begin{equation} \label{eq:Bcris_BdR}
	K \otimes_{K_0} \Bcris \rightarrow K \otimes_{K_0} \Bst \rightarrow \BdR
\end{equation}
are injective.

Let
\[
	\Be = \Bcris^{\varphi = 1} = \{ b \in \Bcris, \varphi(b)=b \}.
\]
There exists a strict exact sequence of topological \(G_K\)\=/modules, the so-called \emph{fundamental exact sequence},
\begin{equation} \label{eq:fundamental}
	0 \rightarrow \Qp \rightarrow \Be \rightarrow \BdR/\BdR^+ \rightarrow 0,
\end{equation}
where the map \(\Be \rightarrow \BdR/\BdR^+\) is the composition of the quotient map \(\BdR \rightarrow \BdR/\BdR^+\) with the inclusion of \(\Be\) in \(\BdR\) (see for instance~\cite[III \S~3]{Colmez1998}).

\subsection{Galois cohomology of \texorpdfstring{\(\Cp(i)\)}{Cp(i)}}
\label{subsec:galois-cohomology-of-cp-i}

We review the computation of the Galois cohomology of \(\Cp(i)\).
These results are standard and follow from Tate's method~\cite{Tate1967}.
We review part of their proofs, which we need to study the topology on these cohomology groups defined in the previous section~\ref{sec:continuous-group-cohomology}.

Let \(L\) be an algebraic extension of \(K\).

We first recall the Ax\--Sen\--Tate Theorem~\cite{Tate1967,Ax1970,Sen1969}.

\begin{theorem}[Ax--Sen--Tate] \label{theorem:ax-sen-tate}
	The natural inclusion \(\hat{L} \subseteq \Cp\) induces an isomorphism of topological \(\Zp\)\=/modules \(\HH^0(L,\Oc_{\Cp}) \similarrightarrow \Oc_{\hat{L}}\), and an isomorphism of \(p\)\=/adic Banach spaces \(\HH^0(L,\Cp) \similarrightarrow \hat{L}\).
\end{theorem}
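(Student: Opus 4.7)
The plan is to prove the integral statement first---the identity \(\Oc_{\Cp}^{G_L} = \Oc_{\hat{L}}\)---and deduce the Banach space version by inverting \(p\). Both topological refinements will then be automatic: by the discussion of \S\ref{subsec:the-compact-open-topology}, the topology on \(\HH^0(L,-)\) is by definition the subspace topology from the coefficient module, so it suffices to identify the underlying abelian groups correctly and check that the resulting subspace topologies on \(\Oc_{\hat{L}} \subseteq \Oc_{\Cp}\) and \(\hat{L} \subseteq \Cp\) agree with their own natural topologies, which they obviously do.

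The inclusion \(\Oc_{\hat{L}} \subseteq \Oc_{\Cp}^{G_L}\) is immediate from continuity of the Galois action on \(\Cp\), which fixes \(L\) pointwise and hence its completion. For the reverse inclusion, I would fix \(x \in \Oc_{\Cp}^{G_L}\) and, using density of \(\Oc_{\Qpbar}\) in \(\Oc_{\Cp}\), pick algebraic approximants \(y_n \in \Oc_{\Qpbar}\) with \(|x - y_n| \leq p^{-n}\). Since each \(\sigma \in G_L\) is an isometry fixing \(x\), the triangle inequality yields
\[
	|\sigma(y_n) - y_n| \leq \max\bigl(|\sigma(y_n - x)|, |x - y_n|\bigr) \leq p^{-n},
\]
so every \(y_n\) is \enquote{almost} \(G_L\)\=/invariant. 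The key input is then the Ax--Sen lemma: there exists a constant \(c_p\) depending only on \(p\) such that any \(y \in \Qpbar\) with \(|\sigma(y) - y| \leq c\) for all \(\sigma \in G_L\) admits an element \(z \in L\) with \(|y - z| \leq c_p \cdot c\). Applied to each \(y_n\), it supplies \(z_n \in L\) with \(z_n \to x\) in \(\Cp\); hence \(x \in \hat{L}\), and since \(|x| \leq 1\) in fact \(x \in \Oc_{\hat{L}}\).

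The hard step is the Ax--Sen lemma itself, whose standard proof proceeds by induction on \([L(y):L]\) through a Newton polygon analysis of the minimal polynomial of \(y\) over \(L\): the hypothesis controls the norms of the differences of conjugates of \(y\), which in turn controls the slopes of that Newton polygon and produces a root---or coefficient---of the polynomial lying in a strictly smaller extension of \(L\) and sufficiently close to \(y\). Since this is classical, I would cite it rather than reprove it. To obtain the Banach space statement, I would observe that \(\Cp = \bigcup_{n \in \N} p^{-n} \Oc_{\Cp}\) is a \(G_L\)\=/stable exhaustive filtration by open \(\Zp\)\=/submodules, so the functor \(H \mapsto H^{G_L}\) commutes with this directed union; this gives \(\HH^0(L,\Cp) = \bigcup_{n} p^{-n} \Oc_{\hat{L}} = \hat{L}\) as abstract \(\Qp\)\=/vector spaces, and the resulting locally convex topology defined by the lattice \(\Oc_{\hat{L}}\) is precisely the \(p\)\=/adic Banach space topology on \(\hat{L}\).
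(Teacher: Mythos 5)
Your proposal is correct. The paper does not actually prove this statement---it recalls it as the classical Ax--Sen--Tate theorem with references to Tate, Ax, and Sen---and your argument (approximating a \(G_L\)\=/invariant element of \(\Oc_{\Cp}\) by almost-invariant elements of \(\Oc_{\Qpbar}\) and invoking the Ax--Sen lemma, then passing to \(\Cp = \bigcup_n p^{-n}\Oc_{\Cp}\) for the Banach statement) is precisely the standard proof in those references; the topological refinements are indeed automatic because, as recorded in \S~\ref{subsec:the-compact-open-topology}, the compact\-/open topology identifies \(\HH^0(G,M)\) with \(M^G\) carrying the subspace topology.
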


\begin{theorem} \label{theorem:cohomology-cp-perfectoid}
	Let \(i \in \Z\).
	If \(\hat{L}\) is perfectoid, then the following holds.
	\begin{enumerate}
		\item There exist an isomorphism of topological \(\Zp\)\=/modules \(\HH^0(L,\Oc_{\Cp}(i)) \similarrightarrow \Oc_{\hat{L}}\), and an isomorphism of \(p\)\=/adic Banach spaces \(\HH^0(L,\Cp(i)) \similarrightarrow \hat{L}\).
		\item For each \(n > 0\), the group \(\HH^n(L,\Oc_{\Cp}(i))\) is torsion with finite exponent, and the group \(\HH^n(L,\Cp(i))\) is trivial.
	\end{enumerate}
\end{theorem}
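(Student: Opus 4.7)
The plan is to reduce to $i = 0$ using Proposition~\ref{proposition:perfectoid-uniformiser}, then invoke Ax--Sen--Tate for $\HH^0$ and the classical bounded-torsion estimate for $\HH^n(L, \Oc_{\Cp})$ in positive degrees. By Proposition~\ref{proposition:perfectoid-uniformiser}, there is a uniformizer $t_{\hat{L}}$ of $\BdR^+$ fixed by $G_L$. Writing $t_{\hat{L}} = u \cdot t$ with $u \in (\BdR^+)^\times$ and using $g(t) = \chi(g) t$, one obtains $g(u) = \chi^{-1}(g) u$ for every $g \in G_L$. The image $\eta := \theta(u) \in \Cp^\times$, where $\theta : \BdR^+ \to \Cp$ denotes the canonical surjection onto the residue field, satisfies $g(\eta) = \chi^{-1}(g) \eta$, so multiplication by $\eta^{-i}$ defines a $G_L$-equivariant isomorphism of topological $\Qp$-vector spaces $\Cp \similarrightarrow \Cp(i)$. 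Its restriction to $\Oc_{\Cp}$ gives a $G_L$-equivariant identification of $\Oc_{\Cp}$ with the lattice $\eta^{-i}\Oc_{\Cp}$ of $\Cp$, which is commensurable with $\Oc_{\Cp} = \Oc_{\Cp}(i)$. Applying the cohomological comparison of commensurable lattices developed in Section~\ref{sec:continuous-group-cohomology}, the groups $\HH^n(L, \Oc_{\Cp}(i))$ and $\HH^n(L, \Oc_{\Cp})$ differ only by bounded $p$-torsion, and $\HH^n(L, \Cp(i)) \to \HH^n(L, \Cp)$ is a topological isomorphism. It therefore suffices to prove the statement for $i = 0$.

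For $n = 0$, the case $i = 0$ is the Ax--Sen--Tate theorem (Theorem~\ref{theorem:ax-sen-tate}), which gives the desired topological isomorphisms $\HH^0(L, \Oc_{\Cp}) \cong \Oc_{\hat{L}}$ and $\HH^0(L, \Cp) \cong \hat{L}$. For $n > 0$, the key input is that $\HH^n(L, \Oc_{\Cp})$ is annihilated by a fixed power of $p$ when $\hat{L}$ is perfectoid: this is the classical Tate--Sen--Hyodo--Faltings computation, obtained from the almost vanishing of $\HH^n(L, \Oc_{\Cp}/p)$ together with passage to the limit along $\Oc_{\Cp} \cong \varprojlim_k \Oc_{\Cp}/p^k$ using Proposition~\ref{proposition:lim-cohomology} and the Mittag-Leffler property of the resulting projective system. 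The triviality $\HH^n(L, \Cp) = 0$ then follows, since $\HH^n(L, \Cp)$ is a $\Qp$-vector space and the image of the bounded-torsion group $\HH^n(L, \Oc_{\Cp})$ generates it over $\Qp$ (via the short exact sequence $0 \to \Oc_{\Cp} \to \Cp \to \Cp/\Oc_{\Cp} \to 0$, whose last term has torsion cohomology).

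The main obstacle is the bounded-torsion assertion for $\HH^n(L, \Oc_{\Cp})$ with $n > 0$: it depends on the perfectoid hypothesis through the deep almost-purity machinery of Faltings, Hyodo, Sen, and Tate, and is the sole non-formal ingredient in the argument. The remaining content of the proof consists of topological bookkeeping using the structure developed in Section~\ref{sec:continuous-group-cohomology} and the explicit period $\eta$ supplied by Proposition~\ref{proposition:perfectoid-uniformiser}.
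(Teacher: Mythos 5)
Your proposal is correct and follows essentially the same route as the paper: Proposition~\ref{proposition:perfectoid-uniformiser} supplies the $G_L$\=/invariant uniformiser whose image under $\theta$ untwists $\Cp(i)$ to $\Cp$ (the paper phrases this as multiplication by $t_{\hat{L}}^{-i}$ on $\Gr^i\BdR$, which induces the same period $\eta$ on residue fields), after which degree $0$ is Ax\--Sen\--Tate and degrees $n>0$ are the cited almost\-/vanishing of $\HH^n(L,\Oc_{\Cp})$ over a perfectoid base. Your additional commensurable\-/lattice bookkeeping and the lim/Mittag\-/Leffler elaboration of the bounded\-/torsion input are consistent with the machinery of Section~\ref{sec:continuous-group-cohomology} and do not change the substance of the argument.
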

\begin{proof}
	By Proposition~\ref{proposition:perfectoid-uniformiser}, there exists a uniformiser \(t_{\hat{L}}\) of \(\BdR^+\) invariant by \(G_L\).
	Therefore, multiplication by \(t_{\hat{L}}^{-i}\) in \(\BdR\) induces an isomorphism of \(p\)\=/adic Banach representation of \(G_L\)
	\[
		\Cp(i) \similarrightarrow \Gr^i \BdR \similarrightarrow \Gr^0 \BdR \similarrightarrow \Cp.
	\]
	The first statement then follows from the Ax\--Sen\--Tate Theorem~\ref{theorem:ax-sen-tate}, and the second statement follows from Tate's method~\cite{Tate1967}, or more generally from perfectoid techniques~\cite{Scholze2012}, which imply that, for each \(n > 0\), the \(\Oc_{\hat{L}}\)\=/module \(\HH^n(L,\Oc_{\Cp})\) is almost zero, that is, it holds
	\[
		\mf_{\hat{L}} \cdot \HH^n(L,\Oc_{\Cp}) = 0,
	\]
	and, in particular, the group \(\HH^n(L,\Oc_{\Cp})\) is \(p\)\=/torsion and
	\[
		\HH^n(L,\Cp) \similarrightarrow \Qp \otimes_{\Zp} \HH^n(L,\Oc_{\Cp})
	\]
	is trivial.
\end{proof}

The composition of the cyclotomic character \(\chi : G_{\Qp} \rightarrow \Zp^\ast\) with the \(p\)\=/adic logarithm \(\log_p : \Zp^\ast \rightarrow \Zp\) defines a continuous group homomorphism \(\log_p \circ \chi \in \Hom_\cont(G_{\Qp},\Zp)=\HH^1(\Qp,\Zp)\), and the restriction of \(\log_p \circ \chi\) to \(G_L\) is denoted by \((\log_p \circ \chi)_L \in \HH^1(L,\Zp)\).
The composition of \((\log_p \circ \chi)_L\) with the natural inclusion of \(\Zp\) in \(\Oc_{\Cp}\) defines an element \([\log_p \circ \chi]_L \in \HH^1(L,\Oc_{\Cp})\).

\begin{theorem} \label{theorem:cohomology-cp-non-perfectoid}
	If \(\hat{L}\) is not perfectoid, then the following holds.
	\begin{enumerate}
		\item Let \(i \neq 0\) be an integer.
		For each \(n \in \N\), the group \(\HH^n(L,\Oc_{\Cp}(i))\) is torsion with finite exponent, and the group \(\HH^n(L,\Cp(i))\) is trivial.
		\item The group \(\HH^1(L,\Oc_{\Cp})\) is \(p\)\=/adically complete and separated, and its torsion subgroup has finite exponent.
		Moreover, the element \([\log_p \circ \chi]_L \in \HH^1(L,\Oc_{\Cp})\) is non\-/torsion and induces an isomorphism of \(p\)\=/adic Banach spaces
		\[
			\begin{split}
				\HH^1(L,\Cp) & \similarrightarrow \hat{L} \\
				[\log_p \circ \chi]_L & \mapsto 1.
			\end{split}
		\]
		\item For each integer \(n >1\), the group \(\HH^n(L,\Oc_{\Cp})\) is torsion with finite exponent, and the group \(\HH^n(L,\Cp)\) is trivial.
	\end{enumerate}
\end{theorem}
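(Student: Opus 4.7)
The strategy is to reduce to the perfectoid case through the cyclotomic tower. Since \(\hat{L}\) is not perfectoid, Examples~\ref{examples:perfectoid}~(3) combined with Proposition~\ref{proposition:perfectoid}~(1) imply that \(K_\infty \not\subset L\), where \(K_\infty\) denotes the cyclotomic \(\Zp\)\=/extension of \(K\). Then \(L_\infty \coloneqq L \cdot K_\infty\) is a proper extension of \(L\) whose completion is perfectoid, and \(\Gamma_L = \Gal(L_\infty/L)\) is a nontrivial closed subgroup of \(\Gal(K_\infty/K) \simeq \Zp\), hence itself procyclic and isomorphic to \(\Zp\) via the cyclotomic character.

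I would then apply the Hochschild--Serre spectral sequence of Proposition~\ref{proposition:spectral-sequence} to \(G_{L_\infty} \triangleleft G_L\) with module \(\Oc_\Cp(i)\). Theorem~\ref{theorem:cohomology-cp-perfectoid} yields \(\HH^0(L_\infty, \Oc_\Cp(i)) \simeq \Oc_{\hat{L}_\infty}(i)\) as a topological \(\Gamma_L\)\=/module, with \(\HH^m(L_\infty, \Oc_\Cp(i))\) torsion of finite exponent for \(m \geq 1\) and \(\HH^m(L_\infty, \Cp(i)) = 0\) for \(m \geq 1\). Since \(\Gamma_L\) has \(p\)\=/cohomological dimension~\(1\), the spectral sequence collapses, giving a rational isomorphism \(\HH^n(L, \Cp(i)) \simeq \HH^n(\Gamma_L, \hat{L}_\infty(i))\) for all \(n\), and reducing the integral statements to the computation of \(\HH^n(\Gamma_L, \Oc_{\hat{L}_\infty}(i))\) up to finite\-/exponent torsion contributions from \(L_\infty\).

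The core calculation is then of the procyclic cohomology \(\HH^*(\Gamma_L, \hat{L}_\infty(i))\) via the exact sequence~\eqref{eq:cohomology-zp} and Lemma~\ref{lemma:cohomology-gamma}: fixing a topological generator \(\gamma\) of \(\Gamma_L\), one asks whether the operator \(\chi(\gamma)^i \gamma - 1\) is an automorphism of the \(p\)\-/adic Banach space \(\hat{L}_\infty\). Following Tate's method, I would construct a normalised trace \(\hat{L}_\infty \to \hat{L}\) producing a \(\Gamma_L\)\=/stable decomposition \(\hat{L}_\infty = \hat{L} \oplus X_L\) on whose complement \(\gamma - 1\) is topologically invertible; since \(\chi(\gamma)\) is close to \(1\) in \(\Zp\), a geometric series argument then shows \(\chi(\gamma)^i \gamma - 1\) is invertible on \(X_L\) for every \(i\), and acts on \(\hat{L}\) as multiplication by \(\chi(\gamma)^i - 1\). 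For \(i \neq 0\) this yields the vanishing of \(\HH^0\) and \(\HH^1\) of \(\hat{L}_\infty(i)\), with integral versions torsion of finite exponent, giving statement~1. For \(i = 0\), Ax--Sen--Tate (Theorem~\ref{theorem:ax-sen-tate}) gives \(\HH^0(\Gamma_L, \hat{L}_\infty) \simeq \hat{L}\), and the decomposition yields an isomorphism \(\HH^1(\Gamma_L, \hat{L}_\infty) \simeq \hat{L}\) sending the class of the cocycle \(\gamma \mapsto \log_p \chi(\gamma)\) to~\(1\); inflating through \(\HH^1(\Gamma_L, \Zp) \to \HH^1(L, \Oc_\Cp)\) identifies \([\log_p \circ \chi]_L\) with the non\-/torsion generator, proving statement~2, while the \(p\)\-/adic completeness and separatedness of \(\HH^1(L, \Oc_\Cp)\) follows from Proposition~\ref{proposition:cohomology-zp-modules}~(2) and Lemma~\ref{lemma:weakly-p-complete} combined with the finite exponent of its torsion subgroup. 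Statement~3 follows from \(\cd_p \Gamma_L = 1\) together with the higher vanishing over \(L_\infty\). The principal obstacle is the Tate--Sen construction of the normalised trace \(\hat{L}_\infty \to \hat{L}\) in the generality where \(L\) is an arbitrary algebraic extension of \(\Qp\) rather than a finite one, which is precisely the content of Iovita--Zaharescu's work~\cite{IovitaZaharescu1999:1}.
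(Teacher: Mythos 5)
Your proposal is correct in essence and follows the same route as the paper: reduce to the perfectoid completion of the cyclotomic tower over \(L\), apply the spectral sequence of Proposition~\ref{proposition:spectral-sequence} to the procyclic quotient, and resolve the resulting rank-one cohomology via the Iovita--Zaharescu extension of Tate's normalised traces together with Lemma~\ref{lemma:cohomology-gamma} and Proposition~\ref{proposition:cohomology-zp-modules}. The one slip is the choice of tower: the cyclotomic character does not factor through \(\Gal(L\cdot K_\infty/L)\), so neither the operator \(\chi(\gamma)^i\gamma-1\) nor the identification of \(\HH^0(L\cdot K_\infty,\Oc_{\Cp}(i))\) with \(\Oc_{\widehat{L\cdot K_\infty}}(i)\) as a \(\Gamma_L\)-module is well defined as stated; the paper avoids this by working with \(L_\cyc\) after first reducing (via the spectral sequence for a finite Galois group and Proposition~\ref{proposition:p-adic-topology-torsion}) to the case where \(L_\cyc/L\) is a totally ramified \(\Zp\)-extension, after which your computation is exactly the one carried out there.
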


\begin{remark}
	If the valuation of \(L\) is discrete, then Theorem~\ref{theorem:cohomology-cp-non-perfectoid} has been proved by several authors following Tate's method~\cite{Tate1967}, for instance~\cite[\S~3]{Fontaine2003} and~\cite[\S~4]{BarthelSchlankStapletonWeinstein2024}.
	Iovita and Zaharescu~\cite[\S~3]{IovitaZaharescu1999:1} have adapted Tate's method to compute \(\HH^n(L,\Cp(i))\) for a general algebraic extension \(L\) of \(\Qp\) .
\end{remark}

\begin{proof}
	We first establish the following two observations.

	Firstly, by Proposition~\ref{proposition:cohomology-zp-modules}, for each \(n \in \N\) and \(i \in \Z\), the group \(\HH^n(L,\Oc_{\Cp}(i))\) is weakly \(p\)\=/complete, and thus, \(p\)\=/adically complete by Lemma~\ref{lemma:weakly-p-complete}.
	In particular, concerning \(\HH^1(L,\Oc_{\Cp})\), it remains to prove that its torsion subgroup \(\HH^1(L,\Oc_{\Cp})_\torsion\) has finite exponent, which would implies that \(\HH^1(L,\Oc_{\Cp})\) is \(p\)\=/adically separated by Lemma~\ref{lemma:weakly-p-complete}, and to prove the property of the element \([\log_p \circ \chi]_L \).

	Secondly, we may prove the statement for a finite Galois extension \(L^\prime\) of \(L\).
	Indeed, by Proposition~\ref{proposition:perfectoid}, the field \(\hat{L}^\prime\) is not perfectoid.
	If the statements hold for \(L^\prime\), then, by Corollary~\ref{corollary:lim-cohomology-linear}, for each \(n \in \N\) and \(i \in \Z\), the system \((\HH^n(L^\prime,\Oc_{\Cp}(i)/p^j))_{j \in \N}\) satisfies the Mittag-Leffler condition and there are isomorphisms of topological groups
	\[
		\HH^n(L^\prime,\Oc_{\Cp}(i)) \similarrightarrow \varprojlim_j \HH^n(L^\prime,\Oc_{\Cp}(i))/p^j \similarrightarrow \varprojlim_j \HH^n(L^\prime,\Oc_{\Cp}(i)/p^j).
	\]
	Therefore, if \(\Delta = \Gal(L^\prime/L)\) denotes the Galois group of \(L^\prime/L\), then Proposition~\ref{proposition:spectral-sequence} applies and yields a convergent spectral sequence
	\begin{equation} \label{eq:spectral-sequence-delta}
		\HH^a(\Delta,\HH^b(L^\prime,\Oc_{\Cp}(i))) \Rightarrow \HH^{a+b}(L,\Oc_{\Cp}(i)).
	\end{equation}
	Since \(\Delta\) is finite, for each \(a \geq 1\), \(b \in \N\) and \(i \in \Z\) , the group \(\HH^a(\Delta,\HH^b(L^\prime,\Oc_{\Cp}(i)))\) is torsion with finite exponent (see~\cite[I \S~2.4 Proposition~9]{Serre1994}).
	We then deduce from the spectral sequence~\eqref{eq:spectral-sequence-delta} that the statement for \(L^\prime\) implies all the cases of the statement for \(L\) except the case concerning \(\HH^1(L,\Oc_{\Cp})\).
	About \(\HH^1(L,\Oc_{\Cp})\), the spectral sequence~\eqref{eq:spectral-sequence-delta} induces the inflation-restriction exact sequence
	\begin{equation} \label{eq:delta-inflation-restriction}
		0 \rightarrow \HH^1(\Delta,\Oc_{\hat{L}^\prime}) \rightarrow \HH^1(L,\Oc_{\Cp}) \rightarrow \HH^1(L^\prime,\Oc_{\Cp})^\Delta \rightarrow \HH^2(\Delta,\Oc_{\hat{L}^\prime}).
	\end{equation}
	Since the groups \(\HH^1(\Delta,\Oc_{\hat{L}^\prime})\) and \(\HH^2(\Delta,\Oc_{\hat{L}^\prime})\) are torsion with finite exponent, and the element \([\log_p \circ \chi]_L \in \HH^1(L,\Oc_{\Cp})\) is mapped to \([\log_p \circ \chi]_{L^\prime} \in \HH^1(L^\prime,\Oc_{\Cp})^\Delta\) by the restriction map, we deduce from the sequence~\eqref{eq:delta-inflation-restriction} and Proposition~\ref{proposition:p-adic-topology-torsion} that the statement for \(L^\prime\) implies the statement for \(\HH^1(L,\Oc_{\Cp})\) .

	We proceed to the proof of the statement.
	On the one hand, the field \(\widehat{L_\cyc}\) is perfectoid by Proposition~\ref{proposition:perfectoid} and the result by Sen from Examples~\ref{examples:perfectoid},
	and, on the other hand, the field \(\hat{L}\) is not perfectoid by hypothesis.
	Therefore, by another application of Proposition~\ref{proposition:perfectoid}, the extension \(L_\cyc/L\) is infinite.
	Considering a finite Galois extension of \(L\) in \(L_\cyc\) if necessary, we may assume that \(L_\cyc/L\) is a totally ramified \(\Zp\)\=/extension.
	We set \(\Gamma = \Gal(L_\cyc/L)\), and thus, the morphism \(\log_p \circ \chi\) induces an isomorphism of topological groups : \(\Gamma \similarrightarrow \Zp\), and we denote by \(\gamma\) a topological generator of \(\Gamma\).

	Iovita and Zaharescu~\cite[\S~3]{IovitaZaharescu1999:1}, adapting Tate's method, have proved the following statements.
	(Note that Iovita and Zaharescu assume that \(L\) contains \(\Qp^\ur\).
	However, their results hold without this assumption and with similar proofs, see also~\cite{Ohkubo2010}.)
	\begin{itemize}
		\item There exists a continous \(\hat{L}\)\=/linear surjective map of \(p\)\=/adic Banach representations of \(\Gamma\)
		\[
			  \Tr : \widehat{L_\cyc} \rightarrow \hat{L},
		\]
		the so\-/called \emph{Tate's normalised trace map}, which is a section of the inclusion \(\hat{L} \subseteq \widehat{L_\cyc}\) as \(p\)\=/adic Banach representations of \(\Gamma\), and thus, it induces an isomorphism of \(p\)\=/adic Banach representations of \(\Gamma\)
		\begin{equation} \label{eq:decomposition-trace}
			\widehat{L_\cyc} \similarrightarrow \hat{L} \oplus \Ker(\Tr).
		\end{equation}
		\item The map
		\[
			(\gamma -1): \widehat{L_\cyc} \rightarrow \widehat{L_\cyc}
		\]
		is a \(\hat{L}\)\=/linear morphism of \(p\)\=/adic Banach spaces, which is trivial on \(\hat{L}\) and an automorphism of \(p\)\=/adic Banach spaces on \(\Ker(\Tr)\).
		Moreover, for each integer \(i \neq 0\), the map
		\[
			(\gamma -1): \widehat{L_\cyc}(i) \rightarrow \widehat{L_\cyc}(i)
		\]
		is an automorphism of \(p\)\=/adic Banach spaces.
		(Iovita and Zaharescu prove that the map \((\gamma -1)\) on \(\Ker(\Tr)\) (respectively \(\widehat{L_\cyc}(i)\)) is bijective with continuous inverse, which implies that \((\gamma -1)\) is an automorphism of \(p\)\=/adic Banach spaces by the open mapping theorem~\cite[Proposition~I.1.3]{Colmez1998}.)
	\end{itemize}

	By Theorem~\ref{theorem:cohomology-cp-perfectoid} and Corollary~\ref{corollary:lim-cohomology-linear}, for each \(n \in \N\) and \(i \in \Z\), the system \((\HH^n(L_\cyc,\Oc_{\Cp}(i)/p^j))_{j \in \N}\) satisfies the Mittag-Leffler condition and there are isomorphisms of topological groups
	\begin{equation} \label{eq:isomorphism-lim-cyc}
		\HH^n(L_\cyc,\Oc_{\Cp}(i)) \similarrightarrow \varprojlim_j \HH^n(L_\cyc,\Oc_{\Cp}(i))/p^j \similarrightarrow \varprojlim_j \HH^n(L_\cyc,\Oc_{\Cp}(i)/p^j).
	\end{equation}
	Again by Theorem~\ref{theorem:cohomology-cp-perfectoid}, for each \(n > 0\) and each \(i \in \Z\), the group \(\HH^n(L_\cyc,\Oc_{\Cp}(i))\) is torsion with finite exponent.
	Moreover, there are isomorphism of topological \(\Gamma\)\=/modules
	\begin{equation} \label{eq:isomorphism-tate-twist}
		\HH^n(L_\cyc,\Oc_{\Cp}(i)) \simeq \HH^n(L_\cyc,\Oc_{\Cp})(i).
	\end{equation}

	Proposition~\ref{proposition:spectral-sequence} applies and yields a convergent spectral sequence
	\begin{equation} \label{eq:spectral-sequence-cyc}
		\HH^a(\Gamma,\HH^b(L_\cyc,\Oc_{\Cp}(i))) \Rightarrow \HH^{a+b}(L,\Oc_{\Cp}(i)).
	\end{equation}
	Since the \(p\)\=/cohomological dimension of \(\Gamma \simeq \Zp\) is \(1\), the combination of the isomorphisms~\eqref{eq:isomorphism-lim-cyc}, Proposition~\ref{proposition:lim-cohomology}, and Proposition~\ref{proposition:cohomological-dimension} imply that \(\HH^a(\Gamma,\HH^b(L_\cyc,\Oc_{\Cp}(i)))\) is trivial unless \(a \in \{0,1\}\).
	Therefore, the spectral sequence~\eqref{eq:spectral-sequence-cyc} induces short exact sequences
	\begin{equation} \label{eq:ses-spectral-sequence-cyc}
		0 \rightarrow \HH^1(\Gamma,\HH^{n-1}(L_\cyc,\Oc_{\Cp}(i))) \rightarrow \HH^n(L,\Oc_{\Cp}(i)) \rightarrow \HH^n(L_\cyc,\Oc_{\Cp}(i))^\Gamma \rightarrow 0.
	\end{equation}

	We then deduce from the exact sequence~\eqref{eq:ses-spectral-sequence-cyc} that for each \(n > 1\) and each \(i \in \Z\), the group \(\HH^n(L,\Oc_{\Cp}(i))\) is torsion with finite exponent.

	For \(i \neq 0\), since \((\gamma - 1)\) is an automorphism of \(\widehat{L_\cyc}(i)\), the combination of the isomorphism~\eqref{eq:isomorphism-tate-twist} and Lemma~\ref{lemma:cohomology-gamma} implies that the group \(\HH^0(\Gamma,\HH^0(L_\cyc,\Oc_{\Cp}(i)))\) is trivial and the group \(\HH^1(\Gamma,\HH^0(L_\cyc,\Oc_{\Cp}(i)))\) is torsion with finite exponent.
	We then deduce from the exact sequence~\eqref{eq:ses-spectral-sequence-cyc} that \(\HH^0(L,\Oc_{\Cp}(i))\) is trivial and \(\HH^1(L,\Oc_{\Cp}(i))\) is torsion with finite exponent.

	Finally, let \(\Lc\) be a \(\Gamma\)\=/stable lattice in \(\Ker(\Tr)\).
	Then, the direct sum \(\Oc_{\hat{L}} \oplus \Lc\) defines a \(\Gamma\)\=/stable lattice in \(\hat{L}\oplus\Ker(\Tr)\).
	By the isomorphism~\eqref{eq:decomposition-trace}, the image of \(\Oc_{\hat{L}} \oplus \Lc\) in \(\widehat{L_\cyc}\) is commensurable with the lattice \(\Oc_{\widehat{L_\cyc}}\), and, without loss of generality, we may assume that the image of \(\Oc_{\hat{L}} \oplus \Lc\) in \(\widehat{L_\cyc}\) is contained in \(\Oc_{\widehat{L_\cyc}}\).
	Therefore, for each \(n \in \N\), there exists a group homomorphism
	\begin{equation} \label{eq:lattices-splitting}
		\HH^n(\Gamma,\Oc_{\hat{L}})\oplus\HH^n(\Gamma,\Lc) \rightarrow \HH^n(\Gamma,\Oc_{\widehat{L_\cyc}})
	\end{equation}
	whose kernel and cokernel are torsion groups with finite exponent.
	On the one hand, since \((\gamma - 1)\) is an automorphism of \(\Ker(\Tr)\), by Lemma~\ref{lemma:cohomology-gamma}, the group \(\HH^0(\Gamma,\Lc)\) is trivial and the group \(\HH^1(\Gamma,\Lc)\) is torsion with finite exponent.
	On the other hand, the action of \((\gamma - 1)\) on \(\hat{L}\) is trivial, and hence, it holds
	\[
		\begin{split}
			\HH^0(\Gamma,\Oc_{\hat{L}}) & = \Oc_{\hat{L}}, \\
			\HH^1(\Gamma,\Oc_{\hat{L}}) & = \Hom_\cont(\Gamma,\Oc_{\hat{L}}),
		\end{split}
	\]
	and the \(\Oc_{\hat{L}}\)\=/module \(\Hom_\cont(\Gamma,\Oc_{\hat{L}})\) is generated by \((\log_p \circ \chi)_L\).
	We then deduce from the exact sequence~\eqref{eq:ses-spectral-sequence-cyc}, the maps~\eqref{eq:lattices-splitting}, and Proposition~\ref{proposition:p-adic-topology-torsion} the statement concerning \(\HH^1(L,\Oc_{\Cp})\).
\end{proof}

We gather the computation of the Galois cohomology of \(\Cp(i)\) in Table~\ref{table:galois-cohomology-cp} in which there exists an isomorphism of \(p\)\=/adic Banach spaces between \(\HH^n(L,\Cp(i))\) and each cell.

\begin{table}[H]
	\centering
\begin{tabular}{|c|c|c|}
	\hline
	\(\HH^n(L,\Cp(i))\) & \(i=0\) & \(i \neq 0 \) \\
	\hline
	\(n=0\) & \(\hat{L}\) &
	\(\begin{cases}
		\hat{L} & \text{if \(\hat{L}\) is perfectoid} \\
		0 & \text{otherwise}
	\end{cases}\)
	\\
	\hline
	\(n=1\) &
	\(\begin{cases}
		0 & \text{if \(\hat{L}\) is perfectoid} \\
		\hat{L} & \text{otherwise}
	\end{cases}\)
	& 0 \\
	\hline
	\(n > 1\) & 0 & 0 \\
	\hline
\end{tabular}
\caption{Galois cohomology of \(\Cp(i)\).}
\label{table:galois-cohomology-cp}
\end{table}

\begin{corollary} \label{corollary:gal-cp}
	Let \(E\) be an extension of \(\Qp\) contained in \(L\).
	If \(\hat{L}\) is not perfectoid, then \(\hat{E}\) is not perfectoid and the restriction map \(\HH^1(E,\Cp) \rightarrow \HH^1(L,\Cp)\) is injective.
\end{corollary}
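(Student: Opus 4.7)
The plan is to handle the two assertions separately, each as a direct consequence of results already established in this section. For the non\-/perfectoidness of \(\hat{E}\), I would apply Proposition~\ref{proposition:perfectoid}(1) to the algebraic extension \(L/E\) contrapositively: were \(\hat{E}\) perfectoid, the proposition would force \(\hat{L}\) to be perfectoid, contradicting the hypothesis.

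For the injectivity of the restriction map, since neither \(\hat{E}\) nor \(\hat{L}\) is perfectoid, Theorem~\ref{theorem:cohomology-cp-non-perfectoid} provides isomorphisms of \(p\)\=/adic Banach spaces \(\HH^1(E,\Cp) \similarrightarrow \hat{E}\) and \(\HH^1(L,\Cp) \similarrightarrow \hat{L}\), characterised by sending \([\log_p \circ \chi]_E\) and \([\log_p \circ \chi]_L\) respectively to \(1\). The argument then runs as follows. By construction, the cocycle \((\log_p \circ \chi)_E\) is the restriction to \(G_E\) of the homomorphism \(\log_p \circ \chi\) on \(G_{\Qp}\), so the restriction map \(\HH^1(E,\Cp) \rightarrow \HH^1(L,\Cp)\) sends \([\log_p \circ \chi]_E\) to \([\log_p \circ \chi]_L\); moreover, restriction is \(\hat{E}\)\=/linear via compatibility with the cup product action of \(\HH^0(E,\Cp) = \hat{E}\). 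Hence, under the two identifications, the restriction map coincides with the natural inclusion \(\hat{E} \hookrightarrow \hat{L}\) of completed subfields of \(\Cp\), which is injective.

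The only point requiring any verification is the commutativity of the resulting square, which reduces to the two observations highlighted above, namely the naturality of \([\log_p \circ \chi]\) under restriction and the \(\hat{E}\)\=/linearity of restriction via cup product. Both are formal, so I do not anticipate any serious obstacle; the substance of the corollary lies entirely in Theorem~\ref{theorem:cohomology-cp-non-perfectoid}, and this corollary is merely the combination of that description with the trivial observation that non\-/perfectoidness descends along algebraic extensions.
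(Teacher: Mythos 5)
Your proposal is correct and follows essentially the same route as the paper: both hinge on Theorem~\ref{theorem:cohomology-cp-non-perfectoid}, the naturality of \([\log_p \circ \chi]\) under restriction, and the resulting commutative square identifying the restriction map with the inclusion \(\hat{E} \subseteq \hat{L}\). The only (harmless) divergence is in the first assertion, where you invoke Proposition~\ref{proposition:perfectoid} contrapositively, while the paper instead deduces that \(\hat{E}\) is not perfectoid from the non\-/vanishing of \([\log_p \circ \chi]_E\), which it already has in hand; both arguments are valid.
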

\begin{proof}
	The image of \([\log_p \circ \chi]_E \in \HH^1(E,\Cp)\) in \(\HH^1(L,\Cp)\) by the restriction map is \([\log_p \circ \chi]_L\).
	Since \(\hat{L}\) is not perfectoid, the element \([\log_p \circ \chi]_L \in \HH^1(L,\Cp)\) is non\-/trivial by Theorem~\ref{theorem:cohomology-cp-non-perfectoid}.
	Therefore, the element \([\log_p \circ \chi]_E\) is non\-/trivial, which implies, by Theorem~\ref{theorem:cohomology-cp-non-perfectoid}, that \(\hat{E}\) is not perfectoid and that there is a commutative diagram of \(p\)\=/adic Banach spaces
	\[
		\begin{tikzcd}
			& \HH^1(E,\Cp) \ar{d}[sloped]{\sim} \ar{r} &  \HH^1(L,\Cp) \ar{d}[sloped]{\sim} \\
			0 \ar{r} & \hat{E} \ar{r} &  \phantom{.}\hat{L}.
		\end{tikzcd}
	\]
\end{proof}

\begin{remark}
	He~\cite{He2025} has recently computed the Galois cohomology of a Henselian valued field of rank \(1\) extension of \(\Qp\).
	In particular, He's results recover and generalise the computation of the groups \(\HH^n(L,\Oc_{\Cp}(i))\) from this subsection~\ref{subsec:galois-cohomology-of-cp-i}.
\end{remark}

\subsection{Galois cohomology of \texorpdfstring{\(p\)\=/}{p-}adic period rings}
\label{subsec:galois-cohomology-of-p-adic-period-rings}

The Galois cohomology of \(\BdR\) and its subquotients may be computed from the Galois cohomology of \(\Gr^i \BdR \similarrightarrow \Cp(i)\).

Let \(L\) be an algebraic extension of \(K\).

\begin{proposition} \label{proposition:quotient-fil-bdr}
	Let \(i \in \Z\) and \(j \in \N\).
	\begin{enumerate}
		\item For each \(n \in \N\), the space \(\HH^n(L,\Fil^i \BdR/\Fil^{i+j} \BdR)\) is a \(p\)\=/adic Banach space.
		\item If \(\hat{L}\) is perfectoid, then there exists an isomorphism of \(p\)\=/adic Banach spaces
		\[
			\HH^0(L,\Fil^i \BdR/\Fil^{i+j} \BdR) \similarrightarrow \hat{L}^{\oplus j}.
		\]
		Moreover, for each integer \(n > 0\), the group \(\HH^n(L,\Fil^i \BdR/\Fil^{i+j} \BdR)\) is trivial.
		\item If \(\hat{L}\) is not perfectoid, then, for \(n \in \{0,1\}\), there exists isomorphisms of \(p\)\=/adic Banach spaces
		\[
			\HH^n(L,\Fil^i \BdR/\Fil^{i+j} \BdR) \similarrightarrow
			\begin{cases}
				\hat{L} & \text{if \(i \leq 0 < i+j\)}, \\
				0 & \text{otherwise}.
			\end{cases}
		\]
		Moreover, for each integer \(n > 1\), the group \(\HH^n(L,\Fil^i \BdR/\Fil^{i+j} \BdR)\) is trivial.
	\end{enumerate}
\end{proposition}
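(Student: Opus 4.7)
The natural approach is induction on $j \geq 0$. The case $j = 0$ is vacuous (the quotient is zero), and for $j = 1$ the isomorphism $\Fil^i \BdR/\Fil^{i+1} \BdR = \Gr^i \BdR \similarrightarrow \Cp(i)$ of $p$-adic Banach representations of $G_K$ recorded earlier in Section~\ref{subsec:p-adic-period-rings} reduces the claim directly to the computation of $\HH^\bullet(L, \Cp(i))$ given by Theorems~\ref{theorem:cohomology-cp-perfectoid} and~\ref{theorem:cohomology-cp-non-perfectoid} and Table~\ref{table:galois-cohomology-cp}, noting that the condition $i \leq 0 < i + 1$ is equivalent to $i = 0$.

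For the inductive step, fix $j \geq 2$ and consider the short exact sequence of $p$-adic Banach representations of $G_K$
\[
	0 \to \Fil^{i+1} \BdR/\Fil^{i+j} \BdR \to \Fil^i \BdR/\Fil^{i+j} \BdR \to \Gr^i \BdR \to 0,
\]
which is strict because $\Fil^{i+1} \BdR$ is closed in $\Fil^i \BdR$. By the results of Section~\ref{subsec:the-compact-open-topology}, this induces a strict exact sequence of topological abelian groups in $\HH^0, \HH^1$ of each term, and an underlying long exact sequence of abelian groups in all higher degrees. The inductive hypothesis applied with parameters $(i+1, j-1)$ computes the cohomology of the sub-object, and the $j = 1$ base case applied to $\Gr^i \BdR$ that of the quotient.

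In the perfectoid case, both sub and quotient have trivial cohomology in degrees $n > 0$ with $\HH^0$ respectively isomorphic to $\hat{L}^{\oplus(j-1)}$ and $\hat{L}$; the long exact sequence then yields $\HH^n(L, \Fil^i \BdR/\Fil^{i+j} \BdR) = 0$ for $n > 0$, together with a strict exact sequence $0 \to \hat{L}^{\oplus(j-1)} \to \HH^0(L, \Fil^i \BdR/\Fil^{i+j} \BdR) \to \hat{L} \to 0$. Using the $\hat{L}$-vector space structure on $\HH^0$ inherited from Ax--Sen--Tate, this sequence splits, producing the asserted isomorphism of $p$-adic Banach spaces. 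In the non-perfectoid case I would proceed by case analysis on $(i,j)$: when $i \leq 0 < i + j$, exactly one of the sub (if $i \leq -1$, via its index condition $(i+1) \leq 0 < (i+1) + (j-1)$) or the quotient (if $i = 0$) contributes the expected $\hat{L}$ in degrees $0$ and $1$, and the long exact sequence then gives the stated identification of the middle; when $i > 0$ or $i + j \leq 0$, neither sub nor quotient contributes, and the middle vanishes as well.

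The main obstacle is tracking topologies: the strict exactness of the six-term sequence from Section~\ref{subsec:the-compact-open-topology} suffices to identify the topological groups in degrees $0$ and $1$, and the splitting in the perfectoid case rests on the $\hat{L}$-vector space structure of the invariants. Vanishing in degrees $n \geq 2$ propagates from the base case and the inductive hypothesis through the underlying abstract long exact sequence, with no topological refinement needed.
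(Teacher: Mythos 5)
Your overall strategy --- induction on the length $j$, base case $\Gr^i\BdR \similarrightarrow \Cp(i)$ handled by Table~\ref{table:galois-cohomology-cp}, inductive step via the strict exact sequence $0 \to \Fil^{i+1}\BdR/\Fil^{i+j}\BdR \to \Fil^i\BdR/\Fil^{i+j}\BdR \to \Gr^i\BdR \to 0$ --- is exactly the paper's, and your computation of the underlying abelian groups (including the case analysis on $(i,j)$ and the vanishing in degrees $n>1$) is correct. The perfectoid case is also handled essentially as in the paper, except that the paper splits the short exact sequence of $\HH^0$'s by the open mapping theorem applied to a continuous surjection of $p$\=/adic Banach spaces (each $\HH^0$ being Banach by Corollary~\ref{corollary:h0-banach}); this avoids your appeal to an $\hat{L}$\=/vector space structure on $\HH^0(L,\Fil^i\BdR/\Fil^{i+j}\BdR)$, which is not obviously available --- a priori only $(\BdR^+/\Fil^j\BdR)^{G_L}$ acts, not $\hat{L}$ itself --- and whose continuity you would in any case have to check.

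The genuine gap is in the non-perfectoid case, and it concerns precisely part (1) of the statement, namely that $\HH^1(L,\Fil^i\BdR/\Fil^{i+j}\BdR)$ is a $p$\=/adic Banach space. You assert that \enquote{the strict exactness of the six-term sequence} from \S~\ref{subsec:the-compact-open-topology} \enquote{suffices to identify the topological groups in degrees $0$ and $1$}, but the sequence~\eqref{eq:esh0h1} is only an \emph{exact sequence of topological abelian groups} in the paper's terminology: the maps are continuous and the underlying sequence of groups is exact, but they are not asserted to be strict. Concretely, for $i=0$ and $j=2$ the long exact sequence produces a continuous bijection $\HH^1(L,\BdR^+/\Fil^2\BdR) \to \HH^1(L,\Cp) \similarrightarrow \hat{L}$, and nothing in the formal machinery tells you that this map is a homeomorphism, nor that its source is Banach (so the open mapping theorem cannot be invoked directly). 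This is where the paper does the real work: it chooses a $G_K$\=/stable lattice $\Lc$ in $\Fil^i\BdR/\Fil^{i+j}\BdR$, runs the induction at the level of lattices, and shows that $\HH^n(L,\Lc)_\torsion$ has finite exponent --- using Proposition~\ref{proposition:p-adic-topology-torsion} applied to the long exact sequence of lattice cohomologies, together with the observation that either the sub-object or the quotient has all of its cohomology torsion --- so that Proposition~\ref{proposition:cohomology-banach} yields the Banach structure, after which the continuous bijections above become isomorphisms. Your proof needs this (or an equivalent) argument; without it, part (1) and the topological content of part (3) are unproved.
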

\begin{proof}
	Let \(n \in \N\).
	We first deduce the structure of group of \(\HH^n(L,\Fil^i \BdR/\Fil^{i+j} \BdR)\) from the cohomology of \(\Gr^i \BdR \similarrightarrow \Cp(i)\) by induction on the length
	\[
		\length_{\BdR^+} \Fil^i \BdR/\Fil^{i+j} \BdR = j.
	\]
	If \(j=1\), then \(\Fil^i \BdR/\Fil^{i+1} \BdR = \Gr^i \BdR\) which has already been treated (Table~\ref{table:galois-cohomology-cp}).
	If \(j>0\), then then there exists a strict exact sequence of \(p\)\=/adic Banach representations of \(G_K\)
	\begin{equation} \label{eq:induction-banach}
		0 \rightarrow \Fil^{i-1}\BdR/\Fil^{i+j} \BdR \rightarrow \Fil^i\BdR/\Fil^{i+j} \BdR  \rightarrow \Gr^i \BdR \rightarrow 0,
	\end{equation}
	and we conclude by induction.

	We now prove the statements concerning the topology of the group \(\HH^n(L,\Fil^i \BdR/\Fil^{i+j} \BdR)\).

	If \(\hat{L}\) is perfectoid, then \(\HH^n(L,\Fil^i \BdR/\Fil^{i+j} \BdR)\) is trivial unless \(n=0\), and \(\HH^0(L,\Fil^i \BdR/\Fil^{i+j} \BdR)\) is a \(p\)\=/adic Banach space by Corollary~\ref{corollary:h0-banach}.
	Moreover, the existence of the isomorphism between \(\HH^0(L,\Fil^i \BdR/\Fil^{i+j} \BdR)\) and \(\hat{L}^{\oplus j}\) follows by induction on the length \(\length_{\BdR^+} \Fil^i \BdR/\Fil^{i+j} \BdR = j\).
	Indeed, if \(j=1\), then we use the cohomology of \(\Gr^i \BdR \simeq \Cp(i)\) (Table~\ref{table:galois-cohomology-cp}).
	If \(j>1\), then the exact sequence~\eqref{eq:induction-banach} induces a short exact sequence of \(p\)\=/adic Banach spaces
	\[
		0 \rightarrow \HH^0(L,\Fil^{i-1}\BdR/\Fil^{i+j} \BdR) \rightarrow \HH^0(L,\Fil^i\BdR/\Fil^{i+j} \BdR)  \rightarrow \HH^0(L,\Gr^i \BdR) \rightarrow 0,
	\]
	which is thus strict and split by the open mapping theorem~\cite[Proposition~I.1.5]{Colmez1998}, and we conclude by induction.

	Assume that \(\hat{L}\) is not perfectoid.
	Let \(\Lc\) be a \(G_K\)\=/stable lattice in \(\Fil^i \BdR/\Fil^{i+j} \BdR\).
	We prove by induction on the length \(\length_{\BdR^+} \Fil^i \BdR/\Fil^{i+j} \BdR = j\) that \(\HH^n(L,\Lc)\) satisfies the hypotheses of Proposition~\ref{proposition:cohomology-banach}, that is, the subgroup \(\HH^n(L,\Lc)_\torsion\) has finite exponent.

	If \(j=1\), then \(\Lc\) is a lattice in \(\Gr^i \BdR \similarrightarrow \Cp(i)\), and thus, it is commensurable with \(\Oc_{\Cp}(i)\).
	Without loss of generality, we may assume that \(\Lc \subseteq \Oc_{\Cp}(i)\), which induces a map \(\HH^n(L,\Lc) \rightarrow \HH^n(L,\Oc_{\Cp}(i))\) whose kernel and cokernel are torsion with finite exponent.
	We conclude by Proposition~\ref{proposition:p-adic-topology-torsion} and the computation of the cohomology of \(\Oc_{\Cp}(i)\) (Theorems~\ref{theorem:ax-sen-tate},~\ref{theorem:cohomology-cp-perfectoid}, and~\ref{theorem:cohomology-cp-non-perfectoid}).
	Assume \(j>0\), the lattice \(\Lc\) yields a strict exact sequence
	\[
		0 \rightarrow \Lc^\prime \rightarrow \Lc  \rightarrow \Lc^\dprime \rightarrow 0
	\]
	of \(G_K\)\=/stable lattices in the sequence~\eqref{eq:induction-banach}, which in turn induces an exact sequence
	\begin{equation} \label{eq:induction-lattice}
		\cdots \rightarrow \HH^n(L,\Lc^\prime) \rightarrow \HH^n(L,\Lc) \rightarrow \HH^n(L,\Lc^\dprime) \rightarrow \HH^{n+1}(L,\Lc^\prime) \rightarrow \cdots.
	\end{equation}
	By induction, for each \(m \in \N\), the groups \(\HH^m(L,\Lc^\prime)_\torsion\) and \( \HH^m(L,\Lc^\dprime)_\torsion\) have finite exponent.
	Moreover, by the group structure studied in the first part of the proof, either \(\{\HH^m(L,\Lc^\prime)\}_{m\in \N}\) or \(\{\HH^m(L,\Lc^\dprime)\}_{m\in \N}\) is a set of torsion groups.
	Therefore, Proposition~\ref{proposition:p-adic-topology-torsion} applied to the exact sequence~\eqref{eq:induction-lattice} implies that \(\HH^n(L,\Lc)\) is \(p\)\=/adically complete and separated and its torsion subgroup has finite exponent.
\end{proof}

\begin{proposition} \label{proposition:fil-bdr}
	Let \(i \in \Z\).
	\begin{enumerate}
		\item For each \(n \in \N\), the space \(\HH^n(L,\Fil^i \BdR)\) is separated.
		\item If \(\hat{L}\) is perfectoid, then the group \(\HH^0(L,\Fil^i \BdR)\) is non\-/trivial, Moreover, for each integer \(n > 0\), the group \(\HH^n(L,\Fil^i \BdR)\) is trivial.
		\item If \(\hat{L}\) is not perfectoid, then, for \(n \in \{0,1\}\), there exists continuous bijective morphisms of topological \(\Qp\)\=/vector spaces
		\[
			\HH^n(L,\Fil^i \BdR) \similarrightarrow
			\begin{cases}
				\hat{L} & \text{if \(i \leq 0\)}, \\
				0 & \text{otherwise}.
			\end{cases}
		\]
		Moreover, for each integer \(n > 1\), the group \(\HH^n(L,\Fil^i \BdR)\) is trivial.
	\end{enumerate}
\end{proposition}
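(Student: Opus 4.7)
Since \(\Fil^i\BdR\) carries the linear topology generated by the \(G_K\)\=/stable subgroups \(\{\Fil^{i+j}\BdR\}_{j\in\N}\) and \(\Fil^i\BdR \similarrightarrow \varprojlim_j \Fil^i\BdR/\Fil^{i+j}\BdR\) as topological \(G_K\)\=/modules, the natural strategy is to apply Proposition~\ref{proposition:lim-cohomology} to the projective system \((\Fil^i\BdR/\Fil^{i+j}\BdR)_j\) and combine with the computations already carried out in Proposition~\ref{proposition:quotient-fil-bdr}. Each transition map has kernel \(\Gr^{i+j}\BdR \simeq \Cp(i+j)\) and fits into a short exact sequence of \(p\)\=/adic Banach representations of \(G_K\); since \(\Qp\) is spherically complete, every such sequence splits topologically, so the transition maps admit continuous sections. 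Proposition~\ref{proposition:lim-cohomology} therefore provides, for each \(n \in \N\), a short exact sequence
\[
0 \to {\varprojlim_j}^1 \HH^{n-1}(L,\Fil^i\BdR/\Fil^{i+j}\BdR) \to \HH^n(L,\Fil^i\BdR) \to \varprojlim_j \HH^n(L,\Fil^i\BdR/\Fil^{i+j}\BdR) \to 0,
\]
in which the right-hand map is continuous when the target is equipped with the inverse limit topology.

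If \(\hat{L}\) is perfectoid, Proposition~\ref{proposition:quotient-fil-bdr} gives \(\HH^n(L,\Fil^i\BdR/\Fil^{i+j}\BdR)=0\) for \(n \geq 1\), so at once \(\HH^n(L,\Fil^i\BdR)=0\) for \(n \geq 2\). Applying the long exact sequence attached to \(0 \to \Gr^{i+j}\BdR \to \Fil^i\BdR/\Fil^{i+j+1}\BdR \to \Fil^i\BdR/\Fil^{i+j}\BdR \to 0\) and using \(\HH^1(L,\Gr^{i+j}\BdR)=0\) shows that the transition maps on \(\HH^0\) are surjective; the Mittag\-/Leffler condition holds, \({\varprojlim_j}^1\HH^0 = 0\), hence \(\HH^1(L,\Fil^i\BdR)=0\) and \(\HH^0(L,\Fil^i\BdR) \simeq \varprojlim_j \hat{L}^{\oplus j}\), which is non\-/trivial.

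If \(\hat{L}\) is not perfectoid, Proposition~\ref{proposition:quotient-fil-bdr} yields \(\HH^n(L,\Fil^i\BdR/\Fil^{i+j}\BdR) \simeq \hat{L}\) when \(n \in \{0,1\}\) and \(i \leq 0 < i+j\), and \(0\) in the remaining cases for \(n \in \{0,1\}\), and always \(0\) for \(n \geq 2\). If \(i>0\) every term is \(0\) and all claims are immediate. If \(i \leq 0\), then for \(j>-i\) one has \(i+j \neq 0\), so \(\HH^m(L,\Gr^{i+j}\BdR) \simeq \HH^m(L,\Cp(i+j))=0\) for every \(m\) by Theorems~\ref{theorem:ax-sen-tate} and~\ref{theorem:cohomology-cp-non-perfectoid}; the associated long exact sequence then forces each transition map between consecutive non\-/zero terms to be an isomorphism, in every degree. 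Thus Mittag\-/Leffler holds uniformly, all \({\varprojlim_j}^1\) vanish, \(\varprojlim_j \HH^n \simeq \hat{L}\) for \(n\in\{0,1\}\) via projection to any fixed stage \(j>-i\), and \(\HH^n(L,\Fil^i\BdR)=0\) for \(n \geq 2\).

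In every case \({\varprojlim_j}^1 = 0\), so the continuous morphism \(\HH^n(L,\Fil^i\BdR) \to \varprojlim_j \HH^n(L,\Fil^i\BdR/\Fil^{i+j}\BdR)\) is bijective; the target is a projective limit of separated topological groups, hence separated, and therefore so is the source. Post\-/composition with the identifications established above yields the continuous bijective morphisms of topological \(\Qp\)\=/vector spaces claimed in~(3). The main delicate step is the stabilisation of the projective system in the non\-/perfectoid case, which rests precisely on the vanishing \(\HH^m(L,\Cp(k))=0\) for \(k \neq 0\) proved in Theorem~\ref{theorem:cohomology-cp-non-perfectoid}; the rest is a formal assembly from the two propositions cited above.
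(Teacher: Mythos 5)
Your proof is correct and follows essentially the same route as the paper: both apply Proposition~\ref{proposition:lim-cohomology} to the presentation \(\Fil^i\BdR \similarrightarrow \varprojlim_j \Fil^i\BdR/\Fil^{i+j}\BdR\) (the paper obtains the continuous sections of the transition maps from the open mapping theorem rather than from linear splitting over the spherically complete field \(\Qp\), but both justifications are valid) and then feed in Proposition~\ref{proposition:quotient-fil-bdr}. Your treatment of the vanishing of the \(\varprojlim^1\) terms and of separatedness (injecting into the separated inverse limit rather than into the single Banach quotient \(\HH^n(L,\Fil^i\BdR/\Fil^1\BdR)\)) is somewhat more explicit than the paper's, but the argument is the same.
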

\begin{proof}
	Let \(n\in\N\).
	We first deduce the structure of group of \(\HH^n(L,\Fil^i \BdR)\) from Proposition~\ref{proposition:quotient-fil-bdr}.
	For each \(j \in \N\), the quotient map \(\Fil^i \BdR/\Fil^{i+j+1} \BdR \rightarrow \Fil^i \BdR/\Fil^{i+j} \BdR\) is a surjective morphism of \(p\)\=/adic Banach spaces, and thus, it admits a continuous section by the open mapping theorem~\cite[Proposition~I.1.5~iii)]{Colmez1998}.
	Therefore, by Proposition~\ref{proposition:lim-cohomology}, the isomorphism of topological \(G_K\)\=/modules \(\Fil^i \BdR \similarrightarrow \varprojlim_j \Fil^i \BdR/\Fil^{i+j} \BdR\) induces short exact sequences of groups
	\begin{equation} \label{eq:lim-bdr}
	\begin{tikzcd}
		0 \ar{r} & \varprojlim_j^1 \HH^{n-1}(L,\Fil^i \BdR/\Fil^{i+j} \BdR) \ar[phantom, ""{coordinate, name=Z}]{d} \ar[rounded corners, to path={ -- ([xshift=2em]\tikztostart.east) |- (Z) [near end]\tikztonodes -| ([xshift=-2em]\tikztotarget.west) -- (\tikztotarget)}]{dl} & \\
		\HH^n(L,\Fil^i \BdR ) \ar{r} & \varprojlim_j \HH^n(L,\Fil^i \BdR/\Fil^{i+j} \BdR) \ar{r} & 0.
	\end{tikzcd}
	\end{equation}
	The combination of the exact sequence~\eqref{eq:lim-bdr} and Proposition~\ref{proposition:quotient-fil-bdr} yields the structure of group of \(\HH^n(L,\Fil^i \BdR)\).

	About the topology, note that the group \(\HH^n(L,\Fil^i \BdR)\) is trivial unless \(i \leq 0\) and \(n \in \{0,1\}\), in which case there is a continuous bijective morphisms of topological \(\Qp\)\=/vector spaces
	\[
		\HH^n(L,\Fil^i \BdR) \similarrightarrow \HH^n(L,\Fil^i \BdR/ \Fil^1 \BdR).
	\]
	By Proposition~\ref{proposition:quotient-fil-bdr}, the topological group \(\HH^n(\Fil^i \BdR/ \Fil^1 \BdR)\) is separated, and thus, the topological group \(\HH^n(L,\Fil^i \BdR)\) is separated.
\end{proof}

\begin{proposition} \label{proposition:bdr}
	The following properties hold.
	\begin{enumerate}
		\item If \(\hat{L}\) is perfectoid, then the group \(\HH^0(L,\BdR)\) is non\-/trivial.
		Moreover, for each integer \(n > 0\), the group \(\HH^n(L, \BdR)\) is trivial.
		\item If \(\hat{L}\) is not perfectoid, then, for \(n \in \{0,1\}\), there exists isomorphisms of \(\Qp\)\=/vector spaces
		\[
			\HH^n(L,\BdR) \similarrightarrow \hat{L}.
		\]
		Moreover, for each integer \(n > 1\), the group \(\HH^n(L,\BdR)\) is trivial.
	\end{enumerate}
\end{proposition}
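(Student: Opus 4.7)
The approach is to exploit the identification $\BdR \similarrightarrow \varinjlim_j \Fil^{-j}\BdR$ of topological $G_K$-modules and reduce to Proposition~\ref{proposition:fil-bdr}. Multiplication by the uniformiser $\xi$ is a homeomorphism $\Fil^{-j}\BdR \similarrightarrow \Fil^{-j-1}\BdR$, so the ascending union is a strict LF-structure of Fréchet spaces with closed transition images. Since $G_L^n$ is compact, any continuous cochain $G_L^n \to \BdR$ has compact image and therefore factors through some $\Fil^{-j}\BdR$, which yields $\Cr^n(G_L, \BdR) = \varinjlim_j \Cr^n(G_L, \Fil^{-j}\BdR)$. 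Exactness of filtered colimits of abelian groups then gives, for each $n \in \N$,
\[
\HH^n(L, \BdR) \similarrightarrow \varinjlim_j \HH^n(L, \Fil^{-j}\BdR).
\]

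Next I analyse the transition maps via the strict exact sequence of topological $G_K$-modules
\[
0 \rightarrow \Fil^{-j} \BdR \rightarrow \Fil^{-j-1} \BdR \rightarrow \Gr^{-j-1} \BdR \rightarrow 0,
\]
combined with $\Gr^{-j-1} \BdR \similarrightarrow \Cp(-j-1)$. Since $-j-1 \neq 0$ for all $j \geq 0$, Table~\ref{table:galois-cohomology-cp} gives two cases. If $\hat{L}$ is not perfectoid, then $\HH^n(L, \Cp(-j-1)) = 0$ for every $n \in \N$ and every $j \geq 0$, hence the long exact sequence forces every transition map $\HH^n(L, \Fil^{-j}\BdR) \rightarrow \HH^n(L, \Fil^{-j-1}\BdR)$ to be an isomorphism; the colimit therefore equals $\HH^n(L, \BdR^+)$, which by Proposition~\ref{proposition:fil-bdr} is isomorphic to $\hat{L}$ for $n \in \{0,1\}$ and trivial for $n > 1$.

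If $\hat{L}$ is perfectoid, Proposition~\ref{proposition:fil-bdr} already yields $\HH^n(L, \Fil^{-j}\BdR) = 0$ for every $n > 0$ and every $j \geq 0$, so the colimit vanishes in positive degree. For $n = 0$, the long exact sequence, combined with the vanishing of $\HH^1(L, \Fil^{-j}\BdR)$, presents each transition $\HH^0(L, \Fil^{-j}\BdR) \hookrightarrow \HH^0(L, \Fil^{-j-1}\BdR)$ as an injection with cokernel $\HH^0(L, \Cp(-j-1)) \simeq \hat{L}$; in particular the colimit is non-trivial, establishing the first claim. The only nontrivial point in this plan is the interchange of continuous cohomology with the colimit, which is where the strict LF-structure of $\BdR$ and the compactness of $G_L^n$ are essential; everything else is bookkeeping via long exact sequences.
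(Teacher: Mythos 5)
Your proof is correct and follows essentially the same route as the paper: both identify \(\HH^n(L,\BdR)\) with \(\varinjlim_j \HH^n(L,\Fil^{-j}\BdR)\) by observing that a continuous cochain has compact, hence bounded, image contained in some \(\Fil^{-j}\BdR\), and then conclude from Proposition~\ref{proposition:fil-bdr}. Your extra analysis of the transition maps via the graded pieces \(\Cp(-j-1)\) merely makes explicit a step the paper leaves implicit in \enquote{the statement then follows from Proposition~\ref{proposition:fil-bdr}}.
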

\begin{proof}
	The isomorphism of topological \(G_K\)\=/modules \(\BdR \similarrightarrow \varinjlim_i \Fil^i \BdR\) induces an isomorphism of groups for each \(n \in \N\)
	\begin{equation} \label{eq:cohomology-bdr-limit}
		\varinjlim_i \Cr^n(G_L,\Fil^i \BdR) \similarrightarrow \Cr^n(G_L,\BdR).
	\end{equation}
	Indeed, the injectivity of the map~\eqref{eq:cohomology-bdr-limit} is obvious, and if \(f \in \Cr^n(G_L,\BdR)\) then, since \(G_L\) is compact, the image of \(f\) in \(\BdR\) is compact and thus bounded, which implies that the image of \(f\) is contained in some \(\Fil^i \BdR\) (see~\cite[Proposition~5.6]{Schneider2002}), and therefore, the element \(f\) is in the image of the map~\eqref{eq:cohomology-bdr-limit}.

	Since filtered colimits of abelian groups are exact, we note, considering the definition of the continuous cohomology groups, that the maps~\eqref{eq:cohomology-bdr-limit} induce an isomorphism of groups for each \(n \in \N\)
	\[
		\varinjlim_i \HH^n(L,\Fil^i \BdR) \similarrightarrow \HH^n(L,\BdR).
	\]
	The statement then follows from Proposition~\ref{proposition:fil-bdr}.
\end{proof}

\begin{corollary} \label{corollary:bdr-fil-bdr}
	Let \(i \in \Z\).
	If \(\hat{L}\) is not perfectoid, then there exists an isomorphism of \(p\)\=/adic Banach spaces
	\[
		\HH^0(L,\BdR/\Fil^i\BdR) \similarrightarrow
		\begin{cases}
			\hat{L} & \text{if \(i > 0\)},\\
			0 & \text{otherwise}.
		\end{cases}
	\]
\end{corollary}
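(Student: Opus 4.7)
The approach is to reduce the computation to Proposition~\ref{proposition:quotient-fil-bdr} via the ind-structure \(\BdR = \varinjlim_{j \to -\infty} \Fil^j \BdR\), which expresses \(\BdR/\Fil^i\BdR\) as an internal union of the \(G_K\)-submodules \(\Fil^j\BdR/\Fil^i\BdR\) for \(j \leq i\). Taking \(G_L\)-invariants commutes with such an internal union of \(G_K\)-stable submodules: any invariant class in \(\BdR/\Fil^i\BdR\) lies in some \(\Fil^j\BdR/\Fil^i\BdR\), where, being preserved by the \(G_K\)-action, it is automatically a \(G_L\)-invariant element of this submodule. Hence
\[
	\HH^0(L,\BdR/\Fil^i\BdR) = \varinjlim_{j \to -\infty} \HH^0(L, \Fil^j\BdR/\Fil^i\BdR).
\]
Proposition~\ref{proposition:quotient-fil-bdr} identifies each term: since \(\hat{L}\) is not perfectoid, \(\HH^0(L,\Fil^j\BdR/\Fil^i\BdR)\) is isomorphic to \(\hat{L}\) as a \(p\)-adic Banach space when \(j \leq 0 < i\), and vanishes otherwise.

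When \(i \leq 0\), every term of the colimit vanishes and we obtain \(\HH^0(L,\BdR/\Fil^i\BdR) = 0\) at once. When \(i > 0\), I would show that the transition maps stabilise for \(j \leq 0\). The short exact sequence
\[
	0 \to \Fil^j\BdR/\Fil^i\BdR \to \Fil^{j-1}\BdR/\Fil^i\BdR \to \Fil^{j-1}\BdR/\Fil^j\BdR \to 0
\]
yields, via the long exact sequence in cohomology, that the transition map on \(\HH^0\) is an isomorphism provided that \(\HH^0(L,\Fil^{j-1}\BdR/\Fil^j\BdR)\) and \(\HH^1(L,\Fil^{j-1}\BdR/\Fil^j\BdR)\) both vanish. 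For \(j \leq 0\), the condition \(j - 1 \leq 0 < j\) of Proposition~\ref{proposition:quotient-fil-bdr} fails, so these cohomology groups are indeed trivial. The colimit therefore reduces to the term at \(j = 0\), giving an algebraic identification \(\HH^0(L,\Fil^0\BdR/\Fil^i\BdR) \similarrightarrow \HH^0(L,\BdR/\Fil^i\BdR)\); the left-hand side is \(\hat{L}\) as a \(p\)-adic Banach space by Proposition~\ref{proposition:quotient-fil-bdr}.

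The main subtlety will be the topological upgrade. Since \(\Fil^0\BdR = \BdR^+\) is closed in \(\BdR\), the submodule \(\Fil^0\BdR/\Fil^i\BdR\) is closed in \(\BdR/\Fil^i\BdR\), and its subspace topology coincides with its canonical \(p\)-adic Banach topology. The topology on \(\HH^0(L, M)\) is the subspace topology from \(\Cr^0(G_L,M) = M\), so the algebraic inclusion on invariants is a closed topological embedding. Combined with the bijectivity established above, this identifies \(\HH^0(L,\BdR/\Fil^i\BdR)\) topologically with \(\HH^0(L,\Fil^0\BdR/\Fil^i\BdR) \cong \hat{L}\) as a \(p\)-adic Banach space.
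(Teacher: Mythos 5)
Your proof is correct. The topological heart of your argument for \(i>0\) is the same as the paper's: identify \(\HH^0(L,\BdR/\Fil^i\BdR)\) with \(\HH^0(L,\BdR^+/\Fil^i\BdR)\), observe that both carry the subspace topology from \(\BdR/\Fil^i\BdR\) (using that \(\BdR^+/\Fil^i\BdR\) is closed with the subspace topology), and conclude from Proposition~\ref{proposition:quotient-fil-bdr}. Where you differ is in how the algebraic identification is obtained. The paper runs the two short exact sequences \(0 \to \Fil^i\BdR \to \BdR \to \BdR/\Fil^i\BdR \to 0\) (for \(i\le 0\)) and \(0 \to \BdR^+/\Fil^i\BdR \to \BdR/\Fil^i\BdR \to \BdR/\BdR^+ \to 0\) (for \(i>0\)), and so has to invoke the cohomology of the infinite-dimensional objects \(\Fil^i\BdR\) and \(\BdR\) themselves (Propositions~\ref{proposition:fil-bdr} and~\ref{proposition:bdr}), including implicitly the compatibility of the identifications \(\HH^n(L,\Fil^i\BdR)\simeq\hat L\simeq\HH^n(L,\BdR)\) under the natural maps. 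You instead write \(\BdR/\Fil^i\BdR\) as the increasing union of the finite-length pieces \(\Fil^j\BdR/\Fil^i\BdR\), use that \(\HH^0\) commutes with such internal unions, and stabilise the colimit via the vanishing of \(\HH^0(L,\Gr^{j-1}\BdR)\cong\HH^0(L,\Cp(j-1))\) for \(j\le 0\) over a non\-/perfectoid \(\hat L\). This buys you a self-contained argument resting only on Proposition~\ref{proposition:quotient-fil-bdr} (equivalently, Table~\ref{table:galois-cohomology-cp}), and your treatment of the case \(i\le 0\) — every term of the colimit already vanishes — is arguably cleaner than the paper's. One cosmetic remark: for the stabilisation you only need \(\HH^0(L,\Fil^{j-1}\BdR/\Fil^j\BdR)=0\) (left exactness of invariants suffices); the vanishing of the corresponding \(\HH^1\) is true but not needed.
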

\begin{proof}
	If \(i \leq 0\), then the strict exact sequence of topological \(G_K\)\=/modules
	\[
		0 \rightarrow \Fil^i \BdR \rightarrow \BdR \rightarrow \BdR/\Fil^i \BdR \rightarrow 0
	\]
	induces an exact sequence
	\[
		0 \rightarrow \HH^0(L,\Fil^i \BdR) \rightarrow \HH^0(L,\BdR) \rightarrow \HH^0(L,\BdR/\Fil^i \BdR) \rightarrow \HH^1(L,\BdR),
	\]
	which, by Proposition~\ref{proposition:fil-bdr} and Proposition~\ref{proposition:bdr}, implies that \(\HH^0(L,\BdR/\Fil^i\BdR)\) is trivial.

	If \(i > 0\), then the strict exact sequence of topological \(G_K\)\=/modules
	\[
		0 \rightarrow \BdR^+/\Fil^i \BdR \rightarrow \BdR/\Fil^i \BdR \rightarrow \BdR/\BdR^+ \rightarrow 0
	\]
	induces a continuous bijective map of topological groups
	\begin{equation} \label{eq:h-0-fil}
		\HH^0(L,\BdR^+/\Fil^i \BdR) \similarrightarrow \HH^0(L,\BdR/\Fil^i \BdR),
	\end{equation}
	since \(\HH^0(L,\BdR/\BdR^+)\) is trivial by the first part of the proof.
	Moreover, the topology of \(\HH^0(L,\BdR^+/\Fil^i \BdR)\) (respectively \(\HH^0(L,\BdR/\Fil^i \BdR)\)) is the subspace topology from \(\BdR^+/\Fil^i \BdR\) (respectively \(\BdR/\Fil^i \BdR\)), and since the topology of \(\BdR^+/\Fil^i \BdR\) is the subspace topology from \(\BdR/\Fil^i \BdR\), we conclude that the topology of \(\HH^0(L,\BdR^+/\Fil^i \BdR)\) is the subspace topology from \(\HH^0(L,\BdR/\Fil^i \BdR)\), and thus, the map~\eqref{eq:h-0-fil} is an isomorphism of topogical groups, which, by Proposition~\ref{proposition:quotient-fil-bdr}, are \(p\)\=/adic Banach spaces.
\end{proof}

The combination of Corollary~\ref{corollary:gal-cp} and Proposition~\ref{proposition:quotient-fil-bdr}, Proposition~\ref{proposition:fil-bdr}, Proposition~\ref{proposition:bdr} and Corollary~\ref{corollary:bdr-fil-bdr} yields the following.

\begin{corollary} \label{corollary:gal-bdr-restriction}
	Let
	\[
		M \in \{\BdR,\Fil^i \BdR,\BdR/\Fil^i \BdR,\Fil^i \BdR/\Fil^{i+j} \BdR\}_{i \in \Z, j \in \N}.
	\]
	Let \(E\) be an extension of \(\Qp\) contained in \(L\).
	If \(\hat{L}\) is not perfectoid, then the restriction map
	\[
		\HH^1(E,M) \rightarrow \HH^1(L,M)
	\]
	is injective.
\end{corollary}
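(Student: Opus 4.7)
The plan is to deduce every case from Corollary~\ref{corollary:gal-cp}, which already handles \(M = \Cp = \Gr^0 \BdR\), by successively ascending from the graded pieces to finite\-/length filtration quotients, then to \(\Fil^i \BdR\), and finally to \(\BdR\) and \(\BdR/\Fil^i \BdR\). The decisive structural input, available throughout because \(\hat{L}\) is assumed non\-/perfectoid, is that Theorem~\ref{theorem:cohomology-cp-non-perfectoid} forces \(\HH^1(L,\Cp(k)) = \HH^2(L,\Cp(k)) = 0\) for every \(k \neq 0\) (and likewise over \(E\), since \(\hat{E}\) is then also non\-/perfectoid by Corollary~\ref{corollary:gal-cp}). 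Thus only the single graded piece \(\Gr^0 \BdR\) contributes to cohomology in degrees 1 and 2, which allows reductions to collapse cleanly.

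First I would treat \(M = \Fil^i \BdR/\Fil^{i+j} \BdR\) by induction on \(j\). The case \(j = 1\) is \(\Gr^i \BdR \simeq \Cp(i)\): if \(i = 0\) apply Corollary~\ref{corollary:gal-cp} directly, and if \(i \neq 0\) both \(\HH^1\) groups vanish by Theorem~\ref{theorem:cohomology-cp-non-perfectoid}, so the restriction map is injective vacuously. For the inductive step, consider the strict exact sequence
\[
	0 \rightarrow \Gr^i \BdR \rightarrow \Fil^i \BdR/\Fil^{i+j}\BdR \rightarrow \Fil^{i+1}\BdR/\Fil^{i+j}\BdR \rightarrow 0
\]
of \(p\)\=/adic Banach representations of \(G_K\), and the associated commutative ladder of long exact cohomology sequences under restriction from \(E\) to \(L\). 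Thanks to the vanishing of \(\HH^1\) and \(\HH^2\) on all non\-/trivial Tate twists of \(\Cp\), at least one of the outer terms is trivial in each place where it matters, so a four\-/lemma argument, combined with the inductive hypothesis and the base case, yields injectivity on the middle term.

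Next, for \(M = \Fil^i \BdR\), the natural quotient map \(\Fil^i \BdR \rightarrow \Fil^i \BdR/\Fil^1 \BdR\) induces, by the proof of Proposition~\ref{proposition:fil-bdr}, a continuous bijective morphism \(\HH^1(L, \Fil^i \BdR) \similarrightarrow \HH^1(L, \Fil^i \BdR/\Fil^1 \BdR)\) which is functorial in \(L\), reducing to the preceding case. For \(M = \BdR\), the isomorphism \(\varinjlim_i \HH^n(L, \Fil^{-i}\BdR) \similarrightarrow \HH^n(L,\BdR)\) established in the proof of Proposition~\ref{proposition:bdr} is compatible with restriction, and a filtered colimit of injective maps is injective. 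Finally, for \(M = \BdR/\Fil^i \BdR\), the strict exact sequence \(0 \rightarrow \Fil^i \BdR \rightarrow \BdR \rightarrow \BdR/\Fil^i \BdR \rightarrow 0\) together with Proposition~\ref{proposition:bdr} (which gives \(\HH^2(L,\Fil^i \BdR) = 0\)) and another diagram chase reduces injectivity to the already handled cases.

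The main obstacle I anticipate is the bookkeeping in the four\-/lemma step of paragraph two: one must ensure that the cokernel of the connecting map from the quotient term is controlled under restriction. What rescues the argument is exactly the structural uniformity noted above \textemdash{} whenever \(\hat{L}\) is non\-/perfectoid, non\-/trivial first cohomology can appear only through \(\Gr^0 \BdR\), and in every non\-/trivial instance the resulting group is identified naturally with \(\hat{L}\), into which \(\hat{E}\) injects because \(E \subseteq L\).
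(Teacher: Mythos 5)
Your argument is correct and follows essentially the same route as the paper, whose proof simply combines Corollary~\ref{corollary:gal-cp} with the explicit computations of Propositions~\ref{proposition:quotient-fil-bdr}, \ref{proposition:fil-bdr}, \ref{proposition:bdr} and Corollary~\ref{corollary:bdr-fil-bdr}; your dévissage merely makes the implicit diagram chases explicit, and in every non\-/trivial case the chase does collapse because only \(\Gr^0\BdR\) contributes in degrees \(1\) and \(2\). Two harmless slips: in your inductive step \(\Gr^i\BdR\) is the \emph{quotient} of \(\Fil^i\BdR/\Fil^{i+j}\BdR\) by the submodule \(\Fil^{i+1}\BdR/\Fil^{i+j}\BdR\), not a submodule of it (the chase works with the correct orientation), and the vanishing of \(\HH^2(L,\Fil^i\BdR)\) is Proposition~\ref{proposition:fil-bdr} rather than Proposition~\ref{proposition:bdr}.
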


Concerning the rings of crystalline and semistable periods, Fontaine~\cite{Fontaine1994:III} has proved that
\[
	\HH^0(K,\Bcris) = \HH^0(K,\Bst) = K_0.
\]

\subsection{\texorpdfstring{\(p\)\=/}{p-}adic Galois representations}
\label{subsec:p-adic-galois-representations}

We recall materials from the theory \(p\)\=/adic Galois representations developped by Fontaine~\cite{Fontaine1994:III}.
Let \(V\) be a \(p\)\=/adic representation of \(G_K\).

With \(V\) is associated the finite dimensional filtered \(K\)\=/vector space
\[
	\DdR(V) = (\BdR \otimes_{\Qp} V)^{G_K}
\]
equipped with the filtration by \(K\)\=/vector subspaces
\[
	\Fil^i \DdR(V) = (\Fil^i \BdR \otimes_{\Qp} V)^{G_K}.
\]
The representation \(V\) is \emph{de Rham} if \(\dim_K \DdR(V) = \dim_{\Qp} V\).
If \(V\) is de Rham, then the \emph{Hodge\--Tate weights} of \(V\) are the integer \(i\) such that the space \(\Fil^{-i} \DdR(V)/\Fil^{-i+1} \DdR(V)\) is non\-/trivial, and the multiplicity \(m_i(V)\) of \(i\) as a Hodge\--Tate weight of \(V\) is defined by
\[
	m_i(V) = \dim_K \Fil^{-i}\DdR(V)/\Fil^{-i+1} \DdR(V).
\]
If \(V\) is de Rham, then the inclusion \(\DdR(V) \subset \BdR \otimes_{\Qp} V\) induces an isomorphism of topological \(G_K\)\=/modules
\begin{equation} \label{eq:dr}
	\BdR \otimes_K \DdR(V) \similarrightarrow \BdR \otimes_{\Qp} V,
\end{equation}
and, for each \(i \in \Z\), an isomorphism
\begin{equation} \label{eq:fil-dr}
	\Fil^i \left(\BdR \otimes_K \DdR(V)\right) \similarrightarrow \Fil^i \BdR \otimes_{\Qp} V,
\end{equation}
where
\[
	\Fil^i\left(\BdR \otimes_K \DdR(V)\right) = \sum_{a + b = i} \Fil^a \BdR \otimes_K \Fil^b \DdR(V).
\]

With \(V\) is associated the finite dimensional filtered \((\varphi,N)\)\=/module over \(K\)
\[
	\Dst(V) = (\Bst \otimes_{\Qp} V)^{G_K},
\]
where the \(\sigma\)\=/semilinear map \(\varphi\) and the \(K_0\)\=/linear map \(N\) on \(\Dst(V)\) are respectively induced by the map \(\varphi\) and \(N\) on \(\Bst\), and the filtration by \(K\)\=/vector subspaces on the \(K\)\=/vector space \(K \otimes_{K_0}\Dst(V)\) is induced by the filtration on \(\DdR(V)\) via the map
\begin{equation} \label{eq:dcris-ddr}
	K \otimes_{K_0}\Dst(V) = (K \otimes_{K_0} \Bst \otimes_{\Qp} V)^{G_K} \rightarrow \DdR(V),
\end{equation}
defined by the inclusions~\eqref{eq:Bcris_BdR}.
Moreover, with \(V\) are associated the filtered \(\varphi\)\=/module over \(K\)
\[
	\Dcris(V) = \Dst(V)^{N=0} = (\Bcris \otimes_{\Qp} V)^{G_K},
\]
and the finite dimensional \(\Qp\)\=/vector space
\[
	\De(V) = \Dcris(V)^{\varphi = 1} = (\Be \otimes_{\Qp} V)^{G_K}.
\]

The representation \(V\) is \emph{semistable} (respectively \emph{crystalline}) if \(\dim_{K_0} \Dst(V) = \dim_{\Qp} V\) (respectively if \(\dim_{K_0} \Dcris(V) = \dim_{\Qp} V\)).
If \(V\) is semistable, then \(V\) is de Rham, and the map~\eqref{eq:dcris-ddr} defines an isomorphism of filtered \(K\)\=/vector spaces
\[
	K \otimes_{K_0} \Dcris(V) \similarrightarrow \DdR(V).
\]
If \(V\) is crystalline, then \(V\) is semistable, and \(\Dcris(V) = \Dst(V)\).

Let \(V^\ast(1) = \Hom_{\Qp}(V,\Qp(1))\) be the \emph{Tate dual representation} of \(V\) on which \(G_K\) acts by
\[
	g \cdot f(v) = g(f(g^{-1}v)) = \chi(g) \cdot f(g^{-1}v),
\]
for all \(g \in G_K\), \(f \in V^\ast(1)\), and \(v \in V\).
If \(V\) is crystalline (respectively semistable, de Rham), then \(V^\ast(1)\) is crystalline (respectively semistable, de Rham).

If \(K^\prime\) is a finite extension of \(K\), then we denote by \(V|_{K^\prime}\) the restiction of \(V\) to \(G_{K^\prime}\), and, for each \(\ast \in \{\e,\cris,\st,\dR\}\), we set \(\DDb_{\ast,K^\prime}(V) = {(\B_\ast \otimes_{\Qp} V)^{G_{K^\prime}}}\).
If \(V\) is crystalline (respectively semistable, de Rham), then the representation \(V|_{K^\prime}\) is crystalline and there is an isomorphism of filtered \(\varphi\)\=/modules \(K^\prime_0 \otimes_{K_0} \Dcris(V) \similarrightarrow \DDb_{\cris,K^\prime}(V)\) (respectively the representation \(V|_{K^\prime}\) is semistable and there is an isomorphism of filtered \((\varphi,N)\)\=/modules \(K^\prime_0 \otimes_{K_0} \Dst(V) \similarrightarrow \DDb_{\st,K^\prime}(V)\), the representation \(V|_{K^\prime}\) is de Rham and there is an isomorphism of filtered vector spaces \(K^\prime \otimes_{K} \DdR(V) \similarrightarrow \DDb_{\dR,K^\prime}(V)\)).

We will need the following~\cite[Proposition~6.3.1]{Ponsinet2024}.

\begin{proposition} \label{proposition:bounded-dim-de}
	There exists \(c \in \N\) such that, for each finite extension  \(K^\prime\) of \(K\), it holds \(\dim_{\Qp} \DDb_{\e,K^\prime}(V) \leq c\).
\end{proposition}

\begin{remark} \label{remark:p-adic-rep-results}
	Recall the following central results of the theory of \(p\)\=/adic Galois representations.
	\begin{enumerate}
		\item The \enquote{weakly admissible implies admissible} Theorem~\cite{ColmezFontaine2000,FarguesFontaine2018} states that the functor \(\Dst\) induces an equivalence between the category of semistable representations of \(G_K\) and the category of weakly admissible filtered \((\varphi,N)\)\=/modules over \(K\).
		\item The \enquote{\(p\)\=/adic monodromy} Theorem~\cite{Berger2002,FarguesFontaine2018} states that if \(V\) is de Rham, then there exists a finite extension \(K^\prime\) of \(K\) such that \(V|_{K^\prime}\) is semistable.
	\end{enumerate}
\end{remark}

It is convenient to introduce the following notation.
\begin{notation} \label{notation:tensor}
	We set the topological \(G_K\)\=/modules
	\[
		\begin{split}
			V_\dR & = \BdR \otimes_{\Qp} V, \\
			V_\dR^i & = \Fil^i \BdR \otimes_{\Qp} V, \text{ for each \(i \in \Z\)}, \\
			V_\dR^+ & = \BdR^+ \otimes_{\Qp} V, \\
			V_\st & = \Bst \otimes_{\Qp} V, \\
			V_\cris & = \Bcris \otimes_{\Qp} V, \\
			V_\e & = \Be \otimes_{\Qp} V.
		\end{split}
	\]
\end{notation}

The isomorphisms~\eqref{eq:dr} and~\eqref{eq:fil-dr} together with the results from the previous subsection~\ref{subsec:galois-cohomology-of-p-adic-period-rings} yield the following.

\begin{proposition} \label{proposition:rep-dr}
	Let
	\[
		M \in \{V_\dR,V_\dR^i,V_\dR/V_\dR^i,V_\dR^i/V_\dR^{i+j}\}_{i \in \Z, j \in \N \setminus \{0\}}.
	\]
	Let \(L\) be an algebraic extension of \(K\).
	If \(V\) is de Rham, then the following properties hold.
	\begin{enumerate}
		\item For each \(n \in \N\), the topological group \(\HH^n(L,V_\dR^i/V_\dR^{i+j})\) is a \(p\)\=/adic Banach space.
		\item For each \(n \in \N\), the topological group \(\HH^n(L,V_\dR^i)\) is separated.
		\item If \(\hat{L}\) is perfectoid, then the groups \(\HH^0(L,M)\) is non\-/trivial, and, for each integer \(n > 0\), the group \(\HH^n(L,M)\) is trivial.
		\item If \(\hat{L}\) is not perfectoid, then, for \(n \in \{0,1\}\), there exist isomorphisms of \(p\)\=/adic Banach spaces
		\[
			\HH^n(L,V_\dR^i/V_\dR^{i+j}) \similarrightarrow \hat{L} \otimes_K (\Fil^i \DdR(V)/ \Fil^{i+j} \DdR(V)),
		\]
		and
		\[
			\HH^0(L,V_\dR/V_\dR^i) \similarrightarrow \hat{L} \otimes_K (\DdR(V)/\Fil^i \DdR(V)),
		\]
		continuous bijective morphisms of topological \(\Qp\)\=/vector spaces
		\[
			\HH^n(L,V_\dR^i) \similarrightarrow \hat{L} \otimes_K \Fil^i \DdR(V),
		\]
		and isomorphisms of \(\Qp\)\=/vector spaces
		\[
			\HH^n(L,V_\dR) \similarrightarrow \hat{L} \otimes_K  \DdR(V),
		\]
		and
		\[
			\HH^1(L,V_\dR/V_\dR^i) \similarrightarrow \hat{L} \otimes_K (\DdR(V)/\Fil^i \DdR(V)).
		\]
		Moreover, for each integer \(n > 1\), the group \(\HH^n(L,M)\) is trivial.
		\item Let \(E\) be an extension of \(\Qp\) contained in \(L\).
		If \(\hat{L}\) is not perfectoid, then the restriction map
		\[
			\HH^1(E,M) \rightarrow \HH^1(L,M)
		\]
		is injective.
	\end{enumerate}
\end{proposition}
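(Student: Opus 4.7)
The plan is to reduce each of the five assertions to the corresponding statement for $\BdR$ and its filtered quotients, which is exactly the content of the previous subsection (Propositions~\ref{proposition:quotient-fil-bdr}, \ref{proposition:fil-bdr}, \ref{proposition:bdr} and Corollaries~\ref{corollary:bdr-fil-bdr}, \ref{corollary:gal-bdr-restriction}). The reduction is performed by means of the de Rham comparison isomorphism \eqref{eq:dr}--\eqref{eq:fil-dr}, which turns the problem into a finite direct sum.

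Since $V$ is de Rham and $G_K$ acts trivially on $\DdR(V)$, I would first choose a $K$\=/linear splitting of the filtration on the finite dimensional $K$\=/vector space $\DdR(V)$, i.e.\ a decomposition $\DdR(V) = \bigoplus_{b \in \Z} W_b$ with $\Fil^a \DdR(V) = \bigoplus_{b \geq a} W_b$. Applying $\BdR \otimes_K -$ and using \eqref{eq:fil-dr}, this produces isomorphisms of topological $G_K$\=/modules
\[
	V_\dR \similarrightarrow \bigoplus_b \BdR \otimes_K W_b, \qquad V_\dR^i \similarrightarrow \bigoplus_b \Fil^{i-b} \BdR \otimes_K W_b,
\]
and the analogous identifications for $V_\dR/V_\dR^i$ and $V_\dR^i/V_\dR^{i+j}$. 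Because each $W_b$ is a finite dimensional $K$\=/vector space with trivial $G_K$\=/action, continuous cohomology commutes with the finite direct sum and with $\otimes_K W_b$, yielding natural isomorphisms of topological abelian groups
\[
	\HH^n(L,V_\dR^i) \similarrightarrow \bigoplus_b \HH^n(L,\Fil^{i-b} \BdR) \otimes_K W_b,
\]
and similarly for the other three types of $M$.

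The five assertions now follow summand by summand. Part (1) and the separatedness in part (2) are immediate from Proposition~\ref{proposition:quotient-fil-bdr} and Proposition~\ref{proposition:fil-bdr}, since a finite direct sum of $p$\=/adic Banach spaces (respectively separated groups) is again a $p$\=/adic Banach space (respectively separated). For part (3), if $\hat{L}$ is perfectoid then Propositions~\ref{proposition:quotient-fil-bdr}, \ref{proposition:fil-bdr}, \ref{proposition:bdr} and Corollary~\ref{corollary:bdr-fil-bdr} ensure that every summand of $\HH^n(L,M)$ vanishes for $n>0$ and that $\HH^0(L,M)$ is non\-/trivial (provided $V \neq 0$, in which case some $W_b$ is non\-/zero). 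For part (4), in the non\-/perfectoid case, the same results show that $\HH^n(L,\Fil^{i-b} \BdR) \simeq \hat{L}$ precisely when $i-b \leq 0$; summing,
\[
	\HH^n(L,V_\dR^i) \similarrightarrow \bigoplus_{b \geq i} \hat{L} \otimes_K W_b \similarrightarrow \hat{L} \otimes_K \Fil^i \DdR(V),
\]
and analogously one recovers each of the stated identifications, with the exact strength (homeomorphism, continuous bijection, or algebraic isomorphism) matching the strength available at the $\BdR$\=/level. Part (5) is obtained by applying Corollary~\ref{corollary:gal-bdr-restriction} to every summand.

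The main point to check carefully, rather than a true obstacle, is that the chosen $K$\=/linear splitting of the filtration on $\DdR(V)$ induces a \emph{homeomorphic} decomposition of $V_\dR^i$ and of its subquotients, so that the cohomological identifications preserve the refined topological structure (Banach, separated, continuous bijection). This is automatic because $\DdR(V)$ is finite dimensional over $K$ and the topology on $\BdR \otimes_K \DdR(V)$ coincides with the one inherited from the finitely many copies of $\BdR$ indexed by a basis compatible with the splitting; once this is observed, the passage from $\BdR$\=/cohomology to $V_\dR$\=/cohomology is purely formal.
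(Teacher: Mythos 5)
Your proposal is correct and follows the same route the paper intends: the paper gives no written proof beyond the remark that the isomorphisms \eqref{eq:dr} and \eqref{eq:fil-dr} combined with the results of \S~\ref{subsec:galois-cohomology-of-p-adic-period-rings} yield the statement, and your argument is precisely that reduction, made explicit by choosing a \(K\)\=/linear splitting of the filtration on \(\DdR(V)\) so that each \(M\) becomes a finite topological direct sum of the modules treated in Propositions~\ref{proposition:quotient-fil-bdr}, \ref{proposition:fil-bdr}, \ref{proposition:bdr} and Corollaries~\ref{corollary:bdr-fil-bdr}, \ref{corollary:gal-bdr-restriction}. Your closing remark that the splitting is topological because everything is finite free over \(\BdR\) is exactly the point that makes the reduction legitimate for the refined (Banach, separated, continuous-bijection) assertions.
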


The \emph{tangent space} \(t_V\) of \(V\) is the finite dimensional \(K\)\=/vector space defined by
\[
	t_V = \HH^0(K,V_\dR/V_\dR^+).
\]
For each \(K\)\=/vector space \(Q\), we set \(t_V(Q) = Q \otimes_K t_V\).
If \(V\) is de Rham, then, by Proposition~\ref{proposition:rep-dr}, there is an isomorphism
\[
	\DdR(V)/\Fil^0 \DdR(V) \similarrightarrow t_V.
\]

The fundamental exact sequence~\eqref{eq:fundamental} induces a strict exact sequence of topological \(\Qp\)\=/vector spaces equipped with a continuous and \(\Qp\)\=/linear action by \(G_K\)
\begin{equation} \label{eq:fundamental-v}
	0 \rightarrow V \rightarrow V_\e \rightarrow V_\dR/V_\dR^+ \rightarrow 0.
\end{equation}

\section{Bloch\texorpdfstring{\--}{--}Kato groups and perfectoid fields}
\label{sec:bloch-kato-groups-and-perfectoid-fields}

\subsection{Bloch\texorpdfstring{\--}{--}Kato groups}
\label{subsec:bloch-kato-groups}

We recall the definition of the Bloch\--Kato groups~\cite[\S~3]{BlochKato1990}.

Let \(V\) be a \(p\)\=/adic representation of \(G_K\).
Let \(L\) be an algebraic extension of \(K\).
The \emph{exponential}, \emph{finite}, and \emph{geometric} \emph{Bloch\--Kato subgroups} of \(\HH^1(L,V)\) are respectively defined by
\[
	\begin{split}
		\HH^1_e(L,V) & = \Ker\left( \HH^1(L,V) \rightarrow \HH^1(L,V_\e) \right), \\
		\HH^1_f(L,V) & = \Ker\left( \HH^1(L,V) \rightarrow \HH^1(L,V_\cris) \right), \\
		\HH^1_g(L,V) & = \Ker\left( \HH^1(L,V) \rightarrow \HH^1(L,V_\dR) \right),
	\end{split}
\]
where the maps are induced by the natural inclusion of \(\Qp\) in the \(p\)\=/adic period rings.
The inclusions \(\Be \subset \Bcris \subset \BdR\) induce inclusions
\begin{equation} \label{eq:bk-inclusions}
	\HH^1_e(L,V) \subseteq \HH^1_f(L,V) \subseteq \HH^1_g(L,V) \subseteq \HH^1(L,V).
\end{equation}

\begin{lemma} \label{lemma:dr-bk-g}
	If \(V\) is de Rham, then
	\[
		\HH^1_g(L,V) = \Ker\left( \HH^1(L,V) \rightarrow \HH^1(L,V_\dR^+) \right),
	\]
	where the map on the right-hand side is induced by the natural inclusion \(\Qp \subseteq \BdR^+\).
\end{lemma}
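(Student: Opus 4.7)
The proof strategy is to establish the sharper statement that the natural map
\[
\iota : \HH^1(L, V_\dR^+) \longrightarrow \HH^1(L, V_\dR),
\]
induced by the $G_K$-equivariant inclusion $V_\dR^+ \hookrightarrow V_\dR$, is injective. The factorisation $\HH^1(L,V) \to \HH^1(L,V_\dR^+) \xrightarrow{\iota} \HH^1(L,V_\dR)$ coming from the inclusions $V \hookrightarrow V_\dR^+ \hookrightarrow V_\dR$ then gives the inclusion $\Ker\bigl(\HH^1(L,V) \to \HH^1(L,V_\dR^+)\bigr) \subseteq \HH^1_g(L,V)$ for free, and the injectivity of $\iota$ delivers the reverse inclusion.

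To establish injectivity of $\iota$, I would apply the exact sequence~\eqref{eq:esh0h1} to the strict short exact sequence of topological $G_K$-modules
\[
0 \longrightarrow V_\dR^+ \longrightarrow V_\dR \longrightarrow V_\dR/V_\dR^+ \longrightarrow 0,
\]
which produces
\[
\HH^0(L, V_\dR) \longrightarrow \HH^0(L, V_\dR/V_\dR^+) \longrightarrow \HH^1(L, V_\dR^+) \xrightarrow{\;\iota\;} \HH^1(L, V_\dR).
\]
Thus the injectivity of $\iota$ is equivalent to the surjectivity of the leftmost map.

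This last surjectivity is exactly where the de Rham assumption enters. If $\hat{L}$ is perfectoid, it is immediate, since then $\HH^1(L, V_\dR^+) = 0$ by Proposition~\ref{proposition:rep-dr}. If $\hat{L}$ is not perfectoid, Proposition~\ref{proposition:rep-dr} provides identifications $\HH^0(L, V_\dR) \cong \hat{L} \otimes_K \DdR(V)$ and $\HH^0(L, V_\dR/V_\dR^+) \cong \hat{L} \otimes_K (\DdR(V)/\Fil^0 \DdR(V))$ under which the map in question is the obvious $\hat{L}$-linear quotient, patently surjective. I do not anticipate any serious obstacle: the lemma is ultimately a formal consequence of the cohomological computations already assembled in Proposition~\ref{proposition:rep-dr}, together with the definition of $\HH^1_g$ and elementary diagram chasing.
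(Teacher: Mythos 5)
Your proposal is correct and follows essentially the same route as the paper, whose entire proof is that the map \(\HH^1(L,V_\dR^+) \rightarrow \HH^1(L,V_\dR)\) is injective by Proposition~\ref{proposition:rep-dr}, which formally implies the statement. You derive that same injectivity slightly more indirectly, via the surjectivity of \(\HH^0(L,V_\dR) \rightarrow \HH^0(L,V_\dR/V_\dR^+)\) in the long exact sequence rather than reading it off from the \(\HH^1\)-computations in Proposition~\ref{proposition:rep-dr}, but the underlying input and the formal deduction are identical.
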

\begin{proof}
	By Proposition~\ref{proposition:rep-dr}, the map \(\HH^1(L,V_\dR^+) \rightarrow \HH^1(L,V_\dR)\) is injective, which implies the statement.
\end{proof}

\begin{proposition} \label{proposition:bkg-closed}
	If \(V\) is de Rham, then the geometric Bloch\--Kato group \(\HH^1_g(L,V)\) is a \(p\)\=/adic Banach subspace of \(\HH^1(L,V)\).
\end{proposition}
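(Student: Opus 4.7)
The plan is to reduce the claim to the statement that $\HH^1_g(L,V)$ is a closed $\Qp$-linear subspace of the $p$-adic Banach space $\HH^1(L,V)$, and then invoke the standard fact that a closed subspace of a $p$-adic Banach space inherits a structure of $p$-adic Banach space from the subspace topology.

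First I would recall that $\HH^1(L,V)$ is already known to be a $p$-adic Banach space by Corollary~\ref{corollary:h1-representation-banach}, since $V$ is a $p$-adic representation of $G_K$ restricted to $G_L$, and $G_L$ is profinite, hence compact and separated. Next, I would use Lemma~\ref{lemma:dr-bk-g}, which identifies $\HH^1_g(L,V)$ with the kernel of the continuous morphism of topological $\Qp$-vector spaces
\[
	\HH^1(L,V) \longrightarrow \HH^1(L,V_\dR^+)
\]
induced by the inclusion $\Qp \subseteq \BdR^+ = V_\dR^+/V$ (the continuity follows from the functoriality of the compact-open topology on continuous cochains reviewed in \S~\ref{subsec:the-compact-open-topology}).

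The key input is then the second item of Proposition~\ref{proposition:rep-dr}: for $V$ de Rham, the topological group $\HH^1(L,V_\dR^+) = \HH^1(L,V_\dR^0)$ is separated. In any separated topological group the trivial subgroup $\{0\}$ is closed, so its preimage under the continuous map above is a closed $\Qp$-linear subspace of $\HH^1(L,V)$. Hence $\HH^1_g(L,V)$ is closed in the $p$-adic Banach space $\HH^1(L,V)$.

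To conclude, I would invoke the standard fact from non-archimedean functional analysis that a closed $\Qp$-linear subspace of a $p$-adic Banach space, equipped with the subspace topology, is itself a $p$-adic Banach space; this can be seen by choosing a lattice $\HH^1(L,V)^\circ$ in $\HH^1(L,V)$ and noting that $\HH^1_g(L,V) \cap \HH^1(L,V)^\circ$ is a $p$-adically complete and separated lattice in $\HH^1_g(L,V)$, so the locally convex topology it defines coincides with the subspace topology. I do not anticipate a serious obstacle: the only nontrivial ingredients, namely that $\HH^1(L,V)$ is a $p$-adic Banach space and that $\HH^1(L,V_\dR^+)$ is separated, have already been established in the preceding sections, and the passage from a closed subspace to a Banach space is routine.
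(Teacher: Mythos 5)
Your proof is correct and coincides with the paper's own argument: identify \(\HH^1_g(L,V)\) with the kernel of the continuous map \(\HH^1(L,V) \rightarrow \HH^1(L,V_\dR^+)\) via Lemma~\ref{lemma:dr-bk-g}, use Corollary~\ref{corollary:h1-representation-banach} and the separatedness from Proposition~\ref{proposition:rep-dr} to conclude that this kernel is a closed subspace of a \(p\)\=/adic Banach space, hence itself a \(p\)\=/adic Banach space. (The phrase \enquote{\(\Qp \subseteq \BdR^+ = V_\dR^+/V\)} is a slip --- the map is induced by tensoring the inclusion \(\Qp \subseteq \BdR^+\) with \(V\) --- but it does not affect the argument.)
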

\begin{proof}
	The natural inclusion \(\Qp \subseteq \BdR^+\) induces the morphism of topological \(\Qp\)\=/vector spaces \(\HH^1(L,V) \rightarrow \HH^1(L,V_\dR^+)\), whose kernel is \(\HH^1_g(L,V)\) by Lemma~\ref{lemma:dr-bk-g}.
	By Corollary~\ref{corollary:h1-representation-banach}, the space \(\HH^1(L,V)\) is a \(p\)\=/adic Banach space.
	Moreover, the space \(\HH^1(L,V_\dR^+) \) is separated by Proposition~\ref{proposition:rep-dr}.
	Therefore, the subspace \(\HH^1_g(L,V)\) is closed in \(\HH^1(L,V)\), and thus, it is a \(p\)\=/adic Banach subspace of \(\HH^1(L,V)\).
\end{proof}

\subsection{Comparison of cohomology of \texorpdfstring{\(p\)\=/}{p-}adic period rings}
\label{subsec:comparison-of-cohomology-of-p-adic-period-rings}

Let \(V\) be a \(p\)\=/adic representation of \(G_K\).
Let \(L\) be an algebraic extension of \(K\).
The inclusion of \(\Be\) in \(\BdR\) induces a morphism of topological \(\Qp\)\=/vector spaces
\[
	f_L : \HH^1(L,V_\e) \rightarrow \HH^1(L,V_\dR),
\]
which we now study.
We recall computations by Hyodo~\cite{Hyodo1991}.

\begin{proposition}[Hyodo] \label{proposition:hyodo}
	If \(V\) is de Rham, then there exists a commutative diagram of \(\Qp\)\=/vector spaces whose rows and columns are exact
	\[
		\begin{tikzcd}
			& & & 0 \ar{d} \\
			& & & \HH^1(L,V_\dR^+) \ar{d} \\
			0 \ar{r} & \HH^1_g(L,V)/\HH^1_e(L,V) \ar{r} \ar{d} & \HH^1(L,V_\e) \ar{r}{f_L} \ar[equal]{d} & \HH^1(L,V_\dR) \ar{d} \\
			0 \ar{r} & \HH^1(L,V)/\HH^1_e(L,V) \ar{r} & \HH^1(L,V_\e) \ar{r} & \HH^1(L,V_\dR/V_\dR^+) \ar{d} \\
			& & & \phantom{.}0.
		\end{tikzcd}
	\]
\end{proposition}
\begin{proof}
	By Proposition~\ref{proposition:rep-dr}, there exists a commutative diagram whose rows are exact
	\begin{equation} \label{eq:diag-hyodo}
		\begin{tikzcd}
			& & \HH^1(L,V_\e) \ar[equal]{r} \ar{d}{f_L} & \HH^1(L,V_\e) \ar{d} & \\
			0 \ar{r} & \HH^1(L,V_\dR^+) \ar{r} & \HH^1(L,V_\dR) \ar{r} & \HH^1(L,V_\dR/V_\dR^+) \ar{r} & 0.
		\end{tikzcd}
	\end{equation}
	By definition of the exponential Bloch\--Kato group, the cohomology of the fundamental exact sequence~\eqref{eq:fundamental-v} defines an isomorphism
	\begin{equation} \label{eq:fundamental-v-bk-e}
		\HH^1(L,V)/\HH^1_e(L,V) \similarrightarrow \Ker\left(\HH^1(L,V_\e) \rightarrow \HH^1(L,V_\dR/V_\dR^+)\right).
	\end{equation}
	The snake lemma applied to the diagram~\eqref{eq:diag-hyodo} combined with the isomorphism~\eqref{eq:fundamental-v-bk-e} yields
	\[
		\Ker(f_L) \similarrightarrow \Ker\left(\HH^1(L,V)/\HH^1_e(L,V) \rightarrow \HH^1(L,V_\dR^+)\right),
	\]
	and we conclude by Lemma~\ref{lemma:dr-bk-g}.
\end{proof}

We now study specifically the case where \(L=K^\prime\) is a finite extension of \(K\).
We first recall properties of the Bloch\--Kato groups over \(K^\prime\) established by Bloch and Kato~\cite[Proposition~3.8 and Corollary~3.8.4]{BlochKato1990}.

\begin{proposition}[Bloch\--Kato] \label{proposition:bk-dcris}
	If \(V\) is de Rham, then it holds
	\[
		\dim_{\Qp} \HH^1_f(K^\prime,V)/\HH^1_e(K^\prime,V) = \dim_{\Qp} \DDb_{\e,K^\prime}(V).
	\]
\end{proposition}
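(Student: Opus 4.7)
The plan is to use the definition $\Be = \Bcris^{\varphi=1}$, which yields a strict exact sequence of topological $G_K$-modules
\[
	0 \to \Be \to \Bcris \xrightarrow{1-\varphi} \Bcris \to 0,
\]
to relate $\HH^1_f(K,V)/\HH^1_e(K,V)$ to the finite-dimensional $\Qp$-vector space $\De(V)$. The heart of the argument is to reinterpret the quotient $\HH^1_f/\HH^1_e$ as the kernel of the comparison map $\HH^1(K,V_\e) \to \HH^1(K,V_\cris)$, and then to compute that kernel via $1-\varphi$.

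First, I would tensor the short exact sequence above with $V$ over $\Qp$ to obtain $0 \to V_\e \to V_\cris \xrightarrow{1-\varphi} V_\cris \to 0$ and take the associated long exact sequence in continuous $G_K$-cohomology. Using $\HH^0(K,V_\e) = \De(V)$ and $\HH^0(K,V_\cris) = \Dcris(V)$, one extracts the exact sequence
\[
	0 \to \De(V) \to \Dcris(V) \xrightarrow{1-\varphi} \Dcris(V) \to \HH^1(K,V_\e) \to \HH^1(K,V_\cris),
\]
so that $\Ker\bigl(\HH^1(K,V_\e) \to \HH^1(K,V_\cris)\bigr)$ is canonically isomorphic to $\Coker\bigl(1-\varphi : \Dcris(V) \to \Dcris(V)\bigr)$. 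Since $\Dcris(V)$ is finite-dimensional over $\Qp$ and $1-\varphi$ is a $\Qp$-linear endomorphism, the $\Qp$-dimension of this cokernel equals that of its kernel, which by definition is $\De(V)$.

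Next, I would identify $\HH^1_f(K,V)/\HH^1_e(K,V)$ with this same kernel inside $\HH^1(K,V_\e)$. Since $\HH^1(K,V) \to \HH^1(K,V_\cris)$ factors through $\HH^1(K,V_\e)$, the quotient $\HH^1_f(K,V)/\HH^1_e(K,V)$ injects into $\HH^1(K,V_\e)$ with image contained in $\Ker(\HH^1(K,V_\e) \to \HH^1(K,V_\cris))$. For the reverse inclusion, the fundamental exact sequence~\eqref{eq:fundamental-v} yields exactness of $\HH^1(K,V) \to \HH^1(K,V_\e) \to \HH^1(K,V_\dR/V_\dR^+)$. A class $\xi \in \Ker(\HH^1(K,V_\e) \to \HH^1(K,V_\cris))$ automatically vanishes in $\HH^1(K,V_\dR/V_\dR^+)$, because the boundary map $V_\e \to V_\dR/V_\dR^+$ from~\eqref{eq:fundamental} coincides with the composition $V_\e \hookrightarrow V_\cris \hookrightarrow V_\dR \twoheadrightarrow V_\dR/V_\dR^+$; hence $\xi$ lifts to some $c \in \HH^1(K,V)$, which lies in $\HH^1_f(K,V)$ by the same factorisation.

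The step requiring the most care is this compatibility of the boundary map of the fundamental exact sequence with the factorisation through $V_\cris$, which is what bridges $\HH^1_f/\HH^1_e$ and $\Coker(1-\varphi)$; it ultimately reduces to the commutativity of the inclusions $\Be \subset \Bcris \subset \BdR$. Once this is in place, combining the two steps yields $\dim_{\Qp} \HH^1_f(K,V)/\HH^1_e(K,V) = \dim_{\Qp} \De(V)$.
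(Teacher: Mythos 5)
Your argument is correct and is essentially the standard Bloch--Kato proof; the paper gives no proof of its own here, merely recalling the statement from~\cite[Proposition~3.8 and Corollary~3.8.4]{BlochKato1990}, so your reconstruction matches the cited source. One caveat: the surjectivity of \(1-\varphi\) on \(\Bcris\) does not follow from the definition \(\Be = \Bcris^{\varphi=1}\) (which only identifies the kernel) but is a genuine theorem of Fontaine and Bloch--Kato that must be invoked, and the strictness of the resulting short exact sequence (needed to apply the six-term continuous-cohomology sequence of \S~\ref{subsec:the-compact-open-topology} for the topology on \(\Bcris\)) also deserves a word.
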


Recall~\cite[II \S~5.2 Théorème~2]{Serre1994} that local Tate duality defines a perfect pairing for each \(i \in \{0,1,2\}\)
\begin{equation} \label{eq:local-tate-duality}
	\HH^i(K^\prime,V) \times \HH^{2-i}(K^\prime,V^\ast(1)) \rightarrow \HH^2(K^\prime,\Qp(1)) \similarrightarrow \Qp.
\end{equation}

\begin{proposition}[Bloch\--Kato] \label{proposition:bk-dual}
	If \(V\) is de Rham, then under local Tate duality the orthogonal complement of \(\HH^1_e(K^\prime,V)\) (respectively \(\HH^1_f(K^\prime,V)\)) is \(\HH^1_g(K^\prime,V^\ast(1))\) (respectively \(\HH^1_f(K^\prime,V^\ast(1))\)).
\end{proposition}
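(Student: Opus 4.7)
The proof combines an orthogonality inclusion, obtained via a cup-product computation exploiting the multiplicative structure of the $p$-adic period rings, with a matching of dimensions via local Tate duality and the Euler--Poincar\'e characteristic formula.

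\emph{Orthogonality.} The fundamental exact sequence~\eqref{eq:fundamental-v} identifies $\HH^1_e(K, V)$ with the image of the connecting homomorphism $\delta \colon t_V = \HH^0(K, V_\dR/V_\dR^+) \to \HH^1(K, V)$. Given $x = \delta(\tilde x) \in \HH^1_e(K, V)$ and $y \in \HH^1_g(K, V^\ast(1))$, the Leibniz rule for cup products yields
\[
\langle x, y \rangle_{\text{Tate}} = \delta'(\tilde x \cup y) \in \HH^2(K, \Qp(1)),
\]
where $\delta'$ is the connecting map of the $\Qp(1)$-twist of~\eqref{eq:fundamental}. By Lemma~\ref{lemma:dr-bk-g}, the class $y$ maps to $0$ in $\HH^1(K, V^\ast(1)_\dR^+)$. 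The pairing $V \otimes V^\ast(1) \to \Qp(1)$ extends compatibly via the multiplication $(\BdR/\BdR^+) \otimes \BdR^+ \to \BdR/\BdR^+$ and the inclusion $V^\ast(1) \hookrightarrow V^\ast(1)_\dR^+$, so $\tilde x \cup y$ is computed by first restricting $y$ to $V^\ast(1)_\dR^+$ (where it vanishes) and then cupping with $\tilde x$. This forces $\tilde x \cup y = 0$ in $\HH^1(K, (\BdR/\BdR^+)(1))$, whence $\langle x, y \rangle = 0$. A parallel argument, using a corresponding exact sequence characterizing $\HH^1_f$ in terms of the complex $\Bcris \to \Bcris \oplus \BdR/\BdR^+$ together with the multiplication $\Bcris \otimes \Bcris \to \Bcris$, yields the inclusion $\HH^1_f(K, V^\ast(1)) \subseteq \HH^1_f(K, V)^\perp$.

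\emph{Dimensions.} The long exact sequence of~\eqref{eq:fundamental-v} truncated at the previous step gives
\[
0 \to V^{G_K} \to \De(V) \to t_V \to \HH^1_e(K, V) \to 0,
\]
hence $\dim_{\Qp} \HH^1_e(K, V) = \dim_K t_V + \dim_{\Qp} V^{G_K} - \dim_{\Qp} \De(V)$. A parallel computation, using Lemma~\ref{lemma:dr-bk-g} and the values of $\HH^\ast(K, V^\ast(1)_\dR)$ from Proposition~\ref{proposition:rep-dr}, produces a formula for $\dim_{\Qp} \HH^1_g(K, V^\ast(1))$. Combining these with local Tate duality (which identifies $\dim_{\Qp} \HH^2(K, V)$ with $\dim_{\Qp} (V^\ast(1))^{G_K}$), the Euler--Poincar\'e characteristic formula $\chi(K, V) = -[K : \Qp] \dim_{\Qp} V$, the filtered duality $\DdR(V^\ast(1)) \cong \DdR(V)^\vee(1)$ (which implies $\dim_K t_V + \dim_K t_{V^\ast(1)} = \dim_{\Qp} V$), and Proposition~\ref{proposition:bk-dcris} together with the analogous duality $\dim_{\Qp} \De(V) + \dim_{\Qp} \De(V^\ast(1)) = \dim_{\Qp} V$, one derives the identities
\[
\dim_{\Qp} \HH^1_e(K, V) + \dim_{\Qp} \HH^1_g(K, V^\ast(1)) = \dim_{\Qp} \HH^1(K, V)
\]
and $\dim_{\Qp} \HH^1_f(K, V) + \dim_{\Qp} \HH^1_f(K, V^\ast(1)) = \dim_{\Qp} \HH^1(K, V)$. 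Combined with the orthogonality inclusions, these force both inclusions to be equalities.

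\emph{Main obstacle.} The principal technical difficulty lies in the dimension count, where one must carefully coordinate local Tate duality, the Euler--Poincar\'e formula, and the dualities of $\DdR$ and $\De$ under the involution $V \mapsto V^\ast(1)$. The cup-product orthogonality is conceptually transparent but still requires verifying compatibility of the connecting maps with the multiplicative structure of the $p$-adic period rings; for the $\HH^1_f$ self-duality in particular, one must carefully set up the appropriate "fundamental" exact sequence for $\Bcris$ and check that the relevant multiplication makes the cup product factor through a zero term.
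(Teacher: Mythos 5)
The paper does not actually prove this proposition: it is recalled from Bloch and Kato~\cite{BlochKato1990} (Proposition~3.8) with a citation, so there is no in\-/paper argument to compare against. Your proposal reconstructs the standard proof from that source, and its architecture — cup\-/product orthogonality plus a dimension count closed by local Tate duality and the Euler--Poincaré formula — is the correct one. The orthogonality half is sound: the compatibility \(\delta(\tilde x)\cup y=\pm\delta'(\tilde x\cup y)\), together with the factorisation of the coefficient pairing through \(V^\ast(1)_\dR^+\) where \(y\) dies, does annihilate the pairing on \(\HH^1_e(K,V)\times\HH^1_g(K,V^\ast(1))\). The \(f\)/\(f\) dimension count also closes with exactly the inputs you list: \(\dim_{\Qp}\HH^1_f(K,V)=\dim_{\Qp}t_V+\dim_{\Qp}\HH^0(K,V)\) from your four\-/term sequence plus Proposition~\ref{proposition:bk-dcris}, then \(\dim_{\Qp}t_V+\dim_{\Qp}t_{V^\ast(1)}=[K:\Qp]\dim_{\Qp}V\), Tate duality and the Euler characteristic. (Watch the \(\dim_K\) versus \(\dim_{\Qp}\) bookkeeping for \(t_V\); the \([K:\Qp]\) factors must be carried consistently, though this is cosmetic.)

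The \(e\)/\(g\) dimension count, however, has two genuine problems. First, the identity \(\dim_{\Qp}\De(V)+\dim_{\Qp}\De(V^\ast(1))=\dim_{\Qp}V\) that you invoke is false: for \(V=\Qp(2)\) one has \(\varphi=p^{-2}\sigma\) on \(\Dcris(V)\) and \(\varphi=p\,\sigma\) on \(\Dcris(V^\ast(1))\), so both \(\varphi\)\=/invariant spaces vanish and the sum is \(0\neq 1\). The correct count does not need this identity — the \(\De\) terms cancel between \(\dim\HH^1_e(K,V)=\dim\HH^1_f(K,V)-\dim\De(V)\) and \(\dim\HH^1_g(K,V^\ast(1))=\dim\HH^1_f(K,V^\ast(1))+\dim\De(V)\) — but the second of these equalities is precisely what you have not established, and that is the more serious gap. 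Lemma~\ref{lemma:dr-bk-g} and Proposition~\ref{proposition:rep-dr} present \(\HH^1_g(K,V^\ast(1))\) as the kernel of a map into a space of known dimension, but the dimension of a kernel is not determined by the dimension of its target: you must compute the image, i.e.\ show that the cokernel of \(\HH^1(K,V^\ast(1))\rightarrow\HH^1(K,V^\ast(1)_\dR^+)\) has dimension \(\dim_{\Qp}\De(V)-\dim_{\Qp}\HH^0(K,V)\). This is the real content of Bloch--Kato's argument, and it requires an input you never introduce, namely the analysis of \(1-\varphi\) on \(\Bcris\) (the exact sequence \(0\rightarrow\Qp\rightarrow\Bcris\rightarrow\Bcris\oplus\BdR/\BdR^+\rightarrow 0\) and the surjectivity of \(1-\varphi\)); this is the only way \(\De\) of the \emph{dual} representation can enter a computation about \(V^\ast(1)\). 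As written, your \enquote{parallel computation} is asserted rather than performed, and the tools you cite do not suffice to perform it.
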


\begin{corollary}[Hyodo] \label{corollary:hyodo}
	If \(V\) is de Rham, then it holds
	\[
		\dim_{\Qp} \Ker(f_{K^\prime}) = \dim_{\Qp} \DDb_{\e,K^\prime}(V) + \dim_{\Qp} \DDb_{\e,K^\prime}(V^\ast(1)).
	\]
\end{corollary}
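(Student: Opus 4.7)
The plan is to combine Hyodo's identification of $\Ker(f_K)$ with the Bloch\--Kato quotient (Proposition~\ref{proposition:hyodo}) with the dimension formulas from Proposition~\ref{proposition:bk-dcris} and Proposition~\ref{proposition:bk-dual}. Concretely, by Proposition~\ref{proposition:hyodo} there is an isomorphism
\[
	\HH^1_g(K,V)/\HH^1_e(K,V) \similarrightarrow \Ker(f_K),
\]
so it suffices to compute the dimension of the left\-/hand side. Using the chain of inclusions~\eqref{eq:bk-inclusions}, I would split this as
\[
	\dim_{\Qp} \HH^1_g(K,V)/\HH^1_e(K,V) = \dim_{\Qp} \HH^1_f(K,V)/\HH^1_e(K,V) + \dim_{\Qp} \HH^1_g(K,V)/\HH^1_f(K,V).
\]

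The first summand equals $\dim_{\Qp} \De(V)$ by Proposition~\ref{proposition:bk-dcris}, since $V$ is de Rham. For the second summand, I would invoke local Tate duality: the perfect pairing~\eqref{eq:local-tate-duality} together with Proposition~\ref{proposition:bk-dual} identifies the quotient $\HH^1(K,V)/\HH^1_f(K,V)$ (respectively $\HH^1(K,V)/\HH^1_g(K,V)$) with the $\Qp$\-/linear dual of $\HH^1_f(K,V^\ast(1))$ (respectively $\HH^1_e(K,V^\ast(1))$). The subquotient $\HH^1_g(K,V)/\HH^1_f(K,V)$ is the kernel of the surjection $\HH^1(K,V)/\HH^1_f(K,V) \twoheadrightarrow \HH^1(K,V)/\HH^1_g(K,V)$, hence its dual is the cokernel of the inclusion $\HH^1_e(K,V^\ast(1)) \hookrightarrow \HH^1_f(K,V^\ast(1))$. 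Therefore
\[
	\dim_{\Qp} \HH^1_g(K,V)/\HH^1_f(K,V) = \dim_{\Qp} \HH^1_f(K,V^\ast(1))/\HH^1_e(K,V^\ast(1)),
\]
and since $V^\ast(1)$ is also de Rham, a second application of Proposition~\ref{proposition:bk-dcris} shows this equals $\dim_{\Qp} \De(V^\ast(1))$.

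Summing the two contributions yields the claimed formula. There is no real obstacle here: the argument is purely a duality bookkeeping on top of the two results of Bloch\--Kato recalled just before the statement, and the only verification needed is that taking $\Qp$\-/duals of finite dimensional vector spaces preserves dimensions and sends kernels to cokernels, which is automatic.
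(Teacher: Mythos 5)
Your proposal is correct and follows essentially the same route as the paper: identify \(\Ker(f_K)\) with \(\HH^1_g(K,V)/\HH^1_e(K,V)\) via Proposition~\ref{proposition:hyodo}, split along the inclusions~\eqref{eq:bk-inclusions}, convert \(\HH^1_g(K,V)/\HH^1_f(K,V)\) into \(\HH^1_f(K,V^\ast(1))/\HH^1_e(K,V^\ast(1))\) by local Tate duality and Proposition~\ref{proposition:bk-dual}, and conclude with two applications of Proposition~\ref{proposition:bk-dcris}. The paper records only the resulting chain of equalities; your write-up just makes the duality bookkeeping explicit.
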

\begin{proof}
	The combination of Propositions~\ref{proposition:hyodo}, \ref{proposition:bk-dcris} and~\ref{proposition:bk-dual} yields
	\[
		\begin{split}
			& \dim_{\Qp} \Ker(f_{K^\prime}) \\
			= & \dim_{\Qp} \HH^1_g(K^\prime,V)/\HH^1_e(K^\prime,V) \\
			= & \dim_{\Qp} \HH^1_f(K^\prime,V)/\HH^1_e(K^\prime,V) + \dim_{\Qp} \HH^1_g(K^\prime,V)/\HH^1_f(K^\prime,V) \\
			= & \dim_{\Qp} \HH^1_f(K^\prime,V)/\HH^1_e(K^\prime,V) + \dim_{\Qp} \HH^1_f(K^\prime,V^\ast(1))/\HH^1_e(K^\prime,V^\ast(1)) \\
			= & \dim_{\Qp} \DDb_{\e,K^\prime}(V) + \dim_{\Qp} \DDb_{\e,K^\prime}(V^\ast(1)).
		\end{split}
	\]
\end{proof}

\begin{corollary} \label{corollary:bk-exp-full}
	If \(V\) is de Rham such that the Hodge\--Tate weights of \(V\) are all \(> 0\), and the spaces \(\DDb_{\e,K^\prime}(V)\) and \(\DDb_{\e,K^\prime}(V^\ast(1))\) are trivial, then
	\[
		\HH^1_e(K^\prime,V) = \HH^1(K^\prime,V).
	\]
\end{corollary}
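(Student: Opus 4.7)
The plan is to combine the Hyodo-style dimension count with Bloch--Kato's duality, chained via the inclusions $\HH^1_e(K,V) \subseteq \HH^1_g(K,V) \subseteq \HH^1(K,V)$, showing both containments are equalities separately.

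For the first equality, I would simply feed the hypothesis $\De(V) = \De(V^\ast(1)) = 0$ into Corollary~\ref{corollary:hyodo}, which gives $\dim_{\Qp} \Ker(f_K) = 0$. Proposition~\ref{proposition:hyodo} then yields $\HH^1_g(K,V)/\HH^1_e(K,V) \simeq \Ker(f_K) = 0$, so $\HH^1_e(K,V) = \HH^1_g(K,V)$.

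For the second equality $\HH^1_g(K,V) = \HH^1(K,V)$, I would dualise via Proposition~\ref{proposition:bk-dual}: the orthogonal complement of $\HH^1_g(K,V)$ in $\HH^1(K,V^\ast(1))$ under local Tate duality is $\HH^1_e(K,V^\ast(1))$. Thus it suffices to verify $\HH^1_e(K,V^\ast(1)) = 0$. Since the Hodge--Tate weights of $V$ are all $>0$ (hence $\geq 1$ as they are integers), the Hodge--Tate weights of $V^\ast(1)$ are all $\leq 0$. In terms of the filtration this means $\Fil^0 \DdR(V^\ast(1)) = \DdR(V^\ast(1))$, so the tangent space
\[
	t_{V^\ast(1)} = \HH^0(K, V^\ast(1)_\dR/V^\ast(1)_\dR^+) \simeq \DdR(V^\ast(1))/\Fil^0 \DdR(V^\ast(1))
\]
is trivial by Proposition~\ref{proposition:rep-dr}. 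But the fundamental exact sequence~\eqref{eq:fundamental-v} applied to $V^\ast(1)$ gives the connecting homomorphism
\[
	t_{V^\ast(1)} = \HH^0(K, V^\ast(1)_\dR/V^\ast(1)_\dR^+) \xrightarrow{\delta} \HH^1(K,V^\ast(1)) \rightarrow \HH^1(K,V^\ast(1)_\e),
\]
and by definition $\HH^1_e(K,V^\ast(1)) = \Img(\delta)$. Since the source is trivial, so is $\HH^1_e(K,V^\ast(1))$.

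Concatenating the two equalities yields $\HH^1_e(K,V) = \HH^1(K,V)$. There is no real obstacle here: every ingredient has been set up in the preceding subsections; the only point to be careful about is the Tate twist arithmetic on Hodge--Tate weights (the paper's convention assigns weight $1$ to $\Qp(1)$, so the weights of $V^\ast(1)$ are $\{1 - a\}$ for $a$ a weight of $V$, confirming they are non-positive).
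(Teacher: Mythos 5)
Your proof is correct, and its first half coincides with the paper's: the equality \(\HH^1_e(K,V) = \HH^1_g(K,V)\) is obtained in both cases by feeding \(\De(V)=\De(V^\ast(1))=0\) into Corollary~\ref{corollary:hyodo} and applying Proposition~\ref{proposition:hyodo}. Where you diverge is the second half. The paper argues directly on the \(V\)-side: since the Hodge--Tate weights are all \(>0\), \(\Fil^0\DdR(V)=0\), so \(\HH^1(K,V_\dR^+)=0\) by Proposition~\ref{proposition:rep-dr}, and Lemma~\ref{lemma:dr-bk-g} gives \(\HH^1_g(K,V)=\Ker\left(\HH^1(K,V)\rightarrow\HH^1(K,V_\dR^+)\right)=\HH^1(K,V)\) in one line. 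You instead pass to the dual side: you show \(\HH^1_e(K,V^\ast(1))=0\) because the tangent space \(t_{V^\ast(1)}\) vanishes (your identification \(\HH^1_e=\Img(\delta)\) is legitimate, being the exactness of the six-term sequence attached to the strict fundamental exact sequence~\eqref{eq:fundamental-v}), and then conclude via Proposition~\ref{proposition:bk-dual} and the perfectness of local Tate duality. The two arguments rest on the same linear-algebra input --- the vanishing of \(\Fil^0\DdR(V)\), equivalently of its dual \(\DdR(V^\ast(1))/\Fil^0\DdR(V^\ast(1))\) --- but the paper's version is shorter and avoids invoking Proposition~\ref{proposition:bk-dual}, which is only quoted from Bloch--Kato without proof; yours trades Lemma~\ref{lemma:dr-bk-g} for that duality statement. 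Your Tate-twist bookkeeping on the weights is consistent with the paper's normalisation, so there is no gap.
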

\begin{proof}
	By Proposition~\ref{proposition:rep-dr}, it holds
	\[
		\dim_{\Qp} \HH^1(K^\prime,V_\dR^+) = \dim_{\Qp} \Fil^0 \DDb_{\dR,K^\prime}(V) =0.
	\]
	Therefore, Lemma~\ref{lemma:dr-bk-g} implies that \(\HH^1_g(K^\prime,V) = \HH^1(K^\prime,V)\).
	Finally, Proposition~\ref{proposition:hyodo} and Corollary~\ref{corollary:hyodo} implies that \(\HH^1_e(K^\prime,V) = \HH^1_g(K^\prime,V)\).
\end{proof}

\begin{proposition} \label{proposition:cohomology-be}
	If \(V\) is de Rham, then the following properties hold.
	\begin{enumerate}
		\item For each \(n \in \N\), the \(\Qp\)\=/vector space \(\HH^n(K^\prime,V_\e)\) is finite dimensional.
		\item For each integer \(n > 2\), the space \(\HH^n(K^\prime,V_\e)\) is trivial.
		\item It holds
		\[
			\sum_{n=0}^2 (-1)^n \dim_{\Qp} \HH^n(K^\prime,V_\e) = -[K^\prime:\Qp]\dim_{\Qp} V.
		\]
	\end{enumerate}
\end{proposition}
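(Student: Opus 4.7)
The plan is to compare the cohomology of $V_\e$ with that of the two outer terms of the fundamental exact sequence~\eqref{eq:fundamental-v}
\[
	0 \rightarrow V \rightarrow V_\e \rightarrow V_\dR/V_\dR^+ \rightarrow 0.
\]

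First, I would compute the cohomology of the two outer terms. Since $K$ is a finite extension of $\Qp$, the valuation on $\hat K = K$ is discrete, and hence $\hat K$ is not perfectoid. Applying Proposition~\ref{proposition:rep-dr} in the non\-/perfectoid branch with $L = K$ and $i = 0$ gives continuous bijections $\HH^0(K, V_\dR/V_\dR^+) \simeq \HH^1(K, V_\dR/V_\dR^+) \simeq t_V$, both of $\Qp$\-/dimension $[K:\Qp]\dim_K t_V$, and $\HH^n(K, V_\dR/V_\dR^+) = 0$ for all $n > 1$. In particular, the alternating sum of the $\Qp$\-/dimensions of the groups $\HH^\bullet(K, V_\dR/V_\dR^+)$ vanishes. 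On the other hand, the classical Tate local finiteness and duality results for Galois cohomology over a finite extension of $\Qp$, together with Tate's local Euler characteristic formula, imply that each $\HH^n(K, V)$ is a finite dimensional $\Qp$\-/vector space, that $\HH^n(K, V) = 0$ for $n > 2$, and that
\[
	\sum_{n=0}^{2} (-1)^n \dim_{\Qp} \HH^n(K, V) = -[K:\Qp]\dim_{\Qp} V.
\]

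Second, I would feed these computations into the long exact sequence of Galois cohomology attached to~\eqref{eq:fundamental-v}. Assertion~(i) is then immediate: in that long exact sequence, each $\HH^n(K, V_\e)$ is an extension of a subgroup of the finite dimensional space $\HH^n(K, V_\dR/V_\dR^+)$ by a quotient of the finite dimensional space $\HH^n(K, V)$. Assertion~(ii) is also immediate, since for $n > 2$ both neighbouring terms vanish. Finally, assertion~(iii) follows from the standard additivity of alternating sums of dimensions along an exact sequence of finite dimensional vector spaces truncated in a finite range:
\[
	\sum_{n=0}^{2} (-1)^n \dim_{\Qp} \HH^n(K, V_\e) = \sum_{n=0}^{2} (-1)^n \dim_{\Qp} \HH^n(K, V) + \sum_{n=0}^{1} (-1)^n \dim_{\Qp} \HH^n(K, V_\dR/V_\dR^+),
\]
whose second summand on the right\-/hand side is zero by the first step.

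The only subtle point is that extending the long exact sequence past $\HH^1$ requires a continuous set\-/theoretic section of the surjection $V_\e \rightarrow V_\dR/V_\dR^+$, as highlighted in the discussion surrounding the sequence~\eqref{eq:cohomology_les}. I expect this to be the main obstacle. However, the fundamental exact sequence~\eqref{eq:fundamental} is known to admit a continuous $\Qp$\-/linear section at the level of underlying topological $\Qp$\-/vector spaces (forgetting the $G_K$\-/action); tensoring with $V$ produces the required section for~\eqref{eq:fundamental-v}, after which the Euler characteristic computation above goes through verbatim.
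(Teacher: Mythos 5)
Your proposal is correct and follows essentially the same route as the paper: the paper likewise takes the long exact sequence attached to the fundamental exact sequence~\eqref{eq:fundamental-v}, combines Tate's finiteness, vanishing and local Euler--Poincaré characteristic for \(\HH^n(K,V)\) with Proposition~\ref{proposition:rep-dr} for \(\HH^n(K,V_\dR/V_\dR^+)\) (whose alternating sum of dimensions vanishes), and reads off the three assertions. Your explicit attention to the continuous section needed to prolong the long exact sequence beyond degree one is a point the paper passes over silently, so it is a welcome refinement rather than a divergence.
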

\begin{proof}
	The fundamental exact sequence~\eqref{eq:fundamental-v} induces a long exact sequence of \(\Qp\)\=/vector spaces
	\begin{equation} \label{eq:les}
		\cdots \rightarrow \HH^n(K^\prime,V) \rightarrow \HH^n(K^\prime,V_\e) \rightarrow \HH^n(K^\prime,V_\dR/V_\dR^+) \rightarrow \cdots.
	\end{equation}
	On the one hand, by Tate~\cite[II \S~5.2 Proposition~14, \S~5.3 Proposition~15, and \S~5.7 Théorème~5]{Serre1994}, the \(\Qp\)\=/vector spaces \(\HH^n(K^\prime,V)\) are finite dimensional, moreover, they are trivial if \(n > 2\), and they satisfy the local Euler\--Poincaré characteristic
	\begin{equation} \label{eq:euler-poincare-characteristic}
		\sum_{n=0}^2 (-1)^n \dim_{\Qp} \HH^n(K^\prime,V) = -[K^\prime:\Qp] \dim_{\Qp} V.
	\end{equation}
	On the other hand, by Proposition~\ref{proposition:rep-dr} (also due to Tate in the case of finite extensions of \(\Qp\)), the groups \(\HH^n(K^\prime,V_\dR/V_\dR^+)\) are finite dimensional \(\Qp\)\=/vector spaces, moreover, they are trivial if \(n \geq 2\), and they satisfy
	\[
		\sum_{n=0}^1 (-1)^n \dim_{\Qp} \HH^n(K^\prime,V_\dR/V_\dR^+) = 0.
	\]
	Finally, the combination of the long exact sequence~\eqref{eq:les} and the properties of the vector spaces \(\HH^n(K^\prime,V)\) and \(\HH^n(K^\prime,V_\dR/V_\dR^+)\) implies the statement about the vector spaces \(\HH^n(K^\prime,V_\e)\).
\end{proof}

\begin{corollary} \label{corollary:isomorphism-fk}
	If the representation \(V\) is de Rham and the spaces \(\DDb_{\e,K^\prime}(V)\) and \(\DDb_{\e,K^\prime}(V^\ast(1))\) are trivial, then the map \(f_{K^\prime}\) defines an isomorphism
		\[
			\HH^1(K^\prime,V_\e) \similarrightarrow \HH^1(K^\prime,V_\dR).
		\]
\end{corollary}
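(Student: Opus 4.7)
The plan is to prove injectivity and surjectivity of \(f_K\) separately, each by reducing to results already established.

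For injectivity, I would invoke Proposition~\ref{proposition:hyodo} combined with Corollary~\ref{corollary:hyodo}: these identify \(\Ker(f_K)\) with \(\HH^1_g(K,V)/\HH^1_e(K,V)\) and compute its \(\Qp\)-dimension as \(\dim_{\Qp} \De(V) + \dim_{\Qp} \De(V^\ast(1))\), which vanishes under the hypothesis. For surjectivity, I would argue by matching finite \(\Qp\)-dimensions on both sides.

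Since \(K\) is a finite extension of \(\Qp\) its valuation is discrete and \(\hat{K}=K\) is not perfectoid, so the non-perfectoid case of Proposition~\ref{proposition:rep-dr} provides an isomorphism \(\HH^1(K,V_\dR) \similarrightarrow \DdR(V)\) of \(\Qp\)-vector spaces, of \(\Qp\)-dimension \([K:\Qp]\dim_{\Qp} V\). To get the same count for \(\HH^1(K,V_\e)\) I would apply the Euler--Poincaré formula of Proposition~\ref{proposition:cohomology-be}, which reduces the task to computing \(\HH^0\) and \(\HH^2\) of \(V_\e\). The \(\HH^0\) term equals \(\De(V) = 0\) by hypothesis. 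For the \(\HH^2\) term, the long exact sequence derived from the fundamental exact sequence~\eqref{eq:fundamental-v} furnishes the piece
\[
\HH^2(K,V) \rightarrow \HH^2(K,V_\e) \rightarrow \HH^2(K,V_\dR/V_\dR^+),
\]
whose right-hand term vanishes by Proposition~\ref{proposition:rep-dr}, so it suffices to show \(\HH^2(K,V)=0\). Local Tate duality~\eqref{eq:local-tate-duality} identifies this with the dual of \(V^\ast(1)^{G_K}\), and the \(G_K\)- and \(\varphi\)-equivariant map \(v \mapsto 1 \otimes v\) embeds \(V^\ast(1)^{G_K}\) into \(\De(V^\ast(1)) = 0\). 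Plugging these vanishings into Proposition~\ref{proposition:cohomology-be} yields \(\dim_{\Qp} \HH^1(K,V_\e) = [K:\Qp]\dim_{\Qp} V\), matching the dimension of the target, and surjectivity follows from the injectivity established above.

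There is no genuinely delicate step: the only small observation is that the second hypothesis is already sufficient—via the inclusion \(V^\ast(1)^{G_K} \hookrightarrow \De(V^\ast(1))\)—to make \(\HH^2(K,V)\) vanish through Tate duality, and everything else is assembly of Hyodo's formula, the de Rham comparison at \(L=K\), and the Euler--Poincaré characteristic.
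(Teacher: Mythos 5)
Your proposal is correct and follows essentially the same route as the paper: injectivity via Proposition~\ref{proposition:hyodo} and Corollary~\ref{corollary:hyodo}, then a dimension count using Proposition~\ref{proposition:rep-dr} and the Euler--Poincaré characteristic of Proposition~\ref{proposition:cohomology-be}. The only difference is that you compute \(\dim_{\Qp}\HH^1(K,V_\e)\) exactly by proving \(\HH^0(K,V_\e)\) and \(\HH^2(K,V_\e)\) vanish, whereas the paper only needs the inequality \(\dim_{\Qp}\HH^1(K,V_\e) \geq [K:\Qp]\dim_{\Qp}V\), which follows from the Euler--Poincaré formula alone and already suffices once injectivity is known; your extra vanishing arguments are correct but not needed.
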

\begin{proof}
	By Corollary~\ref{corollary:hyodo}, the map \(f_{K^\prime}\) is injective.
	Moreover, by Proposition~\ref{proposition:rep-dr} and Proposition~\ref{proposition:cohomology-be}, it holds
	\[
		\begin{split}
			\dim_{\Qp} \HH^1(K^\prime,V_\dR) & = \dim_{\Qp} \DDb_{\dR,K^\prime}(V) \\
			& = [K^\prime:\Qp] \dim_{\Qp} V \\
			& \leq \dim_{\Qp} \HH^1(K^\prime,V_\e).
		\end{split}
	\]
	Therefore, the morphism \(f_{K^\prime}\) is an isomorphism.
\end{proof}

\begin{corollary} \label{corollary:dim-bk-g}
	If \(V\) is a non\-/trivial de Rham whose Hodge\--Tate weights are not all \(\leq 0\), then
	\[
		\dim_{\Qp} \HH^1_g(K^\prime,V) > 0.
	\]
\end{corollary}
\begin{proof}
	On the one hand, by the Euler\--Poincaré characteristic~\eqref{eq:euler-poincare-characteristic}, it holds
	\[
		\dim_{\Qp} \HH^1(K^\prime,V) \geq [K^\prime:\Qp] \dim_{\Qp} V.
	\]
	On the other hand, by Proposition~\ref{proposition:rep-dr}, it holds
	\[
		\begin{split}
			\dim_{\Qp} \HH^1(K^\prime,V_\dR^+) & = \dim_{\Qp} \Fil^0 \DDb_{\dR,K^\prime}(V) \\
			& < \dim_{\Qp} \DDb_{\dR,K^\prime}(V) = [K^\prime:\Qp] \dim_{\Qp} V.
		\end{split}
	\]
	We conclude by Lemma~\ref{lemma:dr-bk-g}.
\end{proof}

\subsection{Universal norms}
\label{subsec:universal-norms}

We recall the definition of the modules of universal norms.

Let \(V\) be a \(p\)\=/adic representation of \(G_K\), and let \(T\) be a \(G_K\)\=/stable lattice in \(V\).
The strict exact sequence of topological \(G_K\)\=/modules
\[
	0 \rightarrow T \rightarrow V \rightarrow V/T \rightarrow 0
\]
induces an exact sequence for each algebraic extension \(E\) of \(K\)
\[
	\HH^1(E,T) \xrightarrow{\alpha_{E}} \HH^1(E,V) \xrightarrow{\beta_{E}} \HH^1(E,V/T).
\]

For each finite extension \(K^\prime\)  of \(K\), and for \(\ast \in \{e,f,g\}\), the Bloch\--Kato subgroups of \(\HH^1(K^\prime,T)\) and \(\HH^1(K^\prime,V/T)\) are respectively defined by
\[
	\begin{split}
		\HH^1_\ast(K^\prime,T) & = \alpha_{K^\prime}^{-1}(\HH^1_\ast(K^\prime,V)) \\
		\HH^1_\ast(K^\prime,V/T) & = \beta_{K^\prime}(\HH^1_\ast(K^\prime,V)). \\
	\end{split}
\]

\begin{remark} \label{remark:rank-bk-g}
	By Corollary~\ref{corollary:dim-bk-g} and since \(\alpha_{K^\prime}(\HH^1(K^\prime,T))\) spans \(\HH^1(K^\prime,V)\), if \(V\) is a non\-/trivial de Rham representation whose Hodge\--Tate weights are not all \(\leq 0\), then  the \(\Zp\)\=/rank of \(\HH^1_g(K^\prime,T)\) is \(>0\).
\end{remark}

Let \(L\) be an algebraic extension of \(K\).

For \(n \in \N\), the \(n\)\=/th \emph{Iwasawa cohomology} group is defined by
\[
	\HH^n_\Iw(K,L,T) = \varprojlim \HH^n(K^\prime,T),
\]
where \(K^\prime\) runs through the finite extensions of \(K\) contained in \(L\), and the transition morphisms are the corestriction maps.
Note that if \(L\) is a finite extension of \(K\), then \(\HH^n_\Iw(K,L,T) = \HH^n(L,T)\).

For each \(\ast \in \{e,f,g\}\), the Bloch\--Kato groups \(\HH^1_\ast(K^\prime,T)\) are compatible under the corestriction maps.
The module of \emph{\(\ast\)\=/universal norms} associated with \(T\) in the extension \(L/K\) is defined by
\[
		\HH^1_{\Iw,\ast}(K,L,T) = \varprojlim \HH^1_\ast(K^\prime, T),
\]
where \(K^\prime\) runs over all the finite extensions of \(K\) contained in \(L\), and the transition morphisms are the corestriction maps.

Recall that there exists a natural isomorphism
\[
	\HH^1(L,V/T) \similarrightarrow \varinjlim \HH^1(K^\prime,V/T),
\]
where \(K^\prime\) runs over all the finite extensions of \(K\) contained in \(L\), and the transition morphisms are the restriction maps~\cite[I \S~2.2 Proposition~8]{Serre1994}.
For each \(\ast \in \{e,f,g\}\), the groups \(\HH^1_\ast(K^\prime,V/T)\) are compatible under the restriction maps, and the Bloch\--Kato subgroups of \(\HH^1(L,V/T)\) are then defined by
\[
		\HH^1_\ast(L,V/T) = \varinjlim \HH^1_\ast(K^\prime, V/T),
\]
where \(K^\prime\) runs over all the finite extensions of \(K\) contained in \(L\), and the transition morphisms are the restriction maps.

\begin{remark}
	Let \(\ast \in \{e,f,g\}\).
	The group \(\HH^1_\ast(L,V/T)\) is not defined as the image of \(\HH^1_\ast(L,V)\) in \(\HH^1(L,V/T)\) by \(\beta_L\).
\end{remark}

Let \(V^\ast(1) = \Hom_{\Qp}(V,\Qp(1))\) be the Tate dual representation of \(V\), and let \(T^\ast(1) = \Hom_{\Zp}(T,\Zp(1))\) be the Tate dual of \(T\), which is a \(G_K\)\=/stable lattice in \(V^\ast(1)\).
Local Tate duality~\eqref{eq:local-tate-duality} yields a perfect pairing for each \(i \in \{0,1,2\}\)
\begin{equation} \label{eq:local-tate-duality-iw}
	\HH^i(L,V^\ast(1)/T^\ast(1)) \times \HH^{2-i}_{\Iw}(K,L,T) \rightarrow \Qp/\Zp.
\end{equation}
The duality properties of the Bloch\--Kato groups from Proposition~\ref{proposition:bk-dual} implies the following.

\begin{proposition}[Bloch\--Kato] \label{proposition:bk-dual-iw}
	If \(V\) is de Rham, then, under local Tate duality,
	\begin{enumerate}
		\item the orthogonal complement of \(\HH^1_e(L,V^\ast(1)/T^\ast(1))\) is \(\HH^1_{\Iw,g}(K,L,T)\),
		\item the orthogonal complement of \(\HH^1_f(L,V^\ast(1)/T^\ast(1))\) is \(\HH^1_{\Iw,f}(K,L,T)\),
		\item the orthogonal complement of \(\HH^1_g(L,V^\ast(1)/T^\ast(1))\) is \(\HH^1_{\Iw,e}(K,L,T)\).
	\end{enumerate}
\end{proposition}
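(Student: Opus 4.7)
The strategy is to reduce to the finite-level Bloch--Kato duality (Proposition~\ref{proposition:bk-dual}) at each finite subextension and then pass to the limit along corestriction and restriction respectively.

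\textbf{Step 1 (finite-level duality for the integral pair).} For each finite extension \(K^\prime\) of \(K\) contained in \(L\), I would first establish the analogous statement under the local Tate pairing
\[
	\HH^1(K^\prime,T) \times \HH^1(K^\prime,V^\ast(1)/T^\ast(1)) \rightarrow \Qp/\Zp,
\]
namely that the orthogonal complement of \(\HH^1_e(K^\prime,T)\) is \(\HH^1_g(K^\prime,V^\ast(1)/T^\ast(1))\), of \(\HH^1_f(K^\prime,T)\) is \(\HH^1_f(K^\prime,V^\ast(1)/T^\ast(1))\), and of \(\HH^1_g(K^\prime,T)\) is \(\HH^1_e(K^\prime,V^\ast(1)/T^\ast(1))\). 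This is obtained from Proposition~\ref{proposition:bk-dual} via the defining relations \(\HH^1_\ast(K^\prime,T)=\alpha_{K^\prime}^{-1}(\HH^1_\ast(K^\prime,V))\) and \(\HH^1_\ast(K^\prime,V^\ast(1)/T^\ast(1))=\beta_{K^\prime}(\HH^1_\ast(K^\prime,V^\ast(1)))\): the \(\Qp\)-valued pairing on \((V,V^\ast(1))\) and the \(\Qp/\Zp\)-valued pairing on \((T,V^\ast(1)/T^\ast(1))\) fit into a compatibility diagram linked by the coboundary of the short exact sequence \(0 \rightarrow T^\ast(1) \rightarrow V^\ast(1) \rightarrow V^\ast(1)/T^\ast(1) \rightarrow 0\), and this translates orthogonality on the \(\Qp\)-side into orthogonality on the \(\Qp/\Zp\)-side once one uses non-degeneracy of the integral Tate pairing.

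\textbf{Step 2 (passing to the limit).} The pairing~\eqref{eq:local-tate-duality-iw} is obtained from the finite-level pairings via \(\HH^1(L,V^\ast(1)/T^\ast(1)) \similarrightarrow \varinjlim \HH^1(K^\prime,V^\ast(1)/T^\ast(1))\) and \(\HH^1_\Iw(K,L,T) = \varprojlim \HH^1(K^\prime,T)\), and by construction it holds
\[
	\HH^1_\ast(L,V^\ast(1)/T^\ast(1)) = \varinjlim \HH^1_\ast(K^\prime,V^\ast(1)/T^\ast(1)),\quad \HH^1_{\Iw,\ast}(K,L,T) = \varprojlim \HH^1_\ast(K^\prime,T).
\]
A compatible system \(x=(x_{K^\prime})_{K^\prime} \in \HH^1_\Iw(K,L,T)\) lies in \(\HH^1_{\Iw,\ast}(K,L,T)\) iff each \(x_{K^\prime}\) lies in \(\HH^1_\ast(K^\prime,T)\). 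By Step~1 this is equivalent to \(x_{K^\prime}\) being orthogonal to \(\HH^1_{\ast^\vee}(K^\prime,V^\ast(1)/T^\ast(1))\) for the dual label \(\ast^\vee\) (with \(e^\vee=g\), \(f^\vee=f\), \(g^\vee=e\)), compatibly in \(K^\prime\); passing to the colimit, this is equivalent to \(x\) being orthogonal under~\eqref{eq:local-tate-duality-iw} to \(\HH^1_{\ast^\vee}(L,V^\ast(1)/T^\ast(1))\). This gives the three claimed orthogonality statements.

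\textbf{Main obstacle.} The delicate point is Step~1: carefully translating the \(\Qp\)-valued duality of Bloch--Kato into a \(\Qp/\Zp\)-valued duality for the integral lattice \(T\) and the divisible quotient \(V^\ast(1)/T^\ast(1)\). One must check that the subgroups defined by the preimage construction \(\alpha_{K^\prime}^{-1}\) and the image construction \(\beta_{K^\prime}\) correspond to each other as exact orthogonal complements, which requires the commutativity of a diagram linking the two pairings via the Bockstein coboundary and the non-degeneracy of the integral Tate pairing. Step~2 is then a formal limit argument, using that compactness of \(\varprojlim \HH^1(K^\prime,T)\) (each \(\HH^1(K^\prime,T)\) being finitely generated over \(\Zp\)) makes the orthogonal complement of a direct limit equal to the inverse limit of the orthogonal complements.
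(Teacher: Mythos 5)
Your proposal is correct and follows exactly the route the paper (implicitly) takes: the paper offers no written proof beyond the remark that the statement follows from the finite-level duality of Proposition~\ref{proposition:bk-dual}, and your Step~1 (translating the \(\Qp\)-valued duality into the \(\Qp/\Zp\)-valued duality between \(\alpha_{K^\prime}^{-1}(\HH^1_\ast(K^\prime,V))\) and \(\beta_{K^\prime}(\HH^1_{\ast^\vee}(K^\prime,V^\ast(1)))\)) together with Step~2 (the formal colimit/limit argument using the compatibility of the pairing with restriction and corestriction) is precisely the standard Bloch--Kato argument being invoked. The one point you flag as delicate is indeed the only substantive one, and it closes as you suggest: the inclusion \(\beta_{K^\prime}(B)\subseteq(\alpha_{K^\prime}^{-1}(A))^\perp\) follows from the compatibility of the two pairings, and equality follows because both sides are divisible of the same corank \(\dim_{\Qp}B\).
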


\begin{corollary} \label{corollary:bk-infinite-extension}
	If \(V\) is a non\-/trivial de Rham representation whose Hodge\--Tate weights are not all \(>0\) and \(\HH^1_e(L,V/T) = \HH^1(L,V/T)\), then the extension \(L/K\) is infinite.
\end{corollary}
\begin{proof}
	If \(L\) is a finite extension of \(K\), then, by Remark~\ref{remark:rank-bk-g}, the group \(\HH^1_g(L,T^\ast(1))\) is non\-/trivial, which by Proposition~\ref{proposition:bk-dual-iw}, implies that \(\HH^1_e(L,V/T) \neq \HH^1(L,V/T)\).
\end{proof}

We recall an alternative description of the exponential Bloch\--Kato groups (see~\cite[Proposition~3.2.1]{Ponsinet2025}).

The fundamental exact sequence~\eqref{eq:fundamental-v} induces a strict exact sequence of topological \(G_K\)\=/modules
\begin{equation} \label{eq:fundamental-t}
	0 \rightarrow V/T \rightarrow V_\e/T \rightarrow V_\dR/V_\dR^+ \rightarrow 0,
\end{equation}
where we again denote by \(T \subseteq V_\e\) the image of \(T \subseteq V\) in \(V_\e\), and \(V_\e/T\) is endowed with the quotient topology from \(V_\e\).

\begin{lemma} \label{lemma:proj-bk-exp}
	Let \(E\) be an algebraic extension of \(K\).
	The strict exact sequence of topological \(G_K\)\=/modules~\eqref{eq:fundamental-t} induces a continuous map
	\[
		\HH^0(L,V_\dR/V_\dR^+) \rightarrow \HH^1(L,V/T)
	\]
	whose image is \(\beta_L(\HH^1_e(L,V))\).
\end{lemma}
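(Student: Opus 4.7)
The plan is to factor the connecting morphism of~\eqref{eq:fundamental-t} through that of~\eqref{eq:fundamental-v}. The natural quotients $V \twoheadrightarrow V/T$ and $V_\e \twoheadrightarrow V_\e/T$ assemble into a morphism of strict short exact sequences of topological $G_K$-modules
\[
\begin{tikzcd}
0 \ar{r} & V \ar{r} \ar{d} & V_\e \ar{r} \ar{d} & V_\dR/V_\dR^+ \ar{r} \ar{d}{\id} & 0 \\
0 \ar{r} & V/T \ar{r} & V_\e/T \ar{r} & V_\dR/V_\dR^+ \ar{r} & 0.
\end{tikzcd}
\]
By functoriality of the connecting morphism in continuous cohomology --- together with the continuity statement from~\eqref{eq:esh0h1}, which applies since both rows are strict --- this induces a commutative square of continuous group homomorphisms
\[
\begin{tikzcd}
\HH^0(L, V_\dR/V_\dR^+) \ar{r}{\partial} \ar{d}{\id} & \HH^1(L, V) \ar{d}{\beta_L} \\
\HH^0(L, V_\dR/V_\dR^+) \ar{r}{\delta} & \HH^1(L, V/T),
\end{tikzcd}
\]
where $\partial$ and $\delta$ denote the connecting morphisms associated with~\eqref{eq:fundamental-v} and~\eqref{eq:fundamental-t} respectively. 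In particular $\delta = \beta_L \circ \partial$ as continuous maps, which simultaneously establishes the continuity claim for $\delta$.

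Next, I would read off the image of $\partial$ from the long exact sequence attached to~\eqref{eq:fundamental-v}: exactness at $\HH^1(L, V)$ gives
\[
\Img(\partial) = \Ker\!\left(\HH^1(L, V) \rightarrow \HH^1(L, V_\e)\right) = \HH^1_e(L, V),
\]
by the very definition of the exponential Bloch--Kato subgroup recalled in \S~\ref{subsec:bloch-kato-groups}. Applying $\beta_L$ and using $\delta = \beta_L \circ \partial$ then yields
\[
\Img(\delta) = \beta_L(\Img(\partial)) = \beta_L(\HH^1_e(L, V)),
\]
which is precisely the claim.

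I foresee no substantive obstacle: the cohomological content is a routine diagram chase based on functoriality of the long exact sequence, while the topological content --- continuity of the boundary map $\delta$ --- is already absorbed by the strictness hypothesis on~\eqref{eq:fundamental-t} and the framework of continuous cohomology developed in \S~\ref{subsec:the-compact-open-topology}.
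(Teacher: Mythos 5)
Your proposal is correct and follows essentially the same route as the paper: the paper's proof consists precisely of the commutative diagram of strict exact sequences you wrote down and the induced commutative square of connecting morphisms, from which the identification $\Img(\delta)=\beta_L(\Img(\partial))=\beta_L(\HH^1_e(L,V))$ follows since $\HH^1_e(L,V)$ is by definition $\Ker\left(\HH^1(L,V)\rightarrow\HH^1(L,V_\e)\right)$, which equals $\Img(\partial)$ by exactness. You have merely spelled out the diagram chase that the paper leaves implicit.
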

\begin{proof}
	The commutative diagram of topological \(G_K\)\=/modules whose rows are exact
	\[
		\begin{tikzcd}
			0 \ar{r} & V \ar{r} \ar{d} & V_\e \ar{r} \ar{d} & V_\dR/V_\dR^+ \ar{r} \ar[equal]{d} & 0 \\
			0 \ar{r} & V/T \ar{r} & V_\e/T \ar{r} & V_\dR/V_\dR^+ \ar{r} & 0
		\end{tikzcd}
	\]
	induces a commutative diagram of topological abelian groups whose rows are exact
	\[
		\begin{tikzcd}
			\HH^0(L,V_\dR/V_\dR^+) \ar{r} \ar[equal]{d} &  \HH^1(L,V) \ar{d}{\beta_L} \ar{r} & \HH^1(L,V_\e) \\
			\HH^0(L,V_\dR/V_\dR^+) \ar{r} &  \HH^1(L,V/T) & ,
		\end{tikzcd}
	\]
	which implies the statement.
\end{proof}

If \(M\) is a topological \(G_K\)\=/module, then the associated \emph{maximal discrete \(G_K\)\=/module} is the discrete \(G_K\)\=/module \(M_\delta = \bigcup_{K^\prime} M^{G_{K^\prime}}\), where \(K^\prime\) runs through all the finite extensions of \(K\).

The maximal discrete \(G_K\)\=/modules associated with the strict exact sequence~\eqref{eq:fundamental-t} fit into a short exact sequence of discrete \(G_K\)\=/modules
\begin{equation} \label{eq:e-delta}
	0 \rightarrow V/T \rightarrow E_\delta(V/T) \rightarrow t_V(\Qpbar) \rightarrow 0.
\end{equation}

\begin{proposition} \label{proposition:e-delta-bk}
	Let \(E\) be an algebraic extension of \(K\).
	The short exact sequence of discrete \(G_K\)\=/modules~\eqref{eq:e-delta} induces a short exact sequence
	\[
		0 \rightarrow \HH^1_e(E,V/T) \rightarrow \HH^1(E,V/T) \rightarrow \HH^1(E,E_\delta(V/T)) \rightarrow 0.
	\]
\end{proposition}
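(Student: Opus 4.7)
The plan is to extract the statement from the long exact sequence in Galois cohomology of discrete $G_K$-modules attached to the short exact sequence~\eqref{eq:e-delta},
\[
	\HH^0(E,t_V(\Qpbar)) \xrightarrow{\partial} \HH^1(E,V/T) \rightarrow \HH^1(E,E_\delta(V/T)) \rightarrow \HH^1(E,t_V(\Qpbar)),
\]
by verifying two claims: the surjectivity of $\HH^1(E,V/T) \to \HH^1(E,E_\delta(V/T))$, and the identification of the kernel of this map with $\HH^1_e(E,V/T)$.

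For the surjectivity, I would show that $\HH^1(E,t_V(\Qpbar)) = 0$. Since $t_V$ is a finite-dimensional $K$-vector space on which $G_K$ acts trivially, there is an isomorphism of discrete $G_K$-modules $t_V(\Qpbar) \simeq \Qpbar^{\oplus \dim_K t_V}$, reducing the claim to $\HH^1(E,\Qpbar) = 0$. Because $\Qpbar$ is a discrete $G_E$-module, this group is a filtered direct limit of cohomology groups of the form $\HH^1(\Gal(L/E^\prime),L)$, over finite sub-extensions $E^\prime/K$ of $E/K$ and finite Galois extensions $L/E^\prime$ in $\Qpbar$, each of which vanishes by the additive form of Hilbert~90 (Noether's theorem).

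For the identification of the kernel, since Galois cohomology of discrete modules commutes with filtered direct limits on the group, and $\HH^1_e(E,V/T) = \varinjlim_{K^\prime} \HH^1_e(K^\prime,V/T)$ holds by definition, it is enough to treat the case where $E = K^\prime$ is a finite extension of $K$. In this case, the inclusions of maximal discrete submodules $E_\delta(V/T) \hookrightarrow V_\e/T$ and $t_V(\Qpbar) \hookrightarrow V_\dR/V_\dR^+$ fit the discrete exact sequence~\eqref{eq:e-delta} into the topological fundamental exact sequence~\eqref{eq:fundamental-t}, yielding a commutative square
\[
	\begin{tikzcd}
		\HH^0(K^\prime,t_V(\Qpbar)) \ar{r}{\partial_{K^\prime}} \ar{d} & \HH^1(K^\prime,V/T) \ar[equal]{d} \\
		\HH^0(K^\prime,V_\dR/V_\dR^+) \ar{r}{\partial^\prime_{K^\prime}} & \HH^1(K^\prime,V/T).
	\end{tikzcd}
\]
Proposition~\ref{proposition:rep-dr} identifies $\HH^0(K^\prime,V_\dR/V_\dR^+)$ with $K^\prime \otimes_K t_V = t_V(K^\prime)$, which matches the description $\HH^0(K^\prime,t_V(\Qpbar)) = (\Qpbar \otimes_K t_V)^{G_{K^\prime}} = K^\prime \otimes_K t_V$ coming from $\Qpbar^{G_{K^\prime}} = K^\prime$; under these identifications the left vertical arrow is the identity. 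By Lemma~\ref{lemma:proj-bk-exp}, the image of $\partial^\prime_{K^\prime}$ equals $\beta_{K^\prime}(\HH^1_e(K^\prime,V)) = \HH^1_e(K^\prime,V/T)$, hence the image of $\partial_{K^\prime}$ equals $\HH^1_e(K^\prime,V/T)$ as well. Passing to the direct limit over finite $K^\prime \subseteq E$ yields $\Img(\partial) = \HH^1_e(E,V/T)$, as desired.

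The main obstacle will be the careful matching of the two $\HH^0$ groups at finite level, in order to transfer the Bloch--Kato description of the connecting map from the topological sequence~\eqref{eq:fundamental-t} to the connecting map of the discrete sequence~\eqref{eq:e-delta}; once this is in place, the vanishing $\HH^1(E,\Qpbar) = 0$ by Hilbert~90 and the reduction to finite sub-extensions by continuity are standard and complete the argument.
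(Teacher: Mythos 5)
Your argument is correct. Note that the paper itself gives no proof of Proposition~\ref{proposition:e-delta-bk} — it is recalled from the author's earlier work (cited as Proposition~3.2.1 of~\cite{Ponsinet2025}) — but your proof is precisely the intended one given the surrounding setup: the long exact sequence of the discrete sequence~\eqref{eq:e-delta}, the vanishing \(\HH^1(E,t_V(\Qpbar)) \simeq \HH^1(E,\Qpbar)^{\oplus \dim_K t_V} = 0\) by the additive Hilbert~90, and the identification of \(\Img(\partial)\) with \(\HH^1_e(E,V/T)\) by comparing, at each finite level \(K^\prime\) (where \((M_\delta)^{G_{K^\prime}} = M^{G_{K^\prime}}\) makes the comparison square degenerate), the connecting map of~\eqref{eq:e-delta} with that of~\eqref{eq:fundamental-t} via Lemma~\ref{lemma:proj-bk-exp}, and then passing to the filtered colimit, which is exactly how \(\HH^1_e(E,V/T)\) is defined.
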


\begin{corollary} \label{corollary:bk-finite-extension}
	Let \(L^\prime\) be a finite Galois extension of \(L\), and let
	\[
		\res: \HH^1(L,V/T) \rightarrow \HH^1(L^\prime,V/T)
	\]
	be the restriction map.
	Then, it holds
	\[
			\HH^1_e(L,V/T) = \res^{-1}(\HH^1_e(L^\prime,V/T))_\divisible.
	\]
\end{corollary}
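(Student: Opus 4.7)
The plan is to combine Proposition~\ref{proposition:e-delta-bk}, which identifies $\HH^1_e(E,V/T)$ with the kernel of the map $\pi_E:\HH^1(E,V/T)\to\HH^1(E,E_\delta(V/T))$ induced by~\eqref{eq:e-delta}, with a short divisibility argument. Since~\eqref{eq:e-delta} is a sequence of discrete $G_K$-modules independent of the base field, restriction from $L$ to $L^\prime$ produces a commutative diagram with exact rows
\[
\begin{tikzcd}
0 \ar{r} & \HH^1_e(L,V/T) \ar{r} \ar{d} & \HH^1(L,V/T) \ar{r}{\pi_L} \ar{d}{\res} & \HH^1(L,E_\delta(V/T)) \ar{r} \ar{d}{\res^\prime} & 0 \\
0 \ar{r} & \HH^1_e(L^\prime,V/T) \ar{r} & \HH^1(L^\prime,V/T) \ar{r} & \HH^1(L^\prime,E_\delta(V/T)) \ar{r} & 0,
\end{tikzcd}
\]
from which the inclusion $\HH^1_e(L,V/T)\subseteq\res^{-1}(\HH^1_e(L^\prime,V/T))$ is immediate.

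To refine this to the maximal divisible subgroup, I would first observe that $\HH^1_e(L,V/T)=\varinjlim\HH^1_e(K^\prime,V/T)$ is itself divisible. Indeed, each $\HH^1_e(K^\prime,V/T)=\beta_{K^\prime}(\HH^1_e(K^\prime,V))$ is the image of a $\Qp$-vector subspace of $\HH^1(K^\prime,V)$, hence $p$-divisible; since $V/T$ is $p$-primary torsion, $p$-divisibility is equivalent to divisibility, and filtered colimits of divisible groups remain divisible. Thus $\HH^1_e(L,V/T)$ sits inside the maximal divisible subgroup of $\res^{-1}(\HH^1_e(L^\prime,V/T))$.

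For the reverse inclusion, set $\Delta=\Gal(L^\prime/L)$. The standard relation $\cores\circ\res=|\Delta|$ on $\HH^1(L,E_\delta(V/T))$ shows that $\Ker(\res^\prime)$ is annihilated by $|\Delta|$. If $D\subseteq\res^{-1}(\HH^1_e(L^\prime,V/T))$ is any divisible subgroup, then $\pi_L(D)$ is contained in $\Ker(\res^\prime)$ by commutativity; being the homomorphic image of a divisible group, $\pi_L(D)$ is itself divisible, and it is annihilated by $|\Delta|$, forcing $\pi_L(D)=0$. Therefore $D\subseteq\Ker(\pi_L)=\HH^1_e(L,V/T)$, and combined with the previous paragraph this yields the desired equality.

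The argument is essentially a brief diagram chase once Proposition~\ref{proposition:e-delta-bk} is in hand. The only nontrivial ingredient, and the closest thing to an obstacle, is the elementary observation that a divisible abelian group annihilated by a positive integer must be trivial; together with $\cores\circ\res=|\Delta|$, this is what allows one to pass from the a priori containment in the preimage of $\HH^1_e(L^\prime,V/T)$ to containment in $\HH^1_e(L,V/T)$.
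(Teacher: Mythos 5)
Your proof is correct and follows essentially the same route as the paper: both rest on Proposition~\ref{proposition:e-delta-bk} together with the fact that the kernel of restriction along the finite extension \(L^\prime/L\) is killed by \([L^\prime:L]\), and on the divisibility of \(\HH^1_e(L,V/T)\). Your endgame is slightly more streamlined --- you apply the triviality of a divisible group of finite exponent directly to \(\pi_L(D)\subseteq\Ker(\res^\prime)\), whereas the paper factors \(\res\) through the \(\Delta\)\=/invariants, controls kernel and cokernel via inflation\--restriction, and then invokes Proposition~\ref{proposition:p-adic-topology-torsion} --- but the substance is the same.
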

\begin{proof}
	Let \(\Delta = \Gal(L^\prime/L)\) be the Galois group of the extension \(L^\prime/L\).
	Recall that the restriction map \(\res\) factorises through
	\[
		r: \HH^1(L,V/T) \rightarrow \HH^1(L^\prime,V/T)^\Delta \subseteq \HH^1(L^\prime,V/T).
	\]
	By Proposition~\ref{proposition:e-delta-bk}, there exists a commutative diagram whose rows and columns are exact
	\begin{equation} \label{eq:bk-finite-extension}
		\begin{tikzcd}
			0 \ar{r} & \HH^1_e(L,V/T) \ar{r} \ar{d} & \HH^1(L,V/T) \ar{r} \ar{d}{r} & \HH^1(L,E_\delta(V/T)) \ar{d}{r_\delta} \ar{r} & 0 \\
			0 \ar{r} & \HH^1_e(L^\prime,V/T)^\Delta \ar{r} & \HH^1(L^\prime,V/T)^\Delta \ar{r} & \HH^1(L^\prime,E_\delta(V/T))^\Delta & .
		\end{tikzcd}
	\end{equation}
	By the inflation\--restriction exact sequence, there exists isomorphisms
	\[
		\begin{split}
			\Ker(r) & \similarrightarrow \HH^1(\Delta,(V/T)^{G_{L^\prime}}) \\
			\Ker(r_\delta) & \similarrightarrow \HH^1(\Delta,E_\delta(V/T)^{G_{L^\prime}}),
		\end{split}
	\]
	and an injective morphism
	\[
		0 \rightarrow \Coker(r) \rightarrow \HH^2(\Delta,(V/T)^{G_{L^\prime}}).
	\]
	Therefore, the groups \(\Ker(r)\), \(\Coker(r)\) and \(\Ker(r_\delta)\) are all torsion with finite exponent, since the cohomology groups of the finite group \(\Delta\) are torsion with finite exponent (see~\cite[I \S~2.4 Proposition~9]{Serre1994}).
	Hence, it follows from the diagram~\eqref{eq:bk-finite-extension} that the group \(r^{-1}(\HH^1_e(L^\prime,V/T)^\Delta)/\HH^1_e(L,V/T)\) is torsion with finite exponent, which, by Proposition~\ref{proposition:p-adic-topology-torsion}, implies that
	\[
		r^{-1}( \HH^1_e(L^\prime,V/T)^\Delta)_\divisible = \HH^1_e(L,V/T)_\divisible.
	\]
	Moreover, by definition or by Proposition~\ref{proposition:e-delta-bk}, the group \( \HH^1_e(L,V/T)\) is divisible, and thus, it holds
	\[
		\res^{-1}(\HH^1_e(L^\prime,V/T))_\divisible = r^{-1}( \HH^1_e(L^\prime,V/T)^\Delta)_\divisible = \HH^1_e(L,V/T)_\divisible = \HH^1_e(L,V/T).
	\]
\end{proof}

\subsection{Bloch\texorpdfstring{\--}{--}Kato groups over non-perfectoid fields}
\label{subsec:bloch-kato-groups-over-non-perfectoid-fields}

Let \(V\) be a \(p\)\=/adic representation of \(G_K\), and let \(T\) be a \(G_K\)\=/stable lattice in \(V\).
Let \(L\) be an algebraic extension of \(K\).

\begin{proposition} \label{proposition:exp-bk-non-perfectoid}
	Assume that \(V\) is de Rham.
	If \(\hat{L}\) is not perfectoid, then \(\HH^1_e(L,V/T)\) coincides with the image of \(\HH^1_e(L,V)\) by the map \(\beta_L : \HH^1(L,V) \rightarrow \HH^1(L,V/T)\).
\end{proposition}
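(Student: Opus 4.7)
The plan is to realise both $\HH^1_e(L,V/T)$ and $\beta_L(\HH^1_e(L,V))$ as the image of a connecting map coming from the short exact sequence~\eqref{eq:fundamental-t}, and then to bridge the two using the non\-/perfectoid hypothesis.

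First I would apply Lemma~\ref{lemma:proj-bk-exp} both at $E = L$ and at every finite intermediate extension $K'$. At $E = L$ it identifies $\beta_L(\HH^1_e(L,V))$ with the image of the continuous connecting map $\delta_L : \HH^0(L, V_\dR/V_\dR^+) \to \HH^1(L, V/T)$, and at each finite $K'$ it identifies $\HH^1_e(K',V/T) = \beta_{K'}(\HH^1_e(K',V))$ with the image of the analogous $\delta_{K'}$. Passing to the direct limit over $K'$---which commutes with $\HH^1$ on the side of the discrete module $V/T$---and using that $\delta$ is compatible with restriction, I would deduce
\[
	\HH^1_e(L, V/T) = \Img\bigl(\varinjlim_{K'} \HH^0(K', V_\dR/V_\dR^+) \longrightarrow \HH^1(L, V/T)\bigr).
\]
Since this last arrow factors through $\delta_L$ via the natural map $\varinjlim_{K'} \HH^0(K', V_\dR/V_\dR^+) \to \HH^0(L, V_\dR/V_\dR^+)$, the inclusion $\HH^1_e(L,V/T) \subseteq \beta_L(\HH^1_e(L,V))$ is immediate.

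The reverse inclusion is where the non\-/perfectoid hypothesis enters. By Proposition~\ref{proposition:rep-dr}, $\HH^0(L, V_\dR/V_\dR^+)$ is canonically identified with the $p$\=/adic Banach space $\hat L \otimes_K t_V$, while for each finite extension $K'/K$ in $L$ the same proposition yields $\HH^0(K', V_\dR/V_\dR^+) \simeq K' \otimes_K t_V$. Therefore $\varinjlim_{K'} \HH^0(K', V_\dR/V_\dR^+) = L \otimes_K t_V$, which is a dense $K$\=/subspace of $\hat L \otimes_K t_V$.

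To conclude, I would exploit that $V/T$ is a discrete $G_L$\=/module (since the lattice $T$ is open in $V$), so by Corollary~\ref{corollary:discrete} the topological group $\HH^1(L, V/T)$ is discrete. Continuity of $\delta_L$ from Lemma~\ref{lemma:proj-bk-exp} then forces $\Ker(\delta_L)$ to be an open subgroup of $\hat L \otimes_K t_V$. By density of $L \otimes_K t_V$ in $\hat L \otimes_K t_V$, any $\ell \in \hat L \otimes_K t_V$ can be written as $\ell = \ell' + u$ with $\ell' \in L \otimes_K t_V$ and $u \in \Ker(\delta_L)$, so that $\delta_L(\ell) = \delta_L(\ell')$ lies in the image of $L \otimes_K t_V$ in $\HH^1(L, V/T)$, that is, in $\HH^1_e(L,V/T)$. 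This density step is the main obstacle, and is the only place where the hypothesis that $\hat L$ is not perfectoid is used essentially, providing the clean description $\HH^0(L, V_\dR/V_\dR^+) = \hat L \otimes_K t_V$ against which the density of $L$ in $\hat L$ can be leveraged.
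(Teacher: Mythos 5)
Your proposal is correct and follows essentially the same route as the paper: both arguments identify \(\beta_L(\HH^1_e(L,V))\) with the image of \(t_V(\hat{L}) \simeq \HH^0(L,V_\dR/V_\dR^+)\) under the connecting map of the sequence~\eqref{eq:fundamental-t} via Lemma~\ref{lemma:proj-bk-exp} and Proposition~\ref{proposition:rep-dr}, identify \(\HH^1_e(L,V/T)\) with the image of the dense subspace \(t_V(L)\) (the paper cites Proposition~\ref{proposition:e-delta-bk} where you re-derive this by a direct limit over finite subextensions, a cosmetic difference), and conclude by the same density--discreteness--continuity argument.
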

\begin{proof}
	On the one hand, by Proposition~\ref{proposition:rep-dr} and by Lemma~\ref{lemma:proj-bk-exp}, there exists a continuous map
	\begin{equation} \label{eq:exp}
		t_V(\hat{L}) \similarrightarrow \HH^0(L,V_\dR/V_\dR^+) \rightarrow \HH^1(L,V/T),
	\end{equation}
	whose image is \(\beta_L(\HH^1_e(L,V))\).
	On the other hand, by Proposition~\ref{proposition:e-delta-bk}, the image of \(t_V(L) \subseteq t_V(\hat{L})\) by the map~\eqref{eq:exp} is \(\HH^1_e(L,V/T)\).

	Since \(L\) is dense in \(\hat{L}\) and \(\HH^1(L,V/T)\) is discrete, the images of \(t_V(L)\) and \(t_V(\hat{L})\) in \(\HH^1(L,V/T)\) by the map~\eqref{eq:exp} coincides by continuity, and thus, it holds \(\HH^1_e(L,V/T) = \beta_L(\HH^1_e(L,V))\).
\end{proof}

\begin{corollary} \label{corollary:bk-non-perfectoid}
	Assume that \(V\) is de Rham.
	If \(\hat{L}\) is not perfectoid and \(\HH^1_e(L,V/T) = \HH^1(L,V/T)\), then
	\begin{enumerate}
		\item the subspace \(\HH^1_e(L,V)\) is dense in \(\HH^1(L,V)\),
		\item it holds \(\HH^1_g(L,V) = \HH^1(L,V)\).
	\end{enumerate}
\end{corollary}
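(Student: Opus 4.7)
The plan is as follows. First, combining Proposition~\ref{proposition:exp-bk-non-perfectoid} with the hypothesis $\HH^1_e(L,V/T) = \HH^1(L,V/T)$ gives that $\beta_L$ maps $\HH^1_e(L,V)$ onto $\HH^1(L,V/T)$. Consequently every $x \in \HH^1(L,V)$ can be written as $y + z$ with $y \in \HH^1_e(L,V)$ and $z \in \Ker \beta_L$, so
\[
	\HH^1_e(L,V) + \Ker \beta_L = \HH^1(L,V).
\]
The long exact sequence of cohomology attached to $0 \to T \to V \to V/T \to 0$ identifies $\Ker \beta_L$ with $\alpha_L(\HH^1(L,T))$. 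Assertion~(1) will then follow from Lemma~\ref{lemma:banach-dense-subspace} once it is shown that $\alpha_L(\HH^1(L,T))$ is a lattice in the $p$\=/adic Banach space $\HH^1(L,V)$, which is a Banach space by Corollary~\ref{corollary:h1-representation-banach}.

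To verify the lattice property, three points must be checked. Openness is immediate: the module $V/T$ is discrete, so $\HH^1(L,V/T)$ is discrete by Corollary~\ref{corollary:discrete}, and the kernel of the continuous map $\beta_L$ is therefore open. Spanning over $\Qp$ is contained in Proposition~\ref{proposition:locally-convex}. For $p$\=/adic completeness and separatedness, one first observes that $\Ker \alpha_L$ coincides with $\HH^1(L,T)_\torsion$ by the general identification recalled after~\eqref{eq:torsion-divisible} (applied to the pair $T \subseteq V$ with $n = 0$), and this subgroup is finite by Lemma~\ref{lemma:finite-torsion}. Since by Proposition~\ref{proposition:cohomology-lattice} the module $\HH^1(L,T)$ is weakly $p$\=/complete with its $p$\=/adic topology and its torsion subgroup has finite exponent, Lemma~\ref{lemma:weakly-p-complete} shows that $\HH^1(L,T)$ is $p$\=/adically complete and separated, and Proposition~\ref{proposition:p-adic-topology-torsion} then transports these properties through the surjection $\HH^1(L,T) \twoheadrightarrow \alpha_L(\HH^1(L,T))$ (whose kernel is torsion of finite exponent and whose cokernel vanishes).

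The first assertion now follows from Lemma~\ref{lemma:banach-dense-subspace}. Assertion~(2) is then formal: by Proposition~\ref{proposition:bkg-closed} the subspace $\HH^1_g(L,V)$ is closed in $\HH^1(L,V)$, and it contains the dense subspace $\HH^1_e(L,V)$ by the inclusions~\eqref{eq:bk-inclusions}, so $\HH^1_g(L,V) = \HH^1(L,V)$. The main obstacle is the lattice verification in the middle paragraph; all other steps are either immediate or amount to assembling results from Sections~\ref{sec:continuous-group-cohomology} and~\ref{sec:p-adic-period-rings-and-perfectoid-fields}.
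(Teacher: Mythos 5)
Your proposal is correct and follows essentially the same route as the paper: both reduce assertion (1) to Lemma~\ref{lemma:banach-dense-subspace} applied with \(X=\HH^1(L,V)\), \(\Xc=\Img(\alpha_L)\) and \(Y=\HH^1_e(L,V)\), using Proposition~\ref{proposition:exp-bk-non-perfectoid} and the hypothesis to get \(\pi(Y)=X/\Xc\), and both deduce (2) from Proposition~\ref{proposition:bkg-closed} and the inclusions~\eqref{eq:bk-inclusions}. The only difference is that you unwind the verification that \(\Img(\alpha_L)\) is a lattice, which the paper simply cites from Corollary~\ref{corollary:h1-representation-banach} and its supporting results.
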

\begin{proof}
	Recall that \(\HH^1(L,V)\) is equipped with a structure of \(p\)\=/adic Banach space for which the image of \(\HH^1(L,T)\) in \(\HH^1(L,V)\) is a lattice by Corollary~\ref{corollary:h1-representation-banach}.
	Moreover, by Proposition~\ref{proposition:exp-bk-non-perfectoid}, there exists a commutative diagram of groups
	\[
		\begin{tikzcd}
			\HH^1(L,T) \ar{r}{\alpha_{L}} & \HH^1(L,V) \ar{r}{\beta_{L}} & \HH^1(L,V/T) & \\
			 & \HH^1_e(L,V) \ar[hook]{u} \ar{r} & \HH^1_e(L,V/T) \ar[equal]{u} \ar{r} & 0,
		\end{tikzcd}
	\]
	which implies that the map \(\beta_{L}\) is surjective.
	We conclude, by Lemma~\ref{lemma:banach-dense-subspace} applied to \(X= \HH^1(L,V)\), \(\Xc = \Img(\alpha_L)\), and \(Y = \HH^1_e(L,V)\), that \(\HH^1_e(L,V)\) is dense in \(\HH^1(L,V)\).

	Concerning \(\HH^1_g(L,V)\), on the one hand, by Proposition~\ref{proposition:bkg-closed}, the subspace \(\HH^1_g(L,V)\) is closed in \(\HH^1(L,V)\).
	On the other hand, by the first statement, the subspace \(\HH^1_e(L,V)\) is dense in \(\HH^1(L,V)\).
	Therefore, by the inclusions~\eqref{eq:bk-inclusions}, the subspace \(\HH^1_g(L,V)\) is both closed and dense in \(\HH^1(L,V)\), and hence, it holds \(\HH^1_g(L,V) = \HH^1(L,V)\).
\end{proof}

\subsection{A characterisation of perfectoid fields}
\label{subsec:a-characterisation-of-perfectoid-fields}

Let \(V\) be a \(p\)\=/adic representation of \(G_K\), and let \(T\) be a \(G_K\)\=/stable lattice in \(V\).
Let \(L\) be an algebraic extension of \(K\).

\begin{theorem} \label{theorem:main}
	Assume that \(V\) is a non\-/trivial de Rham representation of \(G_K\) whose Hodge\--Tate weights are not all \(> 0\).
	If \(\HH^1_e(L,V/T) = \HH^1(L,V/T)\), then the field \(\hat{L}\) is perfectoid.
\end{theorem}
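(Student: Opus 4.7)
The plan is to proceed by contradiction: assume that $\hat{L}$ is not perfectoid while $\HH^1_e(L,V/T) = \HH^1(L,V/T)$, and derive from the other hypotheses that the $\Qp$\=/vector space $\HH^1_g(K,V^\ast(1))$ is simultaneously zero and non\-/zero.

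First, I would apply Corollary~\ref{corollary:bk-non-perfectoid} (2) to translate the assumed equality into $\HH^1_g(L,V) = \HH^1(L,V)$, i.e., into the vanishing of the natural map $\HH^1(L,V) \rightarrow \HH^1(L,V_\dR)$. Functoriality of the restriction maps then yields a commutative square of topological groups in which the composition $\HH^1(K,V) \rightarrow \HH^1(K,V_\dR) \rightarrow \HH^1(L,V_\dR)$ factors through that vanishing map; since the second arrow is injective by Proposition~\ref{proposition:rep-dr} (5) (precisely because $\hat{L}$ is not perfectoid), we conclude that $\HH^1(K,V) \rightarrow \HH^1(K,V_\dR)$ vanishes, i.e., $\HH^1_g(K,V) = \HH^1(K,V)$.

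Second, I would exploit the triviality hypothesis on $\De(V)$ and $\De(V^\ast(1))$: Corollary~\ref{corollary:isomorphism-fk} gives that $f_K$ is an isomorphism, and Proposition~\ref{proposition:hyodo} then yields $\HH^1_g(K,V)/\HH^1_e(K,V) \similarrightarrow \Ker(f_K) = 0$. Combined with the previous step, this forces $\HH^1_e(K,V) = \HH^1(K,V)$, so Proposition~\ref{proposition:bk-dual} (local Tate duality of the Bloch\--Kato subgroups) identifies $\HH^1_g(K,V^\ast(1))$ as the orthogonal complement of the full space $\HH^1(K,V)$, and hence $\HH^1_g(K,V^\ast(1)) = 0$.

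The contradiction comes from Lemma~\ref{lemma:dim-bk-g}: under the twist $i \mapsto 1-i$, the assumption that the Hodge\--Tate weights of $V$ are not all $>0$ is equivalent to the Hodge\--Tate weights of $V^\ast(1)$ not being all $\leq 0$, so the lemma applied to $V^\ast(1)$ forces $\dim_{\Qp} \HH^1_g(K,V^\ast(1)) > 0$. The delicate step, and the one I expect to require the most care, is the initial descent from $L$ to $K$: its success hinges precisely on the injectivity of the restriction map on $\HH^1(\cdot, V_\dR)$ established for non\-/perfectoid completions. The remainder of the argument is a formal packaging of local Tate duality for Bloch\--Kato subgroups together with Hyodo's identification of the quotient $\HH^1_g/\HH^1_e$.
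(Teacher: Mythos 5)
Your proof is correct. It runs on the same fuel as the paper's own argument --- Corollary~\ref{corollary:bk-non-perfectoid} to turn the hypothesis into \(\HH^1_g(L,V)=\HH^1(L,V)\), the injectivity of restriction on \(\HH^1(\cdot,V_\dR)\) for non-perfectoid \(\hat{L}\) from Proposition~\ref{proposition:rep-dr}, and Corollary~\ref{corollary:isomorphism-fk} --- but the endgame is genuinely different. The paper never descends the Bloch--Kato statement to \(K\): it works inside \(\HH^1(L,V_\dR)\) and shows that the subgroup \(\Img(f_L)\cap\HH^1(L,V_\dR^+)\) is simultaneously zero (by Proposition~\ref{proposition:hyodo} applied over \(L\)) and contains the non-zero subgroup \(\HH^1(K,V_\dR^+)\) (by \(f_K\) being an isomorphism together with restriction injectivity), the Hodge--Tate hypothesis entering through \(\Fil^0\DdR(V)\neq 0\). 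You instead use the restriction square and the injectivity of \(\HH^1(K,V_\dR)\rightarrow\HH^1(L,V_\dR)\) to descend to \(\HH^1_g(K,V)=\HH^1(K,V)\), then apply Proposition~\ref{proposition:hyodo} and Corollary~\ref{corollary:isomorphism-fk} over \(K\) to get \(\HH^1_e(K,V)=\HH^1(K,V)\), and place the contradiction in \(\HH^1_g(K,V^\ast(1))\) via Proposition~\ref{proposition:bk-dual} and Lemma~\ref{lemma:dim-bk-g}. Your twist bookkeeping is right: with the paper's normalisation the Hodge--Tate weights of \(V^\ast(1)\) are the \(1-i\), so \enquote{not all \(>0\)} for \(V\) is exactly \enquote{not all \(\leq 0\)} for \(V^\ast(1)\), and the non-triviality that Lemma~\ref{lemma:dim-bk-g} extracts from \(\Fil^0\DdR(V^\ast(1))\subsetneq\DdR(V^\ast(1))\) is equivalent to the paper's \(\Fil^0\DdR(V)\neq 0\), so the two contradictions are dual to one another. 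What your route buys is that it makes the equivalence with the dual formulation in the introduction (Theorem~\ref{intro:theorem:main}) completely transparent; what the paper's route buys is that it avoids invoking local Tate duality at the finite level and keeps the entire argument inside the cohomology of the period rings over \(L\).
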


\begin{remark}
	By local Tate duality (Proposition~\ref{proposition:bk-dual-iw}), Theorem~\ref{theorem:main} is equivalent to Theorem~\ref{intro:theorem:main} stated in the introduction.
\end{remark}

\begin{proof}
	We proceed by contradiction.
	We thus assume that all the hypotheses of the statement hold and that the field \(\hat{L}\) is not perfectoid.

	We first note that the extension \(L/K\) is infinite by Corollary~\ref{corollary:bk-infinite-extension}.
	For each finite extension \(K^\prime\) of \(K\) contained in \(L\), by Proposition~\ref{proposition:rep-dr}, there is a commutative diagram of \(\Qp\)\=/vector spaces whose rows and columns are exact
	\begin{equation} \label{eq:dr-restriction-exact}
		\begin{tikzcd}
			0 \ar{r} & \HH^1(L,V_\dR^+) \ar{r} & \HH^1(L,V_\dR) \ar{r} & \HH^1(L,V_\dR/V_\dR^+) \ar{r} & 0 \\
			0 \ar{r} & \HH^1(K^\prime,V_\dR^+) \ar{r} \ar{u} & \HH^1(K^\prime,V_\dR) \ar{r} \ar{u} & \HH^1(K^\prime,V_\dR/V_\dR^+) \ar{r} \ar{u} & 0 \\
			& 0 \ar{u} & 0 \ar{u} & 0 \ar{u} & ,
		\end{tikzcd}
	\end{equation}
	where the vertical maps are the restriction maps.
	We will abusively consider the spaces \(\HH^1(L,V_\dR^+)\), \(\HH^1(K^\prime,V_\dR^+)\) and \(\HH^1(K^\prime,V_\dR)\) as subspaces of \(\HH^1(L,V_\dR)\)  via the diagram~\eqref{eq:dr-restriction-exact}.

	We consider the map
	\[
		f_L : \HH^1(L,V_\e) \rightarrow \HH^1(L,V_\dR).
	\]
	We first prove that
	\begin{equation} \label{eq:intersection-non-trivial}
		\Img(f_L) \cap \HH^1(L,V_\dR^+) \neq 0
	\end{equation}
	in \(\HH^1(L,V_\dR)\).
	For each finite extension \(K^\prime\) of \(K\) contained in \(L\), there exists a commutative diagram
	\begin{equation} \label{eq:restriction-intersection}
		\begin{tikzcd}
			\HH^1(L,V_\e) \ar{r}{f_L} & \HH^1(L,V_\dR) \\
			\HH^1(K^\prime,V_\e) \ar{r}{f_{K^\prime}} \ar{u} & \HH^1(K^\prime,V_\dR) \ar{u} \\
			& \phantom{,}0, \ar{u}
		\end{tikzcd}
	\end{equation}
	where the vertical maps are the restriction maps.
	By commutativity of the diagrams~\eqref{eq:dr-restriction-exact} and~\eqref{eq:restriction-intersection}, it holds
	\begin{equation} \label{eq:intersection-finite}
		\Img(f_{K^\prime}) \cap \HH^1(K^\prime,V_\dR^+) \subseteq \Img(f_L) \cap \HH^1(L,V_\dR^+)
	\end{equation}
	in \(\HH^1(L,V_\dR)\).
	Therefore, by the relation~\eqref{eq:intersection-finite}, in order to prove the relation~\eqref{eq:intersection-non-trivial}, it is enough to prove prove that there exists a finite extension \(K^\prime\) of \(K\) contained in \(L\) such that \(\Img(f_{K^\prime}) \cap \HH^1(K^\prime,V_\dR^+)\) is non\-/trivial, which we do by considering the dimensions of the finite dimensional \(\Qp\)\=/vector spaces involved.
	By Proposition~\ref{proposition:bounded-dim-de} and Corollary~\ref{corollary:hyodo}, there exists \(c \in \N\) such that, for each finite extension \(K^\prime\) of \(K\), it holds
	\begin{equation} \label{eq:dimension-ker}
		\dim_{\Qp} \Ker(f_{K^\prime}) \leq c.
	\end{equation}
	By Proposition~\ref{proposition:cohomology-be}, it holds
	\begin{equation} \label{eq:dimension-ve}
		\dim_{\Qp} \HH^1(K^\prime,V_\e) \geq [K^\prime:\Qp] \dim_{\Qp} V.
	\end{equation}
	Therefore, by the combination of the equation~\eqref{eq:dimension-ker} and~\eqref{eq:dimension-ve}, it holds
	\begin{equation} \label{eq:dimensions}
		\begin{split}
			[K^\prime:\Qp] \dim_{\Qp} V - c & \leq \dim_{\Qp} \Img(f_{K^\prime}) \\
			& \leq \dim_{\Qp} \HH^1(K^\prime,V_\dR) = [K^\prime:\Qp]\dim_{\Qp} V,
		\end{split}
	\end{equation}
	where the last equality follows from Proposition~\ref{proposition:rep-dr}.
	Since the representation \(V\) is non\-/trivial and the Hodge\--Tate weights of \(V\) are not all \(> 0\), the vector space \(\Fil^0 \DdR(V)\) is non\-/trivial, and hence by Proposition~\ref{proposition:rep-dr}, it holds
	\begin{equation} \label{eq:non-trivial}
		\dim_{\Qp} \HH^1(K^\prime,V_\dR^+) = [K^\prime:\Qp]\dim_{\Qp} \Fil^0 \DdR(V) \neq 0.
	\end{equation}
	In particular, by the equation~\eqref{eq:non-trivial} and since the extension \(L/K\) is infinite, we may find \(K^\prime\) such that
	\begin{equation} \label{eq:greater-than-constant}
		\dim_{\Qp} \HH^1(K^\prime,V_\dR^+) > c,
	\end{equation}
	in which case, the combination of the relations~\eqref{eq:dimensions} and~\eqref{eq:greater-than-constant} implies that
	\[
		\Img(f_{K^\prime}) \cap \HH^1(K^\prime,V_\dR^+) \neq 0.
	\]

	We now prove that
	\begin{equation} \label{eq:intersection-trivial}
		\Img(f_L) \cap \HH^1(L,V_\dR^+) = 0
	\end{equation}
	in \(\HH^1(L,V_\dR)\).
	By Proposition~\ref{proposition:hyodo} and Corollary~\ref{corollary:bk-non-perfectoid}, there exists a commutative diagram whose rows and columns are exact
	\begin{equation} \label{eq:bk-iso}
		\begin{tikzcd}
			& & & 0 \ar{d} \\
			& & & \HH^1(L,V_\dR^+) \ar{d} \\
			0 \ar{r} & \HH^1_g(L,V)/\HH^1_e(L,V) \ar{r} \ar{d}[sloped]{\sim} & \HH^1(L,V_\e) \ar{r}{f_L} \ar[equal]{d} & \HH^1(L,V_\dR) \ar{d} \\
			0 \ar{r} & \HH^1(L,V)/\HH^1_e(L,V) \ar{r} & \HH^1(L,V_\e) \ar{r} & \HH^1(L,V_\dR/V_\dR^+) \ar{d} \\
			& & & \phantom{.}0.
		\end{tikzcd}
	\end{equation}
	We deduce from the commutativity of the diagram~\eqref{eq:bk-iso} that the relation~\eqref{eq:intersection-trivial} holds.

	Finally, the combination of the relations~\eqref{eq:intersection-non-trivial} and~\eqref{eq:intersection-trivial} yields
	\[
		0 \neq \Img(f_L) \cap \HH^1(L,V_\dR^+) = 0.
	\]
	We thus reach a contradiction.
\end{proof}

The combination of Proposition~\ref{proposition:e-delta-bk} and Theorem~\ref{theorem:main} implies the following corollary.

\begin{corollary} \label{corollary:perfectoid-e-delta}
	Assume that \(V\) is a non\-/trivial de Rham representation of \(G_K\) whose Hodge\--Tate weights are not all \(> 0\).
	If \(\HH^1(L,E_\delta(V/T))\) is trivial, then the field \(\hat{L}\) is perfectoid.
\end{corollary}

\section{Examples}
\label{sec:examples}

\subsection{\texorpdfstring{\(p\)\=/}{p-}divisible groups}
\label{subsec:p-divisible-groups}

We recover results by Coates and Greenberg~\cite{CoatesGreenberg1996} for abelian varieties and by Bondarko~\cite{Bondarko2003} for \(p\)\=/divisible groups from Theorem~\ref{theorem:main}.
We first review results concerning \(p\)\=/divisible groups by Tate~\cite{Tate1967} and Fontaine~\cite[\S~6]{Fontaine1982}.

Let \(G\) be a \(p\)\=/divisible group over \(\Oc_K\) of height \(\height(G) \in \N\) and dimension \(\dim(G) \in \N\).
Let \(T_p (G) = \varprojlim_{\times p} G(\Oc_{\Cp})[p^n]\) be the \(p\)\=/adic Tate module of \(G\), and let \(V_p (G) = \Qp \otimes_{\Zp} T_p (G)\).
Note that \(V_p (G)/T_p (G) = G[p^\infty]\) is the \(p\)\=/primary torsion subgroup of \(G(\Oc_{\Cp})\).
The \(p\)\=/adic representation \(V_p (G)\) of \(G_K\) is crystalline with Hodge\--Tate weights all in \([0,1]\), of multiplicity
\begin{equation} \label{eq:ht-weights-p-div}
	\begin{split}
		m_0(V_p(G)) & = \height(G)-\dim(G), \\
		m_1(V_p(G)) & = \dim(G).
	\end{split}
\end{equation}
We denote by \(G^\vee\) the dual \(p\)\=/divisible group of \(G\).
It holds
\begin{equation} \label{eq:p-div-height-dim-dual}
	\height(G) = \height(G^\vee) = \dim(G) + \dim(G^\vee),
\end{equation}
and there exists an isomorphism of topological \(G_K\)\=/modules
\begin{equation} \label{eq:p-div-duality}
	T_p(G^\vee) \similarrightarrow T_p(G)^\ast(1).
\end{equation}

There exist short exact sequences of \(p\)\=/divisible groups
\begin{gather}
	\label{eq:p-divisible-connected-etale} 0 \rightarrow G^\circ \rightarrow G \rightarrow G^\et \rightarrow 0, \\
	\label{eq:p-divisible-multiplicative-coconnected} 0 \rightarrow G^\mult \rightarrow G^\circ \rightarrow G^{\circ \circ} \rightarrow 0,
\end{gather}
where
\begin{itemize}
	\item \(G^\circ\) is the connected component of \(G\),
	\item \(G^\et\) is the maximal étale quotient of \(G\),
	\item \(G^\mult\) is the multiplicative component of \(G\), that is, the maximal \(p\)\=/divisible subgroup of \(G\) whose dual is étale,
	\item \(G^{\circ\circ}\) is the maximal coconnected quotient of \(G^\circ\), that is, the maximal quotient of \(G^\circ\) whose dual is connected.
\end{itemize}

\begin{remark} \label{remark:p-div-dim-height}
	The dimension and height are additive in exact sequences of \(p\)\=/divisible groups.
	Moreover, by definition, it holds \(\dim(G) = \dim(G^\circ)\), and \(\dim(G) = 0\) if and only if \(G^\circ\) is trivial.
\end{remark}

The \(p\)\=/divisible groups associated with \(G\) from~\eqref{eq:p-divisible-connected-etale} and~\eqref{eq:p-divisible-multiplicative-coconnected} defines the following subquotient representations of \(T_p(G)\)
\begin{gather*}
	0 \rightarrow T_p(G^\circ) \rightarrow T_p(G) \rightarrow T_p(G^\et) \rightarrow 0, \\
	0 \rightarrow T_p(G^\mult) \rightarrow T_p(G^\circ) \rightarrow T_p(G^{\circ \circ}) \rightarrow 0.
\end{gather*}

\begin{proposition} \label{proposition:p-div-representations}
	The following properties hold.
	\begin{enumerate}
		\item The representation \(V_p G^\et\) is the maximal quotient of \(V_p G\) whose Hodge\--Tate weights are all \(0\).
		\item The representation \(V_p G^\circ\) is the maximal subrepresentation of \(V_p G\) which does not admit a non\-/trivial quotient whose Hodge\--Tate weights are all \(0\).
		\item The representation \(V_p G^\mult\) is the maximal subrepresentation of \(V_p G\) whose Hodge\--Tate weights all \(1\).
		\item The representation \(V_p G^{\circ\circ}\) is the maximal subquotient representation of \(V_p G\) which does not admit a non\-/trivial quotient whose Hodge\--Tate weights are all \(0\), and does not admit a subrepresentation whose Hodge\--Tate weights all \(1\).
	\end{enumerate}
\end{proposition}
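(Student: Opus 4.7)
The plan is to translate the Hodge--Tate weight conditions into Newton slope conditions on the crystalline Dieudonné modules $\Dcris(-)$, and then exploit the slope rigidity of $\varphi$-isocrystals. Since $V_p(G)$ is crystalline with Hodge--Tate weights in $\{0,1\}$, its Newton slopes lie in $[0,1]$, and any subrepresentation or quotient $W$ inherits this slope range. By weak admissibility of $\Dcris(W)$, the sum of the Newton slopes of $\Dcris(W)$ equals $\sum_i i \cdot m_i(W)$, the sum of its Hodge--Tate weights. Since slopes lie in $[0,1]$, $W$ has Hodge--Tate weights all equal to $0$ (respectively all equal to $1$) if and only if every Newton slope of $\Dcris(W)$ equals $0$ (respectively equals $1$). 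Standard Dieudonné theory for $p$-divisible groups over $\Oc_K$ then pins down the slope content of the four pieces: $\Dcris(V_p(G^\et))$ has all slopes $0$, $\Dcris(V_p(G^\mult))$ has all slopes $1$, and $\Dcris(V_p(G^{\circ\circ}))$ has all slopes strictly in $(0,1)$; in particular $\Dcris(V_p(G^\circ))$ has strictly positive slopes throughout.

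For (1), given a surjection $V_p(G) \twoheadrightarrow W$ with $W$ of Hodge--Tate weights all $0$, the translation above yields $\Dcris(W)$ of slope $0$. The composition $V_p(G^\circ) \hookrightarrow V_p(G) \twoheadrightarrow W$ then has image whose $\Dcris$ is simultaneously a quotient of $\Dcris(V_p(G^\circ))$ (slopes all positive) and a subobject of $\Dcris(W)$ (slopes all $0$), forcing it to vanish; hence the surjection factors through $V_p(G^\et)$. Statement (2) is dual in spirit: for a subrepresentation $U \subseteq V_p(G)$ with no non-trivial Hodge--Tate weight $0$ quotient, the image of $U$ in $V_p(G^\et)$ is such a quotient (subrepresentations of $V_p(G^\et)$ inherit $\Fil^1 \Dcris = 0$), hence vanishes, so $U \subseteq V_p(G^\circ)$; and the same slope argument shows that $V_p(G^\circ)$ itself admits no non-trivial Hodge--Tate weight $0$ quotient. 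Statement (3) is obtained from (1) applied to $G^\vee$ via Tate duality, using $V_p(G)^\ast(1) \simeq V_p(G^\vee)$ and $(G^\vee)^\et \simeq (G^\mult)^\vee$, together with the fact that Tate duality swaps subrepresentations with quotients and interchanges Hodge--Tate weights $0$ and $1$. Finally, (4) results from combining (2) with the analogue of (2) dual to (3): a subquotient $U/U'$ of $V_p(G)$ admitting neither a Hodge--Tate weight $0$ quotient nor a Hodge--Tate weight $1$ subrepresentation must satisfy $U \subseteq V_p(G^\circ)$ and $U \cap V_p(G^\mult) \subseteq U'$, so $U/U'$ is a subquotient of $V_p(G^\circ)/V_p(G^\mult) = V_p(G^{\circ\circ})$; and $V_p(G^{\circ\circ})$ itself satisfies both properties precisely because its slopes lie strictly in $(0,1)$.

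The hardest step is rigorously matching the Newton slope decomposition of $\Dcris(V_p(G))$ with the geometric decomposition $(G^\et, G^\mult, G^{\circ\circ})$ of the $p$-divisible group in mixed characteristic, and justifying the slope characterisations of the four pieces. This compatibility is standard via the Grothendieck--Messing crystalline comparison and Fontaine's classification of $p$-divisible groups by filtered $\varphi$-modules, and ultimately reduces to the Dieudonné--Manin classification over the algebraic closure of the residue field; the remainder is the slope-versus-weight bookkeeping via weak admissibility described above.
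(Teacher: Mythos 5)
Your argument is correct in substance but follows a genuinely different route from the paper's. The paper reduces everything to the single claim that \(V_p(G^\circ)\) admits no non\-/trivial quotient with Hodge\--Tate weights all \(0\) (parts (1) and (2) follow formally from this, and (3) and (4) by duality), and proves that claim via Tate's full faithfulness theorem: such a quotient would be \(V_p(H)\) for an étale \(p\)\=/divisible group \(H\) over \(\Oc_K\) (étale because \(\dim(H)=m_1(V_p(H))=0\)), the quotient map would be induced by a morphism \(G^\circ \rightarrow H\) of \(p\)\=/divisible groups, and there are no non\-/trivial morphisms from a connected \(p\)\=/divisible group to an étale one. You instead read everything off the Newton slope decomposition of \(\Dcris\) together with the weak\-/admissibility identity \(t_N = t_H\). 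This is a valid alternative; what it buys is a uniform slope dictionary that handles all four parts (and in particular part (4)) directly, at the cost of invoking the standard but heavier compatibility between the connected/étale/multiplicative decomposition of \(G\) and the slope decomposition of the Dieudonné isocrystal of its special fibre, which you correctly isolate as the crux. The paper's route needs only Tate's theorem plus the Hodge\--Tate weight formula it has already recalled.

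Two caveats. First, with the paper's conventions (\(\Qp(1)\) has Hodge\--Tate weight \(1\) and \(\varphi(t)=pt\)), the slopes of \(\Dcris(V_p(G))\) lie in \([-1,0]\) and \(t_N(\Dcris(W)) = \sum_i i \dim_K \Gr^i \DdR(W)\) equals \emph{minus} the sum of the Hodge\--Tate weights, so your bookkeeping needs a global sign (or should be carried out on the Dieudonné module of the special fibre); the argument is otherwise unaffected. Second, in part (4) your intermediate claim that \(U \subseteq V_p(G^\circ)\) does not follow from the hypothesis, which bears on \(U/U'\) rather than on \(U\): for \(G = (\Qp/\Zp) \times G'\) with \(G'\) connected with connected dual, \(U = V_p(G)\) and \(U'\) the étale line give a counterexample. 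The repair is routine: the image of \(U/U'\) in the corresponding subquotient of \(V_p(G^\et)\) has Hodge\--Tate weights all \(0\), hence vanishes, so \(U \subseteq U' + V_p(G^\circ)\) and \(U/U'\) is isomorphic to \((U\cap V_p(G^\circ))/(U'\cap V_p(G^\circ))\); one then argues symmetrically with \(V_p(G^\mult)\).
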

\begin{proof}
	We prove the statements concerning \(V_p G^\et\) and \(V_p G^\circ\) simultaneously.
	The statements concerning \(V_p G^\mult\) and \(V_p G^{\circ\circ}\) follow from the statements concerning \(V_p G^\et\) and \(V_p G^\circ\) by duality~\eqref{eq:p-div-duality}.

	We proceed by contradiction.
	Assume that there exists a non\-/trivial quotient \(W\) of \(V_p G^\circ\) whose Hodge\--Tate weights are all \(0\).
	Then by Tate~\cite[Proposition~12]{Tate1967} and duality, there exists a \(p\)\=/divisible group \(H\) over \(\Oc_K\) such that \(W = V_p H\) and the quotient map \(V_p G^\circ \rightarrow V_p H\) is induced by a morphism of \(p\)\=/divisible groups \(G^\circ \rightarrow H\).
	Since the Hodge\--Tate weights of \(V_p H\) are all \(0\), the \(p\)\=/divisible group \(H\) is étale by the equation~\eqref{eq:ht-weights-p-div} and Remark~\ref{remark:p-div-dim-height}.
	We reach a contradiction since there are no non\-/trivial maps from a connected \(p\)\=/divisible group to an étale one over \(\Oc_K\).
\end{proof}

\begin{remark}
	If \(G\) is connected, then \(\De(V_p(G))\) is trivial (see for instance~\cite{Demazure1972}, or by the combination of Proposition~\ref{proposition:p-div-representations} and the \enquote{weakly admissible implies admissible} Theorem from Remark~\ref{remark:p-adic-rep-results}).
	In particular, if \(G\) is connected and coconnected, then by duality~\eqref{eq:p-div-duality}, the spaces \(\De(V_p(G))\) and \(\De(V_p(G)^\ast(1))\) are both trivial.
\end{remark}

If \(E\) is an algebraic extension of \(K\), then we set
\[
	G(\Oc_E) = \varinjlim G(\Oc_{K^\prime}),
\]
and where \(K^\prime\) runs through the finite extensions of \(K\) contained in \(E\), and the transition morphisms are the inclusion maps.

Let \(t_G\) be the tangent space of \(G\).
Recall that the short exact sequence of discrete \(G_K\)\=/modules (see~\cite[\S~2.4]{Tate1967})
\begin{equation} \label{eq:p-div-log-ses}
	0 \rightarrow G[p^\infty] \rightarrow G(\Oc_{\Qpbar}) \xrightarrow{\log_G} t_G(\Qpbar) \rightarrow 0,
\end{equation}
induces a short exact sequence
\[
	0 \rightarrow G(\Oc_L) \otimes_{\Zp} \Qp/\Zp \xrightarrow{\kappa_L} \HH^1(L,G[p^\infty]) \rightarrow \HH^1(L,G(\Oc_{\Qpbar})) \rightarrow 0.
\]
The map \(\kappa_L\) is the \emph{Kummer map}.
By Fontaine~\cite[\S~8.4]{Fontaine2003} and Bloch and Kato~\cite[Example~3.10.1]{BlochKato1990} (see also~\cite[Remark~3.2.2]{Ponsinet2025}), the short exact sequence~\eqref{eq:p-div-log-ses} is isomorphic to the short exact sequence~\eqref{eq:e-delta} associated with \(T_p(G)\).
In particular, it holds
\[
	\Img(\kappa_L) = \HH^1_e(L,G[p^\infty]).
\]
Moreover, from Corollary~\ref{corollary:perfectoid-e-delta}, we recover the following result due to Bondarko~\cite{Bondarko2003} for connected \(p\)\=/divisible groups over \(\Oc_K\), which is itself a generalisation of a result by Coates and Greenberg~\cite[Proposition~5.4]{CoatesGreenberg1996} for connected \(p\)\=/divisible groups arising from abelian varieties.

\begin{proposition}[Coates--Greenberg, Bondarko] \label{proposition:p-divisible-group}
	Assume that \(G\) is connected and \(\height(G) > \dim(G)\).
	If \(\HH^1(L,G(\Oc_{\Qpbar})) = 0\), then \(\hat{L}\) is perfectoid.
\end{proposition}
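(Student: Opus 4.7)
The plan is to apply Theorem~\ref{theorem:main} to the crystalline representation \(V = V_p(G^{\circ\circ})\) with the \(G_K\)\=/stable lattice \(T = T_p(G^{\circ\circ})\), where \(G^{\circ\circ}\) is the maximal coconnected quotient of \(G = G^\circ\).  By Lemma~\ref{lemma:coconnected-non-trivial}, the hypothesis \(\height(G) > \dim(G)\) ensures that \(G^{\circ\circ}\), and hence \(V\), is non\=/trivial.  The argument then splits into two steps: first, deduce the identity \(\HH^1_e(L, G^{\circ\circ}[p^\infty]) = \HH^1(L, G^{\circ\circ}[p^\infty])\) from the hypothesis \(\HH^1(L, G(\Oc_{\Qpbar})) = 0\); second, verify the remaining hypotheses of Theorem~\ref{theorem:main} for \(V\).

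For the first step, I would use, for each \(n \in \N\), the short exact sequence of discrete \(G_K\)\=/modules
\[
	0 \rightarrow G[p^n] \rightarrow G(\Oc_{\Qpbar}) \xrightarrow{p^n} G(\Oc_{\Qpbar}) \rightarrow 0,
\]
in which the surjectivity of \([p^n]\) is a consequence of the connectedness of \(G\) (the \(\Oc_{\Qpbar}\)\=/points of the associated formal group being \(p\)\=/divisible).  Taking continuous cohomology and extracting the short exact sequence linking \(G(\Oc_L)/p^n\) and \(\HH^1(L, G(\Oc_{\Qpbar}))[p^n]\), then passing to the directed colimit over \(n \in \N\) (which commutes with continuous cohomology of discrete modules and is exact), one obtains
\[
	0 \rightarrow G(\Oc_L) \otimes_{\Zp} \Qp/\Zp \rightarrow \HH^1(L, G[p^\infty]) \rightarrow \HH^1(L, G(\Oc_{\Qpbar}))[p^\infty] \rightarrow 0.
\]
The left\=/hand term is \(\HH^1_e(L, G[p^\infty])\) by the Kummer isomorphism~\eqref{eq:kummer-torsion}, so the hypothesis \(\HH^1(L, G(\Oc_{\Qpbar})) = 0\) gives \(\HH^1_e(L, G[p^\infty]) = \HH^1(L, G[p^\infty])\), and Lemma~\ref{lemma:bloch-kato-coconnected} transports this equality to \(G^{\circ\circ}\).

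For the second step, the quotient \(G^{\circ\circ}\) of the connected \(G\) is itself connected (there is no non\=/trivial morphism from a connected to an étale \(p\)\=/divisible group over \(\Oc_K\)), and its dual \(G^{\circ\circ,\vee}\) is connected by the definition of coconnectedness; both being non\=/trivial, they have positive dimension.  The formulae~\eqref{eq:ht-weights-p-div} and~\eqref{eq:p-div-height-dim-dual} then show that \(V_p(G^{\circ\circ})\) has Hodge\--Tate weights \(0\) with multiplicity \(\dim(G^{\circ\circ,\vee}) > 0\) and \(1\) with multiplicity \(\dim(G^{\circ\circ}) > 0\), so in particular they are not all \(> 0\).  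The vanishings \(\De(V_p(G^{\circ\circ})) = 0\) and \(\De(V_p(G^{\circ\circ})^\ast(1)) \simeq \De(V_p(G^{\circ\circ,\vee})) = 0\) are recorded in~\eqref{eq:dcris-coconnected}, using the duality~\eqref{eq:p-div-duality}.  Theorem~\ref{theorem:main} then applies and yields that \(\hat{L}\) is perfectoid.

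I do not anticipate a serious obstacle.  The only delicate point is, in the first step, identifying the transition maps between the exact sequences indexed by \(n\): the transition on \(G(\Oc_L)/p^n\) is multiplication by \(p\), producing the tensor product with \(\Qp/\Zp\) in the colimit, while the transition on \(\HH^1(L, G(\Oc_{\Qpbar}))[p^n]\) is the natural inclusion, producing \(\HH^1(L, G(\Oc_{\Qpbar}))[p^\infty]\); everything else is a straightforward assembly of results already established in the paper.
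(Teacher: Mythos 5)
Your proposal is correct and follows essentially the same route as the paper: reduce to the non\-/trivial coconnected quotient \(G^{\circ\circ}\) via Lemma~\ref{lemma:coconnected-non-trivial} and Lemma~\ref{lemma:bloch-kato-coconnected}, use the Kummer sequence to translate the vanishing of \(\HH^1(L,G(\Oc_{\Qpbar}))\) into \(\HH^1_e(L,G[p^\infty]) = \HH^1(L,G[p^\infty])\), and then apply Theorem~\ref{theorem:main} to \(T_p(G^{\circ\circ})\) after checking its hypotheses via~\eqref{eq:ht-weights-p-div},~\eqref{eq:p-div-height-dim-dual} and~\eqref{eq:dcris-coconnected}. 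The only difference is that you derive the Kummer exact sequence and the positivity of \(m_0(V_p(G^{\circ\circ}))\) explicitly, whereas the paper simply recalls the former and cites Proposition~\ref{proposition:p-div-representations} for the latter.
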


\begin{proof}
	The representation \(V_p(G)\) is non\-/trivial crystalline with Hodge\--Tate weights not all \(>0\) by the equation~\eqref{eq:ht-weights-p-div}.
	Therefore, Corollary~\ref{corollary:perfectoid-e-delta} applies and implies that the field \(\hat{L}\) is perfectoid.
\end{proof}

\begin{remark}
	Bondarko's result~\cite{Bondarko2003} concerns the more general setting of a finite dimensional commutative formal groups \(\Fc\) defined over the valuation ring \(\Oc_k\) of a complete discrete valuation field \(k\) with perfect residue field of positive characteristic, and it characterises the algebraic extensions \(l\) of \(k\) such that the completed field \(\hat{l}\) is perfectoid in terms of the Galois cohomology of \(l\) with coefficients in \(\Fc\).
	However, Bondarko's result requires the additional hypothesis that the valuation of the field \(l\) is non\-/discrete.
	Therefore, for connected \(p\)\=/divisible groups over \(\Oc_K\), Proposition~\ref{proposition:p-divisible-group} slightly improves upon Bondarko's result~\cite{Bondarko2003} by removing this hypothesis.
\end{remark}

\begin{example} \label{example:abelian-varieties}
	Let \(A\) be an abelian variety defined over \(K\).
	Recall that the representation \(V_p(A)\) is de Rham with Hodge\--Tate weights \(0\) and \(1\) both of multiplicity \(\dim(A)\) the dimension of \(A\).
	Moreover, the Weil pairing induces an isomorphism \(V_p(A)^\ast(1) \simeq V_p(A^\vee)\), where \(A^\vee\) is the dual abelian variety of \(A\), and the space \(\De(V_p(A))\) is trivial~\cite[Example~3.11]{BlochKato1990}.
	Further assume that \(A\) has semistable reduction.
	Let \(\hat{\Ac}\) be the formal group associated with a Néron model \(\Ac\) of \(A\) over \(\Oc_K\), and let \(\hat{\Ac}(p)\) be the connected \(p\)\=/divisible group associated with \(\hat{\Ac}\) over \(\Oc_K\).
	Then, the connected \(p\)\=/divisible group \(\hat{\Ac}(p)\) is of dimension \(\dim(\hat{\Ac}(p)) = \dim(A)\) and finite height \(\dim(A)\leq \height(\hat{\Ac}(p)) \leq 2\dim(A)\), the space \(\De(V_p(\hat{\Ac}(p)))\) is trivial, and \(V_p(A)/V_p(\hat{\Ac}(p))\) is the maximal quotient of \(V_p(A)\) whose Hodge\--Tate weigths are all \(0\) (see~\cite[Exp. IX]{SGAVII:1} and~\cite[Remark~3.3.6~2]{Ponsinet2025}).
	In particular, abelian varieties provide examples of connected \(p\)\=/divisible groups satisfying Proposition~\ref{proposition:p-divisible-group}.
	For instance, if \(A\) has good non\-/ordinary reduction, then \(\hat{\Ac}(p)\) is such a \(p\)\=/divisible group.
\end{example}

\subsection{Bloch\texorpdfstring{\--}{--}Kato groups over perfectoid fields}
\label{subsec:bloch-kato-groups-over-perfectoid-fields}

We review computation of the Bloch\--Kato groups over perfectoid fields by Berger~\cite{Berger2005}, and the author~\cite{Ponsinet2025,Ponsinet2024}.
These results provide examples of de Rham representations \(V\) satisfying the hypotheses of Theorem~\ref{theorem:main}, that is, non\-/trivial de Rham representations whose Hodge\--Tate weights are not all \(>0\), and such that, if \(\hat{L}\) is perfectoid, then \(\HH^1_e(L,V/T) = \HH^1(L,V/T)\).

Let \(V\) be a de Rham representation of \(G_K\).
Let \(V^{\leq 0}\) be the maximal quotient of \(V\) whose Hodge\--Tate weights are all \(\leq 0\), and let \(T^{\leq 0}\) be the image of \(T\) in \(V^{\leq 0}\).
The quotient map \(V/T \rightarrow V^{\leq 0}/T^{\leq 0}\) induces a morphism
\[
	\pi_0: \HH^1(L,V/T) \rightarrow \HH^1(L,V^{\leq 0}/T^{\leq 0}).
\]
Recall that it holds \(\HH^1_e(L,V/T) \subseteq \Ker(\pi_0)\) (by Lemma~\ref{lemma:proj-bk-exp}, see~\cite{Ponsinet2024} for details).
Moreover, if \(\hat{L}\) is perfectoid, then the \(p\)\=/cohomological dimension of \(G_L\) is \(\leq 1\) (see~\cite[Proposition~4.1.3]{Ponsinet2024}), which implies that the map \(\pi_0\) is surjective.
Thus, if \(\hat{L}\) is perfectoid, the map \(\pi_0\) induces a morphism
\[
	\HH^1(L,V/T)/\HH^1_e(L,V/T) \rightarrow \HH^1(L,V^{\leq 0}/T^{\leq 0}) \rightarrow 0.
\]

Recall the main result of~\cite{Ponsinet2024}.

\begin{theorem} \label{theorem:bk-perfectoid}
	Let \(n \geq 1\) be an integer.
	If \(\hat{L}\) is a perfectoid field such that \(L\) is dense in \((\BdR^+/\Fil^n \BdR)^{G_L}\), then the following holds.
\begin{enumerate}
	\item If the Hodge\--Tate weights of \(V\) are all \(\leq n\), then the map \(\pi_0\) induces an isomorphism
	\[
		\HH^1(L,V/T)/\HH^1_e(L,V/T) \similarrightarrow \HH^1(L,V^{\leq 0}/T^{\leq 0}).
	\]
	\item If there is no non\-/trivial quotient representation of \(V\) whose Hodge\--Tate weights are all in \(\Z \setminus [1,n]\), then \(\HH^1_e(L,V/T) = \HH^1(L,V/T)\).
\end{enumerate}
\end{theorem}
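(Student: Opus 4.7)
\emph{Overall strategy.} I would exploit the short exact sequence of $p$\=/adic representations
\begin{equation*}
0 \to W \to V \to V^{\leq 0} \to 0, \quad W := \Ker(V \to V^{\leq 0}),
\end{equation*}
whose Hodge\--Tate weights on $W$ are the positive weights of $V$. Passing to lattices gives $0 \to W/T_W \to V/T \to V^{\leq 0}/T^{\leq 0} \to 0$ with $T_W = T \cap W$, hence the long exact sequence
\begin{equation*}
\cdots \to \HH^1(L, W/T_W) \xrightarrow{\iota} \HH^1(L, V/T) \xrightarrow{\pi_0} \HH^1(L, V^{\leq 0}/T^{\leq 0}) \to \HH^2(L, W/T_W).
\end{equation*}
Perfectoidness of $\hat{L}$ gives $\cd_p(G_L) \leq 1$, so the last group vanishes and $\Ker(\pi_0) = \Img(\iota)$. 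The inclusion $\HH^1_e(L, V/T) \subseteq \Ker(\pi_0)$ is recorded in the text, so the content lies in the reverse.

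\emph{Part 1.} I would prove the stronger statement $\HH^1(L, W/T_W) = \HH^1_e(L, W/T_W)$, which via $\iota$ gives $\Img(\iota) \subseteq \HH^1_e(L, V/T)$. By Proposition~\ref{proposition:e-delta-bk}, $\HH^1_e(L, W/T_W)$ equals the image of the exponential $t_W(L) \to \HH^1(L, W/T_W)$ coming from the discretised sequence $0 \to W/T_W \to E_\delta(W/T_W) \to t_W(\Qpbar) \to 0$. The proof splits into two tasks: (i) the ambient exponential $\HH^0(L, W_\dR/W_\dR^+) \to \HH^1(L, W/T_W)$ induced by~\eqref{eq:fundamental-t} is surjective, and (ii) the image of $t_W(L)$ is dense in $\HH^0(L, W_\dR/W_\dR^+) = \hat{L} \otimes_K t_W$. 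For (i), the long exact sequence for~\eqref{eq:fundamental-t} together with the perfectoid vanishings $\HH^1(L, W_\dR^+) = 0$ (Proposition~\ref{proposition:rep-dr}) and $\HH^1(L, W_e/T_W) = 0$ (obtained by a diagram chase using $\HH^2(L, W/T_W) = 0$ and the finite\-/level control provided by the fundamental sequence) yields surjectivity. For (ii), the density of $L$ in $(\BdR^+/\Fil^n \BdR)^{G_L}$ is, because HT weights of $W$ lie in $[1, n]$, equivalent to the density of $t_W(L)$ in $t_W(\hat{L})$; combined with discreteness of $\HH^1(L, W/T_W)$, this transfers surjectivity to the $L$\=/tangent space.

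\emph{Part 2.} The hypothesis forces $V^{\leq 0} = 0$, so $\HH^1(L, V^{\leq 0}/T^{\leq 0}) = 0$ and part 1 would conclude \emph{provided} all HT weights of $V$ were $\leq n$. To handle possible weights $> n$, I would dualise via Proposition~\ref{proposition:bk-dual-iw}: the target equality $\HH^1_e(L, V/T) = \HH^1(L, V/T)$ is equivalent to $\HH^1_{\Iw, g}(K, L, T^\ast(1)) = 0$. The hypothesis on quotients of $V$ translates, via the HT weight correspondence $w \mapsto 1-w$, to the statement that $V^\ast(1)$ has no non\-/trivial subrepresentation with HT weights in $\Z \setminus [1-n, 0]$. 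A dual argument filtering $V^\ast(1)$ by its maximal weight\-/$\leq 0$ subrepresentation, and applying the analogue of part 1 at each step, yields the desired vanishing.

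\emph{Main obstacle.} The crux is the combination of (i) and (ii) in part 1: producing $L$\=/rational Bloch\--Kato classes that cover all of $\HH^1(L, W/T_W)$. Perfectoidness gives plenty of $\hat{L}$\=/rational tangent classes via $\HH^0(L, W_\dR/W_\dR^+) = \hat{L} \otimes_K t_W$, but $\HH^1_e$ is built from $L$\=/rational tangent data; the density hypothesis on $L$ in $(\BdR^+/\Fil^n \BdR)^{G_L}$ is precisely the bridge between these, and the HT weight bound $n$ is what makes this bridge available at the relevant finite level of $\BdR^+$.
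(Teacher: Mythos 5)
First, a contextual point: the paper does not prove Theorem~\ref{theorem:bk-perfectoid} at all --- it is quoted as the main result of~\cite{Ponsinet2024:2} (\enquote{Recall the main result of\dots}), so there is no in-paper proof to compare your argument against. Judged on its own, your skeleton (reduce everything to the group \(\HH^1(L,E_\delta(V/T)) \simeq \HH^1(L,V/T)/\HH^1_e(L,V/T)\), use \(\cd_p(G_L)\leq 1\) and the perfectoid vanishing of \(\HH^1(L,V_\dR^+)\), and use the density hypothesis to pass from completed to algebraic tangent classes) points in the right direction, but the key steps are broken. \emph{(1)} The kernel \(W\) of \(V\to V^{\leq 0}\) need not have Hodge\--Tate weights in \([1,n]\); all one can say is that \(W\) has no non\-/trivial quotient with weights all \(\leq 0\). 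For \(V=V_p(E)\) with \(E\) a supersingular elliptic curve one has \(V\) irreducible with weights \(\{0,1\}\), hence \(V^{\leq 0}=0\) and \(W=V\): your reduction is then vacuous, since proving \(\HH^1_e(L,W/T_W)=\HH^1(L,W/T_W)\) is exactly the statement to be proved. \emph{(2)} In step (ii) you identify \(\HH^0(L,W_\dR/W_\dR^+)\) with \(t_W(\hat{L})\), but Proposition~\ref{proposition:rep-dr} gives that identification only when \(\hat{L}\) is \emph{not} perfectoid; over a perfectoid \(\hat{L}\) this invariant group is strictly larger (already \((\BdR/\Fil^{1}\BdR)^{G_L}\) is infinite\-/dimensional over \(\hat{L}\)). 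Since \(t_W(L)\) is automatically dense in \(t_W(\hat{L})\), your version of (ii) would render the density hypothesis vacuous --- yet Berger's result quoted at the end of \S~\ref{subsec:bloch-kato-groups-over-perfectoid-fields} shows that hypothesis genuinely fails for \(K_\cyc\) when \(n\geq 2\). The real question is whether \(t_W(L)\) has dense image in the much larger group \(\HH^0(L,W_\dR/W_\dR^+)\), and your argument never engages with that.

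\emph{(3)} The surjectivity in step (i) is the actual content of the theorem and cannot be obtained by \enquote{a diagram chase using \(\HH^2(L,W/T_W)=0\)}. The fundamental exact sequence and \(\cd_p(G_L)\leq 1\) only show that \(\HH^1(L,W_\e/T_W)\) is a quotient of \(\HH^1(L,W_\e)\), which is in turn a quotient of the typically non\-/zero \(\Qp\)\=/vector space \(\HH^1(L,W)\); nothing formal forces the image of \(\HH^1(L,W/T_W)\) in it to vanish. (In the cited work this is precisely where the theory of \(\Be\)\=/representations and \(G_L\)\=/equivariant vector bundles on the Fargues\--Fontaine curve enters.) Finally, for part 2 your observation that \(V^{\leq 0}=0\) is correct, but the proposed \enquote{dual argument filtering \(V^\ast(1)\)} is a placeholder, not a proof: Proposition~\ref{proposition:bk-dual-iw} translates the goal into the vanishing of \(\HH^1_{\Iw,g}(K,L,T^\ast(1))\), and establishing that vanishing in the presence of weights outside \([1,n]\) is again the substance of the quoted result rather than a formal dualisation of part 1.
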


\begin{remark} \label{remark:case-n-1}
	Concerning Theorem~\ref{theorem:bk-perfectoid} in the case \(n=1\).
	\begin{enumerate}
		\item By the Ax\--Sen\--Tate theorem (Theorem~\ref{theorem:ax-sen-tate}), the field \(L\) is dense in \((\BdR^+/\Fil^1 \BdR)^{G_L}\similarrightarrow \Cp^{G_L} \similarrightarrow \hat{L}\).
		Therefore, in the case \(n=1\), Theorem~\ref{theorem:bk-perfectoid} applies to all perfectoid field \(\hat{L}\).
		\item In particular, in the case \(n=1\), Theorem~\ref{theorem:bk-perfectoid} applies to the \(p\)\=/adic Tate modules of abelian varieties and \(p\)\=/divisible groups and implies a result by Coates and Greenberg~\cite[Proposition~4.3]{CoatesGreenberg1996} (see~\cite[Remark~3.5.5]{Ponsinet2025}).
	\end{enumerate}
\end{remark}

By Remark~\ref{remark:case-n-1}, the combination of Theorem~\ref{theorem:main} and Theorem~\ref{theorem:bk-perfectoid} yields.

\begin{corollary} \label{corollary:equivalence}
	If \(V\) is a non\-/trivial de Rham representation such that
	\begin{enumerate}
		\item the Hodge\--Tate weights of \(V\) are not all \(>0\),
		\item there is no non\-/trivial quotient representation of \(V\) whose Hodge\--Tate weights are all in \(\Z \setminus \{1\}\),
	\end{enumerate}
	then the field \(\hat{L}\) is perfectoid if and only if \(\HH^1_e(L,V/T) = \HH^1(L,V/T)\).
\end{corollary}

\begin{example}
	Abelian varieties and \(p\)\=/divisible groups provide examples of de Rham representations satisfying Corollary~\ref{corollary:equivalence}.
	For instance, the \(p\)\=/adic Tate module \(T_p(G)\) of a non\-/trivial connected \(p\)\=/divisible group \(G\) defined over \(K\) is such a representation by Proposition~\ref{proposition:p-div-representations}.
	In particular, if \(A\) is a non\-/trivial abelian variety \(A\) defined over \(K\) with good supersingular reduction, then \(T_p(A) = T_p(\hat{\Ac}(p))\), and thus, the \(p\)\=/adic Tate module \(T_p(A)\) satisfies Corollary~\ref{corollary:equivalence} (see Example~\ref{example:abelian-varieties}).
\end{example}

\begin{example}
		The \enquote{weakly admissible implies admissible} Theorem from Remark~\ref{remark:p-adic-rep-results} allows to construct examples of representations satisfying simultaneously the hypotheses of Theorem~\ref{theorem:main}, and the hypotheses of either statements of Theorem~\ref{theorem:bk-perfectoid} for any integer \(n \geq 1\).
\end{example}

Assume that \(L=K_\cyc\).
Berger~\cite{Berger2005}, generalising results by Perrin-Riou~\cite{Perrin-Riou2000,Perrin-Riou2001}, has proved that the Pontryagin dual of the kernel of the map induced by \(\pi_0\)
\[
	\HH^1(K_\cyc,V/T)/\HH^1_e(K_\cyc,V/T) \rightarrow \HH^1(K_\cyc,V^{\leq 0}/T^{\leq 0}) \rightarrow 0
\]
is a finitely generated \(\Zp\)\=/module.
Moreover, if each subquotient representation \(W\) of \(V^\ast(1)\) satisfies \(\HH^0(K_\cyc,W)=0\), then the map \(\pi_0\) induces an isomorphism
\[
	\HH^1(K_\cyc,V/T)/\HH^1_e(K_\cyc,V/T) \similarrightarrow \HH^1(K_\cyc,V^{\leq 0}/T^{\leq 0}).
\]

\begin{remark}
	Berger~\cite{Berger2024} has proved that if \(L/K\) is an infinitely ramified \(\Zp\)\=/extension, then \(L\) is not dense in \((\BdR^+/\Fil^2 \BdR)^{G_L}\).
	In particular, the field \(K_\cyc\) is not dense in \((\BdR^+/\Fil^n\BdR)^{G_{K_\cyc}}\) for any \(n \geq 2\).
\end{remark}

\subsection{Hodge\texorpdfstring{\--}{--}Tate weights}
\label{subsec:hodge-tate-weights}

Let \(L\) be an algebraic extension of \(K\).
We provide examples of de Rham representations \(V\) of \(G_K\) violating the hypothesis on the Hodge\--Tate weights from Theorem~\ref{theorem:main}, and such that \(\HH^1_e(L,V/T) = \HH^1(L,V/T)\) while \(\hat{L}\) is not perfectoid.

\begin{proposition} \label{proposition:bk-ht}
	Assume that \(V\) is a de Rham representation of \(G_K\) such that
	\begin{enumerate}
		\item the Hodge\--Tate weights of \(V\) are all \(\geq 1\),
		\item the spaces \(\DDb_{\e,K^\prime}(V)\) and \(\DDb_{\e,K^\prime}(V^\ast(1))\) are trivial for each finite extension \(K^\prime\) of \(K\) contained in \(L\),
		\item the group
		\[
			\varprojlim \HH^0(K^\prime,V^\ast(1)/T^\ast(1))/(\HH^0(K^\prime,V^\ast(1))/\HH^0(K^\prime,T^\ast(1))),
		\]
		where \(K^\prime\) runs over all the finite extensions of \(K\) contained in \(L\), and the transition morphisms are the corestriction maps, is trivial.
	\end{enumerate}
	Then it holds \(\HH^1_e(L,V/T) = \HH^1(L,V/T)\).
\end{proposition}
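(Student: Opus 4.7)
The plan is to apply Corollary~\ref{corollary:bk-exp-full} at each finite subextension $K'$ of $L/K$, identify the resulting finite quotient $\HH^1(K',V/T)/\HH^1_e(K',V/T)$ via local Tate duality, and then match its vanishing in the colimit with hypothesis~(3).

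First I would apply Corollary~\ref{corollary:bk-exp-full} at each finite $K'\subseteq L$ to the restriction $V|_{K'}$: it is de Rham, has the same Hodge--Tate weights as $V$ (hence all $\geq 1 > 0$), and by hypothesis~(2) both $\De(V|_{K'}) = \DDb_{\e,K'}(V)$ and $\De(V^\ast(1)|_{K'}) = \DDb_{\e,K'}(V^\ast(1))$ are trivial. The corollary yields $\HH^1_e(K', V) = \HH^1(K', V)$, whence $\HH^1_e(K', V/T) = \beta_{K'}(\HH^1(K', V))$. Being the image of a $\Qp$\=/vector space, this subgroup is divisible, and the long exact sequence of $0\to T\to V\to V/T\to 0$ embeds the quotient $\HH^1(K', V/T)/\HH^1_e(K', V/T)$ into $\HH^2(K', T)$. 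Since $V/T$ is torsion and $\HH^2(K',T)$ is finitely generated over $\Zp$ by Tate's theorem, the image lies in the finite torsion part. A subquotient that is simultaneously divisible and finite is trivial, so $\HH^1_e(K', V/T) = \HH^1(K', V/T)_{\divisible}$, and therefore the quotient $\HH^1(K', V/T)/\HH^1_e(K', V/T)$ coincides with the finite cotorsion $\HH^1(K', V/T)/\HH^1(K', V/T)_{\divisible}$.

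Second, local Tate duality identifies the cofinitely generated discrete $\Zp$\=/module $\HH^1(K', V/T)$ with the Pontryagin dual of the finitely generated $\Zp$\=/module $\HH^1(K', T^\ast(1))$, matching the maximal divisible subgroup on one side with the free part on the other. Consequently the finite quotient above is Pontryagin dual to the finite torsion subgroup $\HH^1(K', T^\ast(1))_{\torsion}$. The long exact sequence of $0\to T^\ast(1)\to V^\ast(1)\to V^\ast(1)/T^\ast(1)\to 0$ identifies $\HH^1(K', T^\ast(1))_{\torsion}$ with the image of the connecting map, which is canonically
\[
	\HH^0(K', V^\ast(1)/T^\ast(1))/\bigl(\HH^0(K', V^\ast(1))/\HH^0(K', T^\ast(1))\bigr).
\]

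Finally, by exactness of direct limits,
\[
	\HH^1(L, V/T)/\HH^1_e(L, V/T) = \varinjlim_{K'} \HH^1(K', V/T)/\HH^1_e(K', V/T),
\]
where the transition maps are the restriction maps. Under local Tate duality, restriction on the $V/T$\=/side is dual to corestriction on the $T^\ast(1)$\=/side, so the Pontryagin dual of this direct limit of finite groups equals the inverse limit along corestriction of the finite groups identified in the previous step, which is precisely the group assumed trivial in hypothesis~(3). Thus $\HH^1_e(L, V/T) = \HH^1(L, V/T)$. The delicate point is the naturality of all identifications with respect to the transition maps — in particular verifying that the connecting homomorphism, Pontryagin duality, and the Tate duality pairing all intertwine restriction on $V/T$\=/cohomology with corestriction on $T^\ast(1)$\=/cohomology in a compatible way so that the $\varprojlim$ of hypothesis~(3) is indeed the Pontryagin dual of the $\varinjlim$ displayed above.
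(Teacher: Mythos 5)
Your proof is correct and follows essentially the same route as the paper: apply Corollary~\ref{corollary:bk-exp-full} at each finite level, reduce the claim to the vanishing of the colimit of the finite cokernels of the maps \(\beta_{K^\prime}\), and identify the Pontryagin dual of that colimit with the inverse limit in hypothesis~(3) via local Tate duality. The only (harmless) difference is that you realise the finite obstruction group as the dual of \(\HH^1(K^\prime,T^\ast(1))_\torsion\) and then use the connecting map for \(T^\ast(1)\), whereas the paper identifies it directly with \(\HH^2(K^\prime,T)_\torsion\) and dualises that.
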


\begin{remark}
	About the hypotheses of Proposition~\ref{proposition:bk-ht}.
	\begin{enumerate}
		\item If \(V\) is semistable with Hodge\--Tate weights all \(\geq 2\), then the first two hypotheses hold (see for instance~\cite[Lemme~6.7]{Berger2002}).
		\item The last hypothesis holds if the group \(\HH^0(L,V^\ast(1)/T^\ast(1))\) is trivial.
	\end{enumerate}
\end{remark}

\begin{proof}
	Firstly, by Corollary~\ref{corollary:bk-exp-full}, for each finite extension \(K^\prime\) of \(K\) contained in \(L\), it holds
	\begin{equation} \label{eq:exp-iso}
		\HH^1_e(K^\prime,V) = \HH^1(K^\prime,V).
	\end{equation}

	To conclude, we consider the map
	\begin{equation} \label{eq:V_V/T}
		\varinjlim \HH^1(K^\prime,V) \xrightarrow{(\beta_{K^\prime})_{K^\prime}} \varinjlim \HH^1(K^\prime,V/T) \similarrightarrow \HH^1(L,V/T),
	\end{equation}
	where \(K^\prime\) runs over all the finite extensions of \(K\) contained in \(L\), and the transition morphisms are the restriction maps.
	By the equalities~\eqref{eq:exp-iso} and the definition of the exponential Bloch\--Kato group \(\HH^1_e(L,V/T)\), if the map~\eqref{eq:V_V/T} is surjective, then \(\HH^1_e(L,V/T) = \HH^1(L,V/T)\).
	Recall that the cokernel of the map \(\beta_{K^\prime}\) is isomorphic to the torsion subgroup \(\HH^2(K^\prime,T)_\torsion\) of \(\HH^2(K^\prime,T)\), and thus, the surjectivity of the map~\eqref{eq:V_V/T} is equivalent to the vanishing of \(\varinjlim \HH^2(K^\prime,T)_\torsion\), which in turn, by local Tate duality~\eqref{eq:local-tate-duality-iw}, is equivalent to the vanishing of the group
	\[
		\begin{split}
			& \varprojlim \HH^0(K^\prime,V^\ast(1)/T^\ast(1))/\HH^0(K^\prime,V^\ast(1)/T^\ast(1))_\divisible \\
			= & \varprojlim \HH^0(K^\prime,V^\ast(1)/T^\ast(1))/(\HH^0(K^\prime,V^\ast(1))/\HH^0(K^\prime,T^\ast(1))),
		\end{split}
	\]
	where the transition maps are the corestriction maps.
\end{proof}

\begin{example}
	Let \(i \in \Z\).
	The structure of \(\Zp\)\=/module of \(\HH^n(K,\Zp(i))\) is known (see for instance~\cite[Proposition~7.3.10]{NeukirchSchmidtWingberg2008}).
	In particular, if \(p \neq 2\), \(K=\Qp\) and \(i \neq 0\), then \(\HH^0(\Qp,\Qp(i)/\Zp(i))\) is trivial.
	Moreover, if \(i \neq 0\), then the space \(\De(\Qp(i))\) is trivial.
	Therefore, by Proposition~\ref{proposition:bk-ht}, if  \(p \neq 2\) and \(i \geq 2\), then it holds
	\[
		\HH^1_e(\Qp,\Qp(i)/\Zp(i)) = \HH^1(\Qp,\Qp(i)/\Zp(i)).
	\]
\end{example}

A \(p\)\=/adic representation \(V\) of \(G_K\) is \emph{unramified} if the inertia subgroup \(I_K\) of \(G_K\) acts trivially on \(V\).

\begin{remark} \label{remark:sen-ht-0}
	Let \(V\) be a de Rham representation of \(G_K\).
	By a result of Sen~\cite{Sen1973}, there exists a finite extension \(K^\prime\) of \(K\) such that \(V|_{K^\prime}\) is unramified if and only if the Hodge\--Tate weights of \(V\) are all \(0\).
\end{remark}

The extension \(L/K\) is \emph{infinitely wildly ramified} if the \(p\)\=/adic valuation of the ramification index \(e(K^\prime/K)\) is unbounded as \(K^\prime\) runs through the finite extensions of \(K\) contained in \(L\).

\begin{lemma} \label{lemma:lim-torsion-0}
	Let \(V\) be an unramified \(p\)\=/adic representation of \(G_K\).
	If the extension \(L/K\) is infinitely wildly ramified, then the group
	\[
		\varprojlim \HH^0(K^\prime,V/T)/(\HH^0(K^\prime,V)/\HH^0(K^\prime,T)),
	\]
	where \(K^\prime\) runs over all the finite extensions of \(K\) contained in \(L\), and the transition morphisms are the corestriction maps, is trivial.
\end{lemma}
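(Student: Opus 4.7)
The plan is to reduce the corestriction maps appearing in the inverse limit to the product of the ramification index with a corestriction over the unramified subfields, and then exploit the infinite wild ramification of \(L/K\) to annihilate every element of the limit. The driving observation is that for an unramified representation, the group \(\HH^0\) depends only on the maximal unramified subfield of the base field, which decouples the corestriction into a purely ramified factor and a purely unramified one.

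First, I would set \(A_{K^\prime} := \HH^0(K^\prime,V/T)/(\HH^0(K^\prime,V)/\HH^0(K^\prime,T))\) for each finite extension \(K^\prime\) of \(K\) contained in \(L\). The long exact sequence of Galois cohomology associated with \(0 \rightarrow T \rightarrow V \rightarrow V/T \rightarrow 0\) identifies \(A_{K^\prime}\) with the kernel of \(\HH^1(K^\prime,T) \rightarrow \HH^1(K^\prime,V)\), that is, with the torsion subgroup \(\HH^1(K^\prime,T)_\torsion\). By Tate's results on local Galois cohomology, \(\HH^1(K^\prime,T)\) is a finitely generated \(\Zp\)\-/module, hence \(A_{K^\prime}\) is a finite abelian \(p\)\-/group, annihilated by some power of \(p\).

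Second, using that \(V\) is unramified, for \(M \in \{V,T,V/T\}\) the action of \(G_{K^\prime}\) on \(M\) factors through its image \(\Gamma_{K^\prime_0} := \Gal(\Qp^\ur/K^\prime_0)\) in \(\Gal(\Qp^\ur/\Qp)\), and so \(\HH^0(K^\prime,M) = M^{\Gamma_{K^\prime_0}}\). For finite extensions \(K^\dprime \subseteq K^\prime\) in \(L\), I would establish the corestriction formula
\[
    \cores_{K^\prime/K^\dprime}(m) = e(K^\prime/K^\dprime) \cdot \cores_{K^\prime_0/K^\dprime_0}(m), \quad m \in M^{\Gamma_{K^\prime_0}},
\]
where \(e(K^\prime/K^\dprime)\) is the ramification index and \(\cores_{K^\prime_0/K^\dprime_0}\) is the corestriction associated with the finite unramified extension \(K^\prime_0/K^\dprime_0\). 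This follows from decomposing the defining sum \(\sum_{g \in G_{K^\dprime}/G_{K^\prime}} gm\) through the natural surjection \(G_{K^\dprime}/G_{K^\prime} \twoheadrightarrow \Gamma_{K^\dprime_0}/\Gamma_{K^\prime_0}\), whose fibres all have size \([K^\prime:K^\dprime]/[K^\prime_0:K^\dprime_0] = e(K^\prime/K^\dprime)\); by naturality the formula descends to the quotient \(A_{K^\prime} \rightarrow A_{K^\dprime}\).

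Finally, to show that the inverse limit is trivial, I would fix any finite extension \(K^\dprime\) of \(K\) in \(L\) and let \(p^N\) be the exponent of the finite \(p\)\-/group \(A_{K^\dprime}\). Since \(L/K\) is infinitely wildly ramified, there exists a finite extension \(K^\prime \supseteq K^\dprime\) in \(L\) such that \(v_p(e(K^\prime/K^\dprime)) \geq N\) (for instance by enlarging any deeply ramified \(K^\prime\) to \(K^\prime K^\dprime\)). For any element \((a_{K^\prime})\) of the limit, the compatibility with corestrictions then forces
\[
    a_{K^\dprime} = \cores_{K^\prime/K^\dprime}(a_{K^\prime}) = e(K^\prime/K^\dprime) \cdot \cores_{K^\prime_0/K^\dprime_0}(a_{K^\prime}) \in p^N A_{K^\dprime} = 0,
\]
as \(K^\dprime\) was arbitrary. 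The main obstacle will be a careful verification of the corestriction formula: this is an elementary Mackey-type coset computation, but requires tracking precisely how the indices \([K^\prime:K^\dprime]\), \([K^\prime_0:K^\dprime_0]\) and the ramification \(e(K^\prime/K^\dprime)\) interact; everything else is routine.
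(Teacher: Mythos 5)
Your proposal is correct and follows essentially the same strategy as the paper's proof: both rest on the finiteness of the groups \(\HH^0(K^\prime,V/T)/(\HH^0(K^\prime,V)/\HH^0(K^\prime,T))\) and on the fact that, for an unramified representation, the corestriction factors as the ramification index \(e(K^\prime/K^\dprime)\) times an \enquote{unramified} norm, so that infinite wild ramification annihilates the inverse limit. The only (cosmetic) difference is that you obtain the factor \(e(K^\prime/K^\dprime)\) by a direct coset count for the surjection \(G_{K^\dprime}/G_{K^\prime} \twoheadrightarrow \Gamma_{K^\dprime_0}/\Gamma_{K^\prime_0}\), whereas the paper factors the corestriction through the maximal unramified subextension \(K_n^m\) of \(K_m/K_n\) and uses \(\cores \circ \res = [K_m:K_n^m]\).
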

\begin{proof}
	We set \((K_n)_{n \in \N}\) a sequence of finite extensions of \(K\) contained in \(L\) such that \(K_0 = K\), \(K_n \subseteq K_{n+1}\), and \(L = \bigcup_{n\in\N} K_n\).
	For each \(n \in \N\) and \(m \geq n\), we denote by  \(K_n^m\) be the maximal unramified extension of \(K_n\) in \(K_m\), we set
	\[
		\begin{split}
			R_n = \HH^0(K_n,V/T)/(\HH^0(K_n,V)/\HH^0(K_n,T)), \\
			R_n^m = \HH^0(K_n^m,V/T)/(\HH^0(K_n^m,V)/\HH^0(K_n^m,T)),
		\end{split}
	\]
	and, we denote the corestriction maps by
	\[
		\begin{tikzcd}
			R_m \ar{rr}{c_{m,n}} \ar{rd}[swap]{c^m_n} & & R_n \\
			& R_n^m \ar{ur}[swap]{c_{m,n}^\ur} & .
		\end{tikzcd}
	\]

	Since \(V\) is unramified, there is an isomorphism
	\begin{equation} \label{eq:iso-unramified}
		\HH^0(K_n^m,V/T) \similarrightarrow \HH^0(K_m,V/T).
	\end{equation}
	Since \(V\) is finite dimensional, there exists \(n_0 \in \N\) such that for all \(n \geq n_0\),
	it holds
	\begin{equation} \label{eq:iso-invariants}
		\HH^0(K_{n_0},V) = \HH^0(K_n,V).
	\end{equation}
	For each \(n \geq n_0\) and \(m \geq n\), the isomorphisms~\eqref{eq:iso-unramified} and~\eqref{eq:iso-invariants} induce an isomorphism
	\[
		\varepsilon: R_n^m \similarrightarrow R_m.
	\]
	Therefore, it holds
	\begin{equation} \label{eq:corestriction}
		\begin{split}
			c_{m,n}(R_m) & = c_{m,n}^\ur \circ c^m_n(R_m) = c_{m,n}^\ur ( c^m_n \circ \varepsilon (R_n^m)) \\
			& = c_{m,n}^\ur ([K_m:K_n^m]\cdot R_n^m) =  c_{m,n}^\ur( e(K_m/K_n) \cdot R_n^m) \\
			& \subseteq e(K_m/K_n) \cdot R_n.
		\end{split}
	\end{equation}

	By Lemma~\ref{lemma:finite-torsion}, the groups \(R_n\) are finite of order say \(p^{a_n}\).
	Finally, since \(L/K\) is infinitely wildly ramified, there exists \(m\) sufficiently large such that \(e(K_m/K_n) \geq p^{a_n}\), and hence, the equation~\eqref{eq:corestriction} yields \(c_{m,n}(R_m) = 0\), and thus, the limit \(\varprojlim R_n\) is trivial.
\end{proof}

\begin{corollary} \label{corollary:infinitely-wildly-ramified}
	Assume that \(V\) is a de Rham representation of \(G_K\) such that
	\begin{enumerate}
		\item the Hodge\--Tate weights of \(V\) are all \(1\),
		\item the spaces \(\DDb_{\e,K^\prime}(V)\) and \(\DDb_{\e,K^\prime}(V^\ast(1))\) are trivial for each finite extension \(K^\prime\) of \(K\).
	\end{enumerate}
	If the extension \(L/K\) is infinitely wildly ramified, then it holds \(\HH^1_e(L,V/T) = \HH^1(L,V/T)\).
\end{corollary}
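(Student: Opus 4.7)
My plan is to verify the three hypotheses of Proposition~\ref{proposition:bk-ht} for the representation $V$ and the extension $L/K$. Hypotheses~(1) and~(2) are immediate from the assumptions of the corollary, so everything reduces to verifying hypothesis~(3): that the inverse limit
\[
	\varprojlim_{K' \subseteq L} R_{K'}, \qquad R_{K'} := \HH^0(K', V^\ast(1)/T^\ast(1))/\bigl(\HH^0(K', V^\ast(1))/\HH^0(K', T^\ast(1))\bigr),
\]
taken over finite subextensions $K'$ of $L/K$ with corestriction transition maps, vanishes. The strategy is to reduce to Lemma~\ref{lemma:lim-torsion-0} after a finite base change that renders $V^\ast(1)$ unramified.

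Since $V$ has all Hodge--Tate weights equal to $1$, the Tate dual $V^\ast(1)$ is de Rham with all Hodge--Tate weights equal to $0$, so by Sen's theorem (Remark~\ref{remark:sen-ht-0}) there exists a finite Galois extension $K''/K$ such that $V^\ast(1)|_{K''}$ is unramified. Replacing $K$ by the finite subextension $L \cap K''$ of $L/K$ leaves the inverse limit unchanged (the finite extensions of $L \cap K''$ in $L$ form a cofinal subsystem), so I may assume $L \cap K'' = K$; hence $K''$ is linearly disjoint over $K$ from every finite $K' \subseteq L$. Multiplicativity of ramification in the towers $K \subseteq K' \subseteq K'K''$ and $K \subseteq K'' \subseteq K'K''$, together with the bounds $e(K'K''/K'),e(K''/K) \leq [K'':K]$, gives $v_p(e(K'K''/K'')) \geq v_p(e(K'/K)) - v_p([K'':K])$, whence $LK''/K''$ is infinitely wildly ramified.

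I next apply Lemma~\ref{lemma:lim-torsion-0} to the unramified representation $V^\ast(1)|_{K''}$ of $G_{K''}$ and the infinitely wildly ramified extension $LK''/K''$; since the compositums $K'K''$ for $K' \subseteq L$ finite form a cofinal subsystem of the finite subextensions of $LK''/K''$, the conclusion may be written $\varprojlim_{K' \subseteq L} R_{K'K''} = 0$. I transfer this vanishing via the restriction maps $R_{K'} \to R_{K'K''}$. By linear disjointness the Mackey double coset formula collapses to a single term and yields, for $K_1 \subseteq K_2$ finite in $L$, the commuting square
\[
	\mathrm{res}_{K_1 \to K_1 K''} \circ \mathrm{cor}_{K_2 \to K_1} = \mathrm{cor}_{K_2 K'' \to K_1 K''} \circ \mathrm{res}_{K_2 \to K_2 K''},
\]
so restriction defines a morphism of inverse systems $\varprojlim_{K'} R_{K'} \to \varprojlim_{K'} R_{K'K''} = 0$. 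Each map $R_{K'} \to R_{K'K''}$ is injective: if $\bar{r} \in \HH^0(K', V^\ast(1)/T^\ast(1))$ lifts to some $\tilde{r} \in \HH^0(K'K'', V^\ast(1))$, then the averaged element
\[
	\tilde{r}' := \frac{1}{[K'K'':K']} \sum_{g \in \Gal(K'K''/K')} g\tilde{r}
\]
lies in $\HH^0(K', V^\ast(1))$ (using that $V^\ast(1)$ is a $\Qp$-vector space and $K'K''/K'$ is Galois) and reduces to $\bar{r}$ modulo $T^\ast(1)$ by $G_{K'}$-invariance of $\bar{r}$. Combining injectivity with the vanishing of the target system establishes hypothesis~(3), and Proposition~\ref{proposition:bk-ht} yields the corollary.

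The main obstacle is the bookkeeping around the base change to $K''$. The transfer relies on two nontrivial compatibilities, namely the Mackey formula under linear disjointness (for compatibility of restriction with corestriction across the two towers) and the averaging argument (for injectivity of restriction); both crucially use that $V^\ast(1)$ is a $\Qp$-linear representation rather than merely a $\Zp$-module.
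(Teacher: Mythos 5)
Your overall strategy diverges from the paper's: you try to verify hypothesis~(3) of Proposition~\ref{proposition:bk-ht} directly for \(L/K\) by transporting the vanishing of \(\varprojlim R_{K'K''}\) back along restriction, whereas the paper applies Proposition~\ref{proposition:bk-ht} over \(K''\) to get \(\HH^1_e(LK'',V/T)=\HH^1(LK'',V/T)\) and then descends this conclusion to \(L\) via Corollary~\ref{corollary:bk-finite-extension} together with the divisibility of \(\HH^1(L,V/T)\) (which follows from \(\cd_p G_L\leq 1\) for infinitely wildly ramified extensions and Proposition~\ref{proposition:cohomological-dimension}). The preliminary steps of your reduction (Galois closure, \(L\cap K''=K\), linear disjointness, the ramification estimate, and the collapsed double coset formula) are all fine.

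The gap is the claimed injectivity of \(R_{K'}\rightarrow R_{K'K''}\), and it is not a repairable technicality. In your averaging argument, write \(g\tilde r=\tilde r+t_g\) with \(t_g\in T^\ast(1)\) for \(g\in\Gal(K'K''/K')\); then \(\tilde r'=\tilde r+\tfrac1n\sum_g t_g\) with \(n=[K'K'':K']\), and \(\tfrac1n\sum_g t_g\) need not lie in \(T^\ast(1)\) when \(p\mid n\). So \(\tilde r'\) reduces to \(\bar r\) only up to an \(n\)\=/torsion error, not on the nose. Indeed, the connecting map identifies \(R_{K'}\) with \(\HH^1(K',T^\ast(1))_\torsion\), compatibly with restriction, and the kernel of \(\res:\HH^1(K',T^\ast(1))\rightarrow\HH^1(K'K'',T^\ast(1))\) is \(\HH^1(\Gal(K'K''/K'),\HH^0(K'K'',T^\ast(1)))\); since it is torsion it lies entirely inside \(R_{K'}\), and it is nonzero in general (e.g.\ for a faithful action of a nontrivial finite group on a lattice). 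Consequently your morphism of inverse systems only shows that \(\varprojlim R_{K'}\) is contained in the inverse limit of these kernels — finite groups of exponent bounded by \([K'':K]\) — and you would still have to prove that \emph{that} limit vanishes under corestriction, which your argument does not address and which is essentially the difficulty the paper's detour through Corollary~\ref{corollary:bk-finite-extension} is designed to avoid. To complete your route you would need either to control \(\varprojlim\) of the kernels directly, or to switch to the paper's descent at the level of \(\HH^1_e\).
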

\begin{proof}
	The Hodge\--Tate weights of \(V^\ast(1)\) are all \(0\), and hence, by Sen (Remark~\ref{remark:sen-ht-0}), there exists a finite extension \(K^\prime\) of \(K\) such that \(V^\ast(1)|_{K^\prime}\) is unramified, and we may assume that \(K^\prime/K\) is Galois, taking the Galois closure of \(K^\prime\) if necessary.
	We set \(L^\prime = LK^\prime\) the compositum of \(L\) and \(K^\prime\), then \(L^\prime/L\) is a finite Galois extension, and \(L^\prime/K^\prime\) is infinitely wildly ramified.
	The combination of Proposition~\ref{proposition:bk-ht} and Lemma~\ref{lemma:lim-torsion-0} applies to  \(V^\ast(1)|_{K^\prime}\) and \(L^\prime/K^\prime\) and yields \(\HH^1_e(L^\prime,V/T) = \HH^1(L^\prime,V/T)\).

	The \(p\)\=/cohomological dimension of an infinitely wildly ramified extension is \(\leq 1\) (see~\cite[II \S~2.3 Proposition~4 and II \S~5.6 Lemme~3]{Serre1994}).
	Therefore, by Proposition~\ref{proposition:cohomological-dimension}, the group \(\HH^1(L,V/T)\) is divisible.
	We conclude by Corollary~\ref{corollary:bk-finite-extension}
	\[
		\begin{split}
			\HH^1_e(L,V/T) & = \res^{-1}(\HH^1_e(L^\prime,V/T))_\divisible \\
			& = \res^{-1}(\HH^1(L^\prime,V/T))_\divisible \\
			& = \HH^1(L,V/T)_\divisible \\
			& = \HH^1(L,V/T).
		\end{split}
	\]
\end{proof}

\begin{example}
	Abelian varieties provide examples of representations satisfying the hypotheses of Corollary~\ref{corollary:infinitely-wildly-ramified}.
	For instance, if \(A\) is an abelian variety defined over \(K\) with good ordinary reduction, then \(V_p(\hat{\Ac}(p))\) is such a representation (see Example~\ref{example:abelian-varieties}).
\end{example}

\begin{remark}
	Corollary~\ref{corollary:infinitely-wildly-ramified} generalises a result by Coates and Greenberg for abelian varieties~\cite[Proposition~4.7]{CoatesGreenberg1996}.
\end{remark}

\begin{remark}
	Coates and Greenberg~\cite[Lemma~2.12]{CoatesGreenberg1996} have proved that if the field \(\hat{L}\) is perfectoid, then the extension \(L/K\) is infinitely wildly ramified.
	Therefore, for the representations \(V\) satisfying the hypotheses of Corollary~\ref{corollary:infinitely-wildly-ramified}, there exists a larger class of fields than perfectoid fields, namely the infinitely wildly ramified extensions \(L\) of \(K\), over which \(\HH^1_e(L,V/T) = \HH^1(L,V/T)\) holds.
\end{remark}

\printbibliography[heading=bibintoc]
\end{document}